
\documentclass[12pt,a4paper,reqno]{amsart}
\usepackage{amssymb,amsmath,amsthm}
\usepackage{xcolor}
\usepackage{bbm}
\usepackage{bigints}
\usepackage{mathtools} 
\usepackage{mathrsfs} 
\usepackage{hyperref} 
\usepackage{bigints}
\usepackage{csquotes} 


\newcommand\dela[1]{}

\numberwithin{equation}{section}
\input colordvi

\newcommand{\h}{\Upsilon}
\newcommand{\rkhs}{H_{\mu}}
\newcommand{\rkhsemb}{E}
\newcommand{\rT}{R}
\newcommand{\trm}{\textrm}

\NeedsTeXFormat{LaTeX2e}
\ProvidesPackage{mymacros}[1997/10/21
macros]

\allowdisplaybreaks

\advance\textwidth by 2.5cm 
\advance\oddsidemargin by -1.6cm
\advance\evensidemargin by -1.6cm

\newtheorem{Theorem}{Theorem}[section]
\newtheorem{Definition}[Theorem]{Definition}
\newtheorem{Proposition}[Theorem]{Proposition}
\newtheorem{Lemma}[Theorem]{Lemma}
\newtheorem{Corollary}[Theorem]{Corollary}
\newtheorem{Remark}[Theorem]{Remark}

\newcommand\restr[2]{{
		\left.\kern-\nulldelimiterspace 
		#1 
		\vphantom{\big|} 
		\right|_{#2} 
}}

\makeatletter
\def\@tocline#1#2#3#4#5#6#7{\relax
	\ifnum #1>\c@tocdepth 
	\else
	\par \addpenalty\@secpenalty\addvspace{#2}%
	\begingroup \hyphenpenalty\@M
	\@ifempty{#4}{%
		\@tempdima\csname r@tocindent\number#1\endcsname\relax
	}{%
		\@tempdima#4\relax
	}%
	\parindent\z@ \leftskip#3\relax \advance\leftskip\@tempdima\relax
	\rightskip\@pnumwidth plus4em \parfillskip-\@pnumwidth
	#5\leavevmode\hskip-\@tempdima
	\ifcase #1
	\or\or \hskip 1em \or \hskip 2em \else \hskip 3em \fi%
	#6\nobreak\relax
	\dotfill\hbox to\@pnumwidth{\@tocpagenum{#7}}\par
	\nobreak
	\endgroup
	\fi}
\makeatother

\begin{document}

\bibliographystyle{plain}
\pagenumbering{arabic}

\title[LDP for SGWE]{Large Deviations  for $(1+1)$-dimensional Stochastic Geometric Wave Equation}
\author[ZB, BG, MO and NR]{Zdzis{\l}aw Brze\'zniak, Ben Go{\l}dys, Martin Ondrej\'at and Nimit Rana}
\address{Department of Mathematics \\
The University of York \\
Heslington, York, YO105DD, UK} \email{zdzislaw.brzezniak@york.ac.uk}
\address{Department of Mathematics \\
	The University of Sydney \\
	School of Mathematics and Statistics, Carslaw Building, NSW 2006} \email{beniamin.goldys@sydney.edu.au}

\address{The Czech Academy of Sciences \\ Institute of Information Theory and Automation \\ Pod Vod\'arenskou v\v e\v z\'{\i} 4 \\ 182 08 Prague 8\\ Czech Republic} \email{ondrejat@utia.cas.cz}

\address{Fakult\"at f\"ur Mathematik \\
	Universit\"at Bielefeld\\
	Postfach 10 01 31, 33501 Bielefeld, Germany} \email{nrana@math.uni-bielefeld.de}

\today

\addtocounter{footnote}{-1} \vskip 1 true cm

\maketitle

\begin{abstract}
We consider stochastic wave map equation on real line with solutions taking values in a $d$-dimensional compact Riemannian manifold. We show first that this equation has unique, global, strong in PDE sense, solution in local Sobolev spaces. The main result of the paper is a proof of the Large Deviations Principle for solutions in the case of vanishing noise.
\end{abstract}

\noindent
\keywords{{\it Key words and phrases:} Large deviations, stochastic geometric wave equation, Riemannian manifold, infinite dimensional Brownian motion.}\\
\subjclass{{\it 2000 Mathematics Subject Classification:} 60H10, 58D20, 58DF15, 34G20, 46E35, 35R15, 46E50}

\tableofcontents

\section{Introduction}
Stochastic PDEs for manifold-valued processes has attracted a great deal of attention due to their  wide range of applications in physics, in particular in kinetic theory of phase transitions and quantum field theory, see e.g.  Bruned et. al. \cite{Bruned+Gabriel+Hairer+Zambotti_2019}, the first and the second named authors  \cite{Brz+Caroll}-\cite{Brz+Gold+Jeg_2017}, Carroll \cite{Carroll_1999}, Funaki \cite{Funaki_1992} and R\"ockner et. al. \cite{Rockner+Wu+Zhu+Zhu_2018} and references therein. In this paper we are dealing with a particular stochastic PDE, known as a stochastic geometric wave equation (SGWE), that was introduced and studied by the first and the third named authors  in  a series of papers \cite{Brz+Ondr_2007}, \cite{Brz+Ondr_2011, Brz+Ondr_2013}, see also \cite{bgo}.
\par\medskip\noindent
The aim of this paper is to prove a large deviations principle (LDP) for  the one-dimensional stochastic wave equation with solutions taking values in a $d$-dimensional compact Riemannian manifold $M$.
More precisely we will consider the equation
\begin{equation}\label{SGWEinConnection}
	\mathbf D_t\partial_tu^{\varepsilon} =\mathbf D_x\partial_xu^{\varepsilon} +  \sqrt{\varepsilon} Y_{u^{\varepsilon}} (\partial_t u^{\varepsilon}, \partial_x u^{\varepsilon} ) \, \dot{W},
\end{equation}
where $\varepsilon \in  (0,1]$ approaches zero.  Here $\mathbf D$ is the connection  on the pull-back bundle $u^{-1}TM$ of the tangent bundle over $M$ induced by the Riemannian connection on $M$, see e.g. \cite{Brz+Ondr_2010C,Shatah+Struwe_1998B},  $Y$ is a non-linearity and $W$ is a spatially homogeneous Wiener process on $\mathbb{R}$.
A precise formulation  is provided in Section \ref{sec:prelim}. Here we  only note that we will work with the extrinsic formulation of  \eqref{SGWEinConnection}, that is, we assume $M$ to be isometrically embedded  into a certain Euclidean space $\mathbb{R}^n$, which holds true due to the celebrated Nash isometric embedding theorem \cite{Nash_1956}. Then, in view of Remark 2.5 in \cite{Brz+Ondr_2007}, equation  \eqref{SGWEinConnection} can be written in the form
\begin{equation}\label{SGWE-fundamental}
	\partial_{tt}u^{\varepsilon} =  \partial_{xx}u^{\varepsilon} +  A_{u^{\varepsilon}}(\partial_t u^{\varepsilon}, \partial_t u^{\varepsilon}) - A_{u^{\varepsilon}}(\partial_x u^{\varepsilon}, \partial_x u^{\varepsilon})  + \sqrt{\varepsilon} Y_{u^{\varepsilon}}(\partial_t u^{\varepsilon}, \partial_x u^{\varepsilon})\, \dot{W},
\end{equation}
where $A$ is the second fundamental form of the submanifold $M \subseteq \mathbb{R}^n$. More details about the equivalence of extrinsic and intrinsic formulations of stochastic PDEs can be found in Sections 2 and 12 of  \cite{Brz+Ondr_2007}.
\par\medskip\noindent
Due to its importance for applications, LDP for stochastic PDEs has been widely studied by many authors. However, analysis of large deviations for stochastic PDEs for manifold-valued processes is very little understood. To the best of our knowledge, LDP has only been established for the stochastic Landau-Lifshitz-Gilbert equation with solutions taking values in the two dimensional sphere \cite{Brz+Gold+Jeg_2017}.  Our paper is the first to study LDP for SGWE. One should also mention a PhD thesis by Hussain \cite{Javed_2015T}, see also \cite{Brz+Javed_LDP_2019}, who has established the LDP  for stochastic heat equation with one codimensional  constraint.
\par\medskip\noindent
If $\varepsilon=0$ then equation \eqref{SGWE-fundamental} reduces to a deterministic equation for wave maps. It has been intensely studied in recent years due to its importance in field theory and general relativity, see for example \cite{geba} and references therein. It turns out that solutions to the deterministic geometric wave equation can exhibit a very complex behaviour including (multiple) blowups and shrinking and expanding bubbles, see \cite{Biernat+Bizon_2011,Bizon+Chmaj+Tabor_2001}. In some cases the Soliton Resolution Conjecture has been proved, see \cite{jia}. Various concepts of stability of  these phenomena, including the stability of soliton solutions has also been intensely studied \cite{Donninger_2011}. It seems natural to investigate stability for wave maps by investigating the impact of small random perturbations and this idea leads to equation \eqref{SGWE-fundamental}. Let us recall that the stability of solitons under the influence of noise has already been studied by means of LDP for the  Schr\"odinger equations, see \cite{debusz}. LDP, once established, will provide a tool for more precise analysis of the stability of wave maps.
\par\medskip\noindent
Finally, let us recall that in \cite{norris} large deviations techniques are applied to derive a rigorous connection between the Yang-Mills measure and the energy functional. While in our work the problem is much easier because of the assumed regularity of the noise, we believe we provide a starting point for an analogous result in the case of less regular noises. Equations of stochastic flows for harmonic maps with very irregular noise have been recently proposed in \cite{Bruned+Gabriel+Hairer+Zambotti_2019} and \cite{Rockner+Wu+Zhu+Zhu_2018}.
\par\medskip\noindent
Another motivation for studying equation \eqref{SGWE-fundamental} with $\epsilon>0$ comes from the Hamiltonian structure of deterministic wave equation. Deterministic Hamiltonian systems may have infinite number of invariant measures and are not ergodic, see the discussion of this problem in \cite{Eckmann+Ruelle_1985}. Characterisation of such systems is a long standing problem.
The main idea, which goes back to Kolmogorov-Eckmann-Ruelle, is to choose a suitable small random perturbation such that the solution to stochastic system is a Markov process with the unique invariant measure and then one can select a \enquote{physical} invariant measure of the deterministic system by taking the limit of vanishing noise, see for example \cite{Cruzeiro+Haba_1997}, where this idea is applied to wave maps. A finite dimensional toy example was studied in \cite{Brz+etal_2015}.
\par\medskip\noindent
Our proof of the large deviations principle relies on the weak convergence method introduced in \cite{Budhiraja+Dupuis_2000} and is based on a variational representation formula for certain functionals of the driving infinite dimensional Brownian motion. However, the approach of \cite{Budhiraja+Dupuis_2000} can not be directly applied to the SGWE and requires a number of modifications, see Section 5 below.
\par\medskip\noindent
Recently in \cite{Salins+Budh+Dup_2019} the authors have established a LDP for a certain  class of Banach space valued stochastic differential equations by a different method, but their argument does not apply to SGWE studied in this paper  because the
wave operator does not generate a compact $C_0$-semigroup.
\par\medskip\noindent

Finally, we note that the approach we developed in this paper can be applied  to a number of problems that are open at present, including the beam equation studied in \cite{Brz+Maslowski+Seidler_2005}, and the nonlinear wave equation with polynomial nonlinearity and spatially homogeneous noise. In particular, this method would generalize the results of \cite{Ondr_2010} and \cite{Zhang_2007}. Our approach would also lead to an extension of the work of Martirosyan \cite{Martirosyan_2017} who considers a nonlinear wave equations on a bounded domain. We believe that the methods of the present work will allow us to obtain the large deviations principle for the family of stationary measures generated by the flow of stochastic wave equation, with multiplicative white noise, in non-local Sobolev spaces over the full space $\mathbb{R}^d$.

\par\medskip\noindent
The organisation of the paper is as follows. In Section \ref{sec:notation}, we introduce our notation and state the definitions used in the paper.  Section \ref{sec:prelim} contains some properties of the nonlinear drift terms and  the diffusion coefficient that we need later.  In Section \ref{sec:skeleton}  we prove the existence of a unique global and strong in PDE sense solution to the skeleton equation associated to \eqref{SGWE-fundamental}.   The proof of Large Deviations Principle, based on weak convergence approach, is provided in Section \ref{sec:LDP}.  In Appendix \ref{sec:IntAndExtSoln}, we recall the intrinsic and extrinsic formulation of SGWE  from  \cite{Brz+Ondr_2007} and state, without proof, the equivalence result between them. We conclude the paper with Appendices \ref{sec:existUniqResult} and \ref{sec:EnergyIneqSWE}, where we state modified version of the existing results on global well-posedness of \eqref{SGWE-fundamental} and energy inequality from \cite{Brz+Ondr_2007} that we use frequently in the paper.

Finally, let us point out that the current paper is an expanded and corrected version of a paper \cite{BGR_2020}.

\subsection*{Acknowledgments}  Ben Go{\l}dys was supported by the Australian Research Council Project DP200101866,  Nimit Rana was supported by the Australian Research Council Projects
DP160101755 and DP190103451, Zdzis{\l}aw Brze{\'z}niak  was supported by the Australian Research Council Project ARC DP grant DP180100506
and Martin Ondrej{\'a}t was supported by the
Czech Science Foundation grant no. 19-07140S.
Nimit Rana and Zdzis{\l}aw Brze{\'z}niak would like to thank Department of Mathematics,
	the University of Sydney and School of Mathematics, UNSW, respectively, for hospitality during August/September 2019.

\section{Notation}\label{sec:notation}
For any two non-negative quantities $a$ and $b$, we write $a \lesssim b$ if there exists a universal constant $c >0$ such that $a \leq cb$, and we write $a \simeq b$ when $a \lesssim b$ and $b \lesssim a$.  In case we want to emphasize the dependence of $c$ on some parameters $a_1,\ldots,a_k$, then we write, respectively, $\lesssim_{a_1,\ldots,a_k}$ and $\simeq_{a_1, \ldots, a_k}$.
We will denote by $B_R(a)$,  for  $a\in \mathbb R$ and $R>0$,  the open ball in $\mathbb R$ with center at $a$ and we put $B_R=B_R(0)$.  Now we  list the notation used throughout the whole paper.
\begin{trivlist}
	\item[$\bullet$] $\mathbb{N}=\{0,1,\cdots\}$ denotes  the set of natural numbers, $\mathbb{R}_+=[0,\infty)$, $\mathrm{Leb}$  denotes the Lebesgue measure.
	\item[$\bullet$]  Let $I \subseteq \mathbb{R}$ be an open interval. By $L^p(I;\mathbb{R}^n), p \in [1,\infty)$, we denote the classical real Banach space of all (equivalence classes of) $\mathbb{R}^n$-valued $p$-integrable maps on $I$. The norm on $L^p(I;\mathbb{R}^n)$ is given by
	\begin{equation}\nonumber
		\| u\|_{L^p(I;\mathbb{R}^n)} := \left( \int_{I} |u(x) |^p \, dx \right)^{\frac{1}{p}}, \qquad u\in L^p(I;\mathbb{R}^n),
	\end{equation}
	where $|\cdot|$ is Euclidean norm on $\mathbb{R}^n$.  For $p=\infty$, we consider the usual modification to essential supremum.
	\item[$\bullet$]  For any $p\in [1,\infty]$, $L^p_{\textrm{loc}}(\mathbb{R};\mathbb{R}^n)$ stands for a metrizable topological vector space equipped with a natural countable family of seminorms $ \{p_j\}_{j\in\mathbb{N}}$ defined by
	\begin{equation}\nonumber
		p_j(u) := \| u\| _{L^p(B_j; \mathbb{R}^n)}, \qquad u\in L^2_{\textrm{loc}}(\mathbb{R};\mathbb{R}^n), \; j\in\mathbb{N}.
	\end{equation}
	\item[$\bullet$] By $H^{k,p}(I;\mathbb{R}^n)$, for $p\in [1,\infty]$ and $k\in \mathbb{N}$, we denote the Banach space of all $u \in L^p(I;\mathbb{R}^n)$ for which $D^j u\in L^p(I;\mathbb{R}^n), j=0,1,\ldots,k$, where $D^j$ is the weak derivative of order $j$. The norm here is given by
	\begin{equation}\nonumber
		 	\| u\|_{H^{k,p}(I;\mathbb{R}^n)} := \left(  \sum_{j=0}^{k}  \| D^j u\|_{L^p(I;\mathbb{R}^n)}^p \right)^{\frac{1}{p}}, \qquad u\in H^{k,p}(I;\mathbb{R}^n).
	\end{equation}
	\item[$\bullet$] We write $H^{k,p}_{\textrm{loc}}(\mathbb{R};\mathbb{R}^n)$, for $p\in [1,\infty]$ and $k\in \mathbb{N}$, to denote the space of all elements $u\in L^p_{\textrm{loc}}(\mathbb{R}; \mathbb{R}^n)$ whose weak derivatives up to order $k$ belong to $L^p_{\textrm{loc}}(\mathbb{R}; \mathbb{R}^n)$.  It is relevant to note that $H^{k,p}_{\textrm{loc}}(\mathbb{R}; \mathbb{R}^n)$ is a metrizable topological vector space equipped with the following natural countable family of seminorms $\{ q_j\}_{j\in\mathbb{N}}$,
	\begin{equation}\nonumber
		q_j(u) := \Vert u\Vert _{H^{k,p}(B_j;\mathbb{R}^n)}, \qquad u\in H^{k,p}_{\textrm{loc}}(\mathbb{R}; \mathbb{R}^n), \; j\in\mathbb{N}.
	\end{equation}
	The spaces $H^{k,2}(I; \mathbb{R}^n)$ and $H^{k,2}_{loc}(\mathbb{R}; \mathbb{R}^n)$ are usually denoted by $H^{k}(I; \mathbb{R}^n)$ and $H^k_{loc}(\mathbb{R}; \mathbb{R}^n)$ respectively.
	\item[$\bullet$] We set $\mathcal H := H^2(\mathbb{R};\mathbb{R}^n)\times H^1(\mathbb{R};\mathbb{R}^n)$, $\mathcal H_{\textrm{loc}} := H^2_{\textrm{loc}}(\mathbb{R}; \mathbb{R}^n)\times H^1_{\textrm{loc}}(\mathbb{R};\mathbb{R}^n)$.
	\item[$\bullet$] To shorten the notation in calculation we set the following rules:
	\begin{itemize}
		\item  if the space where function is taking value, for example $\mathbb{R}^n$,  is clear then to save the space we will omit $\mathbb{R}^n$, for example $H^k(I)$ instead $H^k(I;\mathbb{R}^n)$;
		\item  if $I= (0,T) \textrm{ or } (-R,R) \textrm{ or } B(x,R)$, for some $T, R>0$ and $x \in \mathbb{R}$,  then instead of $L^p(I; \mathbb{R}^n)$ we write, respectively, $L^p(0,T; \mathbb{R}^n)$, $L^p(B_R;\mathbb{R}^n)$, $L^p(B(x,R); \mathbb{R}^n)$. Similarly for $H^k$ and $H_{\textrm{loc}}^k$ spaces.
		\item write $\mathcal{H}(B_R)$  or $\mathcal{H}_R$ for $H^2((-R,R); \mathbb{R}^n) \times H^1((-R,R);\mathbb{R}^n)$.
	\end{itemize}
	\item[$\bullet$] For any nonnegative integer $j$, let $\mathcal{C}^j(\mathbb{R})$ be the space of real valued continuous functions whose derivatives up to order $j$ are continuous on $\mathbb{R}$. We  also need the family of spaces $\mathcal{C}_b^j(\mathbb{R})$ defined by
	\begin{equation}\nonumber
		\mathcal{C}_b^j(\mathbb{R}) := \left\{ u \in  \mathcal{C}^j(\mathbb{R}); \forall \alpha \in \mathbb{N}, \alpha \leq j, \exists K_\alpha , \| D^j u\|_{L^\infty(\mathbb{R})} < K_\alpha  \right\}.
	\end{equation}
	\item[$\bullet$] Given $T>0$ and Banach space $E$, we denote by $\mathcal{C}([0,T]; E)$  the real Banach space of all $E$-valued continuous functions $u: [0,T] \to E$ endowed with the norm
	\begin{equation}\nonumber
		\| u \|_{\mathcal{C}([0,T];E)} := \sup_{t \in [0,T]} \| u(t) \|_E, \qquad u \in \mathcal{C}([0,T];E).
	\end{equation}
	By $\prescript{}{0}{\mathcal{C}}([0,T]; E) $ we mean the set of elements of $\mathcal{C}([0,T];E)$ vanishes at origin, that is,
	\begin{equation}\nonumber
	\prescript{}{0}{\mathcal{C}}([0,T]; E)  := \left\{ u \in \mathcal{C}([0,T]; E)  : u(0) =0 \right\}.
	\end{equation}
	\item[$\bullet$] For given metric space $(X,\rho)$, by $\mathcal{C}(\mathbb{R}; X)$ we mean the space of continuous functions from $\mathbb{R}$  to $X$ which is equipped with the metric
	\begin{equation}\nonumber
		(f,g) \mapsto \sum_{j=1}^{\infty} \frac{1}{2^j} \min\{ 1, \sup_{t \in [-j,j] } \rho(f(t), g(t)) \}.
	\end{equation}
	\item[$\bullet$] We denote the tangent and the normal bundle of  a smooth manifold $M$ by $TM$ and $NM$,  respectively.   Let $\mathfrak{F}(M)$ be the set of all smooth $\mathbb{R}$-valued function on $M$.
	\item[$\bullet$] A map $u: \mathbb{R} \to M$ belongs to $H_{loc}^k(\mathbb{R};M)$ provided that $\theta \circ u \in H_{loc}^k(\mathbb{R};\mathbb{R})$ for every $\theta \in \mathfrak{F}(M)$.  We equip $H_{loc}^k(\mathbb{R};M)$ with the topology induced by the mappings
	\begin{equation}\nonumber
	H_{loc}^k(\mathbb{R};M) \ni u \mapsto \theta \circ u \in H_{loc}^k(\mathbb{R};\mathbb{R}), \quad \theta \in \mathfrak{F}(M).
	\end{equation}
	Since the tangent bundle $TM$ of a manifold $M$ is also a manifold, this definition covers Sobolev spaces of $TM$-valued maps too.
	\item[$\bullet$]  By $\mathcal{L}(X,Y)$ we denote the space of all linear continuous operators from a topological vector space $X$ to $Y$. If $H_1, H_2$ are two separable Hilbert spaces then $\mathscr{L}_2\left(H_1,H_2\right)\subset\mathcal L\left( H_1,H_2\right)$ will denote the space of Hilbert–Schmidt operators acting from $H_1$ to $H_2$.
	\item[$\bullet$] We denote by $\mathcal{S}(\mathbb{R})$ the space of Schwartz functions on $\mathbb{R}$ and write $\mathcal S^\prime(\mathbb{R})$ for its dual, which is the space of tempered distributions on $\mathbb R$. By $L_{\varpi}^2$ we denote the weighted space $L^2(\mathbb{R}, \varpi dx)$, where $\varpi(x) := e^{-x^2}, x \in \mathbb{R}$, is an element of $\mathcal{S}(\mathbb{R})$. Let $H_{\varpi}^s(\mathbb{R}),	s \geq 0$, be the completion of $\mathcal{S}(\mathbb{R})$ with respect to the norm
	\begin{equation}\nonumber
		\| u\|_{H_{\varpi}^s(\mathbb{R})} := \left( \int_{\mathbb{R}}   (1+|x|^2)^s |\mathcal{F}(\varpi^{1/2} u)(x)|^2 \, dx  \right)^{\frac{1}{2}},
	\end{equation}
	where $\mathcal{F}$ denoted the Fourier transform.
\end{trivlist}

\section{Preliminaries}\label{sec:prelim}
In this section we discuss all the required preliminaries about the nonlinearity and the diffusion coefficient  that we need in Section \ref{sec:skeleton}.   We are following Sections 3 to 5 of \cite{Brz+Ondr_2007} very closely here. Below we use the notation $\mathcal{F}(\cdot)$, along with $\widehat{\cdot}$, to denote the Fourier transform.

\subsection{The Wiener process}
 Let $\mu$ be a symmetric Borel measure on $\mathbb{R}$. The random forcing we consider is in the form of a spatially homogeneous Wiener process on $\mathbb{R}$ with a spectral measure $\mu$ satisfying
\begin{equation}\label{measure}
	\int_{\mathbb R}(1+|x|^2)^2 \,\mu(dx)<\infty\,.
\end{equation}
An $\mathcal S^\prime(\mathbb{R})$-valued process $W = \{W(t), t \geq 0 \}$, on a given stochastic basis $(\Omega,\mathfrak{F}, (\mathfrak{F}_t)_{t \geq 0}, \mathbb{P})$, is called a spatially homogeneous Wiener process with spectral measure $\mu$ provided that
\begin{enumerate}
	\item for every $\varphi\in\mathcal{S}(\mathbb{R}) $,  $\{ W(t) (\varphi), t \geq 0\}$ is a real-valued $\left(\mathfrak F_t\right)$-adapted Wiener process,
	\item $W(t)(a\varphi+\psi)=aW(t)(\varphi)+W(t)(\psi)$ holds almost surely for every $t\geq 0$, $a\in\mathbb R$ and $\varphi,\psi\in\mathcal {S} (\mathbb{R})$,
\end{enumerate}
It is shown in \cite{Pes+Zab_1997} that the Reproducing Kernel Hilbert Space (RKHS) $H_\mu$ of the Gaussian distribution of $W(1)$ is given by
\begin{equation}\nonumber
	H_{\mu} := \left\{  \widehat{\psi \mu} :\psi \in L^2(\mathbb{R}^d, \mu, \mathbb{C}), \psi(x) = \overline{\psi(-x)}, x \in \mathbb{R}  \right\},
\end{equation}
where $L^2(\mathbb{R}^d, \mu, \mathbb{C})$ is the Banach space of complex-valued  functions that are square integrable with respect to the measure $\mu $. Note that $H_{\mu}$ endowed with inner-product
\begin{equation}\nonumber
	\left\langle  \widehat{\psi_1 \mu}, \widehat{\psi_2 \mu}  \right\rangle_{H_{\mu}} := \int_{\mathbb{R}} \psi_1 (x) \overline{\psi_2(x)} \, \mu(dx),
\end{equation}
is a Hilbert space.

Recall from \cite{Pes+Zab_1997,Pes+Zab_2000}  that $W$ can be regarded as a cylindrical Wiener process on $\rkhs$ and it takes values in any Hilbert space $E$, such that the embedding $\rkhs \hookrightarrow E$ is Hilbert-Schmidt. Since we explicitly know the structure of $\rkhs$, the next result, whose proof is  based on \cite[Lemma 2.2]{Peszat_2002} and discussion with Szymon Peszat \cite{Peszat_Oral} shows that assumption \eqref{measure} is equivalent to saying that the paths of $W$ belong to $\mathcal{C}([0,T];H_{\varpi}^2(\mathbb{R}))$.
\begin{Lemma}\label{example-W}
	Let us assume that the measure $\mu$ satisfies \eqref{measure}. Then the identity map from $\rkhs$ into $H_{\varpi}^2(\mathbb{R})$ is a Hilbert-Schmidt operator.
\end{Lemma}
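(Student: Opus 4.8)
The plan is to diagonalise the embedding and sum. Recall from the displayed formula for $H_\mu$ that $\psi\mapsto\widehat{\psi\mu}$ is an isometry from the real Hilbert space $\mathcal{K}:=\{\psi\in L^2(\mathbb{R},\mu,\mathbb{C}):\psi(x)=\overline{\psi(-x)}\}$, equipped with the inner product $\langle\psi_1,\psi_2\rangle=\int_{\mathbb{R}}\psi_1\overline{\psi_2}\,d\mu$ (which is real valued on $\mathcal{K}$ because $\mu$ is symmetric), onto $H_\mu$. Fix an orthonormal basis $(\psi_k)_{k\in\mathbb{N}}$ of $\mathcal{K}$ and put $e_k:=\widehat{\psi_k\mu}$, so that $(e_k)_{k\in\mathbb{N}}$ is an orthonormal basis of $H_\mu$; the assertion is then equivalent to $\sum_{k\in\mathbb{N}}\|e_k\|_{H_w^2(\mathbb{R})}^2<\infty$. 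Note two facts that will be used repeatedly: first, \eqref{measure} forces $\mu$ to be a finite measure, so every $\psi_k\mu$ is a finite complex measure, hence a tempered distribution whose Fourier transform $e_k$ is a bounded continuous function and $w^{1/2}e_k\in L^1(\mathbb{R})\cap L^2(\mathbb{R})$; second, since the numbers $\langle\psi_j,\psi_k\rangle$ are real, $(\psi_k)_{k\in\mathbb{N}}$ is an orthonormal system (a priori not complete) in the \emph{complex} Hilbert space $L^2(\mathbb{R},\mu,\mathbb{C})$, which is what will allow an appeal to Bessel's inequality.

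Next I would compute the Fourier transform of $w^{1/2}e_k$. Writing $g$ for the Fourier transform of $w^{1/2}(x)=e^{-x^2/2}$, which is again a Gaussian and in particular a Schwartz function, Fubini's theorem (applicable since $w^{1/2}\in L^1(\mathbb{R})$, $\mu$ is finite and $\psi_k\in L^1(\mathbb{R},\mu)$ by Cauchy--Schwarz) shows that, up to a fixed constant $c$ determined by the Fourier normalisation, $\mathcal{F}(w^{1/2}e_k)$ is of the form
\begin{equation}\nonumber
\mathcal{F}(w^{1/2}e_k)(x)=c\int_{\mathbb{R}}g(x-\xi)\,\psi_k(\xi)\,\mu(d\xi),\qquad x\in\mathbb{R}.
\end{equation}
For each fixed $x$, the function $\Phi_x:\xi\mapsto g(x-\xi)$ belongs to $L^2(\mathbb{R},\mu,\mathbb{C})$ because $g$ is bounded and $\mu$ is finite, and the integral above is, up to conjugation, the $k$-th Fourier coefficient of $\Phi_x$ relative to the orthonormal system $(\psi_k)$ in $L^2(\mathbb{R},\mu,\mathbb{C})$; hence Bessel's inequality yields
\begin{equation}\nonumber
\sum_{k\in\mathbb{N}}\Big|\int_{\mathbb{R}}g(x-\xi)\,\psi_k(\xi)\,\mu(d\xi)\Big|^2\le\int_{\mathbb{R}}|g(x-\xi)|^2\,\mu(d\xi),\qquad x\in\mathbb{R}.
\end{equation}

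Combining the two displays with Tonelli's theorem,
\begin{equation}\nonumber
\sum_{k\in\mathbb{N}}\|e_k\|_{H_w^2(\mathbb{R})}^2\le|c|^2\int_{\mathbb{R}}\Big(\int_{\mathbb{R}}(1+|x|^2)^2\,|g(x-\xi)|^2\,dx\Big)\,\mu(d\xi).
\end{equation}
In the inner integral I substitute $x=\xi+z$ and use the elementary inequality $1+|\xi+z|^2\le2(1+|\xi|^2)(1+|z|^2)$ to bound it by $4(1+|\xi|^2)^2\int_{\mathbb{R}}(1+|z|^2)^2|g(z)|^2\,dz$, the last integral being a finite constant since $g$ is a Schwartz function. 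Therefore
\begin{equation}\nonumber
\sum_{k\in\mathbb{N}}\|e_k\|_{H_w^2(\mathbb{R})}^2\lesssim\int_{\mathbb{R}}(1+|\xi|^2)^2\,\mu(d\xi)<\infty
\end{equation}
by \eqref{measure}, which is exactly the statement that the inclusion $H_\mu\hookrightarrow H_w^2(\mathbb{R})$ is Hilbert--Schmidt.

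The steps involving Bessel's inequality, Tonelli's theorem and the change of variables are routine once set up; the point that needs genuine care is the displayed identity for $\mathcal{F}(w^{1/2}e_k)$ — that is, that $e_k=\widehat{\psi_k\mu}$, which a priori is only a tempered distribution, becomes after multiplication by the Gaussian weight $w^{1/2}$ a function whose Fourier transform is given by the \emph{absolutely convergent} integral above rather than merely a distributional expression — together with the verification that the symmetry constraint in the definition of $H_\mu$ genuinely renders the $L^2(\mathbb{R},\mu,\mathbb{C})$ inner products of the $\psi_k$ real, so that $(\psi_k)$ is an orthonormal system there. Both of these reduce, as indicated, to the finiteness of $\mu$ guaranteed by \eqref{measure}.
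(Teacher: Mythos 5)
Your proof is correct and follows essentially the same route as the paper's: diagonalise via an orthonormal basis of the symmetric subspace of $L^2(\mathbb{R},\mu,\mathbb{C})$, write $\mathcal{F}(w^{1/2}\widehat{\psi_k\mu})$ as a convolution of the Gaussian $\mathcal{F}(w^{1/2})$ against $\psi_k\,d\mu$, apply Bessel's inequality in $L^2(\mu)$ for the $k$-sum, and finish with Tonelli and a translation bound to extract $\int(1+|\xi|^2)^2\,\mu(d\xi)$. Your write-up is a bit more careful than the paper's about justifying the absolutely convergent convolution formula and the applicability of Bessel, but the argument is identical in substance.
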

\begin{proof}[\textbf{Proof of Lemma \ref{example-W}}]
	To simplify the notation we set  $$L^2_{(s)}(\mathbb{R}, \mu):= \{ f \in L^2(\mathbb{R}, \mu; \mathbb{C}): f(x) = \overline{f(-x)}, \;\; \forall  x \in \mathbb{R}  \}. $$
	Let $\{e_k\}_{k \in \mathbb{N}} \subset \mathcal{S}(\mathbb{R})$ be an orthonormal basis of $L^2_{(s)}(\mathbb{R}, \mu)$. Then, by the definition of $\rkhs$, $\{\mathcal{F}(e_k \mu)\}_{k \in \mathbb{N}}$ is an orthonormal basis of $\rkhs$.
	Invoking the convolution property of the Fourier transform and the Bessel inequality,
	we obtain,
	\begin{align}
		\sum_{k=1}^{\infty} \| \widehat{e_k \mu} \|_{H_{\varpi}^{2}}^{2} & =  \sum_{k=1}^{\infty} \int_{\mathbb{R}} (1+ |x|^2 ) | \mathcal{F}\left(\varpi^{1/2} \mathcal{F}(e_k \mu)  \right)(x)|^2 \, dx \nonumber\\
		& =   \int_{\mathbb{R}} (1+ |x|^2 )^2 \left( \sum_{k=1}^{\infty}  |\mathcal{F}\left(\varpi^{1/2} \mathcal{F}(e_k \mu)  \right)(x)|^2 \right) \, dx \nonumber\\
		& =    \int_{\mathbb{R}} (1+ |x|^2 )^2 \left( \sum_{k=1}^{\infty}  \bigg|  \int_{\mathbb{R}} \mathcal{F}\left(\varpi^{1/2} \right)(x-z) e_k(z) \, \mu(dz)    \bigg|^2 \right) \, dx \nonumber\\
		& \leq   \int_{\mathbb{R}^2} (1+ |x|^2 )^2 |\mathcal{F}\left(\varpi^{1/2} \right)(x-z) |^2 \, \mu(dz)  \, dx \nonumber\\
		& =   \int_{\mathbb{R}^2} (1+ |x+z|^2 )^2  |\mathcal{F}\left(\varpi^{1/2} \right)(x) |^2 \, \mu(dz)  \, dx \nonumber\\
		 & \lesssim  \| \varpi^{1/2} \|_{H_{\varpi}^1(\mathbb{R})}^2  \int_{\mathbb{R}}  (1+ |z|^2 )^2  \, \mu(dz). \nonumber
	\end{align}
	Hence Lemma \ref{example-W}.
\end{proof}
It is relevant to note here that $H_{\varpi}^2(\mathbb{R})$ is a subset of $H_{\textrm{loc}}^2(\mathbb{R})$ and the embedding is continuous.

\begin{Remark}
	It is important to note that all the results of this paper are valid for any Wiener process which takes values in the space $H_{\varpi}^2(\mathbb{R})$ not just for the Wiener process which is space homogenous. However, in the case of  space homogeneity,  the solution process will be space homogeneous if the intial data is space homogeneous.
\end{Remark}

The next result, whose detailed proof can be found in \cite[Lemma 1]{Ondr_2004_mild}, plays very important role in deriving the required estimates for the terms involving diffusion coefficient.
\begin{Lemma}\label{hsop}
	If the measure $\mu$ satisfies \eqref{measure}, then $H_\mu$  is continuously embedded in $\mathcal{C}_b^2(\mathbb R)$.  Moreover, for given $g\in H^j(B(x,R); \mathbb{R}^n)$, where $x \in \mathbb{R}, R>0$ and $j\in\{0,1,2\}$, the multiplication operator
	$$H_\mu \ni \xi\mapsto g\cdot\xi\in H^j(B(x,R); \mathbb{R}^n), $$
	is Hilbert-Schmidt and  $\exists ~ c >0$, independent of $R$, $x$, $g$, $\xi$ and $j$, such that
	$$
	\Vert \xi\mapsto g\cdot\xi\Vert_{\mathscr L_2(H_\mu,H^j(B(x,R);\mathbb{R}^n)) } \leq c \| g \|_{H^j(B(x,R);\mathbb{R}^n)}.
	$$
\end{Lemma}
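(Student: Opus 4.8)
The plan is to use the explicit description of the RKHS $H_\mu$ recalled above, namely that every element is of the form $\widehat{\psi\mu}$ with $\psi\in L^2(\mathbb{R},\mu;\mathbb{C})$, $\psi(x)=\overline{\psi(-x)}$, and $\|\widehat{\psi\mu}\|_{H_\mu}=\|\psi\|_{L^2(\mathbb{R},\mu)}$. First I would prove the embedding $H_\mu\hookrightarrow\mathcal{C}_b^2(\mathbb{R})$. Writing $\widehat{\psi\mu}(y)=\int_{\mathbb{R}}e^{-\mathrm{i}xy}\psi(x)\,\mu(dx)$ and differentiating under the integral sign — which is legitimate because, by the Cauchy--Schwarz inequality and \eqref{measure}, each $x\mapsto x^m\psi(x)$ is $\mu$-integrable for $m\leq 2$ — one obtains $D^m\widehat{\psi\mu}(y)=\int_{\mathbb{R}}(-\mathrm{i}x)^m e^{-\mathrm{i}xy}\psi(x)\,\mu(dx)$, so that
\begin{equation}\nonumber
|D^m\widehat{\psi\mu}(y)|\leq\Big(\int_{\mathbb{R}}(1+|x|^2)^2\,\mu(dx)\Big)^{1/2}\|\psi\|_{L^2(\mathbb{R},\mu)}\lesssim\|\widehat{\psi\mu}\|_{H_\mu},\qquad m=0,1,2,
\end{equation}
uniformly in $y\in\mathbb{R}$; dominated convergence (using the same $\mu$-integrability) gives continuity of each $D^m\widehat{\psi\mu}$, whence the embedding, with constant $c_\mu$ depending only on $\mu$.

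The key consequence for the second assertion is a bound uniform in the base point: for $m\in\{0,1,2\}$ and $y\in\mathbb{R}$ the linear functional $H_\mu\ni\xi\mapsto D^m\xi(y)\in\mathbb{R}$ is bounded with norm $\leq c_\mu$ by the embedding just established, so its Riesz representative $v^{m,y}\in H_\mu$ satisfies $\|v^{m,y}\|_{H_\mu}\leq c_\mu$. Consequently, for any orthonormal basis $\{f_k\}_{k\in\mathbb{N}}$ of $H_\mu$ — for instance $f_k=\widehat{e_k\mu}$ with $\{e_k\}$ an orthonormal basis of $L^2_{(s)}(\mathbb{R},\mu)$, as in the proof of Lemma \ref{example-W} — Parseval's identity yields
\begin{equation}\nonumber
\sum_{k\in\mathbb{N}}|D^m f_k(y)|^2=\sum_{k\in\mathbb{N}}|\langle f_k,v^{m,y}\rangle_{H_\mu}|^2=\|v^{m,y}\|_{H_\mu}^2\leq c_\mu^2,\qquad m=0,1,2,
\end{equation}
for every $y\in\mathbb{R}$.

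The Hilbert--Schmidt estimate then follows by direct computation. Since each $f_k$ is scalar-valued, Leibniz's rule gives, for $0\leq l\leq j\leq 2$,
\begin{equation}\nonumber
|D^l(g\cdot f_k)(y)|^2\lesssim\sum_{p=0}^{l}|D^p g(y)|^2\,|D^{l-p}f_k(y)|^2;
\end{equation}
integrating over $B(x,R)$, summing over $k$, and using the uniform bound above (note $l-p\leq l\leq 2$), we get $\sum_{k}\int_{B(x,R)}|D^l(g\cdot f_k)|^2\lesssim c_\mu^2\sum_{p=0}^{l}\|D^p g\|_{L^2(B(x,R))}^2\lesssim\|g\|_{H^j(B(x,R))}^2$ with an absolute constant. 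Summing over $l=0,\dots,j$,
\begin{equation}\nonumber
\big\|\xi\mapsto g\cdot\xi\big\|_{\mathscr{L}_2(H_\mu,\,H^j(B(x,R);\mathbb{R}^n))}^2=\sum_{k\in\mathbb{N}}\|g\cdot f_k\|_{H^j(B(x,R);\mathbb{R}^n)}^2\lesssim_\mu\|g\|_{H^j(B(x,R);\mathbb{R}^n)}^2,
\end{equation}
where the implicit constant depends only on $\mu$ (the dependence on $j\in\{0,1,2\}$ being immaterial, as it merely bounds the number of summands), and in particular is independent of $R$, $x$, $g$ and $\xi$, which is the assertion.

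The only genuinely delicate point is the justification of differentiation under the integral sign in the first step, which rests entirely on the fourth-moment condition \eqref{measure}; once the uniform bound on $\sum_k|D^m f_k(y)|^2$ is in hand, the Hilbert--Schmidt estimate is routine bookkeeping with the Leibniz rule, so I do not anticipate any serious obstacle.
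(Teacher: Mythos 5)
The paper itself does not prove Lemma \ref{hsop}: immediately before the statement the authors defer to \cite[Lemma~1]{Ondrejat_2004a}, so there is no in-paper proof to compare against. Your argument is a correct, self-contained derivation. Its key point --- that for every $y\in\mathbb R$ and $m\in\{0,1,2\}$ the quantity $\sum_k|D^m f_k(y)|^2$ equals $\|v^{m,y}\|_{H_\mu}^2$ for the Riesz representative $v^{m,y}$ of the evaluation functional $\xi\mapsto D^m\xi(y)$, hence is bounded by $c_\mu^2$ uniformly in $y$ --- is precisely what makes the final Hilbert--Schmidt constant independent of $x$, $R$ and $g$; the remaining Leibniz/Tonelli bookkeeping is routine, as you say. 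One could obtain the same pointwise bound equivalently via the Bessel inequality for $\{e_k\}$ in $L^2(\mu)$, writing $D^m f_k(y)=\langle\phi_{m,y},e_k\rangle_{L^2(\mu)}$ with $\phi_{m,y}(\cdot)=\overline{(-\mathrm i\,\cdot)^m e^{-\mathrm i\cdot y}}$, which is closer in spirit to the computation the authors display in the proof of Lemma \ref{example-W}, but the two routes are interchangeable. Your justification of differentiation under the integral sign via the fourth-moment condition \eqref{measure} is adequate, and the restriction $j\le 2$ is indeed what gives uniformity in $j$. I see no gap.
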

\begin{Remark}
Note that the constant of inequality $c$ in Lemma \ref{hsop} does not depend on the size and position of the ball. However, if we consider a cylindrical Wiener process, then $c$ will also depend on the centre $x$ but will be bounded on bounded sets with respect to $x$.
\end{Remark}

\subsection{Extensions of non-linear term}
By definition $A_p:T_pM \times T_pM\to N_pM$, $p\in M$,  where $T_pM\subseteq\mathbb R^n$ and $N_pM\subseteq\mathbb R^n$ are the tangent and the normal vector spaces at $p\in M$, respectively. It is well known, see e.g. \cite{Hermann_1968},  that $A_p$, $p\in M$,  is a symmetric bilinear form.

Since we are following the approach of \cite{Brz+Caroll}, \cite{Brz+Ondr_2007}, and \cite{Ham_1975}, one of the main steps in the proof of the existence theorem is to consider the problem \eqref{SGWE-fundamental} in the ambient space $\mathbb{R}^n$ with an appropriate extension of $A$ from their domain to $\mathbb{R}^n$.  In this section we discuss two extensions of $A$ which work fine in the context of stochastic wave map, as displayed in \cite{Brz+Ondr_2007}.

Let us denote by $\mathcal{E}$ the exponential function
$$T \mathbb{R}^n \ni (p,\xi) \mapsto p + \xi \in \mathbb{R}^n, $$
relative to the Riemannian manifold $\mathbb{R}^n$ equipped with the standard Euclidean metric.  The proof of the following proposition about the existence of an open set $O$ containing $M$, which is called a tubular neighbourhood of $M$,  can be found in \cite[Proposition 7.26, p. 200]{ONeill_1983}.
\begin{Proposition}\label{diffexp}
	There exists an $\mathbb{R}^n$-open neighbourhood $O$ around $M$ and an $NM$-open neighbourhood $V$ around the set $\{(p,0)\in NM :p\in  NM \}$  such that the restriction of the exponential map $\mathcal{E}|_V:V\to O$ is a diffeomorphism. Moreover, the neighbourhood $V$ can be chosen in such a way that $(p,t\xi)\in V$ whenever $t \in [-1,1]$ and $(p,\xi)\in V$.
\end{Proposition}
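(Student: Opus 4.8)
The statement is the classical tubular neighbourhood theorem for the compact submanifold $M\subseteq\mathbb R^n$, and a complete account is in \cite{ONeill_1983}; the plan is to recover it from the inverse function theorem together with the compactness of $M$. First I would view $NM=\{(p,\xi):p\in M,\ \xi\in N_pM\}$ as a smooth $n$-dimensional submanifold of $\mathbb R^n\times\mathbb R^n$, observe that the Euclidean exponential map restricts to the smooth map $\mathcal E\colon NM\to\mathbb R^n$, $\mathcal E(p,\xi)=p+\xi$, and note that the zero section $M_0:=\{(p,0):p\in M\}$ is a compact submanifold of $NM$ which $\mathcal E$ carries onto $M$.

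Next I would prove that $\mathcal E$ is a local diffeomorphism at every point of $M_0$. At $(p,0)$ the tangent space $T_{(p,0)}NM$ splits canonically into a horizontal part identified by the bundle projection with $T_pM$ and a vertical part equal to $N_pM$; a direct computation gives $d\mathcal E_{(p,0)}(v,0)=v$ and $d\mathcal E_{(p,0)}(0,\eta)=\eta$ for $v\in T_pM$, $\eta\in N_pM$. Since $T_pM\oplus N_pM=\mathbb R^n$, the differential $d\mathcal E_{(p,0)}$ is a linear isomorphism, so by the inverse function theorem $\mathcal E$ maps a neighbourhood of $(p,0)$ diffeomorphically onto an open subset of $\mathbb R^n$. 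Working in the tubes $NM_\delta:=\{(p,\xi)\in NM:|\xi|<\delta\}$ and using the compactness of $M$ (so that bounded pieces of these tubes have compact closure), one then finds a single $\delta_1>0$ for which $\mathcal E$ is a local diffeomorphism on all of $NM_{\delta_1}$.

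Then I would upgrade this to global injectivity on a possibly smaller tube by a standard compactness argument: if $\mathcal E$ failed to be injective on $NM_{1/k}$ for every $k$, choose distinct $(p_k,\xi_k),(q_k,\eta_k)\in NM_{1/k}$ with $\mathcal E(p_k,\xi_k)=\mathcal E(q_k,\eta_k)$; passing to a subsequence with $p_k\to p$, $q_k\to q$ and using $|\xi_k|,|\eta_k|\to0$ forces $p=q$, so for large $k$ both points would lie in a neighbourhood of $(p,0)$ on which $\mathcal E$ is injective, a contradiction. Fixing such $\delta\le\delta_1$, I set $V:=NM_\delta$ and $O:=\mathcal E(V)$; because $\mathcal E$ is open near $M_0$, the set $O$ is open in $\mathbb R^n$ and contains $M=\mathcal E(M_0)$, and $\mathcal E|_V\colon V\to O$ is a bijective local diffeomorphism, hence a diffeomorphism. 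The final assertion is immediate for this choice of $V$: if $(p,\xi)\in V$ and $t\in[-1,1]$ then $|t\xi|=|t|\,|\xi|\le|\xi|<\delta$, so $(p,t\xi)\in V$.

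The step I expect to be the main obstacle is the passage from the pointwise statement (the inverse function theorem at each point of the zero section) to a uniform tube on which $\mathcal E$ is simultaneously a local diffeomorphism and globally injective; both parts rest essentially on the compactness of $M$, and the conclusion genuinely fails without it.
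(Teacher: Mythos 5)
Your proposal is correct and is essentially the standard proof of the tubular neighbourhood theorem for a compact submanifold of Euclidean space; the paper itself does not supply an independent proof but simply cites O'Neill \cite[Proposition 7.26]{ONeill_1983}, whose argument proceeds exactly along the lines you sketch: identify the differential of $\mathcal{E}$ at the zero section as the identity under the splitting $T_pM\oplus N_pM=\mathbb R^n$, invoke the inverse function theorem, use compactness of $M$ to pass to a uniform tube and to establish global injectivity by a subsequence argument, and take the tube $NM_\delta$ which is automatically stable under the scaling $(p,\xi)\mapsto(p,t\xi)$ for $t\in[-1,1]$. You have also implicitly and correctly read the paper's typographical slip in the statement (the set should be $\{(p,0)\in NM: p\in M\}$, i.e.\ the zero section) and handled it accordingly. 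No gaps.
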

In case of no ambiguity,  we will denote the diffeomorphism $\mathcal E|_V:V\to O$ by $\mathcal E$. By using the Proposition \ref{diffexp},  diffeomorphism $i:NM \ni (p,\xi ) \mapsto  (p,-\xi) \in NM$ and the standard argument of partition of unity, one can obtain a function $\h: \mathbb{R}^n \to \mathbb{R}^n$ which identifies the manifold $M$ as its fixed point set. In precise we have the following result.
\begin{Lemma}\cite[Corollary 3.4 and Remark 3.5]{Brz+Ondr_2007}\label{cor_inv}
	There exists a smooth compactly supported function $\h: \mathbb{R}^n \to \mathbb{R}^n$ which has the following properties:
	\begin{enumerate}
		\item  restriction of $\h$ on $O$ is a diffeomorpshim,
		\item  $\restr{\h}{O}= \mathcal{E} \circ i \circ \mathcal{E}^{-1}: O \to O$ is an involution on the tubular neighborhood $O$ of $M$,
		\item  $\h(\h(q))=q$ for every $q\in O$, \label{inv3}
		\item if $q\in O$, then $\h(q)=q$ if and only if $q\in M$,
		\item   if $p\in M$, then
		$$\h^\prime(p)\xi=
		\begin{cases}\xi, &
		\text{ provided } \xi\in T_pM,\cr
		-\xi &
		\text{ provided } \xi\in N_pM.
		\end{cases}
		\label{inv5}
		$$
	\end{enumerate}
\end{Lemma}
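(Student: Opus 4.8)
The plan is to construct $\h$ by transporting the geometric involution on the tubular neighbourhood $O$ to all of $\mathbb{R}^n$ using a cut-off function. First I would fix the tubular neighbourhood $O$ and the normal-bundle neighbourhood $V$ supplied by Proposition \ref{diffexp}, together with the diffeomorphism $\mathcal{E}\colon V\to O$ and the fibrewise antipodal map $i\colon NM\ni(p,\xi)\mapsto(p,-\xi)\in NM$. By the last clause of Proposition \ref{diffexp}, $V$ is stable under $(p,\xi)\mapsto(p,-\xi)$, so $i$ maps $V$ into $V$ and the composition
\begin{equation}\nonumber
\h_0 := \mathcal{E}\circ i\circ \mathcal{E}^{-1}\colon O\to O
\end{equation}
is well defined. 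It is a diffeomorphism of $O$ onto itself (as a composition of diffeomorphisms), and since $i$ is an involution on $V$, $\h_0$ is an involution on $O$, i.e. $\h_0\circ\h_0=\mathrm{id}_O$; this gives properties (1), (2) and (3) for the would-be restriction. The fixed-point set of $i$ in $V$ is exactly $\{(p,0):p\in M\}$, because $(p,\xi)=(p,-\xi)$ forces $\xi=0$; applying the diffeomorphism $\mathcal{E}$ and using $\mathcal{E}(p,0)=p$, the fixed-point set of $\h_0$ in $O$ is precisely $M$, which is property (4).

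Next I would compute the derivative of $\h_0$ at a point $p\in M$. At such a point $T_p\mathbb{R}^n$ splits orthogonally as $T_pM\oplus N_pM$, and under this splitting the differential of $\mathcal{E}$ at $(p,0)$ is the identity once one identifies $T_{(p,0)}(NM)$ with $T_pM\oplus N_pM$ in the canonical way (for the Euclidean exponential $\mathcal{E}(p,\xi)=p+\xi$ this is immediate). The differential of $i$ at $(p,0)$ acts as the identity on the horizontal part (tangent directions along $M$) and as $-\mathrm{id}$ on the vertical part (the fibre $N_pM$). Conjugating by $d\mathcal{E}$ therefore gives that $\h_0'(p)$ is the identity on $T_pM$ and $-\mathrm{id}$ on $N_pM$, which is property (5). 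This step is essentially the chain rule together with the standard description of the normal exponential map, so I would present it compactly.

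Finally, to globalise, I would choose $\chi\in\mathcal{C}^\infty_c(\mathbb{R}^n)$ with $0\le\chi\le 1$, $\chi\equiv 1$ on an open neighbourhood $O'$ of $M$ with $\overline{O'}\subset O$, and $\supp\chi\subset O$, and set
\begin{equation}\nonumber
\h(q) := q + \chi(q)\bigl(\h_0(q)-q\bigr),\qquad q\in O,\qquad \h(q):=q \text{ for } q\notin \supp\chi.
\end{equation}
Hmm — but this naive interpolation need not preserve the involution property on all of $O$, only on the region $\{\chi=1\}$. Since the statement only asserts that $\restr{\h}{O}$ is an involution on $O$, I should instead take the cut-off to equal $1$ on all of $O$ and $0$ outside a slightly larger set, i.e. pick $\chi\equiv 1$ on $O$, $\supp\chi$ contained in a slightly larger tubular neighbourhood $\widetilde O\supset O$ on which $\h_0$ still extends as an involutive diffeomorphism, and define $\h=\h_0$ on $\widetilde O$ via the cut-off and $\h=\mathrm{id}$ outside. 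The resulting $\h$ is smooth (the two pieces agree to all orders where $\chi$ transitions, since near $\partial(\supp\chi)$ we need $\h_0$ to already be close to the identity — this is arranged by shrinking), compactly supported, and restricts to $\h_0$ on $O$, so it inherits properties (1)–(5) from $\h_0$.

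The main obstacle is the smooth globalisation: one must ensure that the extension of $\h_0$ beyond $O$ patches smoothly with the identity while keeping the involution property on the prescribed neighbourhood $O$. The clean way to handle this — and the route I would take — is to enlarge the tubular neighbourhood at the outset: apply Proposition \ref{diffexp} to obtain a tubular neighbourhood strictly larger than the $O$ in the statement, build $\h_0$ there, and then cut off $\h_0-\mathrm{id}$ by a function that is $\equiv 1$ on $O$. Near the outer boundary the normal fibre coordinate is bounded away from $0$ on $M$ in the reference $O$, so one has genuine room; alternatively one rescales the normal neighbourhood $V$ using the scaling stability $(p,t\xi)\in V$ from Proposition \ref{diffexp}. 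Since this is the content of \cite[Corollary 3.4 and Remark 3.5]{ZB+Ondrejat_2007}, I would carry out the interpolation argument explicitly only to the extent of verifying smoothness and the five listed properties, citing the source for the standard partition-of-unity bookkeeping.
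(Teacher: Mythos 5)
Your construction follows the same route the paper has in mind: the lemma is cited (from Corollary~3.4 and Remark~3.5 of the Brze\'zniak--Ondrej\'at paper), and the paragraph preceding it in the present paper describes exactly the recipe you use, namely $\h_0=\mathcal{E}\circ i\circ\mathcal{E}^{-1}$ on a tubular neighbourhood plus a cut-off.  Your verification of the involution property (using stability of $V$ under $t\mapsto -t$), the identification of the fixed-point set with $M$, and the computation of $d\h_0(p)$ via the chain rule through $d\mathcal{E}_{(p,0)}$ and $di_{(p,0)}$ are all correct.  Two points, though, need attention.

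First, your globalisation does not actually produce a \emph{compactly supported} $\h$.  Cutting off $\h_0-\mathrm{id}$ by $\chi$ and setting $\h=\mathrm{id}$ outside $\supp\chi$ yields a map equal to the identity, not to $0$, near infinity, so $\supp\h=\mathbb{R}^n$.  The Lemma (and its later use, e.g.\ in the proof of Lemma~\ref{lem-lip} where it is invoked that ``$\h$ is smooth and has compact support'') means compact support in the ordinary sense.  The fix is to cut off $\h_0$ itself rather than $\h_0-\mathrm{id}$: build $\h_0$ on an enlarged tubular neighbourhood $\widetilde O\supset\overline{O}$, choose $\chi\in\mathcal{C}^\infty_c(\widetilde O)$ with $\chi\equiv 1$ on a neighbourhood of $\overline{O}$, and set $\h:=\chi\,\h_0$ on $\widetilde O$ and $\h:=0$ outside $\supp\chi$.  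Since $\chi\equiv 1$ and $\chi'\equiv 0$ on $O$, properties (1)--(5) still hold on $O$ and at points of $M$, and now $\supp\h$ is compact.

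Second, the worry about the two pieces ``agreeing to all orders'' and $\h_0$ needing to be ``close to the identity'' near the transition is unfounded.  With $\chi$ supported in $\widetilde O$ and $\h_0$ smooth on $\widetilde O$, the product $\chi\h_0$ vanishes identically on a neighbourhood of $\partial(\supp\chi)$, so the glued map is smooth for free; no asymptotic matching or shrinking argument is needed.  What is essential, and what you correctly identify, is simply that $O$ be enlarged to $\widetilde O$ before the cut-off is introduced, which the scaling stability $(p,t\xi)\in V$ from Proposition~\ref{diffexp} makes available.
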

The following result is the first extension of the second fundamental form that we use in this paper.
\begin{Proposition}\cite[Proposition 3.6]{Brz+Ondr_2007}\label{sft}
	If we define
	\begin{equation}\label{Boper}
	B_q(a,b)=\sum_{i,j=1}^n \frac{\partial^2 \h}{\partial q_i\partial q_j}(q)a_ib_j= \h^{\prime\prime}_q(a,b),\qquad q\in\mathbb R^n,\quad a,b\in\mathbb R^n,
	\end{equation}
	and
	\begin{equation}\label{exta}
		\mathcal A_q(a,b)=\frac{1}{2}B_{\h(q)}(\h^\prime(q)a, \h^\prime(q)b),\qquad q\in\mathbb R^n,\quad a,b\in\mathbb R^n,
	\end{equation}
	then, for every $p \in M$,
	\begin{equation}\nonumber
	\mathcal A_p(\xi,\eta)=A_p(\xi,\eta), \;\xi, \eta \in T_pM,
	\end{equation}
	and
	\begin{equation}\label{eqn-invariance-A}
	\mathcal A_{\h(q)}(\h^\prime(q)a, \h^\prime(q)b) = \h^\prime(q)\mathcal
	A_q(a,b)+B_q(a,b), \; q\in O, \,a,b\in\mathbb R^n.
	\end{equation}
\end{Proposition}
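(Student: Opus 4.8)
The statement has two assertions, and the plan is to handle them by different means: the invariance identity \eqref{eqn-invariance-A} is obtained purely by differentiating the involution property of $\h$, while the agreement $\mathcal A_p=A_p$ on $M$ is checked pointwise with the help of the Gauss formula. Throughout I would use only Lemma \ref{cor_inv} and the definitions \eqref{Boper}, \eqref{exta}.

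For \eqref{eqn-invariance-A}, I would start from the fact that $\h\circ\h=\mathrm{id}$ on $O$ (Lemma \ref{cor_inv}). Differentiating once gives $\h^\prime(\h(q))\,\h^\prime(q)=I$ for $q\in O$, and, since these are $n\times n$ matrices, also $\h^\prime(q)\,\h^\prime(\h(q))=I$ (substitute $\h(q)$ for $q$ and use $\h(\h(q))=q$). Differentiating $\h\circ\h=\mathrm{id}$ a second time — that is, writing out the Hessian of the composition while keeping careful track of the base point of $\h^{\prime\prime}$ and of the vectors fed into it — I expect to obtain
\begin{equation}\nonumber
B_{\h(q)}(\h^\prime(q)a,\h^\prime(q)b)=-\,\h^\prime(\h(q))\,B_q(a,b),\qquad a,b\in\mathbb R^n,
\end{equation}
which, by \eqref{exta}, says $2\,\mathcal A_q(a,b)=-\,\h^\prime(\h(q))\,B_q(a,b)$. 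Substituting this expression for $\mathcal A$ on both sides of \eqref{eqn-invariance-A} and using the two first-derivative relations together with $\h(\h(q))=q$, I expect each side to reduce to $\tfrac12 B_q(a,b)$, which proves \eqref{eqn-invariance-A}.

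For the first identity, fix $p\in M$. By Lemma \ref{cor_inv} we have $\h(p)=p$ and $\h^\prime(p)$ restricts to the identity on $T_pM$, so for $\xi,\eta\in T_pM$ the definition \eqref{exta} collapses to $\mathcal A_p(\xi,\eta)=\tfrac12 B_p(\xi,\eta)=\tfrac12\h^{\prime\prime}_p(\xi,\eta)$, and it only remains to identify $\tfrac12\h^{\prime\prime}_p$ with $A_p$ on $T_pM$. To do this I would pick a smooth curve $c\colon(-\delta,\delta)\to M$ with $c(0)=p$ and $\dot c(0)=\xi$; since $c(t)\in M$ we have $\h(c(t))=c(t)$, and differentiating this twice at $t=0$ gives $\h^{\prime\prime}_p(\xi,\xi)=(I-\h^\prime(p))\ddot c(0)$. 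Splitting $\ddot c(0)$ into its components in $T_pM$ and $N_pM$ and using $\h^\prime(p)=\mathrm{id}$ on $T_pM$, $\h^\prime(p)=-\mathrm{id}$ on $N_pM$, the tangential part is annihilated and the normal part is doubled, so $\h^{\prime\prime}_p(\xi,\xi)=2\,(\ddot c(0))^{N}$; by the Gauss formula $(\ddot c(0))^{N}=A_p(\xi,\xi)$. Hence $\mathcal A_p(\xi,\xi)=A_p(\xi,\xi)$, and since $\mathcal A_p$ and $A_p$ are both symmetric bilinear forms on $T_pM$, polarization yields $\mathcal A_p(\xi,\eta)=A_p(\xi,\eta)$ for all $\xi,\eta\in T_pM$.

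The only genuinely delicate point is the second differentiation of $\h\circ\h=\mathrm{id}$ in the first step: one has to expand the Hessian of a composition correctly and then carry out the short but error-prone linear algebra that collapses \eqref{eqn-invariance-A} to a tautology. Everything else is routine — in particular the curve computation and the appeal to the Gauss equation, which is precisely the place where the geometric content, namely that $\restr{\h}{O}$ reflects $O$ across $M$ along the normal directions (Lemma \ref{cor_inv}), enters.
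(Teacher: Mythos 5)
Your proof is correct, and both steps carry out cleanly. The computation that differentiating $\h\circ\h=\mathrm{id}$ twice yields $B_{\h(q)}(\h'(q)a,\h'(q)b)=-\h'(\h(q))B_q(a,b)$ is right (the Hessian of a composition produces exactly two terms, the first absorbed into the left-hand side), and together with the first-derivative identities $\h'(\h(q))\h'(q)=\h'(q)\h'(\h(q))=I$ this reduces both sides of \eqref{eqn-invariance-A} to $\tfrac12 B_q(a,b)$. For the identification on $M$ your curve computation $\h''_p(\xi,\xi)=(I-\h'(p))\ddot c(0)=2\ddot c(0)^N=2A_p(\xi,\xi)$, followed by polarization, is the standard argument. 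The paper does not reproduce a proof here — it only cites \cite[Proposition 3.6]{ZB+Ondrejat_2007} — so there is nothing in the text to compare against, but your route is the natural one and complete.
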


Along with the extension $\mathcal A$, defined by formula (\ref{exta}), we also need the extension $\mathscr A$, defined by formula \eqref{nexta}, of the second fundamental form tensor $A$ which will be perpendicular to the tangent space.

\begin{Proposition}\cite[Proposition 3.7]{Brz+Ondr_2007}\label{nextapro}
	Consider the function
	$$ \mathscr A: \mathbb{R}^n \times \mathbb{R}^n \times \mathbb{R}^n \ni (q,a,b) \mapsto \mathscr{A}_q(a,b)  \in \mathbb{R}^n,$$
	defined by formula
	\begin{equation}\label{nexta}
		\mathscr A_q(a,b)=\sum_{i,j=1}^n a_iv_{ij}(q)b_j=A_q(\pi_q(a),\pi_q(b)),\qquad q\in\mathbb R^n,\quad 	a\in\mathbb R^n,\quad b\in\mathbb R^n,
	\end{equation}
	where $\pi_p$, $p\in M$ is the orthogonal projection of $\mathbb R^n$ to $T_pM$, and $v_{ij}$, for $i,j\in\{1,\dots,n\}$, are smooth and symmetric (i.e. $v_{ij} = v_{ji}$) extensions of  $v_{ij}(p): = A_p(\pi_pe_i,\pi_pe_j)$ to ambient space $\mathbb{R}^n$.
	Then $\mathscr{A}$ satisfies the following:
	\begin{enumerate}
		\item  $\mathscr{A}$ is smooth in $(q,a,b)$ and symmetric in $(a,b)$ for every $q$,
		\item  $\mathscr A_p(\xi,\eta)=A_p(\xi,\eta)$ for every $p\in M$, $\xi,\eta\in T_pM$,
		\item  $\mathscr A_p(a,b)$ is perpendicular 	to $T_pM$ for every $p\in M$, $a,b\in\mathbb R^n$.
	\end{enumerate}
\end{Proposition}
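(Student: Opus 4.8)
The plan is to construct $\mathscr A$ in two stages: first produce the smooth symmetric coefficient functions $v_{ij}$, and then read off the three asserted properties directly from the defining formula \eqref{nexta}.

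First I would check that for each pair $i,j$ the map $M\ni p\mapsto A_p(\pi_pe_i,\pi_pe_j)\in\mathbb R^n$ is smooth. This holds because the embedding $M\hookrightarrow\mathbb R^n$ is smooth, so $p\mapsto\pi_p\in\mathcal L(\mathbb R^n,\mathbb R^n)$ (the orthogonal projection onto $T_pM$) is smooth, the second fundamental form $A_p$ depends smoothly on $p\in M$, and a composition of smooth maps is smooth. Since each $A_p$ is a symmetric bilinear form we have $A_p(\pi_pe_i,\pi_pe_j)=A_p(\pi_pe_j,\pi_pe_i)$ on $M$, i.e.\ these boundary data are already symmetric in $(i,j)$. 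Because $M$ is a compact smooth submanifold of $\mathbb R^n$, each of these maps extends to a smooth compactly supported map $\tilde v_{ij}\colon\mathbb R^n\to\mathbb R^n$ (for instance, transport the datum along the normal projection of a tubular neighbourhood of $M$ and multiply by a bump function, or invoke Whitney's extension theorem). Replacing $\tilde v_{ij}$ by $\frac12(\tilde v_{ij}+\tilde v_{ji})$ yields smooth compactly supported functions $v_{ij}=v_{ji}$ on $\mathbb R^n$ that still satisfy $v_{ij}(p)=A_p(\pi_pe_i,\pi_pe_j)$ for all $p\in M$, precisely because the original data were symmetric there.

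With the $v_{ij}$ fixed, set $\mathscr A_q(a,b):=\sum_{i,j=1}^n a_iv_{ij}(q)b_j$. Property~(1) is then immediate: $\mathscr A$ is affine (degree one) in each of $a$ and $b$ with coefficients $v_{ij}$ smooth in $q$, hence jointly smooth, and $v_{ij}=v_{ji}$ gives symmetry in $(a,b)$. For $p\in M$ the second expression in \eqref{nexta} is a one-line computation: writing $a=\sum_i a_ie_i$, $b=\sum_j b_je_j$ and using linearity of $\pi_p$ together with bilinearity of $A_p$,
\begin{equation}\nonumber
\mathscr A_p(a,b)=\sum_{i,j} a_ib_j\,A_p(\pi_pe_i,\pi_pe_j)=A_p\Big(\sum_i a_i\pi_pe_i,\sum_j b_j\pi_pe_j\Big)=A_p(\pi_pa,\pi_pb).
\end{equation}

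From this identity the remaining two properties drop out. For property~(2), if $p\in M$ and $\xi,\eta\in T_pM$ then $\pi_p\xi=\xi$ and $\pi_p\eta=\eta$, so $\mathscr A_p(\xi,\eta)=A_p(\xi,\eta)$. For property~(3), for arbitrary $a,b\in\mathbb R^n$ we have $\pi_pa,\pi_pb\in T_pM$, hence $\mathscr A_p(a,b)=A_p(\pi_pa,\pi_pb)\in N_pM$, since the second fundamental form takes values in the normal space $N_pM$; this is exactly the claim that $\mathscr A_p(a,b)\perp T_pM$. I do not expect a genuine obstacle here: the only step carrying any content is the smooth symmetric extension of the $v_{ij}$, which is standard for a compact embedded submanifold, and the symmetrization is what reconciles the requirement $v_{ij}=v_{ji}$ with the prescribed values on $M$.
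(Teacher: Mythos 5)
Your argument is correct and is the expected one. The paper itself does not reproduce a proof here — it delegates to \cite[Proposition~3.7]{ZB+Ondrejat_2007} — but your reasoning is exactly what that reference carries out: smoothness of $p\mapsto\pi_p$ and of the second fundamental form give smoothness of the boundary data $v_{ij}(p)=A_p(\pi_pe_i,\pi_pe_j)$ on $M$; the symmetry $v_{ij}=v_{ji}$ on $M$ comes from symmetry of $A_p$; a tubular-neighbourhood (or Whitney) extension followed by symmetrisation produces the coefficients, and the three properties then read off from the formula, with (2) using $\pi_p|_{T_pM}=\mathrm{id}$ and (3) using that $A_p$ is $N_pM$-valued. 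One point worth stressing explicitly, which you handle implicitly but could make more prominent: the second equality in \eqref{nexta}, $\sum_{i,j}a_iv_{ij}(q)b_j=A_q(\pi_q a,\pi_q b)$, is only meaningful for $q\in M$ (since $\pi_q$ and $A_q$ are not defined off $M$), and your derivation of it for $p\in M$ via bilinearity is precisely the step that links the algebraic definition of $\mathscr A$ to the geometric properties (2) and (3).
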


\subsection{The $C_0$-group and the extension operators}\label{sec:semigroup}
Here we recall some facts on infinitesimal generators of the linear wave equation and on the extension operators in various  Sobolev spaces. Refer \cite[Section 5]{Brz+Ondr_2007} for details.
\begin{Proposition}\label{gr}
	Assume that $k,n \in \mathbb{N}$. The one parameter family of operators defined by
	$$
	S_t\left(\begin{array}{c}u\\v\end{array}\right)=\left(\begin{array}{ccc}
	\cos[t(-\Delta)^{1/2}]u^1&+&(-\Delta)^{-1/2}\sin[t(-\Delta)^{1/2}]v^1
	\\
	&\vdots&
	\\
	\cos[t(-\Delta)^{1/2}]u^n&+&(-\Delta)^{-1/2}\sin[t(-\Delta)^{1/2}]v^n
	\\
	-(-\Delta)^{1/2}\sin[t(-\Delta)^{1/2}]u^1&+&\cos[t(-\Delta)^{1/2}]v^1
	\\
	&\vdots&
	\\
	-(-\Delta)^{1/2}\sin[t(-\Delta)^{1/2}]u^n &+&\cos[t(-\Delta)^{1/2}]v^n
	\end{array}\right)
	$$
	is a $C_0$-group on
	$$ \mathcal H^k: =H^{k+1}(\mathbb R;\mathbb R^n)\times H^{k}(\mathbb R;\mathbb R^n), $$
	and its infinitesimal generator is an operator $\mathcal G^k=\mathcal G$ defined by
	\begin{eqnarray*}
		D(\mathcal G^k)&=&H^{k+2}(\mathbb R;\mathbb R^n) \times H^{k+1}(\mathbb R;\mathbb R^n) ,\\
		\mathcal G\left(\begin{array}{c}u\\v\end{array}\right)&=&
		\left(\begin{array}{c}
			v
			\\
			\Delta u
		\end{array}\right).
	\end{eqnarray*}
\end{Proposition}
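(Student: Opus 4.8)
The statement to prove is Proposition \ref{gr}: the family $\{S_t\}$ defined by the explicit matrix of Fourier multipliers is a $C_0$-group on $\mathcal H^k = H^{k+1}(\mathbb R;\mathbb R^n)\times H^k(\mathbb R;\mathbb R^n)$ with the stated generator.

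My plan: This is a classical fact about the wave equation, and the natural tool is the Fourier transform, which diagonalises everything. The plan is to work component-wise (the $n$ scalar components decouple completely) and reduce to the scalar wave equation on $\mathbb R$. First I would apply the Fourier transform $\mathcal F$ on $\mathbb R$, which is an isometric isomorphism carrying $H^s(\mathbb R;\mathbb R^n)$ onto the weighted space $L^2(\mathbb R,(1+|\xi|^2)^s\,d\xi;\mathbb C^n)$; under $\mathcal F$ the operator $(-\Delta)^{1/2}$ becomes multiplication by $|\xi|$, so $S_t$ becomes multiplication by the $2\times 2$ (blockwise, per component) matrix
\[
\widehat S_t(\xi)=\begin{pmatrix} \cos(t|\xi|) & |\xi|^{-1}\sin(t|\xi|)\\ -|\xi|\sin(t|\xi|) & \cos(t|\xi|)\end{pmatrix}.
\]
Note $|\xi|^{-1}\sin(t|\xi|)$ extends to an entire function of $\xi$ (equal to $t$ at $\xi=0$), so there is no genuine singularity at the origin. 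One checks directly that $\widehat S_t(\xi)\widehat S_s(\xi)=\widehat S_{t+s}(\xi)$ (trigonometric addition formulae) and $\widehat S_0(\xi)=I$, giving the group property. For strong continuity, I would show $\|\widehat S_t(\xi)\|$ is bounded uniformly in $t$ on compact $t$-intervals and in $\xi$ — more precisely $\|\widehat S_t(\xi)v\|^2$ compared to the weighted norm: the natural observation is that with the weight $(1+|\xi|^2)$ built into the $\mathcal H^k$ norm, the quantity $(1+|\xi|^2)^{k+1}|\widehat u(\xi)|^2+(1+|\xi|^2)^k|\widehat v(\xi)|^2$ is, up to a constant, equivalent to the "energy" $(1+|\xi|^2)^k\big(|\xi|^2+1)|\widehat u|^2+|\widehat v|^2\big)$, and the map $(|\xi|\widehat u,\widehat v)\mapsto(\cos\,\cdot\,,\sin\,\cdot)$-rotation is a genuine rotation in $\mathbb C^2$ hence an isometry of $|\xi|^2|\widehat u|^2+|\widehat v|^2$. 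That gives boundedness of $S_t$ (a bound like $\|S_t\|\le \sqrt 2$ or so, in fact isometry up to equivalence of norms), and then strong continuity $\|S_tx-x\|\to 0$ as $t\to0$ follows from dominated convergence in the Fourier integral, since $\widehat S_t(\xi)\to I$ pointwise and the integrands are dominated by an integrable function.

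Having established the $C_0$-group, I would identify the generator. Let $\mathcal G$ be the operator with domain $H^{k+2}\times H^{k+1}$ and action $(u,v)\mapsto(v,\Delta u)$. On the Fourier side this is multiplication by $\begin{pmatrix}0&1\\-|\xi|^2&0\end{pmatrix}$, which is exactly $\frac{d}{dt}\widehat S_t(\xi)\big|_{t=0}$. For $(u,v)$ in this domain one checks $\frac1t(S_t(u,v)-(u,v))\to \mathcal G(u,v)$ in $\mathcal H^k$ by dominated convergence — the difference quotient of $\widehat S_t(\xi)$ converges pointwise to the matrix above and is dominated (using $|\cos(t|\xi|)-1|\le \tfrac12 t^2|\xi|^2$, $|\sin(t|\xi|)-t|\xi||\le \tfrac16 t^3|\xi|^3$, etc.) by an $L^2$-with-weight function precisely because two extra derivatives are available. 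So $\mathcal G\subseteq A$, where $A$ is the true generator. For the reverse inclusion, since $A$ is a generator it suffices to show that some $\lambda$ is in the resolvent set of $\mathcal G$ (e.g. by exhibiting $(\lambda-\mathcal G)^{-1}$ explicitly via the Fourier-multiplier $(\lambda I-\begin{pmatrix}0&1\\-|\xi|^2&0\end{pmatrix})^{-1} = \frac{1}{\lambda^2+|\xi|^2}\begin{pmatrix}\lambda&1\\-|\xi|^2&\lambda\end{pmatrix}$ for $\lambda$ real nonzero, which is a bounded multiplier mapping $\mathcal H^k$ into $H^{k+2}\times H^{k+1}$); since both $\lambda-\mathcal G$ and $\lambda-A$ are bijections from their domains onto $\mathcal H^k$ and $\mathcal G\subseteq A$, they must coincide. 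Alternatively one invokes the standard lemma that a closed operator which is an extension of a generator and has nonempty resolvent set equal to that operator.

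The main obstacle is really just bookkeeping: handling the apparent singularity of $|\xi|^{-1}\sin(t|\xi|)$ and $(-\Delta)^{-1/2}\sin[t(-\Delta)^{1/2}]$ at $\xi=0$ (harmless once one notes it is an entire symbol with a $t$-dependent but locally bounded $L^\infty$ bound), and getting the norm equivalences on $\mathcal H^k$ right so that the group is bounded with constants uniform on compact time intervals. There is nothing deep — everything is diagonal in Fourier space — but one must be careful that all estimates are uniform in $\xi\in\mathbb R$ and that the domination arguments for strong continuity and for the generator limit use exactly the available Sobolev regularity (no loss, no gain). I expect to present this concisely by citing \cite{ZB+Ondrejat_2007} for the details while recording the Fourier-side computation that makes each claim transparent.
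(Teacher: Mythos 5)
The paper itself does not prove Proposition \ref{gr}; it simply records the statement and cites Section 5 of \cite{ZB+Ondrejat_2007} for details. Your Fourier-multiplier argument is the standard route and is what the cited reference does in essence: diagonalise everything via $\mathcal F$, verify the group law from trigonometric addition, get strong continuity by dominated convergence, identify the generator on the dense core $H^{k+2}\times H^{k+1}$ via the derivative of the symbol, and close the argument with the explicit resolvent symbol $(\lambda^2+|\xi|^2)^{-1}\left(\begin{smallmatrix}\lambda & 1\\ -|\xi|^2 & \lambda\end{smallmatrix}\right)$. All of that is sound.

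One genuine slip worth flagging: you claim $S_t$ is \enquote{an isometry up to equivalence of norms}, because the rotation preserves $|\xi|^2|\widehat u|^2+|\widehat v|^2$. But that \emph{homogeneous} energy is not equivalent to the quantity $(1+|\xi|^2)|\widehat u|^2+|\widehat v|^2$ appearing in the $\mathcal H^k$-norm: they differ at $\xi=0$. In fact $\{S_t\}$ is not uniformly bounded on $\mathcal H^k$. Taking $u=0$ and $\widehat v=\mathbbm 1_{[\epsilon/2,\epsilon]}$ with $\epsilon\to 0$, the first component of $S_t(0,v)$ is $\frac{\sin(t|\xi|)}{|\xi|}\widehat v\approx t\,\widehat v$ on the support, so $\|S_t(0,v)\|_{\mathcal H^k}/\|(0,v)\|_{\mathcal H^k}\to |t|$. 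So the correct bound is $\|S_t\|_{\mathcal L(\mathcal H^k)}\le C(1+|t|)$ (polynomial growth), which is still of the form $Me^{\omega|t|}$ and therefore perfectly compatible with the Hille--Yosida framework you invoke; it just means the energy identity you gesture at controls the homogeneous part of the norm only, and the $|\widehat u|^2$-contribution near $\xi=0$ must be estimated separately by $\big|\frac{\sin(t|\xi|)}{|\xi|}\big|\le |t|$. With that adjustment the rest of your argument goes through.
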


The following theorem is well known, see e.g. \cite{LM_1972} and \cite[Section II.5.4]{Evans_1998}.
\begin{Proposition}\label{prop-extension}
	Let $k\in\mathbb{N}$. There exists a linear bounded operator
	$$E^k:H^k((-1,1); \mathbb{R}^n ) \to H^k(\mathbb R; \mathbb{R}^n),$$
	such that
	\begin{trivlist}
		\item[(i)] $E^kf=f$ almost everywhere on $(-1,1)$ whenever $f\in H^k((-1,1); \mathbb{R}^n)$,
		\item[(ii)] $E^kf$ vanishes outside of $(-2,2)$ whenever $f\in H^k((-1,1); \mathbb{R}^n)$,
		\item[(iii)] $E^kf\in \mathcal{C}^k(\mathbb R;  \mathbb{R}^n))$, if $f\in \mathcal{C}^k([-1,1];  \mathbb{R}^n))$,
		\item[(iv)] if $j\in\mathbb{N}$ and  $j<k$, then  there exists a unique   extension of $E^k$  to  a bounded linear operator from $H^j((-1,1); \mathbb{R}^n)$    to $H^j(\mathbb R; \mathbb{R}^n)$.
	\end{trivlist}
\end{Proposition}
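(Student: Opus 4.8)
The plan is to use the classical (higher-order) reflection construction for Sobolev extensions, adapted to the bounded interval $(-1,1)$ so that the support of the extension stays inside $(-2,2)$. Since $H^k(\,\cdot\,;\mathbb{R}^n)$ is the $n$-fold Cartesian power of $H^k(\,\cdot\,;\mathbb{R})$ and the operator will act coordinatewise, it suffices to treat the scalar case; the case $k=0$ is essentially trivial (pure $L^2$ even reflection), so I shall assume $k\ge 1$.

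First I would fix coefficients $c_1,\dots,c_{k+1}\in\mathbb{R}$ solving $\sum_{j=1}^{k+1}c_j(-j)^m=1$ for $m=0,1,\dots,k$; this system has a unique solution because its coefficient matrix is a Vandermonde matrix on the pairwise distinct nodes $-1,-2,\dots,-(k+1)$. Then I would choose $\eta\in\bigl(0,\min\{1,\,2/(k+1)\}\bigr)$ together with a fixed cut-off $\chi\in\mathcal{C}^\infty_c\bigl((-1-\eta,1+\eta)\bigr)$ with $\chi\equiv 1$ on $[-1,1]$. For $f$ defined on $(-1,1)$ I would introduce the ``raw reflection'' $Rf$ on $(-1-\eta,1+\eta)$ by setting $Rf=f$ on $(-1,1)$, $(Rf)(x)=\sum_{j=1}^{k+1}c_j\,f\bigl(1-j(x-1)\bigr)$ for $x\in[1,1+\eta)$, and $(Rf)(x)=\sum_{j=1}^{k+1}c_j\,f\bigl(-1-j(x+1)\bigr)$ for $x\in(-1-\eta,-1]$; the restriction $\eta<2/(k+1)$ guarantees that all reflected arguments $1-j(x-1)$ (resp. $-1-j(x+1)$) stay inside $(-1,1)$, so $Rf$ is well defined. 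Finally I would set $E^kf:=\chi\cdot(Rf)$, extended by $0$ to all of $\mathbb{R}$ (coordinatewise in the $\mathbb{R}^n$-valued case).

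The remaining verification is essentially bookkeeping. Using the one-dimensional embedding $H^k((-1,1))\hookrightarrow\mathcal{C}^{k-1}([-1,1])$ the boundary values $D^mf(\pm1)$ exist for $m\le k-1$, and the Vandermonde identities yield $\lim_{x\to 1^-}D^m(Rf)=\lim_{x\to 1^+}D^m(Rf)$ for all $m\le k$, and likewise at $-1$; hence $D^{k-1}(Rf)$ is continuous across $\pm1$ while $D^k(Rf)\in L^2$ on each subinterval, so $Rf\in H^k((-1-\eta,1+\eta))$, and an affine change of variables ($y=1-j(x-1)$, Jacobian $j$, resp. near $-1$) gives $\|Rf\|_{H^k}\lesssim\|f\|_{H^k((-1,1))}$; since multiplication by the fixed $\chi\in\mathcal{C}^\infty_c$ is bounded on $H^k(\mathbb{R})$, the map $E^k$ is linear and bounded. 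Property (i) is immediate since $\chi\equiv1$ and $Rf=f$ on $(-1,1)$; property (ii) holds since $\supp(E^kf)\subseteq\supp\chi\subset(-1-\eta,1+\eta)\subset(-2,2)$ because $\eta<1$; property (iii) follows because for $f\in\mathcal{C}^k([-1,1])$ all derivatives up to order $k$ match at $\pm1$, so $Rf\in\mathcal{C}^k([-1-\eta,1+\eta])$ and $\chi\cdot Rf\in\mathcal{C}^k_c(\mathbb{R})$. For (iv), the very same formula and estimates apply verbatim to any $f\in H^j((-1,1))$ with $j<k$ (now using $H^j\hookrightarrow\mathcal{C}^{j-1}$ and matching of derivatives up to order $j-1\le k-1$), producing a bounded operator $H^j((-1,1))\to H^j(\mathbb{R})$ that agrees with $E^k$ on $H^k((-1,1))$; uniqueness is then the standard density argument, since $H^k((-1,1))$ is dense in $H^j((-1,1))$ and $H^j(\mathbb{R})$ is complete. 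The only genuinely delicate point is the gluing step --- that a function which is $H^k$ on each side of a point and whose derivatives up to order $k-1$ agree there is globally $H^k$ near that point --- and this is precisely what the count of Vandermonde equations and the one-dimensional Sobolev embedding are designed to supply; everything else reduces to affine substitutions and boundedness of multiplication by a fixed smooth compactly supported function.
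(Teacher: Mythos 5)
The paper does not give its own proof of Proposition~\ref{prop-extension}; it is recorded there as a well-known fact, with references to Lions--Magenes \cite{LM_1972} and Evans \cite[Section II.5.4]{Evans_1998}. Your higher-order reflection construction is precisely the standard argument one finds in those sources, and it is correct: the $(k+1)\times(k+1)$ Vandermonde system at the distinct nodes $-1,\dots,-(k+1)$ has a unique solution, the constraint $\eta<2/(k+1)$ keeps the reflected arguments inside $(-1,1)$, and multiplication by the fixed cut-off $\chi$ both localizes the support inside $(-2,2)$ (since $\eta<1$) and is bounded on every $H^j(\mathbb{R})$. One small imprecision worth flagging: you write that the Vandermonde identities make the one-sided limits of $D^m(Rf)$ at $\pm1$ agree \emph{for all} $m\le k$, but for a generic $f\in H^k((-1,1))$ only the traces $D^mf(\pm1)$ with $m\le k-1$ exist, so pointwise matching can be asserted only up to order $k-1$; the $m=k$ identity is available only in the $\mathcal{C}^k$ setting of part~(iii). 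This does not damage the argument, because, as you say in the very next clause, matching up to order $k-1$ together with $D^k(Rf)\in L^2$ on each piece already yields $Rf\in H^k$ by the usual one-dimensional gluing lemma; the extra Vandermonde condition at $m=k$ is there precisely so that part~(iii) holds. Parts~(i), (ii), and (iv) then follow as you describe, with (iv) resting on the observation that the same coefficients satisfy $\sum_j c_j(-j)^m=1$ for every $m\le j<k$, so the identical formula defines a bounded map $H^j((-1,1))\to H^j(\mathbb{R})$, and uniqueness of the extension is the standard density argument.
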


\begin{Definition}
	For $k\in \mathbb{N}$, $r>0$ we define the operators
	$$E^k_r: H^j((-r,r); \mathbb{R}^n) \to H^j(\mathbb{R}; \mathbb{R}^n), \qquad j\in\mathbb{N}, j\leq k, $$
	called as $r$-scaled $E^k$ operators, by the following formula
	\begin{equation}\label{extension}
		(E^k_rf)(x)=\{E^k[y\mapsto f(yr)]\}\left( \frac{x}{r} \right),\qquad x\in\mathbb R,
	\end{equation}
	for $r>0$ and $f\in H^k((-r,r); \mathbb{R}^n)$.
\end{Definition}

The following remark will be useful in Lemma \ref{lem-lip}.
\begin{Remark}\label{rem-extension}
	We can rewrite \eqref{extension} as $(E_r^kf)(x) = (E^kf_r)(\frac{x}{r}), f \in H^k((-r,r);\mathbb{R}^n)$  where $f_r: (-1,1) \ni y \mapsto f(yr) \in \mathbb{R}^n$. Also, observe that for $ f \in H^1((-r,r);\mathbb{R}^n)$
	\begin{equation}\nonumber
		\| f_r \|_{H^1((-1,1); \mathbb{R}^n)}^2   \leq (r^{-1}+ r) \| f \|_{H^1((-r,r);\mathbb{R}^n)}^2.
	\end{equation}
\end{Remark}

\subsection{Diffusion coefficient}
In this subsection we discuss the assumptions on diffusion coefficient $Y$ which we only need in Section \ref{sec:skeleton}. It is relevant to note that due to a technical issue, which is explained in Section \ref{sec:LDP}, we need to consider stricter conditions on $Y$ in establishing the large deviation principle for \eqref{SGWE-fundamental}.  Here $Y_p:T_pM\times T_pM \to T_pM$,  for  $p \in M$,  is a mapping satisfying,
\begin{equation}\nonumber
	|Y_p(\xi,\eta)|_{T_pM}\le C_Y (1+|\xi|_{T_pM}+|\eta|_{T_pM}),\qquad p\in M,\quad\xi,\eta\in T_pM,
\end{equation}
for some constant $C_Y>0$ which is  independent of $p$.
By invoking Lemma \ref{cor_inv} and \cite[Proposition 3.10]{Brz+Ondr_2007}, we can extend the noise coefficient to map $Y: \mathbb{R}^n \times \mathbb{R}^n \times \mathbb{R}^n \ni (p,a,b) \mapsto  Y_p(a,b) \in \mathbb{R}^n$ which satisfies the following:
\begin{enumerate}
	\item[\textbf{Y.1}] for $q\in O$ and $a,b \in \mathbb{R}^n$,
	\begin{equation}\label{y1}
	Y_{\h(q)} \left(  \h^\prime(q)a, \h^\prime(q) b   \right) = \h^\prime (q) Y_q(a,b),
	\end{equation}
	\item[\textbf{Y.2}] there exists an compact set $K_Y \subset \mathbb{R}^n$  containing $M$ such that $Y_p(a,b) = 0$, for all $a,b \in \mathbb{R}^n$, whenever $p \notin K_Y$,
	\item[\textbf{Y.3}]  $Y$ is of $\mathcal{C}^2$-class and there exist positive constants $C_{Y_i}, i \in \{1,2,3\}$ such that, with notation $Y(p,a,b) := Y_p(a,b)$,  for every $p,a,b \in \mathbb{R}^n$,
	\begin{align}
	& \vert Y_p(a,b) \vert \leq C_{Y_0}  (1+ |a|+ |b|), \label{y00}\\
	& \bigg\vert \frac{\partial Y}{\partial p_i}(p,a,b) \bigg\vert \leq C_{Y_1} (1+ |a|+|b|), \quad i=1,\ldots,n, \label{y2} \\
	& \bigg\vert \frac{\partial Y}{\partial a_i}(p,a,b) \bigg\vert + \bigg\vert \frac{\partial Y}{\partial b_i}(p,a,b) \bigg\vert \leq C_{Y_2}, \quad i=1,\ldots,n, \label{y3} \\
	& \bigg\vert \frac{\partial^2 Y}{\partial x_j \partial y_i}(p,a,b) \bigg\vert \leq C_{Y_3}, \quad x,y \in \{p,a,b\} \textrm{ and } i,j\in \{1,\ldots,n \}  \label{y4}.
	\end{align}
\end{enumerate}

\section{Skeleton equation}\label{sec:skeleton}
The purpose of this section is to introduce and study the deterministic equation associated to   \eqref{SGWE-fundamental}. Define
\begin{equation}\nonumber
\prescript{}{0}{H}^{1,2}(0,T; \rkhs) := \left\{ h \in \prescript{}{0}{\mathcal{C}}([0,T];\rkhs): \dot{h} \in L^2(0,T;\rkhs)  \right\}.
\end{equation}
Note that $\prescript{}{0}{H}^{1,2}(0,T;\rkhs )$ is a Hilbert space with norm $\int_{0}^{T} \|\dot{h}(t)\|_{\rkhs}^2 \, dt$ and the map
\begin{equation}\nonumber
	L^2(0,T;\rkhs) \ni \dot{h} \mapsto h = \left\{ t \mapsto \int_{0}^{t} \dot{h}(s)\, ds  \right\} \in \prescript{}{0}{H}^{1,2}(0,T;\rkhs),
\end{equation}
is an isometric isomorphism.
For $h \in \prescript{}{0}{H}^{1,2}(0,T;\rkhs )$, let us consider the so called \enquote{skeleton equation} associated to problem
\begin{equation}\label{CP-skeleton-int}
\left\{\begin{aligned}
& \mathbf D_t\partial_tu =\mathbf D_x\partial_xu+Y_u(\partial_tu,\partial_xu)\,\dot h,
\\
& u(0,\cdot)=u_0,  \partial_tu(t,\cdot)_{|t=0} =v_0
\end{aligned}
\right.\end{equation}
\begin{equation}\label{CP-skeleton}
\left\{\begin{aligned}
& \partial_{tt}u =  \partial_{xx}u +  A_{u}(\partial_t u, \partial_t u) - A_{u}(\partial_x u, \partial_x u)  +Y_u (\partial_t u, \partial_x u ) \, \dot{h}\,,
\\
& u(0,\cdot)  = u_0,\,\, \partial_tu (0,\cdot)=v_0.
\end{aligned}
\right.\end{equation}

Recall that $M$ is a compact Riemannian manifold which is isometrically embedded  into some Euclidean space $\mathbb  R^n$, and hence, we can assume that  $M$ is  a submanifold of  $\mathbb  R^n$. The following main result of this section is closely related to  \cite[Theorem 11.1]{Brz+Ondr_2007}.
\begin{Theorem}\label{thm-skeleton}
	Let $T>0$, $h \in \prescript{}{0}{H}^{1,2}(0,T;\rkhs)$ and $(u_0, v_0) \in H_{\textrm{loc}}^2 \times H_{\textrm{loc}}^1 (\mathbb{R};TM)$ are given.  Then for every $R >T$, there exists a  $u: [0,T) \times \mathbb{R} \to M$  such that the following hold:
	\begin{trivlist}
		\item[(i)] $u$ belongs to $\mathcal{C}^1(\mathbb  R_+\times\mathbb  R; M)$,
		\item[(ii)]  $[0,T) \ni t \mapsto u(t,\cdot) \in H^2((-R,R);M)$ is continuous,
		\item[(iii)]  $[0,T)\ni t \mapsto u(t,\cdot) \in H^1((-R,R);M)$ is continuously differentiable,
		\item[(iv)]$u(0,x)=u_0(x)$ and $\partial_tu(0,x)=v_0(x,\omega)$ holds for every $x\in\mathbb R$,
		\item[(v)] for every vector field $X$ on $M$, and every $t\ge 0$ and $R>0$
		\begin{align}
		\langle\partial_tu(t),X(u(t))\rangle_{T_{u(t)}M}&= \langle v_0,X(u_0)\rangle_{T_{u(t)}M}+\int_0^t\langle\mathbf D_x\partial_xu(s),X(u(s))\rangle_{T_{u(s)}M}\,ds
		\nonumber\\
		&\quad + \int_0^t\langle\partial_tu(s),\nabla_{\partial_tu(s)}X\rangle_{T_{u(s)}M}\,ds \nonumber
		\\
		&\quad + \int_0^t\langle X(u(s)),Y_{u(s)}(\partial_tu(s),\partial_xu(s)) \dot h (s)\rangle_{T_{u(s)}M} \, ds,
		\nonumber
		\end{align}
		holds in $L^2(-R,R)$.
	\end{trivlist}
	Moreover, if there exists another map $U: [0,T) \times \mathbb{R} \to M$ which also satisfies the above properties then
	$$ U(t,x)=u(t,x) \quad \textrm{for every} \quad |x|\leq R-t\quad \textrm{and} \quad t \in[0,T). $$
\end{Theorem}
\begin{proof}[\textbf{Proof of Theorem \ref{thm-skeleton}}]
	First note that, due to Theorem \ref{thm-equiv}, to prove the existence part it is sufficient to prove that for every $R >T$, there exists a  $u: [0,T) \times \mathbb{R} \to M$  such that the following  hold:
	\begin{enumerate}
		\item   $[0,T) \ni t \mapsto u(t,\cdot) \in H^2((-R,R);\mathbb{R}^n)$ is continuous,
		\item   $[0,T)\ni t \mapsto u(t,\cdot) \in H^1((-R,R);\mathbb{R}^n)$ is continuously differentiable,
		\item  $u(t,x) \in M$ for every $t \in [0,T)$ and $x \in \mathbb{R}$,
		\item  $u(0,x)  = u_0(x)$ and $\partial_t u(0,x) = v_0(x)$ for every $x \in \mathbb{R}$,
		\item  for every $t \in [0,T)$ the following holds in $L^2((-R,R);\mathbb{R}^n)$,
		\begin{align}\label{e1-skeleton}
		\partial_t u(t) & = v_0 + \int_{0}^{t} \left[ \partial_{xx}u(s) - A_{u(s)}(\partial_x u(s), \partial_x u(s)) + A_{u(s)}(\partial_t u(s), \partial_t u(s))   \right] ds \nonumber\\
		& \quad + \int_{0}^{t}  Y_{u(s)}(\partial_t u(s), \partial_x u(s)) \dot{h}(s )\, ds.
		\end{align}
	\end{enumerate}
	For the uniqueness we will show that if there exists another map $U: [0,T) \times \mathbb{R} \to M$ which also satisfies the above properties [1]-[5], then
	$$ U(t,x)=u(t,x) \quad \textrm{for every} \quad |x|\leq R-t\quad \textrm{and} \quad t \in[0,T). $$
	Since we seek solutions that take values in the Fr\'echet space $H^2_{\textrm{loc}}(\mathbb R; \mathbb{R}^n)\times H^1_{\textrm{loc}}(\mathbb R; \mathbb{R}^n)$,  we  localize the problem using a sequence of non-linear wave equations.

	Let us  fix $r> R + T$, and $k\in\mathbb N$. Let  $\varphi:\mathbb R\to\mathbb R$ be  a smooth compactly supported function such that $\varphi (x)=1$ for $x \in (-r,r)$ and $\varphi(x) = 0$ for $x \notin (-2r,2r)$.
	Next, with the convention $z=(u,v)\in\mathcal H$, we define the following maps
	\begin{eqnarray*}
		\mathbf F_r &:&[0,T]\times\mathcal H\ni \left(t,z\right)\mapsto \left(\begin{array}{c}0\\E^1_{r-t}[\mathcal A_u(v,v)-\mathcal A_u(u_x, u_x)]\end{array}\right)\in \mathcal H,
		\\
		\mathbf F_{r,k}&:&[0,T]\times\mathcal H\ni
		(t,z)\mapsto\left\{\begin{array}{lrl}
			\mathbf F_r(t,z),&\textrm{if}&\vert z\vert_{\mathcal H_{r-t}}\le k
			\\
			\left(2-\frac{1}{k}\vert z\vert_{\mathcal H_{r-t}}\right)\mathbf F_r(t,z),&\textrm{if}&k\le\vert z\vert_{\mathcal H_{r-t}}\le 2k
			\\
			0,&\textrm{if}&2k\le\vert z\vert_{\mathcal H_{r-t}}
		\end{array}\right.\in \mathcal H,
		\\
		\mathbf G_r&:&[0,T]\times\mathcal H\ni \left(t,z\right)\mapsto\left(\begin{array}{c}0\\(E^1_{r-t} Y_u(v,  u_x)) \cdot  \end{array}\right)\in \mathscr L_2(\rkhs,\mathcal H),
		\\
		\mathbf G_{r,k}&:&[0,T]\times\mathcal H\ni
		(t,z)\mapsto\left\{\begin{array}{lrl}
			\mathbf G_r(t,z),&\textrm{if}&\vert z\vert_{\mathcal H_{r-t}}\le k
			\\
			\left(2-\frac{1}{k}\vert z\vert_{\mathcal H_{r-t}}\right)\mathbf G_r(t,z),&\textrm{if}&k\le\vert z\vert_{\mathcal H_{r-t}}\le 2k
			\\
			0,&\textrm{if}&2k\le\vert z\vert _{\mathcal H_{r-t}}
		\end{array}\right. \in \mathscr L_2(\rkhs,\mathcal H),
		\\
		\mathbf Q_r&:&\mathcal H\ni z\mapsto\left(\begin{array}{c}\varphi\cdot \h(u)\\\varphi\cdot \h^\prime(u)v\end{array}\right)\in \mathcal H,
	\end{eqnarray*}
	where $(E^1_{r-t} Y_u(v,u_x)) \cdot $ means that, for every $(u,v) \in \mathcal{H}$, $E^1_{r-t} Y_u(v,u_x) \in H_{\textrm{loc}}^1(\mathbb{R}; \mathbb{R}^n)$ and the multiplication operator defined as
	$$ (E^1_{r-t} Y_u(v,u_x)) \cdot : \rkhs \ni \xi \mapsto (E^1_{r-t} Y_u(v,u_x)) \cdot \xi \in H_{\textrm{loc}}^1(\mathbb{R};\mathbb{R}^n), $$
	satisfy Lemma \ref{hsop}.

	The following two properties, which we state without proof, of $\mathbf{Q}_r$ are taken from \cite[Section 7]{Brz+Ondr_2007}.
	\begin{Lemma}\label{lem_consistency}
		If $z=(u,v)\in\mathcal H$ is such that $u(x)\in M$ and $v(x)\in T_{u(x)}M$ for $\vert x\vert <r$, then $\mathbf{Q}_r(z)=z$ on $(-r,r)$.	
	\end{Lemma}
	\begin{Lemma}\label{c2q}
		The mapping $\mathbf Q_r$ is of $\mathcal{C}^1$-class and  its derivative, with $z=(u,v)\in\mathcal H$,  satisfies
		$$
		\mathbf Q_r^\prime(z)w=\left(\begin{array}{c}\varphi\cdot \h^\prime(u)w^1\\\varphi\cdot [\h^{\prime\prime}(u)(v,w^1)+\h^\prime(u)w^2]\end{array}\right),\quad  w = (w^1,w^2) \in\mathcal H.
		$$
	\end{Lemma}
	The next lemma is about the locally Lipschitz properties of the localized maps defined above.
	\begin{Lemma}\label{lem-lip}
		For each $k\in\mathbb N$ the functions $\mathbf F_r$, $\mathbf F_{r,k}$, $\mathbf G_r$, $\mathbf G_{r,k}$ are continuous
		and there exists a constant $C_{r,k} >0$ such that
		\begin{align}\label{lem-lip-result}
		& \| \mathbf F_{r,k}(t,z)-\mathbf F_{r,k}(t,w) \|_{\mathcal H}  + \| \mathbf G_{r,k}(t,z)-\mathbf G_{r,k}(t,w) \|_{\mathscr{L}_2(\rkhs,\mathcal{H})} \le C_{r,k} \| z-w\|_{\mathcal H_{r-t}},
		\end{align}
		holds for every $t\in [0,T]$ and every $z,w\in\mathcal H$.
	\end{Lemma}
	\begin{proof}[\textbf{Proof of Lemma \ref{lem-lip}}]
		Let us fix $t \in [0,T]$ and $z=(u,v), w=(\tilde{u}, \tilde{v}) \in \mathcal{H}$. Note that due to the definitions of $\mathbf F_{r,k}$ and $\mathbf G_{r,k}$, it is sufficient to prove \eqref{lem-lip-result} in the case $\| z\|_{\mathcal H_{r-t}} , \| w \|_{\mathcal H_{r-t}} \leq k$.

		 Let us set $I_{rt} := (t-r,r-t)$.  Since in the chosen case $\mathbf F_{r,k}(t,z) = \mathbf F_{r}(t,z)$ and $\mathbf F_{r,k}(t,w) = \mathbf F_{r}(t,w)$, by Proposition \ref{prop-extension} and Remark \ref{rem-extension}, there exists $C_E(r,t) >0$ such that
		\begin{align}\label{lip-t1}
		\| \mathbf F_{r,k}(t,z)-\mathbf F_{r,k}(t,w) \|_{\mathcal H}  & \leq C_E(r,t) \left[ \| \mathcal{A}_{u}(v,v) - \mathcal{A}_{ \tilde{u} }( \tilde{v},\tilde{v} ) \|_{H^1(I_{rt})} \right. \nonumber\\
		& \quad \left. +  \|  \mathcal{A}_{u}(u_x,u_x) - \mathcal{A}_{\tilde{u}}( \tilde{u}_x,\tilde{u}_x) \|_{H^1(I_{rt})}  \right].
		\end{align}
		Since $\h$ is smooth and has compact support, see Lemma  \ref{cor_inv}, from \eqref{exta} observe that
		$$ \mathcal{A} : \mathbb{R}^n \ni q \mapsto \mathcal{A}_q \in \mathcal{L}(\mathbb{R}^n \times \mathbb{R}^n; \mathbb{R}^n), $$
		is smooth, compactly supported (in particular bounded) and globally Lipschitz. Recall the following well-known interpolation inequality, refer \cite[(2.12)]{Brz+Gold+Jeg_2017},
		\begin{equation}\label{interpolation1}
		\| u\|_{L^\infty(I)}^2  \leq k_e^2 \| u\|_{L^2(I)} \| u\|_{H^1(I)}, \quad u \in H^1(I),
		\end{equation}
		where $I$ is any open interval in $\mathbb{R}$ and $k_e = 2 \max\left\{ 1, \frac{1}{\sqrt{|I|}}  \right\}$. Note that since $r>R + T$ and $t \in [0,T]$, $|I_{rt}| = 2(r-t) >2R$. Thus, we can choose $k_e = 2 \max\left\{ 1, \frac{1}{\sqrt{|R|}} \right\}$.Consequently, using the above mentioned properties of $\mathcal{A}$ and the interpolation inequality \eqref{interpolation1} we get
		\begin{align}\label{lip-t2}
		\| \mathcal{A}_{u}(v,v) - \mathcal{A}_{\tilde{u}}(\tilde{v},\tilde{v}) \|_{L^2(I_{rt})} & \leq  \| \mathcal{A}_{u}(v,v) - \mathcal{A}_{\tilde{u}}(v,v) \|_{L^2(I_{rt})} \nonumber\\
		& \quad  + \| \mathcal{A}_{\tilde{u}}(v,v) - \mathcal{A}_{\tilde{u}}(\tilde{v},v) \|_{L^2(I_{rt})} \nonumber\\
		& \quad + \| \mathcal{A}_{\tilde{u}}(\tilde{v},v) - \mathcal{A}_{\tilde{u}}(\tilde{v},\tilde{v}) \|_{L^2(I_{rt})} \nonumber\\
		& \leq L_{\mathcal{A}} \| v\|_{L^\infty(I_{rt})}^2 \| u-\tilde{u}\|_{L^2(I_{rt})} \nonumber\\
		& \quad + B_{\mathcal{A}} \left[ \| v\|_{L^\infty(I_{rt})}  + \| \tilde{v} \|_{L^\infty(I_{rt})} \right] \| v-\tilde{v}\|_{L^2(I_{rt})} \nonumber\\
		& \leq C(L_{ \mathcal{A}},B_{\mathcal{A}},R, k, k_e) \| z-w\|_{\mathcal{H}_{r-t}},
		\end{align}
		where $L_{\mathcal{A}}$ and $B_{\mathcal{A}}$  are the Lipschitz constants and bound of $\mathcal{A}$, respectively. Next, since $\mathcal{A}$ is smooth and have compact support, if we set $L_{\mathcal{A}^\prime}$ and $B_{\mathcal{A}^\prime}$  are the Lipschitz constants and bound of
		$$ \mathcal{A}^\prime: \mathbb{R}^n \ni q \mapsto d_q\mathcal{A} \in \mathcal{L}(\mathbb{R}^n \times \mathbb{R}^n \times \mathbb{R}^n ; \mathbb{R}^n),$$
		then by adding and subtracting the terms as we did to get \eqref{lip-t2} followed by the properties of $\mathcal{A}^\prime$  and the interpolation inequality \eqref{interpolation1} we have
		\begin{align}\label{lip-t3}
		& \| d_x \left[ \mathcal{A}_{u}(v,v) - \mathcal{A}_{\tilde{u}}(\tilde{v},\tilde{v}) \right] \|_{L^2(I_{rt})} \nonumber\\
		&  \leq \| d_{u}\mathcal{A}(v,v) (u_x) - d_{\tilde{u}}\mathcal{A}(\tilde{v},\tilde{v})(\tilde{u}_x) \|_{L^2(I_{rt})}  + 2 \| \mathcal{A}_{u}( v_x,v) - \mathcal{A}_{\tilde{u}}(\tilde{v}_x,\tilde{v})  \|_{L^2(I_{rt})} \nonumber\\
		& \leq L_{\mathcal{A}^\prime} \| u_x\|_{L^\infty(I_{rt})}  \|v\|_{L^\infty(I_{rt})}^2  \|u - \tilde{u} \|_{L^2(I_{rt})} +  B_{\mathcal{A}^\prime}  \|v\|_{L^\infty(I_{rt})}^2  \| u_x - \tilde{u}_x \|_{L^2(I_{rt})}  \nonumber\\
		& \quad +  B_{\mathcal{A}^\prime} \left[\|v\|_{L^\infty(I_{rt})} + \|\tilde{v} \|_{L^\infty(I_{rt})} \right]  \|v - \tilde{v}\|_{L^2(I_{rt})}  \| \tilde{u}_x \|_{L^\infty(I_{rt})} \nonumber\\
		& \quad + 2 \left[   L_{\mathcal{A}} \|u - \tilde{u} \|_{L^\infty(I_{rt})} \|v\|_{L^\infty(I_{rt})} \| v_x\|_{L^2(I_{rt})} + B_{\mathcal{A}} \| v_x -  \tilde{v}_x  \|_{L^2(I_{rt})} \|v\|_{L^\infty(I_{rt})} \right. \nonumber\\
		& \quad \left. + B_{\mathcal{A}} \|v - \tilde{v}  \|_{L^\infty(I_{rt})} \| \tilde{v}_x\|_{L^2(I_{rt})}  \right] \nonumber\\
		& \lesssim_{L_{\mathcal{A}},B_{\mathcal{A}}, L_{\mathcal{A}^\prime},B_{\mathcal{A}^\prime}, k_e}  \left[\|u - \tilde{u} \|_{H^2(I_{rt})} \|u\|_{H^2(I_{rt})}  \|v\|_{H^1(I_{rt})}^2 + \|u - \tilde{u} \|_{H^2(I_{rt})} \|v\|_{H^1(I_{rt})}^2 \right. \nonumber\\
		& \quad \left. + \|v - \tilde{v}\|_{H^1(I_{rt})} \left[ \|v\|_{H^1(I_{rt})} +  \|\tilde{v} \|_{H^1(I_{rt})} \right] \| \tilde{u} \|_{H^2(I_{rt})}  + \|u - \tilde{u} \|_{H^2(I_{rt})} \|v\|_{H^1(I_{rt})}^2 \right. \nonumber\\
		& \quad \left. +  \|v - \tilde{v}  \|_{H^1(I_{rt})}  \left( \|v\|_{H^1(I_{rt})} + \|\tilde{v}\|_{H^1(I_{rt})} \right)  \right] \nonumber\\
		& \lesssim_{k} \|z - w \|_{\mathcal{H}_{r-t}},
		\end{align}
		where the last step is due to the case $\| z\|_{\mathcal H_{r-t}} , \| w \|_{\mathcal H_{r-t}} \leq k$.
		By following similar procedure of \eqref{lip-t2} and \eqref{lip-t3} we also get
		\begin{equation}\nonumber
		\|  \mathcal{A}_{u}( u_x,  u_x) - \mathcal{A}_{\tilde{u}}(\tilde{u}_x,  \tilde{u}_x) \|_{H^1(I_{rt})} \lesssim_{L_{\mathcal{A}},B_{\mathcal{A}}, L_{\mathcal{A}^\prime},B_{\mathcal{A}^\prime}, k_e,k} \|z - w \|_{\mathcal{H}_{r-t}}.
		\end{equation}
		Hence by substituting the estimates back in \eqref{lip-t1} we are done with \eqref{lem-lip-result} for $F_{r,k}$-term.

		Next, we move to the terms of $G_{r,k}$. As for $F_{r,k}$, it is sufficient to perform the calculations for the case $\| z\|_{\mathcal H_{r-t}} , \| w \|_{\mathcal H_{r-t}} \leq k$. By invoking Lemma \ref{hsop} followed by Remark \ref{rem-extension} we have
		\begin{align}
		\| \mathbf G_{r,k}(t,z)-\mathbf G_{r,k}(t,w) \|_{\mathscr{L}_2(\rkhs,\mathcal{H})}^2  &  \leq \| (E^1_{r-t} Y_{u}(v,  u_x)) \cdot  - (E^1_{r-t} Y_{\tilde{u}}(\tilde{v}, \tilde{u}_x)) \cdot  \|_{\mathscr{L}_2(\rkhs,H^1(\mathbb{R}))}^2  \nonumber\\
		& \leq c_{r,t} ~C_E(r,t) ~ \| Y_{u}(v, u_x)  - Y_{\tilde{u}}(\tilde{v}, \tilde{u}_x) \|_{H^1(I_{rt})}^2. \nonumber
		\end{align}
		Recall that the 1-D Sobolev embedding gives $H^1(\mathbb{R}) \hookrightarrow L^\infty(\mathbb{R})$. Consequently, by the Taylor formula \cite[Theorem 5.6.1]{Cartan_1971B} and  inequalities \eqref{y2}-\eqref{y3} we have
		\begin{align}\label{lip-t5}
		\| Y_{u}(v, \partial_ xu)  - Y_{\tilde{u}}(\tilde{v},  \tilde{u}_x) \|_{L^2(I_{rt})}^2 & \leq   \int_{I_{rt}} |Y_{u(x)}(v(x),  u_x(x))  - Y_{\tilde{u}(x)}(v(x), u_x(x)) |^2 \, dx  \nonumber\\
		& \quad + \int_{I_{rt}} |Y_{\tilde{u}(x)}(v(x), u_x(x))  - Y_{\tilde{u}(x)}(v(x),\tilde{u}_x(x)) |^2 \, dx  \nonumber\\
		& \quad + \int_{I_{rt}} |Y_{\tilde{u}(x)}(v(x), \tilde{u}_x(x))  - Y_{\tilde{u}(x)}(\tilde{v}(x),\tilde{u}_x(x)) |^2 \, dx  \nonumber\\
		& \leq C_{Y}^2 \left[ 1 + \| v\|_{H^1(I_{rt})}^2 + \| u\|_{H^1(I_{rt})}^2   \right] \| u - \tilde{u}\|_{H^2(I_{rt})} ^2 \nonumber\\
		& \quad + C_{Y_2}^2  \left[  \|  u_x - \tilde{u}_x  \|_{H^1(I_{rt})} ^2 +  \| v -\tilde{v}  \|_{H^1(I_{rt})} ^2 \right] \nonumber\\
		& \lesssim_{k, C_{Y}, C_{Y_2} } \| z-w\|_{\mathcal{H}_{r-t}}^2 .
		\end{align}
		For homogeneous part of the norm, that is $L^2$-norm of the derivative,   we have
		\begin{align}\label{lip-t6}
		& \| d_x \left[ Y_{u}(v,u_x)  - Y_{\tilde{u}}(\tilde{v},\tilde{u}_x) \right] \|_{L^2(I_{rt})}^2 \nonumber\\
		 & \lesssim \int_{I_{rt}}\sum_{i=1}^{n}\left\{ \bigg\vert \frac{\partial Y}{\partial p_i}(u(x),v(x),  u_x (x))  \frac{d u^i}{dx}(x)  - \frac{\partial Y}{\partial p_i}(\tilde{u}(x),\tilde{v}(x), \tilde{u}_x (x)) \frac{d \tilde{u}^i}{dx}(x) \bigg\vert^2 \right. \nonumber\\
		 & \quad \left. + \bigg\vert \frac{\partial Y}{\partial a_i}(u(x),v(x),  u_x (x))  \frac{d v^i}{dx}(x) - \frac{\partial Y}{\partial a_i}(\tilde{u}(x),\tilde{v}(x),  \tilde{u}_x (x)) \frac{d \tilde{v}^i}{dx}(x)  \bigg\vert^2 \right. \nonumber\\
		 & \quad \left. + \bigg\vert \frac{\partial Y}{\partial b_i}(u(x),v(x), u_x (x)) \frac{d u_x^i}{dx}(x) - \frac{\partial Y}{\partial b_i}(\tilde{u}(x),\tilde{v}(x),  \tilde{u}_x (x))  \frac{d \partial_ x \tilde{u}^i}{dx}(x)  \bigg\vert^2 \right\}  \, dx  \nonumber\\
		& =: Y_1 + Y_2 + Y_3.
		\end{align}
		We will estimate each term separately by using the 1-D Sobolev embedding, the Taylor formula and inequalities \eqref{y2}-\eqref{y4} as follows:
		\begin{align}\label{lip-t7}
			Y_1 & \lesssim \int_{I_{rt}}\sum_{i=1}^{n} \left\{  \bigg\vert \frac{\partial Y}{\partial p_i}(u(x),v(x), \ u_x (x))  \frac{d u^i}{dx}(x)  - \frac{\partial Y}{\partial p_i}(\tilde{u}(x),v(x), u_x (x)) \frac{d u^i}{dx}(x) \bigg\vert^2 \right. \nonumber\\
			& \quad \left. + \bigg\vert \frac{\partial Y}{\partial p_i}(\tilde{u}(x),v(x), u_x (x)) \frac{d u^i}{dx}(x)   - \frac{\partial Y}{\partial p_i}(\tilde{u}(x),v(x),  u_x (x)) \frac{d \tilde{u}^i}{dx}(x) \bigg\vert^2  \right. \nonumber\\
			& \quad\left. + \bigg\vert \frac{\partial Y}{\partial p_i}(\tilde{u}(x),v(x),  u_x (x)) \frac{d \tilde{u}^i}{dx}(x)    - \frac{\partial Y}{\partial p_i}(\tilde{u}(x),\tilde{v}(x),  u_x (x)) \frac{d \tilde{u}^i}{dx}(x) \bigg\vert^2  \right. \nonumber\\
			& \quad\left. + \bigg\vert \frac{\partial Y}{\partial p_i}(\tilde{u}(x),\tilde{v}(x), u_x (x)) \frac{d \tilde{u}^i}{dx}(x)   - \frac{\partial Y}{\partial p_i}(\tilde{u}(x),\tilde{v}(x),  \tilde{u}_x (x)) \frac{d \tilde{u}^i}{dx}(x) \bigg\vert^2  \right\} \, dx  \nonumber\\
			& \lesssim C_{Y_3}^2 \| u - \tilde{u} \|_{L^2(I_{rt})}^2 \|  u_x\|_{H^1(I_{rt})}^2  + C_{Y_1}^2 \left[ 1 + \| v\|_{H^1(I_{rt})}^2 + \| u_x\|_{H^1(I_{rt})}^2 \right] \|  u_x - \tilde{u}_x \|_{L^2(I_{rt})}^2 \nonumber\\
			& \quad + C_{Y_3}^2 \| v - \tilde{v} \|_{L^2(I_{rt})}^2 \| \tilde{u}_x \|_{H^1(I_{rt})}^2  + C_{Y_3}^2 \| u_x - \tilde{u}_x \|_{L^2(I_{rt})}^2 \| \tilde{u}_x \|_{H^1(I_{rt})}^2 \nonumber\\
     		& \lesssim_{k, C_{Y_2}, C_{Y_3} , C_{Y_1} } \| z-w\|_{\mathcal{H}_{r-t}}^2.
		\end{align}
		Terms $Y_2$ and $Y_3$ are quite similar so it is enough to estimate only one. For $Y_2$ we have the following calculation
		\begin{align}\label{lip-t8}
			Y_2  &  \lesssim  \int_{I_{rt}}\sum_{i=1}^{n} \left\{ \bigg\vert \frac{\partial Y}{\partial a_i}(u(x),v(x),  u_x (x))  \frac{d v^i}{dx}(x) - \frac{\partial Y}{\partial a_i}(\tilde{u}(x),v(x),  u_x (x)) \frac{d v^i}{dx}(x)  \bigg\vert^2 \, dx  \right. \nonumber\\
			& \quad \left. + \bigg\vert \frac{\partial Y}{\partial a_i}(\tilde{u}(x),v(x),  u_x (x)) \frac{d v^i}{dx}(x)  - \frac{\partial Y}{\partial a_i}(\tilde{u}(x),\tilde{v}(x),  u_x (x)) \frac{d v^i}{dx}(x)  \bigg\vert^2 \, dx  \right. \nonumber\\
			& \quad \left. + \bigg\vert \frac{\partial Y}{\partial a_i}(\tilde{u}(x),\tilde{v}(x),  u_x (x)) \frac{d v^i}{dx}(x)  - \frac{\partial Y}{\partial a_i}(\tilde{u}(x),\tilde{v}(x),  \tilde{u}_x (x)) \frac{d v^i}{dx}(x)  \bigg\vert^2 \, dx  \right. \nonumber\\
			& \quad \left. + \bigg\vert \frac{\partial Y}{\partial a_i}(\tilde{u}(x),\tilde{v}(x),  \tilde{u}_x (x)) \frac{d v^i}{dx}(x)      - \frac{\partial Y}{\partial a_i}(\tilde{u}(x),\tilde{v}(x),  \tilde{u}_x (x)) \frac{d \tilde{v}^i}{dx}(x)  \bigg\vert^2 \, dx  \right\} \nonumber\\
			& \lesssim C_{Y_3}^2 \| u - \tilde{u} \|_{H^1(I_{rt})}^2 \| v_x\|_{L^2(I_{rt})}^2 + C_{Y_3}^2 \| v - \tilde{v} \|_{H^1(I_{rt})}^2 \| v_x\|_{L^2(I_{rt})}^2 \nonumber\\
			& \quad + C_{Y_3}^2 \| u_x - \tilde{u}_x \|_{H^1(I_{rt})}^2 \| v_x\|_{L^2(I_{rt})}^2 + C_{Y_3}^2 C_{r,t}  \| v_x - \tilde{v}_x \|_{L^2(I_{rt})}^2 \nonumber\\
			& \lesssim_{k, C_{r,t}C_{Y_3}} \| z-w\|_{\mathcal{H}_{r-t}}^2.
		\end{align}
		Hence by substituting \eqref{lip-t7}-\eqref{lip-t8} into \eqref{lip-t6} we get
		\begin{equation}\nonumber
			\| d_x \left[ Y_{u}(v,u_x)  - Y_{\tilde{u}}(\tilde{v},\tilde{u}_x) \right] \|_{L^2(I_{rt})}^2 \lesssim_{k, C_{r,t},C_{Y_2}, C_{Y_3} , C_{Y_1} } \| z-w\|_{\mathcal{H}_{r-t}}^2.
		\end{equation}
		which together with \eqref{lip-t5} gives $G_{r,k}$ part of \eqref{lem-lip-result}. Hence the Lipschitz property Lemma \ref{lem-lip}.
	\end{proof}

	The following result follows directly from Lemma \ref{lem-lip} and the  standard theory of PDE via semigroup approach, refer \cite{Ball_1977} and \cite{Tayfun_2017T} for detailed proof.
	\begin{Corollary}\label{cor-existence approximate solution}
		Given any $\xi \in \mathcal H$ and $h \in \prescript{}{0}{H}^{1,2}(0,T;\rkhs)$, there exists a unique $z$ in $\mathcal{C}([0,T]; \mathcal{H})$ such that for all $t\in [0,T]$
		$$
		z(t)=S_t\xi+\int_0^tS_{t-s}\mathbf F_{r,k}(s,z(s))\,ds+\int_0^tS_{t-s}( \mathbf G_{r,k}(s,z(s))  \dot{h}(s))  \,ds.
		$$
	\end{Corollary}
	\begin{Remark}\label{rem-habuse}
		Here by $\mathbf G_{r,k}(s,z(s))  \dot{h}(s)$ we understand that both components of $\mathbf G_{r,k}(s,z(s))$ are acting on $\dot{h}(s)$.
	\end{Remark}
	From now on, for each $r>R+T$  and $k \in \mathbb{N}$, the solution from Corollary \ref{cor-existence approximate solution} will be denoted by $z_{r,k}$ and called  the \textit{approximate solution}. To proceed further we define the following two auxiliary functions
	\begin{eqnarray*}
		\widetilde F_{r,k}&:&[0,T]\times\mathcal H \ni (t,z)\mapsto
		\left(\begin{array}{c}0\\\varphi\cdot \h^\prime(u)\mathbf F^2_{r,k}(t,z)+\varphi B_u(v,v)-\varphi B_u(u_x,u_x)\end{array}\right) \\
		&& \qquad\qquad \qquad\qquad \quad - \left(\begin{array}{c}0\\ \Delta\varphi\cdot h(u)+2\varphi_x\cdot h^\prime(u)u_x \end{array}\right)  \in \mathcal H,
		\\
		\text{and} &&
		\\
		\widetilde G_{r,k}&:&[0,T]\times\mathcal H \ni (t,z) \mapsto \left(\begin{array}{c}0\\\varphi\cdot \h^\prime(u)\mathbf G^2_{r,k}(t,z)\end{array}\right) \in \mathcal H.
	\end{eqnarray*}
	Here $\mathbf{F}_{r,k}^2(s,z_{r,k}(s))$ and $\mathbf{G}_{r,k}^2(s,z_{r,k}(s))$ denote the second components of the  vectors $\mathbf{F}_{r,k}(s,z_{r,k}(s))$ and $\mathbf{G}_{r,k}(s,z_{r,k}(s))$, respectively.
	The following corollary relates the solution $z_{r,k}$ with its transformation under the map $\mathbf{Q}_r$ and allow to understand the need of the functions $\widetilde F_{r,k}$ and $\widetilde G_{r,k}$.
	\begin{Corollary}\label{cor-ZTildaSoln}	
		Let us assume that $\xi : =(E^2_ru_0,E^1_rv_0)$ and that  $z_{r,k} \in \mathcal{C}([0,T]; \mathcal{H})$ satisfies
		\begin{equation}\label{eqn_local}
			z_{r,k}(t)=S_t\xi+\int_0^tS_{t-s}\mathbf F_{r,k}(s,z_{r,k}(s))\,ds+\int_0^tS_{t-s} (\mathbf G_{r,k}(s,z_{r,k}(s)) \dot{h}(s) ) \,ds,\qquad t \in [0,T].
		\end{equation}
		Then $\widetilde z_{r,k}=\mathbf Q_r(z_{r,k})$ satisfies, for each $t \in [0,T]$,
		$$
		\widetilde z_{r,k}(t)=S_t\mathbf Q_r(\xi)+\int_0^tS_{t-s}\widetilde F_{r,k}(s,z_{r,k}(s))\,ds+\int_0^tS_{t-s} ( \widetilde G_{r,k}(s,z_{r,k}(s)) \dot{h}(s) ) \,ds.
		$$
	\end{Corollary}
	\begin{proof}[\textbf{Proof of Corollary \ref{cor-ZTildaSoln}}]
		First observe that by the action of $\mathbf Q_r^\prime$ and $\mathcal{G}$ on the elements of $\mathcal{H}$ from Lemma \ref{c2q} and \eqref{gr}, respectively,  we get
		\begin{align}\label{cor-ZTildaSoln-t1}
		\mathbf{Q}_r^\prime(z_{r,k}(s)) & \left( \mathbf F_{r,k}(s,z_{r,k}(s)) +  \mathbf G_{r,k}(s,z_{r,k}(s)) \dot{h}(s) \right)  \nonumber\\
		& \quad = \left(\begin{array}{c} 0 \\ \varphi \cdot \left\{  [\h^\prime(u_{r,k}(s))] (\mathbf F_{r,k}^2(s,z_{r,k}(s))) + [\h^\prime(u_{r,k}(s))] (\mathbf G_{r,k}^2(s,z_{r,k}(s)) \dot{h}(s)) \right\} \end{array}\right).
		\end{align}
		Moreover, since by applying Lemma \ref{c2q} and \eqref{gr} to $z =(u,v) \in \mathcal{H}$ we have
		\begin{align}\label{cor-ZTildaSoln-t2}
		&  F(z) := 	\mathbf Q_r^\prime \mathcal{G} z - \mathcal{G} \mathbf Q_r z = \left(\begin{array}{c} \varphi \cdot [\h^\prime(u)](v) \\ \varphi \cdot \left\{ [\h^{\prime\prime}(u)](v,v) + [\h^{\prime}(u)](u^{\prime\prime}) \right\} \end{array}\right) \nonumber\\
		& \quad - \left(\begin{array}{c} \varphi \cdot [\h^\prime(u)](v) \\  \varphi^{\prime\prime} \cdot \h(u) + 2  \varphi^\prime \cdot [\h^\prime(u)](u^\prime) + \varphi \cdot  [\h^\prime(u)](u^{\prime\prime})  + \varphi \cdot [\h^{\prime\prime}(u)](u^\prime ,u^\prime) \end{array}\right),
		\end{align}
		substitution $z = z_{r,k}(s) = (u_{r,k}(s), v_{r,k}(s)) \in \mathcal{H}$  in \eqref{cor-ZTildaSoln-t2} with \eqref{cor-ZTildaSoln-t1} followed by definition  \eqref{Boper}  gives, for $s \in [0,T] $,
		\begin{align}
		& \mathbf{Q}_r^\prime(z_{r,k}(s))  \left( \mathbf F_{r,k}(s,z_{r,k}(s)) +  \mathbf G_{r,k}(s,z_{r,k}(s)) \right) + F(z_{r,k}(s))  \nonumber\\
		&  = \left(\begin{array}{c} 0 \\  \varphi \cdot  [\h^\prime(u_{r,k}(s))] (\mathbf F_{r,k}^2(s,z_{r,k}(s)))  + \varphi \cdot [\h^{\prime\prime}(u_{r,k}(s))](v_{r,k}(s),v_{r,k}(s))  \\  -  \varphi \cdot [\h^{\prime\prime}(u_{r,k}(s))](\partial_x u_{r,k}(s),\partial_x u_{r,k}(s))   \end{array}\right) \nonumber\\
		&    -  \left(\begin{array}{c} 0 \\  - \varphi^{\prime\prime} \cdot \h(u_{r,k}(s)) + 2 \varphi^\prime \cdot [\h^\prime(u_{r,k}(s))](\partial_x u_{r,k}(s)) + \varphi \cdot [\h^\prime(u_{r,k}(s))] (\mathbf G_{r,k}^2(s,z_{r,k}(s))) \end{array}\right) \nonumber\\
		&  = \widetilde F_{r,k}(s,z_{r,k}(s)) +  \widetilde G_{r,k}(s,z_{r,k}(s)). \nonumber
		\end{align}
		Hence, if we have
		\begin{equation}\label{cor-ZTildaSoln-t3}
			\int_{0}^{T} \left[  \| \mathbf F_{r,k}(s,z_{r,k}(s)) \|_{\mathcal{H}} + \| \mathbf G_{r,k}(s,z_{r,k}(s)) \dot{h}(s)\|_{\mathcal{H}} \right] \, ds < \infty,
		\end{equation}
		then by invoking \cite[Lemma 6.4]{Brz+Ondr_2007} with
		$$ L = \mathbf Q_r,  K = U = \mathcal{H}, A =B = \mathcal{G}, g(s) = 0, f(s) = \mathbf F_{r,k}(s,z_{r,k}(s)) + \mathbf G_{r,k}(s,z_{r,k}(s)) \dot{h}(s),$$
		we are done with the proof here. But \eqref{cor-ZTildaSoln-t3} follows by Lemma \ref{lem-lip}, because $h \in \prescript{}{0}{H}^{1,2}(0,T;\rkhs)$ and the following  holds, due to the H\"older inequality with the abuse of notation as mentioned in Remark \ref{rem-habuse},
		\begin{align}
			& \int_{0}^{T}  \| \mathbf G_{r,k}(s,z_{r,k}(s)) \dot{h}(s)\|_{\mathcal{H}}\, ds = \int_{0}^{T}  \| \mathbf G_{r,k}^2(s,z_{r,k}(s)) \dot{h}(s)\|_{H^1(\mathbb{R})}\, ds \nonumber \\
			 & \hspace{3cm} \leq \left( \int_{0}^{T}  \| (\mathbf G_{r,k}^2(s,z_{r,k}(s)) )  \cdot\|_{\mathscr{L}_2(\rkhs, H^1(\mathbb{R}))}^2 \, ds \right)^{\frac{1}{2}} \left( \int_{0}^{T}  \| \dot{h}(s)\|_{\rkhs}^2 \, ds \right)^{\frac{1}{2}}  . \nonumber
		\end{align}
	\end{proof}
	Next we prove that the approximate solution $z_{r,k}$ stays on the manifold. Define the following three positive reals: for each $r >R+T$ and $k \in \mathbb{N}$,
	\begin{equation}
		\label{stoppingTimes}
		\left\{
		\begin{aligned}
			&   \tau_k^1 := \inf\,\{t\in[0,T]: \| z_{r,k}(t)\|_{\mathcal H_{r-t}}\ge k\},   \\
			& \tau_k^2 := \inf\,\{t\in[0,T]: \| \widetilde z_{r,k}(t)\|_{\mathcal H_{r-t}}\ge k\}, \\
			& \tau_k^3 : = \inf\,\{t\in[0,T]: \exists x,\,|x|\le r-t,\,u_{r,k}(t,x)\notin O\},  \\
			& \tau_k  :=\tau_k^1\land\tau_k^2\land\tau_k^3.
		\end{aligned}\right.
	\end{equation}
	Also, define the following  $\mathcal{H}$-valued functions of time $t \in [0,T]$
	\begin{align}\label{defn-aProcesses}
	& a_k(t)=S_t\xi+\int_0^tS_{t-s} \mathbbm{1}_{[0,\tau_k)}(s)\mathbf F_{r,k}(s,z_{r,k}(s))\,ds \nonumber\\
	& \qquad \qquad + \int_0^tS_{t-s}( \mathbbm{1}_{[0,\tau_k)}(s)\mathbf G_{r,k}(s,z_{r,k}(s)) \dot{h}(s)) \,ds, \nonumber\\
	& \widetilde a_k(t)= S_t\mathbf Q_r(\xi)+\int_0^tS_{t-s}\mathbbm{1}_{[0,\tau_k)}(s)\widetilde F_{r,k}(s,z_{r,k}(s))\,ds \nonumber\\
	& \qquad \qquad + \int_0^tS_{t-s}(\mathbbm{1}_{[0,\tau_k)}(s)\widetilde G_{r,k}(s,z_{r,k}(s))  \dot{h}(s) ) \,ds .
	\end{align}
	\begin{Proposition}\label{prop-coincidence}
		For each $k \in \mathbb{N}$ and $\xi : =(E^2_ru_0,E^1_rv_0)$, the functions $a_k$, $\widetilde a_k$, $z_{r,k}$ and $\widetilde z_{r,k}$ coincide on $[0,\tau_k)$. In particular, $u_{r,k}(t,x)\in M$ for $|x|\le r-t$ and $t\le\tau_k$.  Consequently, $\tau_k=\tau_k^1=\tau_k^2\le\tau_k^3$.
	\end{Proposition}
	\begin{proof}[\textbf{Proof of Proposition \ref{prop-coincidence}}]
		Let us fix $k$. First note that, due to indicator function,
		\begin{equation}\label{prop-coincidence-t0}
			a_k=z_{r,k} \qquad \textrm{ and } \qquad \widetilde a_k=\widetilde z_{r,k} \textrm{ on } [0,\tau_k).
		\end{equation}
		Next, since $E_{r-s}^1f = f$ on $|x|\leq r-s$, see Proposition \ref{prop-extension},  and $\varphi=1$ on $(-r,r)$, by Lemma \ref{lem_consistency} followed by  \eqref{eqn-invariance-A} we infer that
		\begin{equation}\label{prop-coincidence-t1}
		\left\{
		\begin{aligned}
		&   \mathbbm{1}_{[0,\tau_k)}(s)[\widetilde F_{r,k}(s,z_{r,k}(s))](x)=\mathbbm{1}_{[0,\tau_k)}(s)[\mathbf F_{r,k}(s,\widetilde z_{r,k}(s))](x),  \\
		& \mathbbm{1}_{[0,\tau_k)}(s)[\widetilde G_{r,k}(s,z_{r,k}(s))e](x)=\mathbbm{1}_{[0,\tau_k)}(s)[\mathbf G_{r,k}(s,\widetilde z_{r,k}(s))e](x), \quad e\in K,
		\end{aligned}\right.
		\end{equation}
		holds for every $|x|\le r-s$, $0\le s\le T$.
		Now we claim that if we denote
		$$ p(t) : = \frac{1}{2} \| a_k(t)-\widetilde a_k(t)\|_{\mathcal H_{r-t}}^2, $$
		then the map $s\mapsto p(s\land\tau_k)$ is continuous and uniformly bounded. Indeed, since, by Proposition \ref{prop-extension},  $\xi(x) = (u_0(x), v_0(x)) \in TM$ for $|x| \leq r$, the  uniform boundedness is an easy consequence of bound property of $C_0$-group, Lemmata \ref{lem_consistency}  and \ref{lem-lip}. Continuity of $s\mapsto p(s\land\tau_k)$ follows from the following:
		\begin{enumerate}
			\item  for every $z \in \mathcal{H}$, the map $t \mapsto \| z\|_{\mathcal{H}_{r-t}}^2$ is continuous;
			\item  for each  $t$, the map
			\[L^2(\mathbb{R}) \ni u \mapsto \int_{0}^{t} |u(s)|^2 \, ds \in \mathbb{R}, \]
			 is locally Lipschitz.
		\end{enumerate}
		Now observe that by applying Proposition \ref{prop_magic} for
		$$ k =1,L = I, T = r, x=0 \quad \textrm{and} \quad z(t) = (u(t), v(t)) : = a_k(t) - \widetilde{a}_k(t), $$
		we get $\mathbf{e}(t,r;0,z(t)) = p(t)$,  and the following
		\begin{align}\label{prop-coincidence-t1a-2}
		\mathbf e(t,r;0,(t)) & \leq  \mathbf e(0,r;0,z_0)+\int_0^t V(s,z(s))\,ds.
		\end{align}
		Here
		\begin{align}
		V(t,z(t)) & : = \langle 	u(t),v(t) \rangle_{L^2(B_{r-t})}+ \langle  v(t), f(t)\rangle_{L^2(B_{r-t})}  + \langle  \partial_x v(t), \partial_x f(t)\rangle_{L^2(B_{r-t})}   \nonumber\\
		&    \qquad + \langle 	 v(t), g(t) \rangle_{L^2(B_{r-t})}  + \langle 	\partial_x  v(t), \partial_x g(t) \rangle_{L^2(B_{r-t})}, \nonumber
		\end{align}		
		and
		\begin{align}
		& \left(\begin{array}{c} 0\\ f(t) \end{array}\right) : = \mathbbm{1}_{[0,\tau_k)}(t) [\mathbf F_{r,k}(t,z_{r,k}(t)) - \widetilde{F}_{r,k}(t,z_{r,k}(t))], \nonumber\\
		& \left(\begin{array}{c} 0\\ g(t) \end{array}\right) : = \mathbbm{1}_{[0,\tau_k)}(t) [\mathbf G_{r,k}(t,z_{r,k}(t)) \dot{h}(t) - \widetilde{G}_{r,k}(t,z_{r,k}(t)) \dot{h}(t) ]. \nonumber
		\end{align}
		Due to the extension operators $E_r^2$  and $E_r^1$ the initial data $\xi$ in the definition \eqref{defn-aProcesses} satisfies the assumption of Lemma \ref{lem_consistency}, $S_t\mathbf Q_r(\xi) = S_t \xi$, and so
		$\mathbf{e}(0,0;0,z(0)) = p(0)=0$. Next observe that by the Cauchy-Schwarz inequality  we have
		\begin{align}
		V(t,z(t)) & \leq  \frac{1}{2}  \|u(t)\|_{L^2(B_{r-t})}^2 +  \frac{3}{2}\|v(t)\|_{L^2(B_{r-t})}^2 + \frac{1}{2} \|f(t)\|_{L^2(B_{r-t})}^2 +  \|\partial_x v(t)\|_{L^2(B_{r-t})}^2 \nonumber\\
		& \quad  + \frac{1}{2}  \|\partial_x f(t)\|_{L^2(B_{r-t})}^2 + \frac{1}{2}\|g(t)\|_{L^2(B_{r-t})}^2+ \frac{1}{2}\|\partial_x g(t)\|_{L^2(B_{r-t})}^2  \nonumber\\
		& \leq  3 p(t) + \frac{1}{2}  \|f(t)\|_{H^1(B_{r-t})}^2   + \frac{1}{2} \|g(t) \|_{H^1(B_{r-t})}^2. \nonumber
		\end{align}
		By using above into \eqref{prop-coincidence-t1a-2} and, then, by invoking equalities \eqref{prop-coincidence-t1} and \eqref{prop-coincidence-t0}, definition \eqref{stoppingTimes},  Lemma \ref{hsop} and Lemma \ref{lem-lip}    we have the following calculation, for every $t \in [0,T]$,
		\begin{align}\label{prop-coincidence-t1a-6}
		p(t) & \leq \int_{0}^{t} 3p(s)\, ds + \frac{1}{2} \int_{0}^{t} \mathbbm{1}_{[0,\tau_k)}(s) \|\mathbf F_{r,k}^2(s,z_{r,k}(s)) - \mathbf{F}_{r,k}^2(s,\tilde{z}_{r,k}(s)) \|_{H^1(B_{r-s})}^2 \, ds  \nonumber\\
		& \quad + \frac{1}{2} \int_{0}^{t} \mathbbm{1}_{[0,\tau_k)}(s) \|\mathbf G_{r,k}^2(s,z_{r,k}(s)) - \mathbf{G}_{r,k}^2(s,\tilde{z}_{r,k}(s)) \|_{\mathscr{L}_2(\rkhs, H^1(B_{r-s}))}^2  \|\dot{h}(s)\|_{\rkhs}^2 \, ds \nonumber\\
		& \leq 3 \int_{0}^{t}  p(s)\, ds + \frac{1}{2} C_{r,k}^2  \int_{0}^{t} \mathbbm{1}_{[0,\tau_k)}(s) \|z_{r,k}(s) - \widetilde{z}_{r,k}(s) \|_{\mathcal{H}_{r-s}}^2 \, ds  \nonumber\\
		& \quad + \frac{1}{2}C_{r,k}^2 \int_{0}^{t} \mathbbm{1}_{[0,\tau_k)}(s) \|z_{r,k}(s) - \widetilde{z}_{r,k}(s) \|_{\mathcal{H}_{r-s}}^2  \|\dot{h}(s)\|_{\rkhs}^2 \, ds  \nonumber\\
		& \leq (3+C_{r,k}^2 ) \int_{0}^{t}   p(s) (1+ \|\dot{h}(s)\|_{\rkhs}^2) \, ds.
		\end{align}
		Consequently by the Gronwall Lemma, for $t \in [0,\tau_k]$,
		\begin{equation}\label{prop-coincidence-t1a-8}
			p(t) \lesssim_{C_{r,k}} p(0)  \exp\left[ \int_{0}^{t} (1+ \|\dot{h}(s)\|_{\rkhs}^2) \, ds  \right].
		\end{equation}
		Note that the right hand side in \eqref{prop-coincidence-t1a-8} is finite because $h \in \prescript{}{0}{H}^{1,2}(0,T;\rkhs)$.  Since we know that $p(0)=0$ we arrive to  $p(t)=0$ on $ t \in [0,\tau_k]$ . This further implies that $a_k(t,x)=\widetilde{a}_k(t,x)$ hold for $|x|\le r-t$ and $t\le\tau_k$. Consequently, $z_{r,k}(t,x) = \widetilde{z}_{r,k}(t,x)$ hold for $|x|\le r-t$ and $t\le\tau_k$. So, because $\widetilde{z}_{r,k}(t,x) = \mathbf{Q}_r(z_{r,k}(t))$ and $\varphi=1$ on $(-r,r)$,
		\begin{equation}\label{prop-coincidence-t2}
			u_{r,k}(t,x) = \h(u_{r,k}(t,x)), \qquad \textrm{ for } |x|\leq r-t , \quad   t\leq \tau_k.
		\end{equation}
		Since, by definition \eqref{stoppingTimes} of $\tau_k$,  $u_{r,k}(t,x)\in O$, equality \eqref{prop-coincidence-t2} and Lemma \ref{cor_inv}, gives $u_{r,k}(t,x)\in M$ for $|x|\le r-t$ and $t\le\tau_k$. This suggests that $\tau_k\le\tau_k^3$ and  hence $\tau_k=\tau_k^1\land\tau_k^2$. It remains to show that $\tau_k^1=\tau_k^2$. But suppose it does not hold and without loss of generality we assume that $\tau_k^1 > \tau_k^2 $.   Then by definition \eqref{stoppingTimes} and the continuity of $z_{r,k}$ and $\tilde{z}_{r,k}$ in time we have
		\begin{equation}\nonumber
		\| z_{r,k}(\tau_k^2,\cdot)\|_{\mathcal{H}_{r-\tau_k^2}} < k \quad \textrm{ but } \quad \| \widetilde{z}_{r,k}(\tau_k^2,\cdot) \|_{\mathcal{H}_{r-\tau_k^2}} \geq k,
		\end{equation}
		which contradicts the above mentioned consequence of  $p=0$ on $[0,\tau_k]$. Hence we conclude that $\tau_k^1=\tau_k^2$ and this finishes the proof of Proposition  \ref{prop-coincidence}.
	\end{proof}
	Next in the ongoing proof of Theorem \ref{thm-skeleton} we show that the approximate solutions extend each other.  Recall that $r> R + T$ is fixed for given $T >0$.
	\begin{Lemma}\label{ndcr}
		Let $k\in\mathbb N$ and $\xi= (E_r^2u_0, E_r^1 v_0)$. Then $z_{r,k+1}(t,x)=z_{r,k}(t,x)$ on $|x|\le r-t$, $t\le\tau_k$, and $\tau_k \le\tau_{k+1}$.
	\end{Lemma}
	\begin{proof}[\textbf{Proof of Lemma \ref{ndcr}}] Define $$p(t) : = \frac{1}{2} \Vert a_{k+1}(t)-a_k(t)\Vert^2_{H^1(B_{r-t}) \times L^2(B_{r-t})}. $$
		As an application of Proposition \ref{prop_magic}, by performing the  computation based on \eqref{prop-coincidence-t1a-2} -  \eqref{prop-coincidence-t1a-6}, with $k =0$ and rest the same, we obtain
		\begin{align}\label{ndcr-t1}
		p(t) & \le 2 \int_0^tp(s)\,ds+ \frac{1}{2} \int_0^t\Vert \mathbbm{1}_{[0,\tau_{k+1})}(s)\mathbf F^2_r(s,z_{r,k+1}(s))-\mathbbm{1}_{[0,\tau_k)}(s)\mathbf F^2_r(s,z_{r,k}(s))\Vert^2_{L^2(B_{r-s})}\,ds \nonumber\\
		& \quad +\frac{1}{2} \int_0^t\Vert \mathbbm{1}_{[0,\tau_{k+1})}(s)\mathbf G^2_r(s,z_{r,k+1}(s)) \dot{h}(s) -\mathbbm{1}_{[0,\tau_k)}(s)\mathbf G^2_r(s,z_{r,k}(s)) \dot{h}(s)\Vert^2_{L^2(B_{r-s})}\,ds.
		\end{align}
		Then,  since $F_r$ and $G_r$ depends on  $u_{r,k}(s)$, $u_{r,k+1}(s)$ and their first partial derivatives, with respect to time $t$ and space $x$, which are actually bounded on the interval $(-(r-s),r-s)$ by some constant $C_r$ for every $s<\tau_{k+1}\land\tau_k$, by evaluating \eqref{ndcr-t1} on $t\land\tau_{k+1}\land\tau_k$ following the use of Lemmata \ref{lem-lip} and \ref{hsop} we get
		\begin{align}
		p(t\land\tau_{k+1}\land\tau_k) & \leq 2 \int_0^tp(s\land\tau_{k+1}\land\tau_k)\,ds \nonumber\\
		& \quad + \frac{1}{2} \int_0^{t\land\tau_{k+1}\land\tau_k }\Vert \mathbf F^2_r(s,z_{r,k+1}(s))-\mathbf F^2_r(s,z_{r,k}(s))\Vert^2_{L^2(B_{r-s})}\,ds \nonumber\\
		& \quad + \frac{1}{2} \int_0^{t\land\tau_{k+1}\land\tau_k}\Vert \mathbf G^2_r(s,z_{r,k+1}(s))\zeta(s)-\mathbf G^2_r(s,z_{r,k}(s))\dot{h}(s)\Vert^2_{L^2(B_{r-s})}\,ds \nonumber \\
		& \lesssim_k \int_0^tp(s\land\tau_{k+1}\land\tau_k) (1+ \| \dot{h}(s)\|_{\rkhs}^2) \,ds.  \nonumber
		\end{align}
		Hence by the Gronwall Lemma we infer that $p=0$ on $[0,\tau_{k+1}\land\tau_k]$.

		Consequently, we claim that $\tau_k\le\tau_{k+1}$.  We divide the proof of our claim in the following three exhaustive subcases. Due to \eqref{stoppingTimes}, the subcases when $\|\xi\|_{\mathcal{H}_r} > k+1$ and $k  < \|\xi\|_{\mathcal{H}_r} \leq k+1$ are trivial. In the last subcase when $\|\xi\|_{\mathcal{H}_r} \leq k$ we  prove the claim $\tau_k\le\tau_{k+1}$ by the method of contradiction, and so assume that $\tau_k > \tau_{k+1}$ is true.  Then, because of continuity in time of $z_{r,k}$ and $z_{r,k+1}$,  by \eqref{stoppingTimes} we have
		\begin{equation}\label{ndcr-t3}
		\| z_{r,k}(\tau_{k+1})\|_{\mathcal{H}_{r-\tau_{k+1}}} < k \quad \textrm{ and } \quad \| z_{r,k+1}(\tau_{k+1})\|_{\mathcal{H}_{r-\tau_{k+1}}} \geq k.
		\end{equation}
		However, since $p(t) = 0$ for  $t \in [0,\tau_{k+1}\land\tau_k]$  and $(u_0(x),v_0(x)) \in TM$ for $|x|< r$, by argument based on the one made  after \eqref{prop-coincidence-t1a-8},  in the Proposition \ref{prop-coincidence}, we get $z_{r,k}(t,x) =z_{r,k+1}(t,x)$ for every $t \in [0,\tau_{k+1}]$ and $|x|\leq r-t$. But this contradicts \eqref{ndcr-t3} and we finish the proof of our claim and, in result, the proof of Lemma \ref{ndcr}.
	\end{proof}
	Since by definition \eqref{stoppingTimes} and Lemma \ref{ndcr} the sequence of stopping times $\{\tau_k\}_{k  \geq 1}$ is bounded and non-decreasing, it makes sense to denote by $\tau$ the limit of $\{\tau_k\}_{k  \geq 1}$. Now by using \cite[Lemma 10.1]{Brz+Ondr_2007}, we prove that the approximate solutions do not explode which is same as the following in terms of $\tau$.
	\begin{Proposition}\label{nonexp}
		For $\tau_k$ defined in \eqref{stoppingTimes}, $\tau := \lim\limits_{k \to \infty} \tau_k = T$.
	\end{Proposition}
	\begin{proof} [\textbf{Proof of Proposition \ref{nonexp}}]
		We  first notice  that  by a particular case of the Chojnowska-Michalik Theorem \cite{Chojn-M_1979}, when the diffusion coefficient is absent, we have that for each $k$ the  approximate solution $z_{r,k}$, as a function of time $t$, is $H^1(\mathbb{R};\mathbb{R}^n) \times L^2(\mathbb{R};\mathbb{R}^n)$-valued and  satisfies
		\begin{equation}\label{vcfa}
			z_{r,k}(t)=\xi+\int_0^t\mathcal Gz_{r,k}(s)\,ds+\int_0^t\mathbf F_{r,k}(s,z_{r,k}(s))\,ds+\int_0^t\mathbf G_{r,k}(s,z_{r,k}(s))  \dot{h}(s)\,ds,
		\end{equation}
		for $t \leq T$. In particular,
		$$
			u_{r,k}(t)=\xi_1+\int_0^tv_{r,k}(s)\,ds,
		$$
		for $t\le T$, where $\xi_1 = E_r^2 u_0$ and  the integral converges in $H^1(\mathbb{R};\mathbb{R}^n)$. Hence
		$$\partial_tu_{r,k}(s,x)=v_{r,k}(s,x), \qquad \textrm{for all} \quad s\in [0,T], x \in\mathbb{R}. $$

		Next, by keeping in mind the Proposition \ref{prop-coincidence}, we set
		\begin{equation}\nonumber
			l(t) : =\Vert a_k(t)\Vert_{H^1(B_{r-t})\times L^2(B_{r-t})}^2 \quad \textrm{ and } \quad q(t) : =\log(1+\Vert a_k(t)\Vert_{\mathcal H_{r-t}}^2).
		\end{equation}
		By applying  Proposition \ref{prop_magic}, respectively, with $ k =0,1$ and $L(x) = x, \log(1+x)$,  followed by the use of Lemma \ref{lem-lip} we get
		\begin{align}\label{sob1}
			l(t) &\leq  l(0)+\int_0^tl(s)\,ds + \int_0^t\mathbbm{1}_{[0,\tau_k]}(s)\langle v_{r,k}(s),\varphi(s)\rangle_{L^2(B_{r-s})}\,ds	\nonumber	
			\\
			& \quad + \int_0^t\mathbbm{1}_{[0,\tau_k]}(s)\langle v_{r,k}(s),\psi(s) \rangle_{L^2(B_{r-s})}\,ds,
		\end{align}
		and
		\begin{align}\label{sob2}
			q(t) &\leq  q(0)+\int_0^t\frac{\Vert a_k(s)\Vert_{\mathcal H_{r-s}}^2}{1+\Vert a_k(s)\Vert_{\mathcal H_{r-s}}^2}\,ds \nonumber		\\
			& \quad +\int_0^t\mathbbm{1}_{[0,\tau_k]}(s)\frac{\langle v_{r,k}(s), \varphi(s) \rangle_{L^2(B_{r-s})}}{1+\Vert a_k(s)\Vert_{\mathcal H_{r-s}}^2}\,ds
			+ \int_0^t\mathbbm{1}_{[0,\tau_k]}(s)\frac{\langle\partial_xv_{r,k}(s),\partial_x[\varphi(s) ]\rangle_{L^2(B_{r-s})}}{1+\Vert a_k(s)\Vert_{\mathcal H_{r-s}}^2}\,ds
			\nonumber \\
			& \quad +\int_0^t\mathbbm{1}_{[0,\tau_k]}(s)\frac{\langle v_{r,k}(s), \psi(s) \rangle_{L^2(B_{r-s})}}{1+\Vert a_k(s)\Vert_{\mathcal H_{r-s}}^2}\,ds+  \int_0^t\mathbbm{1}_{[0,\tau_k]}(s)\frac{\langle\partial_xv_{r,k}(s),\partial_x[\psi(s) ]\rangle_{L^2(B_{r-s})}}{1+\Vert a_k(s)\Vert_{\mathcal H_{r-s}}^2}\,ds.			
		\end{align}
		Here
		\begin{align}
			& \varphi(s) := \mathcal A_{u_{r,k}(s)}(v_{r,k}(s),v_{r,k}(s))-\mathcal A_{u_{r,k}(s)}(\partial_x u_{r,k}(s),\partial_x u_{r,k}(s)) , \nonumber\\
			& \psi(s) := Y_{u_{r,k}(s)} (\partial_t u_{r,k}(s), \partial_x u_{r,k}(s))  \dot{h} (s). \nonumber
		\end{align}
		Since by Proposition \ref{prop-coincidence} $u_{r,k}(s,x)\in M$ for $|x|\le r-s$ and $s\le\tau_k$, we have
		$$ u_{r,k}(s,x)\in M \quad \textrm{and} \quad \partial_tu_{r,k}(s,x) =v_{r,k}(s,x)\in T_{u_{r,k}(s,x)}M, $$
		on the mentioned domain of $s$ and $x$.  Consequently, by Proposition \ref{sft}, we get
		\begin{align}\label{nonexp-t1}
		& \mathcal A_{u_{r,k}(s,x)}(v_{r,k}(s,x),v_{r,k}(s,x))=A_{u_{r,k}(s,x)}(v_{r,k}(s,x),v_{r,k}(s,x)), \\
		& \mathcal A_{u_{r,k}(s,x)} (\partial_xu_{r,k}(s,x),\partial_xu_{r,k}(s,x)) = 		A_{u_{r,k}(s,x)}(\partial_xu_{r,k}(s,x),\partial_xu_{r,k}(s,x)), \nonumber
		\end{align}
		on $|x|\le r-s$ and $s\le\tau_k$. Hence, since $v_{r,k}(s,x)\in T_{u_{r,k}(s,x)}M$, and by definition, $A_{u_{r,k}(s,x)} \in N_{u_{r,k}(s,x)}M$, the $L^2$-inner product on domain $B_{r-s}$ vanishes and, in result, the second integrals in \eqref{sob1} and \eqref{sob2} are equal to zero.

		Next, to deal with the integral containing terms $\psi$, we follow Lemma \ref{lem-lip} and we invoke Lemma \ref{hsop}, estimate \eqref{y00}, and Proposition \ref{prop-coincidence} to get
		\begin{align}\label{nonexp-t3}
		&  \langle v_{r,k}(s), Y_{u_{r,k}(s)} (\partial_t u_{r,k}(s), \partial_x u_{r,k}(s))  \dot{h} (s) \rangle_{L^2(B_{r-s})}  \nonumber\\
		& \lesssim \| v_{r,k}(s)\|_{L^2(B_{r-s})}^2 +    \| Y_{u_{r,k}(s)} (\partial_t u_{r,k}(s), \partial_x u_{r,k}(s))  \dot{h} (s) \|_{L^2(B_{r-s})}^2  \nonumber\\
		&  \leq  \| v_{r,k}(s)\|_{L^2(B_{r-s})}^2 + C_{Y_0}^2 C_r^2 \left( 1+ \| v_{r,k}(s)\|_{L^2(B_{r-s})}^2 + \| \partial_x u_{r,k}(s) \|_{L^2(B_{r-s})}^2  \right)\| \dot{h}(s) \|_{\rkhs}^2\nonumber\\
		& \lesssim (1+l(s)) (1+ \| \dot{h}(s)\|_{\rkhs}^2 ),
		\end{align}
		for some $C_r >0$, and estimates \eqref{y2}-\eqref{y3} yields
		\begin{align}\label{nonexp-t3i}
		&  \langle v_{r,k}(s), Y_{u_{r,k}(s)} (\partial_t u_{r,k}(s), \partial_x u_{r,k}(s))  \dot{h} (s) \rangle_{L^2(B_{r-s})}  \nonumber\\
		&  \qquad +  \langle \partial_x v_{r,k}(s), \partial_x [Y_{u_{r,k}(s)} (\partial_t u_{r,k}(s), \partial_x u_{r,k}(s))  \dot{h} (s)] \rangle_{L^2(B_{r-s})}  \nonumber\\
		& \lesssim \| v_{r,k}(s)\|_{H^1(B_{r-s})}^2 +    \| Y_{u_{r,k}(s)} (\partial_t u_{r,k}(s), \partial_x u_{r,k}(s))  \dot{h} (s) \|_{H^1(B_{r-s})}^2  \nonumber\\
		&  \leq \| v_{r,k}(s)\|_{H^1(B_{r-s})}^2 +  \| \dot{h}(s) \|_{\rkhs}^2  \left[ C_{Y_0}^2 C_r^2 \left( 1+ \| v_{r,k}(s)\|_{L^2(B_{r-s})}^2 + \| \partial_x u_{r,k}(s) \|_{L^2(B_{r-s})}^2  \right) \right. \nonumber\\
		& \quad \left. + C_{Y_1}^2  \left( 1+ \| v_{r,k}(s)\|_{H^1(B_{r-s})}^2 + \| \partial_x u_{r,k}(s) \|_{H^1(B_{r-s})}^2  \right) \| u_{r,k}(s)\|_{H^1(B_{r-s})}^2 \right. \nonumber\\
		& \quad \left.  + C_{Y_2}^2  \left( \| v_{r,k}(s)\|_{L^2(B_{r-s})}^2 + \| \partial_x u_{r,k}(s) \|_{L^2(B_{r-s})}^2  \right) \right]  \nonumber\\
		& \lesssim_{C_r, C_{Y_i}} ~ (1+l(s))~ (1+ \|a_k(s)\|_{\mathcal{H}_{r-s}}^2)  (1+ \| \dot{h}(s)\|_{\rkhs}^2 ), \quad i=0,1,2.
		\end{align}
		By substituting the estimates \eqref{nonexp-t1} and  \eqref{nonexp-t3} in the inequality \eqref{sob1} we get
		\begin{equation}\label{nonexp-t3a}
			l(t)  \lesssim  l(0)+\int_0^t \mathbbm{1}_{[0,\tau_k]}(s) (1+l(s)) ~ (1+\| \dot{h}(s)\|_{\rkhs}^2) \,ds.
		\end{equation}
		Now we define $S_j$ as the set of initial data whose norm under extension is bounded by $j$, in precise,  $$S_j := \{ (u_0,v_0) \in \mathcal{H}_{\textrm{loc}} :  ~ \|\xi \|_{\mathcal{H}_r}\leq j  \textrm{ where }  \xi :=  (E_r^2 u_0, E_r^1v_0) \}. $$
		Then, for the initial data belonging to $S_j$, the Gronwall Lemma  on \eqref{nonexp-t3a} yields
		\begin{equation}\label{gw1}
			1+ l_j(t\land\tau_k) \leq    K_{r,j}, \qquad t\le T,\quad 	j\in\mathbb N,
		\end{equation}
		where the constant $K_{r,j}$ also depends on $\| \dot{h}\|_{L^2(0,T; \rkhs)}$ and $l_j$ stands to show that $\eqref{gw1}$ holds under $S_j$ only.

		Next to deal with the third integral in \eqref{sob2}, denote by $O$ its integrand, we recall the  following celebrated Gagliardo-Nirenberg inequalities, see e.g. \cite{Friedman_1969},
		\begin{equation}\label{GNineq}
			\vert \psi\vert_{L^\infty(r-s)}^2\le\vert \psi\vert_{L^2(B_{r-s})}^2+2\vert \psi\vert_{L^2(B_{r-s})}\vert \dot\psi\vert_{L^2(B_{r-s})},\qquad\psi\in H^1(B_{r-s}).
		\end{equation}
		Then by applying \cite[Lemma 10.1]{Brz+Ondr_2007} followed by the generalized H\"older inequality and \eqref{GNineq} we infer
		\begin{align}\label{nonexp-t4}
		|O(s)| & \lesssim \mathbbm{1}_{[0,\tau_k)}(s)\frac{\int_{B_{r-s}}\{|\partial_xv_{r,k}||\partial_xu_{r,k}||v_{r,k}|^2+|\partial_{xx}u_{r,k}||\partial_xu_{r,k}|^2|v_{r,k}|+|\partial_xv_{r,k}||\partial_xu_{r,k}|^3\}\,dx}{1+\Vert a_k(s)\Vert^2_{\mathcal H_{r-s}}} \nonumber\\
		& \lesssim \mathbbm{1}_{[0,\tau_k)}(s) \frac{l(s)\Vert a_k(s)\Vert^2_{\mathcal H_{r-s}}}{1+\Vert a_k(s)\Vert^2_{\mathcal H_{r-s}}} \le \mathbbm{1}_{[0,\tau_k)}(s) (1+l(s)).
		\end{align}
		So, by substituting \eqref{nonexp-t1},  \eqref{nonexp-t3} and \eqref{nonexp-t4} in \eqref{sob2} we have
		\begin{equation}\nonumber
			q(t) \lesssim 1+ q(0) + \int_0^t \mathbbm{1}_{[0,\tau_k)}(s) (1+l(s)) ~(1+\| \dot{h}(s) \|_{\rkhs}^2)  \,ds.
		\end{equation}
		Consequently, by applying \eqref{gw1},   we obtain  on $S_j$,
		\begin{align} \label{gw2}
			q_j(t \land \tau_k) & \lesssim 1 + q_j(0) + \int_0^t  [1+l_j(s \land \tau_k )] ~(1+\| \dot{h}(s)\|_{\rkhs}^2)  \,ds \nonumber\\
			& \leq  C_{r,j} ~ \| \dot{h}\|_{L^2(0,T;\rkhs)}, \qquad j \in \mathbb{N}, t \in [0,T],
		\end{align}
		for some $C_{r,j} >0$, where in the last step we have used that $r> T$ and on set $S_j$ the quantity $q_j(0)$ is bounded by $\log(1+j)$.

		To complete the proof let us fix $t < T$. Then, by Proposition \ref{prop-coincidence},
		$$ \vert a_k(\tau_k)\vert_{\mathcal H_{r-\tau_k}}=\vert z_{r,k}(\tau_k)\vert_{\mathcal H_{r-\tau_k}}\ge k \quad \textrm{whenever} \quad \tau_k\le t. $$
		So for every $k$ such that $\tau_k \leq t$ we have
		\begin{equation}\nonumber
			 \log(1+k^2) \le  q(\tau_k) = q(t\land\tau_k).
		\end{equation}
		Thus by restricting us to $S_j$ and using inequality  \eqref{gw2}, we obtain
		\begin{equation}\label{prol}
			\log(1+k^2) \le q_j(t\land\tau_k) \lesssim C_{r,j} \| \dot{h}\|_{L^2(0,T;\rkhs)}.
		\end{equation}
		In this way, if  $\lim\limits_{k \to \infty} \tau_k = t_0$ for any $t_0 < T$, then by taking $k \to \infty$ in \eqref{prol} we get $C_{r,j} \|\dot{h} \|_{L^2(0,T;\rkhs)} \geq \infty$ which is absurd. Since this holds for every $j \in \mathbb{N}$ and $t_0 <T$, we infer that  $\tau=T$. Hence, the proof of Proposition \ref{nonexp} is complete.
	\end{proof}
	Now we have all the machinery required to finish the proof of Theorem \ref{thm-skeleton}.   Define
	$$
	w_{r,k}(t) : =\left(\begin{array}{c}E^2_{r-t}u_{r,k}(t)\\E^1_{r-t}v_{r,k}(t)\end{array}\right),
	$$
	and observe that $w_{r,k} : [0,T) \to \mathcal{H}$ is continuous.  If we set
	\begin{equation}\label{sol}
	z_r(t) : =\lim_{k\to\infty}w_{r,k}(t), \qquad t<T,
	\end{equation}
	then by Lemma \ref{ndcr} and  Proposition \ref{nonexp} it is straightforward to verify that, for every $t < T$, the sequence  $\{ w_{r,k}(t) \}_{k \in \mathbb{N}}$ is Cauchy in $\mathcal{H}$. But since $\mathcal{H}$ is complete,  the limit in \eqref{sol} converges in $\mathcal H$.
	Moreover, since  by  Proposition \ref{nonexp} $z_{r,k}(t) = z_{r, k_1}(t)$ for every $k_1 \geq k$ and $t \leq \tau_k$,  we have that $z_r(t)=w_{r,k}(t)$ for $t\le\tau_k$. In particular, $ [0,T) \ni t \mapsto z_r(t) \in\mathcal{H}$  is continuous and $z_r(t,x)=z_{r,k}(t,x)$ for $|x|\le r-t$ if $t\le\tau_k$.

	Hence, if we write $z_r(t) = (u_r(t), v_r(t))$, then we have shown that $u_r$ satisfy the first conclusion of the Theorem \ref{thm-exist}. In the remaining proof of the existence part we will show that the $z_r$, defined in \eqref{sol}, will satisfy all the remaining conclusions.
	Evaluation of (\ref{vcfa}) at $t \land \tau_k$ together applying the result from previous paragraph gives
	\begin{equation}\label{vcfa2}
	z_{r,k}(t\land\tau_k)=\xi+\int_0^{t\land\tau_k}\mathcal Gz_{r,k}(s)\,ds+\int_0^{t\land\tau_k}\mathbf F_r(s,z_{r,k}(s))\,ds+\int_0^{t\land\tau_k}\mathbf G_r(s,z_{r,k}(s)) \dot{h}(s)\,ds,
	\end{equation}
	and this equality holds in $H^1(\mathbb R; \mathbb{R}^n)\times L^2(\mathbb R; \mathbb{R}^n)$. Restricting to the interval $(-R,R)$, (\ref{vcfa2}) becomes
	$$
	z_r(t\land\tau_k)=\xi+\int_0^{t\land\tau_k}\mathcal Gz_r(s)\,ds+\int_0^{t\land\tau_k}\mathbf F_r(s,z_r(s))\,ds+\int_0^{t\land\tau_k}\mathbf G_r(s,z_r(s))  \dot{h}(s)\,ds,
	$$
	under the action of natural projection from $H^1(\mathbb R; \mathbb{R}^n)\times L^2(\mathbb R; \mathbb{R}^n)$ to $H^1((-R,R); \mathbb{R}^n)\times L^2((-R,R); \mathbb{R}^n)$. Here the integrals converge in $H^1((-R,R); \mathbb{R}^n)\times L^2((-R,R); \mathbb{R}^n)$. Taking the limit  $k \to \infty $ on both the sides, the dominated convergence theorem yields
	\begin{equation}\nonumber
	z_r(t)=\xi+\int_0^t\mathcal Gz_r(s)\,ds+\int_0^t\mathbf F_r(s,z_r(s))\,ds+\int_0^t\mathbf G_r(s,z_r(s)) \dot{h}(s)\,ds,\qquad t <T,
	\end{equation}
	in $H^1((-R,R); \mathbb{R}^n) \times L^2((-R,R); \mathbb{R}^n)$. In particular, by looking to each component separately we have, for every $t <T$,
	\begin{equation}\label{uSolnIntegral}
		u_r(t)=u_0+\int_0^tv_r(s)\,ds,
	\end{equation}
	in $H^1((-R,R); \mathbb{R}^n)$, and
	\begin{align}\label{vSolnIntegral}
		v_r(t) & =v_0+\int_0^t\left[\partial_{xx}u_r(s)+A_{u_r(s)}(v_r(s),v_r(s))-A_{u_r(s)}(\partial_xu_r(s),\partial_xu_r(s))\right]\,ds \nonumber\\
		& \qquad +\int_0^tY_{u_r(s)} ( v_r(s),\partial_xu_r(s))  \dot{h}(s) \,ds,
	\end{align}
	holds in $L^2((-R,R); \mathbb{R}^n)$. It is relevant to note that in the formula above, we have replaced $\mathcal{A}$ by $A$ which make sense because due to Proposition \ref{prop-coincidence} and  Proposition \ref{nonexp}, $u_r(t,x)=u_{r,k}(t,x)\in M$ for $|x|\le r-t$ and $t < T$. Hence we are done with the proof of existence part.

	Concerning the uniqueness, define
	$$
	Z(t) := \left(\begin{array}{c}E^2_RU(t)\\E^1_R\partial_tU(t)\end{array}\right),\qquad t<T,
	$$
	and observe that it is a $\mathcal{H}$-valued continuous function of $t \in [0,T)$. Define also
	$$
		\sigma_k : =\tau_k\land\inf\,\{t<T: \Vert Z(t)\Vert_{\mathcal H_{r-t}} \ge k\},
	$$
	and the $\mathcal H$-valued function, for $t < T$,
	$$
	\beta(t) : =S_t\xi+\int_0^tS_{t-s}\mathbbm{1}_{[0,\sigma_k)}(s)\mathbf F_{r,k}(s,Z(s))\,ds+\int_0^tS_{t-s}\mathbbm{1}_{[0, \sigma_k)}(s)\mathbf G_{r,k}(s,Z(s)) \dot{h}(s) \,ds.
	$$
	In the same vein as in the existence part of the proof,  as an application of the Chojnowska-Michalik Theorem and projection operator,  the restriction of $\beta$ on $\mathcal{H}_R$, which we denote by $b$, satisfies
	$$
	b(t)=\xi+\int_0^t\mathcal Gb(s)\,ds+\int_0^t\left(\begin{array}{c}0\\\mathcal A_{U(s)}(\partial_tU(s),\partial_tU(s))-\mathcal A_{U(s)}(\partial_xU(s),\partial_xU(s))\end{array}\right)\,ds
	$$
	$$
	+\int_0^t\left(\begin{array}{c}0\\Y_{U(s)}(\partial_tU(s),\partial_xU(s))  \dot{h}(s) \end{array}\right)\,ds,\qquad t\le \sigma_k,
	$$
	where the integrals converge in $H^1((-R,R); \mathbb{R}^n) \times L^2((-R,R); \mathbb{R}^n)$. Then since $U(t)$ and $\partial_t U(t)$ have similar form, respectively to \eqref{uSolnIntegral} and \eqref{vSolnIntegral}, by direct computation we deduce that function $p$ defined as
	$$
	p(t) : = b(t) - \left(\begin{array}{c}U(t) \\ \partial_tU(t) \end{array}\right),
	$$
	satisfies
	$$
	p(t)=\int_0^t\mathcal Gp(s)\,ds,\qquad t\le \sigma_k.
	$$
	Since the above implies that $p$ satisfies the linear homogeneous wave equation  with null initial data, by \cite[Remark 6.2]{Brz+Ondr_2007},
	\begin{equation}\label{pzero}
		p(t,x)=0 \quad \textrm{ for } \quad |x|\le R-t, t\le \sigma_k.
	\end{equation}
	Next we set
	$$ q(t): =\Vert \beta(t)-a_k(t)\Vert^2_{\mathcal H_{R-t}}, $$
	and apply Proposition \ref{prop_magic}, with $k =1, T= r, L= I$, to obtain
	\begin{align}\label{nonexp-t5}
		q(t\land \sigma_k) & \le 2 \int_0^{t\land \sigma_k} q(s)\, ds + \int_0^t \Vert \mathbf F_{r,k}(s,Z(s))-\mathbf F_{r,k}(s,a_k(s))\Vert^2_{\mathcal H} \,ds \nonumber\\
		& \quad +\int_0^{t\land \sigma_k}\Vert \mathbf G_{r,k}(s,Z(s))  \dot{h}(s) -\mathbf G_{r,k}(s,a_k(s))  \dot{h}(s) \Vert ^2_{\mathcal{H}}\,ds.
	\end{align}
	But we know that $r-t > R-t$, and by definition $\sigma_k \leq \tau_k$ which implies
	$$ F_{r,k}(t,z)=F_{R,k}(t,z), \qquad G_{r,k}(t,z)=G_{R,k}(t,z)\quad  \textrm{on} \quad (t-R,R-t), $$
	whenever $\Vert z\Vert_{\mathcal H_{r-t}}\le k$. Consequently, the estimate \eqref{nonexp-t5} becomes
	\begin{align}
		q(t\land \sigma_k)& \le 2 \int_0^{t\land \sigma_k}q(s) \, ds + \int_0^{t\land \sigma_k} \Vert \mathbf F_{R,k}(s,Z(s))-\mathbf F_{R,k}(s,a_k(s))\Vert^2_{\mathcal H}]\,ds \nonumber\\
		& \quad +\int_0^{t\land \sigma_k}\Vert \mathbf G_{R,k}(s,Z(s))  \dot{h}(s) -\mathbf G_{R,k}(s,a_k(s)) \dot{h}(s) \Vert ^2_{\mathcal{H}}\,ds. \nonumber
	\end{align}
	Invoking Lemmata \ref{lem-lip} and \ref{hsop} followed by \eqref{pzero} yields
	$$
	q(t\land \sigma_k) \le C_R  \int_0^{t\land \sigma_k} q(s)  (1+\| \dot{h}(s)\|_{\rkhs}^2 )  \,ds.
	$$
	Therefore, we get $q=0$ on $[0,\sigma_k)$ by the Gronwall Lemma. Since in the limit $k \to \infty$,  $\sigma_k$ goes to $T$ as $\tau_k$, by taking  $k$ to infinity, by Proposition \ref{prop-coincidence} we obtain that $u_r(t,x)=U(t,x)$ for each $t<T$ and $|x|\le R-t$. The proof of Theorem \ref{thm-skeleton} completes here.
\end{proof}

\section{Large deviation principle}\label{sec:LDP}
In this section we establish a large deviation principle (LDP) for system \eqref{SGWE-fundamental} via a weak convergence approach developed in \cite{Budhiraja+Dupuis_2000} and \cite{Budhiraja+Dupuis+Maroulas_2008} which is based on variational representations of infinite-dimensional Wiener processes.

First, let us recall the general criteria for LDP obtained in \cite{Budhiraja+Dupuis_2000}. Let $(\Omega,\mathfrak{F},  \mathbb{P})$ be a probability space with an increasing family $\mathbb{F} := \{\mathfrak{F}_t,  t \geq 0   \}$ of the sub-$\sigma$-fields of $\mathfrak{F}$ satisfying the usual conditions.  Let $\mathscr{B}(E)$ denote the Borel $\sigma$-field of the Polish space $E$ (i.e. complete separable metric space).
Since we are interested in the large deviations of continuous stochastic processes, we follow \cite{Chueshov+Millet_2010} and consider the following definition of large deviations principle given in terms of random variables.
\begin{Definition}
	The  $(E, \mathscr{B}(E))$-valued random family $ \left\{ X^{\varepsilon} \right\}_{\varepsilon >0}$, defined on $(\Omega,\mathfrak{F},\mathbb{P})$, is said to satisfy a large deviation principle on $E$ with the good rate function $\mathcal{I}$ if the following conditions hold:
	\begin{enumerate}
		\item  \textbf{$\mathcal{I}$ is a good rate function}: The function $\mathcal{I} : E \to [0,\infty]$ is such that for each $\mathcal{M} \in [0,\infty)$ the level set $\{\phi \in E: \mathcal{I}(\phi) \leq \mathcal{M} \}$ is a compact subset of $E$.
		\item  \textbf{Large deviation upper bound}: For each closed subset $F$ of $E$
		\begin{equation}\nonumber
		\limsup_{\varepsilon \to 0} ~ \varepsilon \log \mathbb{P}\left[  X^{\varepsilon} \in F\right] \leq - \inf_{u \in F} \mathcal{I}(u).
		\end{equation}
		\item  \textbf{Large deviation lower bound}: For each open subset $G$ of $E$
		\begin{equation}\nonumber
		\liminf_{\varepsilon \to 0} ~ \varepsilon \log \mathbb{P}\left[  X^{\varepsilon}  \in G \right] \geq - \inf_{u \in G} \mathcal{I}(u),
		\end{equation}
		where by convention the infimum over an empty set is $+\infty$.
	\end{enumerate}
\end{Definition}
Assume that $K, H$ are separable Hilbert spaces such that the embedding $K \hookrightarrow H$ is Hilbert-Schmidt. Let $W:= \{ W(t), t \geq 0 \}$ be a cylindrical Wiener process on $K$ defined on  $(\Omega, \mathfrak{F}, \mathbb{F}, \mathbb{P})$. Hence the paths of $W$ take values in $\mathcal{C}([0,\infty); H)$.

Let us,  for the whole section,   fix a number  $T>0$.
Note that the RKHS linked to $W$ restricted to the time interval $[0,T]$ is equal to  $\prescript{}{0}{H}^{1,2}(0,T;K)$.
Let $\mathscr{S}$ be the class of $K$-valued $\mathbb{F}$-predictable processes $\phi$ belonging to $\prescript{}{0}{H}^{1,2}(0,T;K)$, $\mathbb{P}$-almost surely.
For $\mathcal{M} >0$, we set
\begin{equation}\label{eqn-S_M}
S_{\mathcal{M}} := \left\{ h \in \prescript{}{0}{H}^{1,2}(0,T;K): \int_{0}^{T} \| \dot{h}(s)\|_K^2 \,ds \leq \mathcal{M}  \right\}.
\end{equation}
The set $S_{\mathcal{M}}$ endowed with the weak topology from $\prescript{}{0}{H}^{1,2}(0,T;K)$, is metrizable by the following metric
\begin{equation}\nonumber
d_1(h,k) := \sum_{i=1}^{\infty} \frac{1}{2^i} \biggl\vert  \int_{0}^{T} \langle \dot{h}(s) - \dot{k}(s), e_i \rangle_K  \, ds \biggl\vert,
\end{equation}
where $\{e_i\}_{i \in\mathbb{N}}$ is a complete orthonormal basis for $L^2(0,T;K)$, is a Polish space, see \cite{Budhiraja+Dupuis+Maroulas_2008}.
Define  $\mathscr{S}_{\mathcal{M}}$ as the set of bounded stochastic controls by
\begin{equation}\nonumber
\mathscr{S}_{\mathcal{M}} := \{ \phi \in \mathscr{S} : \phi(\omega) \in S_{\mathcal{M}} , \mathbb{P}\textrm{-a.s.} \}.
\end{equation}
Note that $\cup_{M >0}\mathscr{S}_{\mathcal{M}}$ is a proper subset of $\mathscr{S}$. Next, consider a family indexed by $\varepsilon \in (0,1]$ of Borel measurable maps
\begin{equation}\nonumber
J^{\varepsilon} : \prescript{}{0}{\mathcal{C}}([0,T]; H) \to E.
\end{equation}
We denote by $\mu^{\varepsilon}$ the `` image" measure on $E$ of $\mathbb{P}$ by $J^{\varepsilon}$, that is,
\begin{equation}\nonumber
\mu^{\varepsilon} = J^{\varepsilon}(\mathbb{P}), \quad i.e.\quad  \mu^{\varepsilon}(A) = \mathbb{P} \left( (J^{\varepsilon})^{-1}(A)\right), \quad  A \in\mathscr{B}(E).
\end{equation}
We have the following result.
\begin{Theorem}\label{thm-BudDup} \cite[Theorem 4.4]{Budhiraja+Dupuis_2000} Suppose that there exists a measurable map $J^{0} : \prescript{}{0}{\mathcal{C}}([0,T];H) \to E$ such that
	
	\begin{enumerate}
		\item[\textbf{BD1} :] if $\mathcal{M}>0$ and a family $\{ h_{\varepsilon} \} \subset \mathscr{S}_{\mathcal{M}}$  converges in law as $S_{\mathcal{M}}$-valued  random elements to $h \in\mathscr{S}_{\mathcal{M}}$ as $\varepsilon \to 0$, then the random variables
		\begin{equation}\nonumber
		\prescript{}{0}{\mathcal{C}}([0,T]; H) \ni \omega  \mapsto J^{\varepsilon} \left( \omega + \frac{1}{\sqrt{\varepsilon}}  \int_{0}^{\cdot} \dot{h}_{\varepsilon}(s) \,ds \right) \in E,
		\end{equation}
		converges in law, as $\varepsilon \searrow 0$, to the random variable $J^0 \left(  \int_{0}^{\cdot} \dot{h}(s) \,ds\right)$,
		\item[\textbf{BD2} :] for every $\mathcal{M} > 0$, the set
		\begin{equation}\nonumber
		\left\{  J^0 \left(  \int_{0}^{\cdot} \dot{h}(s) \,ds\right): h \in S_{\mathcal{M}}  \right\},
		\end{equation}
		is a compact subset of $E$.
	\end{enumerate}
	Then the family of measures $\mu^{\varepsilon}$ satisfies the large deviation principle (LDP) with the rate function defined by
	\begin{equation}\label{rateFn}
	\mathcal{I}(u) := \inf \left\{ \frac{1}{2} \int_{0}^{T} \|\dot{h}(s)\|_K^2\, ds : \prescript{}{0}{H}^{1,2}(0,T;K)  \textrm{ and } u =  J^0 \left(  \int_{0}^{\cdot} \dot{h}(s) \,ds\right) \right\},
	\end{equation}
	with the convention $\inf \{ \emptyset\} = +\infty$.
\end{Theorem}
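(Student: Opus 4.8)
This is \cite[Theorem 4.4]{Budhiraja+Dupuis_2000}; here we only outline the argument. The plan is to replace the LDP by its equivalent formulation as a Laplace principle and to establish the latter via the Bou\'e--Dupuis variational representation for functionals of the cylindrical Wiener process $W$. Since $E$ is Polish, it is enough to show that $\mathcal I$ defined in \eqref{rateFn} is a good rate function and that, for every bounded continuous $g:E\to\mathbb R$,
\begin{equation}\nonumber
\lim_{\varepsilon\to 0}\Big(-\varepsilon\log\mathbb E\big[e^{-g(J^\varepsilon(\sqrt\varepsilon W))/\varepsilon}\big]\Big)=\inf_{\xi\in E}\big\{g(\xi)+\mathcal I(\xi)\big\}.
\end{equation}
The one genuinely analytic ingredient is the representation, valid for every bounded Borel $F:\prescript{}{0}{\mathcal C}([0,T],H)\to\mathbb R$,
\begin{equation}\nonumber
-\log\mathbb E\big[e^{-F(W)}\big]=\inf_{\phi\in\mathscr S}\mathbb E\Big[\tfrac12\int_0^T\|\dot\phi(s)\|_K^2\,ds+F\Big(W+\int_0^\cdot\dot\phi(s)\,ds\Big)\Big];
\end{equation}
applying it to $F=\varepsilon^{-1}g\circ J^\varepsilon$ composed with the Brownian scaling $\omega\mapsto\sqrt\varepsilon\,\omega$ and then substituting $\phi\mapsto\sqrt\varepsilon\,\phi$ (which leaves the control class $\mathscr S$ invariant) will yield the rescaled identity
\begin{equation}\nonumber
-\varepsilon\log\mathbb E\big[e^{-g(J^\varepsilon(\sqrt\varepsilon W))/\varepsilon}\big]=\inf_{\phi\in\mathscr S}\mathbb E\Big[\tfrac12\int_0^T\|\dot\phi(s)\|_K^2\,ds+g\Big(J^\varepsilon\Big(\sqrt\varepsilon W+\int_0^\cdot\dot\phi(s)\,ds\Big)\Big)\Big],
\end{equation}
which is the object on which both inequalities will be tested.

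For the Laplace lower bound I would fix $g$ and $\delta>0$, pick controls $\phi_\varepsilon\in\mathscr S$ attaining the infimum up to $\delta$, and note that boundedness of $g$ forces $\tfrac12\mathbb E\int_0^T\|\dot\phi_\varepsilon(s)\|_K^2\,ds$ to stay bounded uniformly in $\varepsilon$; a stopping-time truncation of $\phi_\varepsilon$ then lets one assume $\phi_\varepsilon\in\mathscr S_{\mathcal M}$ for a fixed $\mathcal M$. Since $S_{\mathcal M}$ with its weak topology is compact Polish, a subsequence of $\phi_\varepsilon$ converges in law in $S_{\mathcal M}$ to some $\phi\in\mathscr S_{\mathcal M}$; \textbf{BD1} then gives convergence in law of $J^\varepsilon(\sqrt\varepsilon W+\int_0^\cdot\dot\phi_\varepsilon)$ to $J^0(\int_0^\cdot\dot\phi)$, and passing to the limit with the help of the Skorokhod representation theorem and the weak lower semicontinuity of $\phi\mapsto\tfrac12\int_0^T\|\dot\phi\|_K^2\,ds$ will produce $\liminf_{\varepsilon\to0}(-\varepsilon\log\mathbb E[e^{-g/\varepsilon}])\ge\inf_{\xi}\{g(\xi)+\mathcal I(\xi)\}$.

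For the Laplace upper bound I would fix $\delta>0$, choose $\xi_0\in E$ and a \emph{deterministic} $h\in\prescript{}{0}{H}^{1,2}(0,T;K)$ with $\xi_0=J^0(\int_0^\cdot\dot h)$ and $g(\xi_0)+\tfrac12\int_0^T\|\dot h(s)\|_K^2\,ds\le\inf_{\xi}\{g+\mathcal I\}+\delta$, use the constant control $\phi\equiv h$ in the rescaled identity, and invoke \textbf{BD1} with the constant family $h_\varepsilon\equiv h$ to obtain $g(J^\varepsilon(\sqrt\varepsilon W+\int_0^\cdot\dot h))\to g(\xi_0)$ in law; bounded convergence will then give $\limsup_{\varepsilon\to0}(-\varepsilon\log\mathbb E[e^{-g/\varepsilon}])\le g(\xi_0)+\tfrac12\int_0^T\|\dot h\|_K^2\,ds\le\inf_{\xi}\{g+\mathcal I\}+\delta$, and $\delta\downarrow0$ closes the Laplace principle. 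Goodness of $\mathcal I$ will follow from \textbf{BD2}: the representation makes $\mathcal I$ lower semicontinuous, and for each $\mathcal M\ge0$ the sublevel set $\{\mathcal I\le\mathcal M\}$ is a closed subset of the compact set $\{J^0(\int_0^\cdot\dot h\,ds):h\in S_{\mathcal M}\}$, using once more weak compactness of $S_{\mathcal M}$ and the continuity of $h\mapsto J^0(\int_0^\cdot\dot h)$ that is implicit in \textbf{BD1}.

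The hard part will be the reduction, inside the lower bound, of arbitrary near-optimal stochastic controls $\phi_\varepsilon\in\mathscr S$ to controls confined to a fixed ball $\mathscr S_{\mathcal M}$, so that the weak compactness of $S_{\mathcal M}$ and hypothesis \textbf{BD1} become usable; this, together with the scaling bookkeeping that converts the representation for $W$ into one for $J^\varepsilon(\sqrt\varepsilon W)$ without enlarging the admissible class of controls, is where the structure of the bounded-noise problem is really exploited. A lesser technical obligation is to check that $J^\varepsilon$ and $J^0$ are well defined and Borel along the shifted paths $\sqrt\varepsilon W+\int_0^\cdot\dot\phi$ and $\int_0^\cdot\dot h$, which the standing hypotheses of the theorem provide.
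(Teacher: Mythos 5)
The paper does not prove Theorem \ref{thm-BudDup}; it is cited verbatim from \cite[Theorem~4.4]{Budhiraja+Dupuis_2000}, so there is no in-paper proof to compare your outline against. Your reconstruction is, however, a faithful summary of the Budhiraja--Dupuis argument: pass to the equivalent Laplace principle, invoke the Bou\'e--Dupuis variational representation for functionals of the cylindrical Wiener process, obtain the Laplace lower bound by choosing near-optimal stochastic controls, truncating them into a fixed ball $\mathscr S_{\mathcal M}$, extracting a subsequence converging in law in $S_{\mathcal M}$, and combining \textbf{BD1} with the weak lower semicontinuity of $\phi\mapsto\tfrac12\int_0^T\|\dot\phi\|_K^2\,ds$; obtain the Laplace upper bound by testing the representation against a deterministic near-minimizer $h$ together with the constant family in \textbf{BD1}; and deduce goodness of $\mathcal I$ from \textbf{BD2}. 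You have also correctly identified the genuinely delicate step, namely the truncation of arbitrary near-optimal controls into $\mathscr S_{\mathcal M}$, which is where boundedness of $g$ is really used.

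Two small remarks on bookkeeping. First, your rescaled representation is written with $J^\varepsilon\bigl(\sqrt\varepsilon\,W+\int_0^\cdot\dot\phi\bigr)$, whereas the paper's $J^\varepsilon$ satisfies $z^\varepsilon=J^\varepsilon(W)$ and \textbf{BD1} is phrased in terms of $J^\varepsilon\bigl(\omega+\tfrac1{\sqrt\varepsilon}\int_0^\cdot\dot h_\varepsilon\bigr)$. Applying the representation directly to $F=\varepsilon^{-1}g\circ J^\varepsilon$ and substituting $\phi\mapsto\phi/\sqrt\varepsilon$ yields the form that matches \textbf{BD1}; your version amounts to having implicitly precomposed $J^\varepsilon$ with $\omega\mapsto\omega/\sqrt\varepsilon$, which is consistent but would need to be stated. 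Second, in the goodness argument the sublevel set $\{\mathcal I\le \mathcal M\}$ is contained in $K_{\mathcal M'}$ for some $\mathcal M'>2\mathcal M$ (to absorb the fact that the infimum in \eqref{rateFn} need not be attained), not in $K_{\mathcal M}$ itself; the compactness conclusion is unchanged, but the constant should be adjusted. Neither point affects the validity of the outline.
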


\subsection{Main result}\label{sec:mainResult}
In is important to note that in transferring the general theory argument from Theorem \ref{thm-BudDup} in our setting we require some information about the difference of solutions at two different times, hence we need to strengthen the assumptions on diffusion coefficient. In the remaining part of this paper,  we assume that
$$Y: M \ni p \mapsto  Y(p) \in T_pM,$$
is a smooth vector field on compact Riemannian manifold $M$, which can be considered as a submanifold of $\mathbb{R}^n$, such that its extension, denote again by $Y$, on the ambient space $\mathbb{R}^n$ is smooth and satisfies

\begin{trivlist}
	\item[\textbf{Y.4}] there exists a compact set $K_Y \subset \mathbb{R}^n$ such that $Y(p) = 0$ if $p \notin K_Y$,
	\item[\textbf{Y.5}] for $q\in O$, $Y(\h(q))  = \h^\prime (q) Y(q)$,
	\item[\textbf{Y.6}] for some $C_Y>0$
	\begin{align}\nonumber
		|Y(p)| & \leq C_Y (1+|p|),  \quad  \bigg\vert  \frac{\partial Y}{\partial p_i} (p) \bigg\vert \leq C_Y, \textrm{ and } \bigg\vert  \frac{\partial^2 Y}{\partial p_i \partial p_j} (p) \bigg\vert \leq C_Y, \nonumber
	\end{align}
	for $p \in K_Y, i,j =1,\ldots,n$.
\end{trivlist}

\begin{Remark}\label{rem-Y-LDP}
	\begin{enumerate}
		\item    Since $K_Y$ is compact, there exists a  $C_K$ such that $|Y(p)| \leq C_K$ for  $p \in \mathbb{R}^n$.
		\item  For $M = \mathbb{S}^2 $ case,  $Y(p) = p \times e, p \in M$, for some fixed vector $e\in \mathbb{R}^3$ satisfies above assumptions.
	\end{enumerate}
\end{Remark}
Since, due to the above assumptions, $Y$ and its first order partial derivatives are Lipschitz, by 1-D Sobolev embedding we easily get the next result.
\begin{Lemma}\label{lem-Y-LDP}
	For any $R>0$, there exists a constant $C_{Y,R} >0$ such that the extension $Y$ defined above satisfy
	\begin{align}
		& (1)\quad \| Y(u) \|_{H^j(B_R)} \leq C_{Y,R} ( 1 + \| u\|_{H^j(B_R)} ), \quad j=0,1,2,\nonumber\\
		& (2)\quad \| Y(u) - Y(v) \|_{L^2(B_R)} \leq C_{Y,R} \| u-v \|_{L^2(B_R)}, \nonumber\\
		& (3)\quad \| Y(u) - Y(v) \|_{H^1(B_R)} \leq C_{Y,R} \| u-v \|_{H^1(B_R)}  \left( 1 + \|u\|_{H^1(B_R)}  + \|v\|_{H^1(B_R)} \right). \nonumber
	\end{align}
\end{Lemma}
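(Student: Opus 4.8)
The plan is to read these three bounds as standard superposition (Nemytskii) operator estimates on the bounded interval $B_R=(-R,R)$, using only two features of the extended field $Y$: the linear growth $|Y(p)|\le C_Y(1+|p|)$, and the fact that, by \textbf{Y.4}, \textbf{Y.6} and Remark \ref{rem-Y-LDP}, $Y$ is of class $\mathcal C^2$ with $Y$, $\nabla Y$ and $\nabla^2Y$ all bounded on the whole of $\mathbb R^n$ (they vanish off the compact set $K_Y$ and are continuous there). After possibly enlarging $C_Y$ we may assume $|Y(p)|+|\nabla Y(p)|+|\nabla^2Y(p)|\le C_Y$ for every $p$, so that $Y$ is globally Lipschitz and $\nabla Y$ is globally Lipschitz with constants controlled by $C_Y$. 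All constants below may depend on $R$ and $C_Y$ only, and I use freely the one-dimensional embedding $H^1(B_R)\hookrightarrow L^\infty(B_R)$ and the interpolation inequality \eqref{interpolation1}.

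For (1) with $j=0$ I would estimate directly $\int_{B_R}|Y(u(x))|^2\,dx\le 2C_Y^2(2R+\|u\|_{L^2(B_R)}^2)$. For $j=1$ and $j=2$ I would differentiate $x\mapsto Y(u(x))$ by the chain rule for Sobolev functions, obtaining $\partial_x[Y(u)]=\sum_i\frac{\partial Y}{\partial p_i}(u)\,\partial_xu^i$ and $\partial_{xx}[Y(u)]=\sum_{i,k}\frac{\partial^2Y}{\partial p_i\partial p_k}(u)\,\partial_xu^i\,\partial_xu^k+\sum_i\frac{\partial Y}{\partial p_i}(u)\,\partial_{xx}u^i$. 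Since $\nabla Y$ is bounded, the $L^2(B_R)$-norms of $\partial_x[Y(u)]$ and of the last term above are bounded by constants times $\|\partial_xu\|_{L^2(B_R)}$ and $\|\partial_{xx}u\|_{L^2(B_R)}$ respectively, which together with the $j=0$ bound gives the $j=1$ estimate and the ``linear part'' of the $j=2$ estimate. The remaining term $\sum_{i,k}\frac{\partial^2Y}{\partial p_i\partial p_k}(u)\,\partial_xu^i\,\partial_xu^k$ is the only point requiring thought: bounding $|\nabla^2Y|\le C_Y$ and using H\"older in the form $\| |\partial_xu|^2 \|_{L^2(B_R)}\le\|\partial_xu\|_{L^\infty(B_R)}\|\partial_xu\|_{L^2(B_R)}$, followed by \eqref{interpolation1} applied to $\partial_xu$, one controls it in $L^2(B_R)$ in terms of $\|u\|_{H^2(B_R)}$. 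This is the step I expect to be the main obstacle: it is genuinely quadratic in $\partial_xu$, and one must keep careful track of the powers of $\|u\|_{H^2(B_R)}$ produced by the interpolation.

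For (2) I would use the mean value theorem together with the global bound on $\nabla Y$: pointwise $|Y(u(x))-Y(v(x))|\le C_Y|u(x)-v(x)|$, hence $\|Y(u)-Y(v)\|_{L^2(B_R)}\le C_Y\|u-v\|_{L^2(B_R)}$. For (3) the $L^2$-part is immediate from (2) and $\|u-v\|_{L^2(B_R)}\le\|u-v\|_{H^1(B_R)}$. For the first derivative I would split $\partial_x[Y(u)-Y(v)]=\nabla Y(u)(\partial_xu-\partial_xv)+\big(\nabla Y(u)-\nabla Y(v)\big)\partial_xv$; the first summand has $L^2(B_R)$-norm at most $C_Y\|u-v\|_{H^1(B_R)}$, and for the second the Lipschitz property of $\nabla Y$ and the embedding $H^1(B_R)\hookrightarrow L^\infty(B_R)$ give $\|\nabla Y(u)-\nabla Y(v)\|_{L^\infty(B_R)}\lesssim\|u-v\|_{H^1(B_R)}$, so that $\|(\nabla Y(u)-\nabla Y(v))\partial_xv\|_{L^2(B_R)}\lesssim\|u-v\|_{H^1(B_R)}\|v\|_{H^1(B_R)}$. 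Adding the two contributions and the $L^2$-part yields (3). Apart from the quadratic term in the $j=2$ case of (1), every step is a direct consequence of the boundedness of $Y$ and $\nabla Y$ (together with its Lipschitz constant) and the one-dimensional Sobolev embedding.
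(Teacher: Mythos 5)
Your route — chain rule for Sobolev functions, boundedness of $Y$, $\nabla Y$, $\nabla^2 Y$ via \textbf{Y.4}–\textbf{Y.6}, and the one-dimensional embedding $H^1(B_R)\hookrightarrow L^\infty(B_R)$ with \eqref{interpolation1} — is exactly the one the paper has in mind (it states only that the result follows ``easily'' from these ingredients, without a written proof). Your treatment of (1) for $j=0,1$, of (2), and of (3) is correct and complete; for (3) you in fact obtain the stronger bound $\|u-v\|_{H^1(B_R)}(1+\|v\|_{H^1(B_R)})$, which implies the stated symmetric form.

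The obstacle you flag in the $j=2$ case of (1) is real, and you should not expect to close it as the Lemma is stated. The cross term $\nabla^2Y(u)\,\partial_xu\otimes\partial_xu$ satisfies, by your own computation via H\"older and \eqref{interpolation1},
\[
\bigl\| |\partial_xu|^2 \bigr\|_{L^2(B_R)}
\;\le\;\|\partial_xu\|_{L^\infty(B_R)}\,\|\partial_xu\|_{L^2(B_R)}
\;\lesssim\; k_e\,\|\partial_xu\|_{L^2(B_R)}^{3/2}\,\|\partial_xu\|_{H^1(B_R)}^{1/2}
\;\lesssim\; \|u\|_{H^2(B_R)}^2,
\]
and the power $2$ cannot be improved by interpolation alone. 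Hence the best attainable estimate is $\|Y(u)\|_{H^2(B_R)}\lesssim_{Y,R}1+\|u\|_{H^2(B_R)}^2$, \emph{quadratic} rather than the linear $1+\|u\|_{H^2(B_R)}$ printed in the Lemma; the linear form for $j=2$ is simply not true in general (take $Y$ quadratic near $0$ and $u$ with small $L^2$ data but large $\|\partial_xu\|_{L^4}$). What saves the paper is that the $j=2$ case is invoked only once, in the estimate of $G_2$ inside the proof of Proposition \ref{prop-strngConv}, where the authors actually insert a bound of the form $\|Y(u_h)\|_{H^2(B_{2R})}^2\lesssim\|u_h\|_{H^2}^2(1+\|u_h\|_{H^2}^2)$ — i.e. the quadratic-growth version — and then only use that this is uniformly controlled by $\mathcal B(1+\mathcal B)$ through Lemma \ref{lem-aprioriEnergyEst}. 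So the downstream argument is unaffected, but the Lemma as printed overstates the $j=2$ bound, and your hesitation at precisely that point is warranted rather than a gap in your proof.
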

\noindent Let $(\mathfrak F^{W,0}_t)$ be the $\Bbb P$-augmented filtration generated by the Wiener process $W$. Now we state the main result of this section for the following small noise Cauchy problem
\begin{equation}
\label{LDP-CP}
\left\{
\begin{aligned}
&  \partial_{tt}u^{\varepsilon} =  \partial_{xx}u^{\varepsilon} +  A_{u^{\varepsilon}}(\partial_t u^{\varepsilon}, \partial_t u^{\varepsilon}) - A_{u^{\varepsilon}}(\partial_x u^{\varepsilon}, \partial_x u^{\varepsilon})   + \sqrt{\varepsilon} Y(u^{\varepsilon}) \dot{W},   \\
& \left( u^{\varepsilon}(0), \partial_t u^{\varepsilon}(0) \right) = \left( u_0 , v_0 \right),
\end{aligned}\right.
\end{equation}
with the hypothesis that $(u_0,v_0)$ is $\mathfrak F_0$-measurable $H_{\trm{loc}}^2 \times H_{\trm{loc}}^1(\mathbb R,TM)$-valued random variable, such that $u_0(x,\omega)\in M$ and $v_0(x,\omega)\in T_{u_0(x,\omega)}M$ hold for every $\omega\in\Omega$ and $x\in\mathbb R$.
Since the small noise problem \eqref{LDP-CP}, with initial data $(u_0,v_0) \in \mathscr{H}_{\textrm{loc}}(\mathbb{R}; M)$,  is a particular case of Theorem \ref{thm-exist}, for given $\varepsilon >0$ and $T>0$, there exists a unique global strong $(\mathfrak F^{W,0}_t)$-adapted solution to \eqref{LDP-CP}, which we denote by $z^{\varepsilon} := (u^{\varepsilon}, \partial_t u^{\varepsilon})$, with values in the Polish space
\begin{equation}\nonumber
\mathcal{X}_T := \mathcal{C}\left([0,T]; H_{\textrm{loc}}^2 (\mathbb{R}; \mathbb{R}^n) \right) \times \mathcal{C}\left([0,T]; H_{\textrm{loc}}^1(\mathbb{R}; \mathbb{R}^n) \right),
\end{equation}
and satisfy the properties mentioned in Appendix \ref{sec:existUniqResult}.

	Below, let $H_\mu$ be embedded in a separable Hilbert space $E$ via a Hilbert-Schmidt inclusion $\mathbf i:H_\mu\hookrightarrow E$ as in Example \ref{example-W}, define a filtration
	$$
	\mathcal G_t=\sigma(\pi_s:s\le t),\qquad t\in[0,T]
	$$
	on $\prescript{}{0}{\mathcal{C}}([0,T]; \rkhsemb)$ where $\pi_s(f)=f(s)$, denote by $\mathfrak w$ the Wiener measure with the covariance operator $\mathbf i{\mathbf i}^*$ on $\prescript{}{0}{\mathcal{C}}([0,T]; \rkhsemb)$ and denote by $\mathbf B$ the identity mapping on $\prescript{}{0}{\mathcal{C}}([0,T]; \rkhsemb)$.
	
	\begin{Lemma}\label{lem-Jmap} Let $(u_0,v_0) \in \mathscr{H}_{\textrm{loc}}(\mathbb{R}; M)$. Then there exists a Borel measurable mapping $J^{\varepsilon}=(U^{\varepsilon},V^{\varepsilon})$
		\begin{equation}\label{Jepsilon}
		J^{\varepsilon} : \prescript{}{0}{\mathcal{C}}([0,T]; \rkhsemb)  \to \mathcal{X}_T,
		\end{equation}
		such that
		\begin{itemize}
			\item[(a)] $U^\varepsilon(t,x), V^\varepsilon(t,x)$ are $\mathcal G^{\mathfrak w}_t$-adapted for every $(t,x)\in[0,T]\times\Bbb R$,
			\item[(b)] $U^\varepsilon(t,x):\prescript{}{0}{\mathcal{C}}([0,T]; \rkhsemb)\to M$ for every $(t,x)\in[0,T]\times\Bbb R$,
			\item[(c)] $t\mapsto U^\varepsilon(t)\in H^1_{\textrm{loc}}(\Bbb R;\mathbb{R}^n)$ is continuously differentiable and
			$$
			\frac{dU^\varepsilon}{dt}=V^\varepsilon,
			$$
			\item[(d)] $(U^\varepsilon(0),V^\varepsilon(0))=(u_0,v_0)$,
			\item[(e)] $(U^\varepsilon,\mathbf B)$ is a solution of \eqref{LDP-CP} in the sense of Theorem \ref{thm-exist} for the probability measure $\mathfrak w$,
			\item[(f)] if $\tilde W$ is an $E$-valued Wiener process with covariance operator $\mathbf i{\mathbf i}^*$ on some stochastic basis then $(U^\varepsilon(\tilde W),\tilde W)$ is a solution of \eqref{LDP-CP} in the sense of Theorem \ref{thm-exist}.
		\end{itemize}
	\end{Lemma}
	
	\begin{proof}[\textbf{Proof of Lemma \ref{lem-Jmap}}]
		Define a stopping operator
		$$
		L_t:\prescript{}{0}{\mathcal{C}}([0,T]; \rkhsemb)\to\prescript{}{0}{\mathcal{C}}([0,T]; \rkhsemb):f\mapsto f(\cdot\land t)
		$$
		and observe that $\mathcal G_t=\sigma(L_t)$ and $\mathfrak F^W_t=\sigma(L_t(W))$. Doob-Dynkin lemma yields existence of a Borel measurable mapping $J^{\varepsilon}$ such that $z^\varepsilon=J^{\varepsilon}(W)$ a.s., and since $z^\varepsilon$ is $(\mathfrak F^{W,0}_t)$-adapted, the same lemma yields existence of a Borel measurable mapping
		$$
		l_t:\prescript{}{0}{\mathcal{C}}([0,T]; \rkhsemb)\to H_{\textrm{loc}}^2 (\mathbb{R}; \mathbb{R}^n)\times H_{\textrm{loc}}^1 (\mathbb{R}; \mathbb{R}^n)
		$$
		such that $z^\varepsilon(t)=l_t(L_t(W))$ a.s.. Hence $\mathfrak w(J^\varepsilon_t=l_t\circ L_t)=1$ and we conclude that $J^\varepsilon_t$ is $\mathcal G^{\mathfrak w}_t$-measurable for every $t\in[0,T]$. In particular, we have proved (a). Since $U^\varepsilon(t,x)(W)=u^\varepsilon(t,x)\in M$ a.s. for every $(t,x)\in[0,T]\times\Bbb R$ by definition, we get that, $\mathfrak w$-a.s.,  $U^\varepsilon(t,x)\in M$ for every $(t,x)\in[0,T]\times\Bbb R$ since paths of $U^\varepsilon$ are jointly continuous. Thus (b) holds $\mathfrak w$-a.s. Next,
		$$
		u^\varepsilon(t,x)=u_0(x)+\int_0^t\partial_tu^\varepsilon(s,x)\,ds
		$$
		holds a.s. for every $(t,x)\in[0,T]\times\Bbb R$ so, as in the previous step, $\mathfrak w$-a.s.,
		$$
		U^\varepsilon(t,x)=u_0(x)+\int_0^tV^\varepsilon(s,x)\,ds
		$$
		holds for every $(t,x)\in[0,T]\times\Bbb R$ since paths of $U^\varepsilon$ and $V^\varepsilon$ are jointly continuous. In particular, (c) holds $\mathfrak w$-a.s. Moreover, it is obvious that (d) holds $\mathfrak w$-a.s. To deal with the $\mathfrak w$-exceptional set, denote by $\gamma$ the smooth geodesic flow on $\Bbb R\times TM$ and redefine, on this exceptional set,
		$$
		J^\varepsilon(t,x)=(\gamma(t,u_0(x),v_0(x)),\dot\gamma(t,u_0(x),v_0(x)))
		$$
		which satisfies (b), (c) and (d) as well. Finally, if we define $(\tilde u^\varepsilon,\tilde v^\varepsilon)=(\tilde U^\varepsilon(\tilde W),\tilde V^\varepsilon(\tilde W))$ then the finite-dimensional distributions of the processes
		$$
		(V^\varepsilon,\partial_{xx}U^\varepsilon,A_{U^\varepsilon}(\partial_xU^\varepsilon,\partial_xU^\varepsilon),A_{U^\varepsilon}(V^\varepsilon,V^\varepsilon),Y(U^\varepsilon),\mathbf B)
		$$
		$$
		(\partial_tu^\varepsilon,\partial_{xx}u^\varepsilon,A_{u^\varepsilon}(\partial_xu^\varepsilon,\partial_xu^\varepsilon),A_{u^\varepsilon}(\partial_tu^\varepsilon,\partial_tu^\varepsilon),Y(u^\varepsilon),W)
		$$
		$$
		(\tilde v^\varepsilon,\partial_{xx}\tilde u^\varepsilon,A_{\tilde u^\varepsilon}(\partial_x\tilde u^\varepsilon,\partial_x\tilde u^\varepsilon),A_{\tilde u^\varepsilon}(\tilde v^\varepsilon,\tilde v^\varepsilon),Y(\tilde u^\varepsilon),\tilde W)
		$$
		coincide in every in $L^2((-R,R;\Bbb R^n))$ hence we obtain (e) and (f) e.g. by \cite[Theorem 8.3 and Theorem 8.6]{Ondr_2004}. Let us just point out that the measurability and qualitative properties of $\tilde u^\varepsilon$ and $\tilde v^\varepsilon=\frac{d\tilde u^\varepsilon}{dt}$ are guaranteed by (a)-(d).
	\end{proof}

Recall from Section \ref{sec:prelim} that the random perturbation $W$ we consider is a cylindrical Wiener process on $\rkhs$ and there exists a separable Hilbert space $E$ such that the embedding of $\rkhs$ in  $E$ is Hilbert-Schmidt. Hence we can apply the general theory from previous section with the notations defined by taking $\rkhs$ instead of $K$.

Let us define a Borel map
\begin{equation}\label{eqn-J^0}
J^0: \prescript{}{0}{\mathcal{C}}([0,T]; \rkhsemb) \to \mathcal{X}_T.
\end{equation}
Note that it is well-defined due to Lemma \ref{lem-Jmap}.  If $h \in \prescript{}{0}{\mathcal{C}}([0,T]; \rkhsemb) \setminus \prescript{}{0}{H}^{1,2}(0,T;\rkhs)$, then we set $J^0(h)=0$. If $h \in \prescript{}{0}{H}^{1,2}(0,T;\rkhs)$ then by Theorem \ref{thm-skeleton} there exists a function in $\mathcal{X}_T$, say $z_h$, that solves
\begin{equation}\label{LDP-skeleton}
\left\{\begin{aligned}
& \partial_{tt}u =  \partial_{xx}u +  A_{u}(\partial_t u, \partial_t u) - A_{u}(\partial_x u, \partial_x u)  +Y(u) \,  \dot{h},
\\
& u(0,\cdot)  = u_0, \partial_tu (0,\cdot)=v_0,
\end{aligned}
\right.\end{equation}
uniquely and we set $J^0(h) = z_h$.
\begin{Remark}
	At some places in the paper we denote $J^0(h)$ by $J^0\left( \int_{0}^{\cdot} \dot{h}(s)\, ds \right)$ to make it clear that the considered differential equation is controlled by $\dot{h}$ not by $h$.
\end{Remark}

The main result of this section is as follows:
\begin{Theorem}\label{thm-LDP}
The family of laws $\{ \mathscr{L}(z^{\varepsilon}): \varepsilon \in (0,1] \}$  on $\mathcal{X}_T$, where $z^\varepsilon := (u^{\varepsilon}, \partial_t u^{\varepsilon})$ is the unique solution to \eqref{LDP-CP}  satisfies the large deviation principle with rate function $\mathcal{I}$ defined in \eqref{rateFn}.
\end{Theorem}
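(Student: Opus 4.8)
The plan is to apply the Budhiraja--Dupuis criterion, Theorem \ref{thm-BudDup}, with $K=\rkhs$, with $E$ the Hilbert space furnished by Example \ref{example-W} (into which the paths of $W$ fall), and with $J^\varepsilon$, $J^0$ the solution maps defined above; it then remains only to verify hypotheses \textbf{BD1} and \textbf{BD2}. Throughout, all estimates are localized ball by ball in the Fr\'echet space $\mathcal X_T$, using the finite speed of propagation of the wave operator together with the localization machinery of Section \ref{sec:skeleton} (cut-offs, scaled extensions $E^k_r$, truncated coefficients $\mathbf F_{r,k}$, $\mathbf G_{r,k}$) and the energy inequality of Appendix \ref{sec:EnergyIneqSWE}; the growth and Lipschitz bounds of Lemma \ref{lem-Y-LDP} and the uniform bound $|Y|\le C_K$ of Remark \ref{rem-Y-LDP} are used repeatedly.

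For \textbf{BD2} it suffices to show that $\Gamma\colon S_{\mathcal M}\to\mathcal X_T$, $h\mapsto z_h=J^0(h)$, is continuous for the (metrizable, compact) weak topology on $S_{\mathcal M}$, for then $\Gamma(S_{\mathcal M})$ is compact. First I would establish an a priori bound: applying the energy inequality on each ball and Gronwall's lemma, each seminorm of $z_h$ is bounded uniformly in $h\in S_{\mathcal M}$, the Gronwall exponent being controlled through $\int_0^T\|\dot h(s)\|_{\rkhs}^2\,ds\le\mathcal M$. Next, given $h_n\rightharpoonup h$, I would pass to the limit in the difference equation; the contribution of the noise coefficient splits as
\begin{equation}\nonumber
Y(u_{h_n})\dot h_n-Y(u_h)\dot h=\bigl[Y(u_{h_n})-Y(u_h)\bigr]\dot h_n+Y(u_h)\bigl[\dot h_n-\dot h\bigr],
\end{equation}
where the first summand is Lipschitz in $u_{h_n}-u_h$ with an $L^1(0,T)$ coefficient $c(1+\|\dot h_n(s)\|_{\rkhs}^2)$ and is absorbed by Gronwall, while the second summand has a coefficient that no longer depends on $n$ and contributes, after convolution with $S$, a term that tends to zero because $\dot h_n-\dot h\rightharpoonup 0$ in $L^2(0,T;\rkhs)$ and the relevant convolution operator is locally compact --- a consequence of the Hilbert--Schmidt bound of Lemma \ref{hsop}, the energy bound on $u_h$, and the time regularity carried by the equation. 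Combining this with weak compactness at a lower Sobolev level via an Aubin--Lions argument, identification of the limit through the nonlinear terms (for which strong local convergence of $u_n$ and $\partial_t u_n$ suffices), uniqueness from Theorem \ref{thm-skeleton}, and finally an energy-balance argument to upgrade the weak convergence of $z_{h_n}(t)$ to convergence of norms --- hence to strong convergence in $\mathcal X_T$ --- yields the continuity of $\Gamma$.

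For \textbf{BD1}, let $h_\varepsilon\in\mathscr S_{\mathcal M}$ converge in law to $h$. By Girsanov's theorem $\tilde z^\varepsilon:=J^\varepsilon\bigl(W+\varepsilon^{-1/2}\int_0^\cdot\dot h_\varepsilon(s)\,ds\bigr)$ is the unique solution (Appendix \ref{sec:existUniqResult}) of the controlled equation
\begin{equation}\nonumber
\partial_{tt}\tilde u^\varepsilon=\partial_{xx}\tilde u^\varepsilon+A_{\tilde u^\varepsilon}(\partial_t\tilde u^\varepsilon,\partial_t\tilde u^\varepsilon)-A_{\tilde u^\varepsilon}(\partial_x\tilde u^\varepsilon,\partial_x\tilde u^\varepsilon)+Y(\tilde u^\varepsilon)\dot h_\varepsilon+\sqrt\varepsilon\,Y(\tilde u^\varepsilon)\dot W,
\end{equation}
with data $(u_0,v_0)$. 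I would first derive $\varepsilon$-uniform energy estimates on each ball --- the stochastic term contributing only $O(\sqrt\varepsilon)$ after the Burkholder--Davis--Gundy inequality, thanks to the bound on $\|Y(\tilde u^\varepsilon)\|_{\mathscr L_2(\rkhs,H^j(B_R))}$ from Lemmas \ref{hsop} and \ref{lem-Y-LDP} --- and then, adding the time regularity read off from the equation, obtain tightness of the laws of $(\tilde z^\varepsilon,h_\varepsilon,W)$ on $\mathcal X_T\times S_{\mathcal M}\times\prescript{}{0}{\mathcal C}([0,T],\rkhsemb)$. An application of the Skorokhod representation theorem, followed by $\varepsilon\to0$, kills the stochastic term and lets the drift term pass to the limit (strong local convergence of $\tilde u^\varepsilon$ against weak convergence of $\dot h_\varepsilon$); the limit solves \eqref{LDP-skeleton} with the limiting control, so by uniqueness it equals the corresponding value of $J^0$, and since Skorokhod preserves laws and $J^0$ is continuous on $S_{\mathcal M}$ by the previous step, $\mathscr L(\tilde z^\varepsilon)\to\mathscr L\bigl(J^0(\int_0^\cdot\dot h(s)\,ds)\bigr)$; the limit being independent of the subsequence, the whole family converges. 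An alternative that avoids Skorokhod is to show, via the energy inequality and Gronwall applied to $\tilde z^\varepsilon-J^0(h_\varepsilon)$, that this difference tends to $0$ in probability in $\mathcal X_T$, and then to invoke $J^0(h_\varepsilon)\to J^0(h)$ in law.

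The main obstacle is that the wave group $S_t$ is not smoothing and the state space is the non-local Fr\'echet space $\mathcal X_T$: no compactness comes for free, so every a priori estimate, tightness argument and passage to the limit has to be localized by finite propagation speed, and --- crucially --- convergence in the full topology of $\mathcal X_T=\mathcal C([0,T];H^2_{\textrm{loc}})\times\mathcal C([0,T];H^1_{\textrm{loc}})$ cannot be obtained merely from an Aubin--Lions compactness (which only yields a weaker Sobolev exponent) but must be recovered by combining weak convergence with convergence of the energy, exploiting the energy (in)equality of Appendix \ref{sec:EnergyIneqSWE}. This is exactly the point at which the standard Budhiraja--Dupuis scheme has to be adapted to stochastic wave maps in local Sobolev spaces. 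A related subtlety, already visible in \textbf{BD2}, is that the control enters through the multiplicative term $Y(u)\dot h$, so passing to the weak limit of the controls there requires the local compactness of the associated deterministic convolution operator rather than a plain continuity bound.
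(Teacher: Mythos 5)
Your high-level architecture matches the paper's: verify the Budhiraja--Dupuis hypotheses by proving the two statements (compactness of $J^0(S_{\mathcal M})$ via weak-to-strong continuity of $J^0$, and convergence in law of the shifted controlled solutions to the skeleton), localize by finite propagation speed, and run energy estimates through Appendix~\ref{sec:EnergyIneqSWE}. Your ``alternative'' route for \textbf{BD1} --- show $J^{\varepsilon_n}\bigl(W+\varepsilon_n^{-1/2}\int_0^\cdot\dot h_n(s)\,ds\bigr)-J^0(h_n)\to 0$ in probability and then combine with the weak-topology continuity of $J^0$ via Skorokhod --- is exactly what the paper does in Proposition~\ref{prop-weakConv}, Lemma~\ref{lem-ProbConv} and the closing paragraph of Section~\ref{sec:LDP}. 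The Girsanov detour you mention is unnecessary: strong pathwise uniqueness already identifies $J^{\varepsilon}$ of the shifted path with the solution of the controlled Cauchy problem.

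Your argument for \textbf{BD2} has a gap, however, at exactly the point the paper treats as its main technical contribution. You split $Y(u_{h_n})\dot h_n - Y(u_h)\dot h$ into a piece absorbed by Gronwall and the remainder $Y(u_h)\bigl[\dot h_n - \dot h\bigr]$, and then assert that the remainder's contribution vanishes because $\dot h_n-\dot h\rightharpoonup 0$ and ``the relevant convolution operator is locally compact.'' But in the energy estimate the remainder is paired against $\bar{\mathfrak v}_n$, which itself depends on $n$, so weak convergence of $\dot h_n-\dot h$ alone gives nothing; and the wave group is not smoothing, so the mild-formulation convolution with $S_{t-s}$ does not supply the compactness you invoke. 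The paper closes this gap in Proposition~\ref{prop-strngConv} with a dyadic time discretization: it expands that pairing as $G_2+G_3+G_4$, controls $G_2,G_3$ by $O(2^{-m/2})$ using the time-Lipschitz estimate of Lemma~\ref{lem-vnLips}, and for $G_4$ freezes both $\bar{\mathfrak v}_n$ and $Y(u_h)$ at grid points so that, for fixed $m$, each summand involves a fixed compact (Hilbert--Schmidt, Lemma~\ref{hsop}) multiplication operator applied to $\int_{t_{k-1}}^{t_k}(\dot h_n-\dot h)\,dr$, which tends to zero by Lemma~\ref{lem-hconv}. Your parallel suggestion of an Aubin--Lions extraction followed by an energy-balance upgrade from weak to strong convergence in $\mathcal X_T$ is not the paper's route and would need separate justification --- the control does work on the system, so the energy relation is not conservative and the ``convergence of norms'' step you invoke is far from automatic in the Fr\'echet state space.
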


Note that, in light of Theorem \ref{thm-BudDup}, in order to prove the Theorem \ref{thm-LDP} it is sufficient to show the following two statements:
\begin{enumerate}
	\item[]\textbf{Statement 1}: For each $\mathcal{M}>0$, the set $K_{\mathcal{M}} := \{ J^0(h): h \in S_{\mathcal{M}}\}$  is a compact subset of $\mathcal{X}_T$, where $S_{\mathcal{M}} \subset \prescript{}{0}{H}^{1,2}(0,T;\rkhs)$ is the centred closed ball of radius $\mathcal{M}$ endowed with the weak topology.    \\
	\item[]\textbf{Statement 2}: Assume that $\mathcal{M}>0$, that $\{\varepsilon_n \}_{n \in \mathbb{N}}$ is an $(0,1]$-valued sequence convergent to $0$, that $\{h_n \}_{n \in \mathbb{N}} \subset \mathscr{S}_{\mathcal{M}}$ converges in law to $h \in\mathscr{S}_{\mathcal{M}}$ as $\varepsilon \to 0$. Then,  the processes
	\begin{equation}\label{Noise+SkeletonPrc}
	\prescript{}{0}{\mathcal{C}}([0,T]; E) \ni W(\cdot)  \mapsto J^{\varepsilon_n} \left( W(\cdot) + \frac{1}{\sqrt{\varepsilon_n}}  \int_{0}^{\cdot} \dot{h}_n(s) \,ds \right) \in \mathcal{X}_T,
	\end{equation}
	converges in law on $\mathcal{X}_T$ to $J^0\left( \int_{0}^{\cdot} \dot{h} (s) \, ds \right)$.
\end{enumerate}

\begin{Remark}
	By combining the proofs of Theorem \ref{thm-exist} and Theorem \ref{thm-skeleton} we infer that the map \eqref{Noise+SkeletonPrc} is well-defined and  $J^{\varepsilon_n} \left( W(\cdot) + \frac{1}{\sqrt{\varepsilon_n}}  \int_{0}^{\cdot} \dot{h}_n(s) \,ds \right) $ solves the following stochastic control Cauchy problem
	\begin{equation}
	\label{CP_noise+skeleton}
	\left\{
	\begin{aligned}
	&  \partial_{tt}u^{\varepsilon_n} =  \partial_{xx}u^{\varepsilon_n} +  A_{u^{\varepsilon_n}}(\partial_t u^{\varepsilon_n}, \partial_t u^{\varepsilon_n}) - A_{u^{\varepsilon_n}}(\partial_x u^{\varepsilon_n}, \partial_x u^{\varepsilon_n})  + Y(u^{\varepsilon_n})\dot{h}_n    \\
	& \qquad \qquad  + \sqrt{\varepsilon_n} Y(u^{\varepsilon_n}) \dot{W}, \\
	& \left( u^{\varepsilon_n}(0), \partial_t u^{\varepsilon_n}(0) \right) = \left( u_0 , v_0 \right),
	\end{aligned}\right.
	\end{equation}
	for the initial data $\left( u_0 , v_0 \right)\in H_{\trm{loc}}^2 \times H_{\trm{loc}}^1(\mathbb{R};TM)$.
\end{Remark}

\begin{Remark}
	It is clear by now that verification of an LDP comes down to proving two convergence results, see \cite{Brz+Manna+Panda_2019,Brz+Manna+Zhai_2018,Brz+Peng+Zhai_sub,Chueshov+Millet_2010,Sri+Sunder_2006}. As it was shown first in \cite{Brz+Gold+Jeg_2017}, the second convergence result follows  from  the first one via the Jakubowski version of the Skorokhod representation theorem. Therefore, establishing LDP, de facto, reduces to proving one convergence result for deterministic controlled problem called also the skeleton equation.
	This convergence result is specific to the stochastic PDE in question and require techniques related to the considered equation. Thus, for instance, the proof in \cite[Lemma 6.3]{Brz+Gold+Jeg_2017}  for the stochastic Landau-Lifshitz-Gilbert equation, is different from the proof, for stochastic Navier-Stokes equation,  of \cite[Proposition 3.5]{Chueshov+Millet_2010}.  On technical level, the proof of corresponding result, i.e. \textbf{Statement 1}, is the main contribution of our work.
\end{Remark}

\subsection{Proof of Statement 1} \label{subsec: Statement 1}

Let us fix $\mathcal{M}>0$ and consider a sequence of controls $\{ h_n\}_{n \in \mathbb{N}} \subset S_{\mathcal{M}}$.
Let $z_n = (u_n, v_n) := J^0(h_n)$, for $n \in \mathbb{N}$, be a solution to problem \eqref{LDP-skeleton}, corresponding to  control $h_n$.
  Since $S_{\mathcal{M}}$ is the closed unit ball in the  Hilbert space $\prescript{}{0}{H}^{1,2}(0,T;\rkhs) $, by the Banach-Alaoglu Theorem \cite[Theorem 3.15]{Rudin_FA_1991} or \cite[Theorem 3.16]{Brezis_2011},  $S_{\mathcal{M}}$ is weakly compact. Consequently there exists a subsequence of $\{ h_n\}_{n \in \mathbb{N}}$,  we still denote this by $\{ h_n\}_{n \in \mathbb{N}}$,  which converges weakly to a limit $h \in  S_{\mathcal{M}}$. Hence in order to complete the proof  of \textbf{Statement 1} we only need to show that the subsequence  $\{ z_n\}_{n \in \mathbb{N}}$   converges to $z_h = (u_h,v_h)$ which, by definition, is the unique solution to  the  Cauchy  problem of the skeleton equation \eqref{LDP-skeleton} with  the control $h$.

  Before delving into the proof of this claim we establish the following a priori estimate which is a preliminary step required to prove, Proposition \ref{prop-weakConv}, the main result of this section. Let us recall that $T>0$ is fixed for the whole section and $\mathcal{M}>0$ is chosen and fixed in this subsection.
\begin{Lemma}\label{lem-aprioriEnergyEst}
	If  $x \in \mathbb{R}$, then  there exists a constant $\mathcal{B}>0$, which depends on $\|(u_0,v_0)\|_{\mathcal{H}(B(x,T))}, \mathcal{M}$ and $T$, such that
	\begin{equation}\label{lem-aprioriEnergyEst-res1}
		\sup_{h \in S_{\mathcal{M}}} \sup_{t \in [0,\frac{T}{2}]} \textbf{e}(t,T;x,z_h(t)) \leq \mathcal{B},
	\end{equation}
	where $z_h$ is the unique global strong solution to problem \eqref{LDP-skeleton} and
	\begin{align}
	\textbf{e}(t,T;x,z): & = \frac{1}{2}\|z\|_{\mathcal{H}_{B(x,T-t)}}^2 = \frac{1}{2}\left\{ \Vert u\Vert^2_{L^2(B(x,T-t))} + \Vert \partial_x u\Vert^2_{L^2(B(x,T-t))} + \Vert v \Vert^2_{L^2(B(x,T-t))} \right. \nonumber\\
	& \quad \left. + \Vert \partial_{xx} u\Vert^2_{L^2(B(x,T-t))} +  \Vert \partial_x v \Vert^2_{L^2(B(x,T-t))} \right\}, \quad z= (u,v) \in \mathcal{H}_{\textrm{loc}}. \nonumber
	\end{align}
	Moreover, if we restrict $x$ on an interval $[-a,a]\subset \mathbb{R}$,  then the constant $\mathcal{B}:= \mathcal{B}(\mathcal{M},T,a)$, which also depends on `$a$' now,  can be chosen such that
	\begin{equation}\nonumber
	\sup_{x \in [-a,a]}	\sup_{h \in S_{\mathcal{M}}} \sup_{t \in [0,\frac{T}{2}]} \textbf{e}(t,T;x,z_h(t)) \leq \mathcal{B}.
	\end{equation}
\end{Lemma}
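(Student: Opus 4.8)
The plan is to derive an energy inequality for the quantity $e(t,z_h(t))$ on the backward light cone with apex at $(x,T/2)$, and then close it by a Gronwall-type argument, using uniformly (in $h\in S_{\mathcal M}$) the $L^2$-bound $\int_0^T\|\dot h(s)\|_{\rkhs}^2\,ds\le\mathcal M$. First I would invoke the finite-speed-of-propagation structure built into Theorem \ref{thm-skeleton}: since the solution restricted to the cone over $B(x,T-t)$ only depends on the data over $B(x,T)$, it suffices to bound $e$ in terms of $\|(u_0,v_0)\|_{\mathcal H(B(x,T))}$, $\mathcal M$ and $T$. Concretely I would apply the energy estimate machinery of Appendix \ref{sec:EnergyIneqSWE} (the cone-energy inequality, essentially Proposition \ref{prop_magic} applied with $L=I$, $T$ replaced by the cone parameter, and the relevant center $x$) to the solution $z_h=(u_h,v_h)$ of \eqref{LDP-skeleton}, obtaining
\begin{equation}\nonumber
e(t,z_h(t)) \le e(0,(u_0,v_0)) + \int_0^t V(s,z_h(s))\,ds,
\end{equation}
where $V(s,\cdot)$ collects the contributions of the second fundamental form terms $A_{u_h}(\partial_t u_h,\partial_t u_h)-A_{u_h}(\partial_x u_h,\partial_x u_h)$ and of the control term $Y(u_h)\dot h$.

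The second step is to control $V$. For the geometric (drift) terms I would use that $u_h(t,x)\in M$ with $v_h=\partial_t u_h\in T_{u_h}M$, so — exactly as in the proof of Proposition \ref{nonexp}, via Proposition \ref{sft} — the $L^2$-inner products $\langle v_h, A_{u_h}(v_h,v_h)\rangle$ and $\langle v_h, A_{u_h}(\partial_x u_h,\partial_x u_h)\rangle$ vanish (normal vs.\ tangential), while the highest-order pieces $\langle \partial_x v_h,\partial_x[A_{u_h}(\cdots)]\rangle$ are estimated by the Gagliardo–Nirenberg inequality \eqref{GNineq} and the smoothness/compact support of the extension $\mathcal A$, giving a bound of the form $C\,(1+e(s,z_h(s)))\cdot\|z_h(s)\|_{\mathcal H_{B(x,T-s)}}^2/(1+\cdots)\lesssim 1+e(s,z_h(s))$, i.e.\ controllable by $e$ itself. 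For the control term I would use Lemma \ref{lem-Y-LDP}: $\|Y(u_h(s))\|_{H^1}\le C_{Y,R}(1+\|u_h(s)\|_{H^1})$, and $Y(u_h(s))\dot h(s)$ is estimated in $H^1(B(x,T-s))$ by $\|Y(u_h(s))\|_{H^1}\,\|\dot h(s)\|_{\rkhs}$ via Lemma \ref{hsop} (the multiplication operator $\xi\mapsto g\cdot\xi$ being Hilbert–Schmidt with norm $\lesssim\|g\|_{H^1}$), so that
\begin{equation}\nonumber
|\langle \partial_x v_h(s), \partial_x[Y(u_h(s))\dot h(s)]\rangle| \lesssim (1+e(s,z_h(s)))\,(1+\|\dot h(s)\|_{\rkhs}^2).
\end{equation}
Here one must be a little careful at second order: $\partial_x[Y(u_h)\dot h] = Y'(u_h)(\partial_x u_h)\dot h + Y(u_h)\partial_x\dot h$, and the second-derivative bound on $Y$ in \textbf{Y.6} together with the $H^2$ control of $u_h$ in the energy is what lets the worst term be absorbed into $e$ times $\|\dot h\|_{\rkhs}^2$.

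Combining, one arrives at a differential inequality of the form
\begin{equation}\nonumber
e(t,z_h(t)) \lesssim e(0,(u_0,v_0)) + \int_0^t \bigl(1+e(s,z_h(s))\bigr)\bigl(1+\|\dot h(s)\|_{\rkhs}^2\bigr)\,ds,
\end{equation}
valid for $t\in[0,T/2]$ (the restriction $t\le T/2$ guarantees $T-t\ge T/2>0$, so the radius of the ball stays bounded below and the interpolation/Gagliardo–Nirenberg constants are uniform). Since $\int_0^{T}\bigl(1+\|\dot h(s)\|_{\rkhs}^2\bigr)\,ds \le T+\mathcal M$ uniformly over $h\in S_{\mathcal M}$, the Gronwall Lemma gives
\begin{equation}\nonumber
\sup_{t\in[0,T/2]} e(t,z_h(t)) \le \bigl(e(0,(u_0,v_0))+T+\mathcal M\bigr)\exp(T+\mathcal M) =: \mathcal B,
\end{equation}
which depends only on $\|(u_0,v_0)\|_{\mathcal H(B(x,T))}$, $\mathcal M$ and $T$, proving \eqref{lem-aprioriEnergyEst-res1}. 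For the final assertion, note that when $x$ ranges over $[-a,a]$ the cone over $B(x,T)$ is contained in $B(0,a+T)$, so $e(0,(u_0,v_0))\le \tfrac12\|(u_0,v_0)\|_{\mathcal H(B_{a+T})}^2$, a quantity independent of $x$; taking the supremum over $x\in[-a,a]$ then yields the same bound with $\mathcal B=\mathcal B(\mathcal M,T,a)$. The main obstacle is the bookkeeping in Step 2 — making sure every nonlinear and control contribution to $V(s,\cdot)$ is genuinely bounded by $(1+e(s,z_h(s)))(1+\|\dot h(s)\|_{\rkhs}^2)$ with constants uniform in $h$ and in $x\in[-a,a]$, in particular handling the second-order term $Y(u_h)\partial_x\dot h$ and the $\partial_{xx}u_h$ contributions from the drift via Gagliardo–Nirenberg without picking up a superlinear power of the energy that Gronwall could not absorb.
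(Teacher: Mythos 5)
Your single-pass Gronwall argument at the full $H^2\times H^1$ level with $L=I$ does not close, and the paper's proof is structured differently precisely to avoid this. The difficulty is that the drift contribution to the cone-energy inequality at order $k=1$ is \emph{not} linear in $e$. Even after the algebraic cancellation that you would need from \cite[Lemma 10.1]{ZB+Ondrejat_2007} (which kills the genuinely dangerous pairings $\langle \partial_x v_h, A_{u_h}(\partial_x v_h,v_h)\rangle$ and $\langle\partial_x v_h, A_{u_h}(\partial_{xx}u_h,\partial_xu_h)\rangle$ — these are of size $e^{5/4}$ under Gagliardo--Nirenberg and would already defeat you if you relied only on GN and smoothness of $\mathcal A$, as you propose), the surviving terms give a bound of the form $l(s)\,e(s)$, where $l(s)=\tfrac12\|(u_h(s),v_h(s))\|_{H^1\times L^2(B_{T-s})}^2$ is the \emph{lower-order} cone energy, see \eqref{nonexp-t4}. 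Since $l\le e$, this is quadratic in $e$; a Gronwall inequality with a quadratic right-hand side blows up in finite time and cannot deliver a bound uniform on $[0,T/2]$ over all $h\in S_{\mathcal M}$.

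The paper's proof resolves this in two steps, neither of which appears in your proposal. Step one applies Proposition \ref{prop_magic} with $k=0$, $L=I$: at this level the second-fundamental-form terms drop out \emph{entirely} by orthogonality ($v_h\in T_{u_h}M$, $A_{u_h}\in N_{u_h}M$), leaving only the $Y\dot h$ contribution, and Gronwall yields $\sup_{t\le T/2} l(t)\lesssim (T+\mathcal M)(1+l(0))$. Step two applies Proposition \ref{prop_magic} with $k=1$ and the nonlinear rescaling $L(x)=\log(1+x)$: the drift integrand, bounded by $l(s)e(s)$, is divided by $1+\|z_h(s)\|_{\mathcal H_{T-s}}^2\sim 1+e(s)$ coming from $L'$, and thus reduces to something controlled by $l(s)$, which is already uniformly bounded by step one. (An equivalent variant is to re-run $L=I$, $k=1$ \emph{after} step one, using the established bound on $l$ to make the drift term $\lesssim C_1 e(s)$.) Either way, the prior bound on the lower-order energy is the indispensable ingredient; without it the estimate you wrote down — whether with $L=I$ or with the denominator you informally insert — does not produce a linear-in-$e$ Gronwall integrand, and the conclusion does not follow.

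A secondary point: the inequality form you display, $C(1+e)\cdot\|z_h\|_{\mathcal H}^2/(1+\cdots)\lesssim 1+e$, is internally inconsistent with your choice $L=I$: the denominator only appears when $L'\ne 1$, i.e.\ when you take $L(x)=\log(1+x)$ as in the paper. Committing to one or the other — and in both cases first establishing the $l$-bound and invoking the cancellation from \cite[Lemma 10.1]{ZB+Ondrejat_2007} — is what is missing.
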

\begin{proof}[\textbf{Proof of Lemma \ref{lem-aprioriEnergyEst}}] Let us choose and fix    $x \in \mathbb{R}$.
	First note that the last part follows from the first one because by assumptions, $(u_0,v_0) \in \mathcal{H}_{\trm{loc}}$, in particular, $\|(u_0,v_0)\|_{\mathcal{H}(-a-T,a+T)} < \infty$ and therefore,
	\[\sup_{x \in [-a,a]} \|(u_0,v_0)\|_{\mathcal{H}(B(x,T))} \leq \|(u_0,v_0)\|_{\mathcal{H}(-a-T,a+T)}<\infty.\]
	The procedure to prove \eqref{lem-aprioriEnergyEst-res1} is based on the proof of Proposition \ref{nonexp}. Let us fix $h$ in  $S_{\mathcal{M}}$ and denote the corresponding solution $z_h := (u_h,v_h)$ which exists due to Theorem \ref{thm-skeleton}.
	
	Since $x$ is fixed, we will avoid writing it explicitly in the norm. Define
	$$l(t,T;x) := \frac{1}{2} \| (u_h(t), v_h(t)\|_{H^1(B_{T-t})\times L^2(B_{T-t})}^2, \quad t \in [0,T]. $$
	To shorten the notation we will write $l(t)$ in place of $l(t,T;x)$. Thus, invoking Proposition \ref{prop_magic}, with $ k =0$ and $L = I$,  implies, for $t \in [0,T]$,
	\begin{align}\label{lem-aprioriEnergyEst-t0}
	l(t) & \leq  l(0)+ \int_0^t \langle u_h(r) , v_h(s) \rangle_{L^2(B_{T-s})} \, ds  + \int_0^t \langle v_h(s) , f_h(s) \rangle_{L^2(B_{T-s})} \, ds 	\nonumber \\
	& \quad + \int_0^t \langle v_h(s) , Y(u_h(s)) \dot{h}(s) \rangle_{L^2(B_{T-s})} \, ds,
	\end{align}
	where
	\begin{equation}\nonumber
		f_h(r) := A_{u_h(r)}(v_h(r), v_h(r) - A_{u_h(r)}(\partial_x u_h(r), \partial_x u_h(r).
	\end{equation}
	Since $v_h(r) \in T_{u_h(r)}M$ and by definition $A_{u_h(r)}(\cdot, \cdot) \in N_{u_h(r)}M $, the second integral in \eqref{lem-aprioriEnergyEst-t0} vanishes. Because $u_h(r) \in M$, invoking the Cauchy-Schwartz inequality,  Lemmata \ref{hsop}  and \ref{lem-Y-LDP} implies
	\begin{equation}\nonumber
		l(t) \leq l(0) + \left( \frac{C_{Y}^2 C_T^2}{2} + 2 \right) \int_{0}^{t}  (1+l(s)) (1+\| \dot{h}(s)\|_{\rkhs}^2) \, ds .
	\end{equation}
	Consequently, by appying the Gronwall Lemma and using $h \in S_{\mathcal{M}}$ we get
	\begin{align}\label{lem-aprioriEnergyEst-t2}
		l(t) \lesssim_{C_Y,C_T} (1+ l(0) )  \left[ T + \|\dot{h}\|_{L^2(0,T;\rkhs)}^2 \right] \leq (T+\mathcal{M}) (1+l(0)).
	\end{align}
	Next we define
	$$ q(t) : = \log(1+\Vert z_h(t)\Vert_{\mathcal H_{T-t}}^2). $$
	Then  Proposition \ref{prop_magic}, with $ k =1$ and $L(x) = \log(1+x)$,  gives, for $t \in [0,\frac{T}{2}]$,
	\begin{align}
	q(t) &\leq  q(0)+\int_0^t\frac{\Vert z_h(s)\Vert_{\mathcal H_{T-s}}^2}{1+\Vert z_h(s)\Vert_{\mathcal H_{T-s}}^2}\,ds \nonumber \\
	&+ \int_0^t \frac{\langle v_h (s), f_h(s) \rangle_{L^2(B_{T-s})}}{1+\Vert z_h(s)\Vert_{\mathcal H_{T-s}}^2}\,ds
	+ \int_0^t  \frac{\langle\partial_xv_h(s),\partial_x[f_h(s) ]\rangle_{L^2(B_{T-s})}}{1+\Vert z_h(s)\Vert_{\mathcal H_{T-s}}^2}\,ds
	\nonumber \\
	& + \int_0^t \frac{\langle v_h(s), Y(u_h(s)) \dot{h}(s) \rangle_{L^2(B_{T-s})}}{1+\Vert z_k(s)\Vert_{\mathcal H_{T-s}}^2}\,ds+  \int_0^t \frac{\langle\partial_x v_h(s),\partial_x[Y(u_h(s)) \dot{h}(s) ]\rangle_{L^2(B_{T-s})}}{1+\Vert z_h(s)\Vert_{\mathcal H_{T-s}}^2} \,ds. \nonumber
	\end{align}
	Since by perpendicularity the second integral in above vanishes, by doing the calculation based on \eqref{nonexp-t3i} and  \eqref{nonexp-t4}  we deduce
	\begin{align}
		q(t) & \lesssim_{T}  1 +  q(0)  + \int_0^t  \frac{l(s)\Vert z_h(s)\Vert^2_{\mathcal H_{T-s}}}{1+\Vert z_h(s)\Vert^2_{\mathcal H_{T-s}}}  \, ds \nonumber\\
		& \quad +  \int_0^t \frac{(1+l(s))~ (1+ \|z_h(s)\|_{\mathcal{H}_{T-s}}^2)  (1+ \| \dot{h}(s)\|_{\rkhs}^2 )}{1+\Vert z_k(s)\Vert_{\mathcal H_{T-s}}^2} \, ds \nonumber\\
		& \leq 1 + q(0) + \int_{0}^{t} (1+l(s)) (1+ \| \dot{h}(s)\|_{\rkhs}^2)  \, ds, \nonumber
	\end{align}
	which further implies, due to  \eqref{lem-aprioriEnergyEst-t2} and $h \in S_{\mathcal{M}}$,
	\begin{equation}\nonumber
		q(t) \lesssim 1 + q(0) +  (T+\mathcal{M})^2 (1+l(0)).
	\end{equation}
	In terms of $z_h$, that is, for each $x \in \mathbb{R}$ and $t \in [0,\frac{T}{2}]$,	
	\begin{equation}\nonumber
		\| z_h(t) \|_{\mathcal{H}_{B(x,T-t)}}^2 \lesssim \exp\left[ \| (u_0,v_0)\|_{\mathcal{H}_{B(x,T)}}^2 (T+\mathcal{M})^2  \right].
	\end{equation}
	Since above holds for every $t \in [0,\frac{T}{2}], h\in S_{\mathcal{M}}$, by taking supremum on $t$ and $h$ we get \eqref{lem-aprioriEnergyEst-res1}, and hence the proof of Lemma \ref{lem-aprioriEnergyEst}.
\end{proof}

\begin{Remark}\label{rem-aprioriEnergyEst}
	Since $B(x,\frac{T}{2}) \subseteq B(x,T-t)$ for every $t \in [0,\frac{T}{2}]$, Lemma \ref{lem-aprioriEnergyEst} also implies
	\begin{equation}\nonumber
		\sup_{x \in [-a,a]}  \sup_{h \in S_{\mathcal{M}}}\sup_{t \in [0,\frac{T}{2}] } \frac{1}{2} \left\{  \Vert u_h(t) \Vert^2_{H^2(B(x,R))}  + \Vert v_h(t) \Vert^2_{H^1(B(x,R))} \right\}  \leq \mathcal{B}(\mathcal{M},T,a),
	\end{equation}
	for $R=\frac{T}{2}$.
\end{Remark}

Recall that, in the current subsection \ref{subsec: Statement 1}, we have the sequence $\{ h_n\}_{n \in \mathbb{N}}$ which converges weakly to a limit $h \in  S_{\mathcal{M}}$. Now we prove the main result of this subsection which will allow to complete the proof of \textbf{Statement 1}.
\begin{Proposition}\label{prop-strngConv}
Let $z_n = (u_n, v_n) := J^0(h_n)$, for $n \in \mathbb{N}$, be a solution to problem \eqref{LDP-skeleton}, corresponding to  control $h_n$ and similarly let $z_h = (u_h, v_h) := J^0(h)$.
Then  	the sequence $\{ z_n \}_{n \in \mathbb{N}}$  converges to $z_h$ in the space $\mathcal{X}_{T}$.
\\
 In particular,  the map
	\begin{equation}\nonumber
		S_{\mathcal{M}} \in h \mapsto J^0(h)  \in \mathcal{X}_{T},
 	\end{equation}
 	is Borel measurable.
\end{Proposition}
\begin{proof}[\textbf{Proof of Proposition \ref{prop-strngConv}}]
	Let us first note that the second part of the Proposition  follows from first one  because continuous maps are Borel measurable.

Towards proving the first conclusion let us  consider the objects as in the assumptions of Proposition \ref{prop-strngConv}. In particular,
$z_h = (u_h,v_h)$ and $z_n= (u_n,v_n)$,  are the unique global strong solutions,   respectively, to
	\begin{equation}\label{prop-strngConv-uh}
		\left\{
		\begin{aligned}
			&  \partial_{tt}u_h =  \partial_{xx}u_h +  A_{u_h}(\partial_t u_h, \partial_t u_h) - A_{u_h}(\partial_x u_h, \partial_x u_h) + Y(u_h)\dot{h},   \\
			& \left( u_h(0), v_h(0) \right) = \left( u_0 , v_0 \right), \quad \textrm{ where } v_nh := \partial_t u_h,
		\end{aligned}\right.
	\end{equation}
	and
	\begin{equation}\label{prop-strngConv-un}
		\left\{
		\begin{aligned}
			&  \partial_{tt}u_n =  \partial_{xx}u_n +  A_{u_n}(\partial_t u_n, \partial_t u_n) - A_{u_n}(\partial_x u_n, \partial_x u_n)   +  Y(u_n) \dot{h}_n,   \\
			& \left( u_n(0), v_n(0) \right) = \left( u_0 , v_0 \right),  \quad \textrm{ where } v_n := \partial_t u_n.
		\end{aligned}\right.
	\end{equation}
	Hence $\mathfrak{z}_n:= (\mathfrak{u}_n, \mathfrak{v}_n) = z_h - z_n $ is the unique global strong solution to, with null initial data,
	\begin{align}\label{prop-strngConv-barun}
		\partial_{tt}\mathfrak{u}_n & =  \partial_{xx}\mathfrak{u}_n - A_{u_h}(\partial_x u_h, \partial_x u_h) +  A_{u_n}(\partial_x u_n, \partial_x u_n) + A_{u_h}(\partial_t u_h, \partial_t u_h)  \nonumber\\
		& \quad - A_{u_n}(\partial_t u_n, \partial_t u_n)  +  Y(u_h) \dot{h}  - Y(u_n) \dot{h}_n,
	\end{align}
	where $\mathfrak{v}_n := \partial_t \mathfrak{u}_n$.  This implies that
	\begin{align}
		\mathfrak{z}_n(t) = \int_{0}^{t} S_{t-s}\left(\begin{array}{c}0\\f_n(s)\end{array}\right)\, ds + \int_{0}^{t} S_{t-s} \left(\begin{array}{c}0\\g_n(s)\end{array}\right)\, ds, \quad t \in [0,T]. \nonumber
	\end{align}
	Here
	\begin{align}
		f_n(s) & := - A_{u_h(s)}(\partial_x u_h(s), \partial_x u_h(s)) +  A_{u_n(s)}(\partial_x u_n(s), \partial_x u_n(s)) + A_{u_h(s)}(\partial_t u_h(s), \partial_t u_h(s))  \nonumber\\
		& \qquad - A_{u_n(s)}(\partial_t u_n(s), \partial_t u_n(s)) , \nonumber
	\end{align}
	and
	\begin{equation}
		g_n(s) := Y(u_h(s))\dot{h}(s)  - Y(u_n(s))\dot{h}_n(s). \nonumber
	\end{equation}
	We aim to show that
	\begin{equation}\nonumber
		\mathfrak{z}_n  \xrightarrow[n \to \infty]{} 0  \quad \textrm{ in } \quad \mathcal{C}\left( [0,T], H_{\textrm{loc}}^2(\mathbb{R}; \mathbb{R}^n) \right) \times \mathcal{C}\left( [0,T], H_{\textrm{loc}}^1(\mathbb{R}; \mathbb{R}^n) \right),
	\end{equation}
	that is, for every $R >0$ and $x \in \mathbb{R}$,
	\begin{equation} \label{prop-strngConv-t1}
		\sup_{t \in [0,T]} \left[  \|\mathfrak{u}_n(t) \|_{H^2(B(x,R))}^2 + \|\mathfrak{v}_n(t) \|_{H^1 (B(x,R))}^2 \right]   \to 0 \textrm{ as } n \to \infty.
	\end{equation}

	Without loss of generality we assume $x=0$.  Since a compact set in $\mathbb{R}$ can be covered by a finite number of any given closed interval of non-zero length, it is sufficient to prove above for a fixed $R>0$ whose value we set to $T$.
	
	Let $\varphi$ be a bump function which takes value $1$ on $B_R$ and vanishes outside $\overline{B_{2R}}$.    Define
	$$\bar{u}_n(t,x) : = u_n(t,x) \varphi(x) \quad \textrm{ and } \quad \bar{u}_h(t,x) : = u_h(t,x) \varphi(x),$$
	so
	$$ \bar{v}_n(t,x) =  \varphi(x) v_n(t,x), \qquad  \quad  \bar{v}_h(t,x) =  \varphi(x) v_h(t,x),$$
	and  with notation $\bar{\mathfrak{u}}_n : = \bar{u}_n - \bar{u} _h$,
	\begin{align}
		& \partial_{tt} \bar{\mathfrak{u}}_n - \partial_{xx} \bar{\mathfrak{u}}_n   = \left[ A_{u_n}(\partial_t u_n, \partial_t u_n) - A_{u_n}(\partial_x u_n, \partial_x u_n)  - A_{u_h}(\partial_t u_h, \partial_t u_h) \right.\nonumber\\
		& \quad \left. + A_{u_h}(\partial_x u_h, \partial_x u_h)  \right] \varphi   - (u_n - u_h)  \partial_{xx} \varphi   -   2 ( \partial_x u_n - \partial_x u_h) \partial_x \varphi  + \left[ Y(u_n)\dot{h}_n - Y(u_h)\dot{h} \right] \varphi \nonumber\\\
		& \quad =:  \bar{f}_n + \bar{g}_n. \nonumber
	\end{align}
	Here
	\begin{align}
		\bar{f}_n(s) & : = \left[ A_{u_n(s)}(\partial_t u_n(s), \partial_t u_n(s)) - A_{u_n(s)}(\partial_x u_n(s), \partial_x u_n(s))  - A_{u_h(s)}(\partial_t u_h(s), \partial_t u_h(s)) \right. \nonumber\\
		& \quad  \left. + A_{u_h(s)}(\partial_x u_h(s), \partial_x u_h(s))  \right] \varphi  - (u_n(s) - u_h(s))  \partial_{xx} \varphi   -   2 ( \partial_x u_n(s) - \partial_x u_h(s)) \partial_x \varphi , \nonumber
	\end{align}
	and
	\begin{equation}
		\bar{g}_n(s) := \left[ Y(u_n(s))\dot{h}_n(s) - Y(u_h(s))\dot{h}(s) \right] \varphi, \quad s \in [0,T]. \nonumber
	\end{equation}
	Next, by direct computation we can find constants $C_{\varphi}, \bar{C}_{\varphi} >0 $, depend on $\varphi, \varphi^\prime, \varphi^{\prime\prime},$ such that, for all $t \in [0,T]$ and $n \in \mathbb{N}$,
	\begin{align}\label{prop-strngConv-normChngVarphi}
		\|\bar{\mathfrak{u}}_n(t) \|_{H^2(-R,R)}^2  + \| \bar{\mathfrak{v}}_n(t) \|_{H^1(-R,R)}^2  & \leq C_{\varphi} \bigl[  \| \mathfrak{u}_n(t) \|_{H^2(-R,R)}^2  + \| \mathfrak{v}_n(t) \|_{H^1(-R,R)}^2 \bigr] \nonumber\\
		& \leq \bar{C}_{\varphi} \bigl[ \|\bar{\mathfrak{u}}_n(t) \|_{H^2(-R,R)}^2 + \| \bar{\mathfrak{v}}_n(t) \|_{H^1(-R,R)}^2 \bigr] .
	\end{align}
	Hence, in order to prove assertion   \eqref{prop-strngConv-t1} it is enough to prove the  following
	\begin{equation}\label{prop-strngConv-LimitBump}
		\sup_{t \in [0,T]} \left[  \|\bar{\mathfrak{u}}_n(t) \|_{H^2(-R,R)}^2 + \| \bar{\mathfrak{v}}_n(t) \|_{H^1 (-R,R)}^2 \right]   \to 0 \textrm{ as } n \to \infty.
	\end{equation}
Using  the time dependent balls in the space $\mathbb{R}$,  what  is more natural in the context of  the wave equations, we observe that  claim \eqref{prop-strngConv-LimitBump} is a consequence of the
following one.
	\begin{equation}\label{prop-strngConv-LimitBumpi}
		\sup_{t \in [0,R]} \left[  \|\bar{\mathfrak{u}}_n(t) \|_{H^2(B_{\mathcal{T}-t})}^2 + \| \bar{\mathfrak{v}}_n(t) \|_{H^1 (B_{\mathcal{T}-t})}^2 \right]   \to 0 \textrm{ as } n \to \infty,
	\end{equation}
	where $\mathcal{T} := 4T $.
Indeed,  because for every $t \in [0,R]$, $T-t > 2R$ and consequently, we have
	\begin{align}
		  \|\bar{\mathfrak{u}}_n(t) \|_{H^2(B_R)}^2 + \| \bar{\mathfrak{v}}_n(t) \|_{H^1 (B_R)}^2 & \leq   \|\bar{\mathfrak{u}}_n(t) \|_{H^2(B_{2R})}^2 + \| \bar{\mathfrak{v}}_n(t) \|_{H^1 (B_{2R})}^2 \nonumber\\
		&  \leq \sup_{t \in [0,R]} \left[  \|\bar{\mathfrak{u}}_n(t) \|_{H^2(B_{\mathcal{T}-t})}^2 + \| \bar{\mathfrak{v}}_n(t) \|_{H^1 (B_{\mathcal{T}-t})}^2 \right] . \nonumber
	\end{align}
So we conclude  that  in order to  prove  Proposition \ref{prop-strngConv} it is enough to show \eqref{prop-strngConv-LimitBumpi}.

\begin{proof}[Proof of claim \eqref{prop-strngConv-LimitBumpi}]
	Let us set $ l(t,z) := \frac{1}{2} \|z\|_{\mathcal{H}_{\mathcal{T}-t}}^2,$ for $z= (u,v) \in \mathcal{H}_{\textrm{loc}}$ and $t \in [0,R]$. Invoking Proposition \ref{prop_magic}, with null diffusion part and $k=1,L=I, x=0$, gives, for every $t \in [0,R]$,
	\begin{align}\label{prop-strngConv-t2i}
		l(t,\bar{\mathfrak{z}}_n(t)) & \leq  \int_{0}^{t} \mathbb{V}(r,\bar{\mathfrak{z}}_n(r)) \, dr ,
	\end{align}
	where $\bar{\mathfrak{z}}_n(t) = (\bar{\mathfrak{u}}_n(t), \bar{\mathfrak{v}}_n(t))$ and
	\begin{align}
		\mathbb{V}(t,\bar{\mathfrak{z}}_n(t)) & = \langle \bar{\mathfrak{u}}_n(t), \bar{\mathfrak{v}}_n(t) \rangle_{L^2(B_{\mathcal{T}-t})} + \langle \bar{\mathfrak{v}}_n(t), \bar{f}_n(t) \rangle_{L^2(B_{\mathcal{T}-t})} \nonumber\\
		&  \quad + \langle \partial_x \bar{\mathfrak{v}}_n(t), \partial_x  \bar{f}_n(t) \rangle_{L^2(B_{\mathcal{T}-t})}  + \langle \bar{\mathfrak{v}}_n(t), \bar{g}_n(t) \rangle_{L^2(B_{\mathcal{T}-t})}  \nonumber\\
		& \quad +  \langle \partial_x \bar{\mathfrak{v}}_n(t), \partial_x \bar{g}_n(t) \rangle_{L^2(B_{\mathcal{T}-t})} \nonumber\\
		& = : \mathbb{V}_f(t,\bar{\mathfrak{z}}_n(t)) + \mathbb{V}_g(t,\bar{\mathfrak{z}}_n(t)). \nonumber
	\end{align}
	We estimate $\mathbb{V}_f(t,\bar{\mathfrak{z}}_n(t))$ and  $\mathbb{V}_g(t,\bar{\mathfrak{z}}_n(t))$ separately as follows.
	Since $\mathcal{T}-t >  2R$, for every $t \in[0,R]$ and $\varphi(y), \varphi^\prime(y) =0$ for $y \notin \overline{B_{2R}}$, we have
	\begin{align}
		\int_{0}^{t} \mathbb{V}_f(r,\bar{\mathfrak{z}}(r)) \, dr &   = \int_{0}^{t} \bigg[  \int_{B_{2R}}  \left\{  \varphi(y) \mathfrak{u}_n(r,y)   \varphi(y) \mathfrak{v}_n(r,y)  +  \varphi(y) \mathfrak{v}_n(r,y)   \bar{f}_n(r,y)   \right. \nonumber\\
		&  \quad \left.  +  \varphi^\prime(y) \mathfrak{v}_n(r,y)   \partial_x \bar{f}_n(r,y) + \varphi(y) \partial_x \mathfrak{v}_n(r,y)    \partial_x \bar{f}_n(r,y) \right\} \, dy       \bigg] \, dr \nonumber\\
		& \lesssim_{\varphi, \varphi^\prime}  \int_{0}^{t} l(r,\bar{\mathfrak{z}}_n(r)) \, dr   +  \int_{0}^{t}  \| \bar{f}_n(r) \|_{H^1(B_{2R})}^2  \, dr, \nonumber
	\end{align}
	and
	\begin{align}
		\int_{0}^{t} \mathbb{V}_g(r,\bar{\mathfrak{z}}(r)) \, dr & = \int_{0}^{t} \left(  \langle \bar{\mathfrak{v}}_n(r), \bar{g}_n(r) \rangle_{L^2(B_{\mathcal{T}-r})} +  \langle \partial_x \bar{\mathfrak{v}}_n(r), \partial_x \bar{g}_n(r) \rangle_{L^2(B_{\mathcal{T}-r})} \right) \, dr  \nonumber\\
		&   = \int_{0}^{t} \left(  \langle \bar{\mathfrak{v}}_n(r), \bar{g}_n(r) \rangle_{L^2(B_{2R})} +  \langle \partial_x \bar{\mathfrak{v}}_n(r), \partial_x \bar{g}_n(r) \rangle_{L^2(B_{2R})} \right) \, dr. \nonumber
	\end{align}
	Let us estimate the terms involving $\bar{f}_n$ first. Since $u_n,u_h$ takes values on manifold $M$, by using the properties of $\varphi$ and invoking interpolation inequality \eqref{interpolation1},  as pursued in Lemma \ref{lem-lip}, followed  by Lemma \ref{lem-aprioriEnergyEst}  we deduce that
	\begin{align}\label{prop-strngConv-t3i}
		\|\bar{f}_n(r)\|_{L^2(B_{2R})}^2 & \lesssim_{\varphi,\varphi^\prime, \varphi^{\prime\prime}} \| A_{u_n(r)}(v_n(r), v_n(r)) - A_{u_h(r)}(v_n(r), v_n(r)) \|_{L^2(B_{2R})}^2 \nonumber\\
		& \quad + \| A_{u_h(r)}(v_n(r), v_n(r)) - A_{u_h(r)}(v_n(r), v_h(r)) \|_{L^2(B_{2R})}^2 \nonumber\\
		& \quad + \| A_{u_h(r)}(v_n(r), v_h(r)) - A_{u_h(r)}(v_h(r), v_h(r)) \|_{L^2(B_{2R})}^2 \nonumber\\
		& \quad + \| A_{u_n(r)}(\partial_x u_n(r), \partial_x u_n(r))  -  A_{u_h(r)}(\partial_x u_n(r), \partial_x u_n(r)) \|_{L^2(B_{2R})}^2  \nonumber\\
		& \quad + \| A_{u_h(r)}(\partial_x u_n(r), \partial_x u_n(r))  -  A_{u_h(r)}(\partial_x u_n(r), \partial_x u_h(r)) \|_{L^2(B_{2R})}^2  \nonumber\\
		& \quad + \| A_{u_h(r)}(\partial_x u_n(r), \partial_x u_h(r))  -  A_{u_h(r)}(\partial_x u_h(r), \partial_x u_h(r)) \|_{L^2(B_{2R})}^2  \nonumber\\
		& \quad + \| u_n(r) - u_h(r) \|_{L^2(B_{2R})}^2  + 2 \| \partial_x u_n(r) - \partial_x u_h(r) \|_{L^2(B_{2R})}^2  \nonumber\\
		& \lesssim_{ L_{A}, B_{A}, R   }  \| u_n(r) - u_h(r)\|_{L^2(B_{2R})}^2 \| v_n(r)\|_{L^\infty(B_{2R})}^4 \nonumber\\
		& \quad + \| v_n(r) - v_h(r)\|_{L^2(B_{2R})}^2 \left( \| v_n(r)\|_{L^\infty(B_{2R})}^2 +  \| v_h(r)\|_{L^\infty(B_{2R})}^2   \right)\nonumber\\
		& \quad + \| u_n(r) - u_h(r)\|_{L^2(B_{2R})}^2 \| \partial_x u_n(r)\|_{L^\infty(B_{2R})}^4 \nonumber\\
		& \quad + \| \partial_x u_n(r) - \partial_x u_h(r)  \|_{L^2(B_{2R})}^2 \left( \| \partial_x u_n(r)\|_{L^\infty(B_{2R})}^2 +  \| \partial_x u_h(r)\|_{L^\infty(B_{2R})}^2   \right) \nonumber\\
		& \quad + \| u_n(r) - u_h(r) \|_{L^2(B_{2R})}^2  + 2 \| \partial_x u_n(r) - \partial_x u_h(r) \|_{L^2(B_{2R})}^2  \nonumber\\
		& \lesssim_{ L_{A}, B_{A}, R ,k_e,\mathcal{B} } ~ \| \mathfrak{z}_n(r)\|_{\mathcal{H}(B_{2R})}^2  \lesssim l(r, \mathfrak{z}_n(r)).
	\end{align}
	Similarly by using the interpolation inequality \eqref{interpolation1} and Lemma \ref{lem-aprioriEnergyEst}, based on the computation of \eqref{lip-t3},  we get
	\begin{equation}\nonumber
		\| \partial_x \bar{f}_n(r)\|_{L^2(B_{2R})}^2 \lesssim_{ L_{A}, B_{A}, R ,k_e,\mathcal{B} } ~ l(r, \mathfrak{z}_n(r)),
	\end{equation}
	where the constant of inequality is independent of $n$ but depends on the properties of $\varphi$ and its first two derivatives,
	consequently, we have, for some $C_{\bar{f}} >0$,
	\begin{align}\label{prop-strngConv-t4}
		& \int_{0}^{t} \| \bar{f}_n(r) \|_{H^1(B_{2R})}^2 \, dr  \leq C_{\bar{f}}  \int_{0}^{t} l(r, \mathfrak{z}_n(r)) \, dr, \quad \forall t \in [0,R].
	\end{align}
	Now we move to the crucial estimate of integral involving $\bar{g}_n$. It is the part where we  follow the idea of \cite[Proposition 3.4]{Chueshov+Millet_2010} and \cite[Proposition 4.4]{Duan+Millet_2009}.
	Let $m$ be a natural number, whose value will be set later. Define the following partition of $[0,R]$,
	\begin{equation}\nonumber
		\left\{ 0, \frac{1 \cdot \rT}{2^m}, \frac{2 \cdot \rT}{2^m},\cdots, \frac{2^m \cdot \rT}{2^m}   \right\},
	\end{equation}
	and set
	\begin{equation}\nonumber
		r_m := \frac{(k+1) \cdot \rT}{2^m} \textrm{ and } t_{k+1} := \frac{(k+1) \cdot \rT}{2^m} \textrm{ if } r \in \left[ \frac{k \cdot \rT}{2^m}, \frac{(k+1) \cdot \rT}{2^m} \right).
	\end{equation}
	 Now  observe that, for every $t \in [0,R]$,
	\begin{align}\label{prop-strngConv-t5}
		& \int_{0}^{t} \langle \bar{\mathfrak{v}}_n(r), \bar{g}_n(r) \rangle_{H^1(B_{2R})}  \, dr  \nonumber\\
		& \quad =  \int_{0}^{t}  \langle \bar{\mathfrak{v}}_n(r) , \varphi (Y(u_n(r)) - Y(u_h(r)) )\dot{h}_n(r)  \rangle_{H^1(B_{2R})} \, dr \nonumber \\
		& \quad \quad + \int_{0}^{t}  \langle \bar{\mathfrak{v}}_n(r) - \bar{\mathfrak{v}}_n(r_m), \varphi Y(u_h(r)) (\dot{h}_n(r) - \dot{h}(r)) \rangle_{H^1(B_{2R})} \, dr \nonumber\\
		& \quad \quad + \int_{0}^{t}  \langle \bar{\mathfrak{v}}_n(r_m), \varphi (Y(u_h(r)) - Y(u_h(r_m)) )  (\dot{h}_n(r) - \dot{h}(r)) \rangle_{H^1(B_{2R})} \, dr \nonumber\\
		& \quad \quad + \int_{0}^{t}  \langle \bar{\mathfrak{v}}_n(r_m),  \varphi Y(u_h(r_m))  (\dot{h}_n(r) - \dot{h}(r)) \rangle_{H^1(B_{2R})} \, dr \nonumber\\
		& \quad =: G^{n,m}_1(t) + G^{n,m}_2(t) + G^{n,m}_3(t) + G^{n,m}_4(t).
	\end{align}
	For $G^{n,m}_1$, Lemmata \ref{hsop},  \ref{lem-Y-LDP}  and \ref{lem-aprioriEnergyEst}   followed by \eqref{prop-strngConv-normChngVarphi} imply
	\begin{align}\label{prop-strngConv-t6}
		& |G^{n,m}_1(t)|   \lesssim_{\varphi} \int_{0}^{t} \|\bar{\mathfrak{v}}_n(r) \|_{H^1(B_{2R})}^2 \, dr + \int_{0}^{t} \|Y(u_n(r)) - Y(u_h(r))\|_{H^1(B_{2R})}^2  \|\dot{h}_n(r)\|_{\rkhs}^2  \, dr \nonumber\\
		& \quad \lesssim_R \int_{0}^{t} \|\bar{\mathfrak{v}}_n(r) \|_{H^1(B_{2R})}^2 \, dr  \nonumber\\
		& \quad \quad + \int_{0}^{t} \|u_n(r) - u_h(r)\|_{H^1(B_{2R})}^2  \left(1+ \|u_n(r) \|_{H^1(B_{2R})}^2 + \|u_h(r)\|_{H^1(B_{2R})}^2 \right) \|\dot{h}_n(r)\|_{\rkhs}^2  \, dr \nonumber\\
		& \quad \lesssim_{\mathcal{B}} \int_{0}^{t}  \left( 1+ l(r,\mathfrak{z}_n(r)) \right)  \left( 1+ \|\dot{h}_n(r)\|_{\rkhs}^2 \right)   \, dr, \qquad \forall t \in [0,R].
	\end{align}
	To  estimate  $G^{n,m}_2(t)$  we invoke $\langle h,k \rangle_{H^1(B_{2R})}  \leq \|h\|_{L^2(B_{2R})} \|k\|_{H^2(2R))}$  followed by the H\"older inequality and  Lemmata \ref{hsop}, \ref{lem-Y-LDP}, and  \ref{lem-vnLips}     to get, for every $t \in [0,R]$,
	\begin{align}
		|G^{n,m}_2(t)| & \lesssim_{R, \varphi} \int_{0}^{t}  \| \mathfrak{v}_n(r) - \mathfrak{v}_n(r_m)\|_{L^2(B_{2R})} \| Y(u_h(r)) \|_{H^2(B_{2R})} \| \dot{h}_n(r) - \dot{h}(r)\|_{\rkhs} \, dr   \nonumber\\
		& \lesssim_R \left( \int_{0}^{t}   \|\mathfrak{v}_n(r) - \mathfrak{v}_n(r_m)\|_{L^2(B_{2R})}^2 \, dr \right)^{\frac{1}{2}}   \nonumber\\
		& \quad \times \left( \int_{0}^{t}  \left[ 1 + \|u_h(r)\| _{H^2(B_{2R})} ^4  \right] \| \dot{h}_n(r) - \dot{h}(r)\|_{\rkhs}^2 \, dr  \right)^{\frac{1}{2}}  \nonumber\\
		& \lesssim  \sqrt{M_{\mu}}  \left( \int_{0}^{t}   |r-r_m|  \, dr \right)^{\frac{1}{2}} \sup_{r\in [0,\frac{\mathcal{T}}{2}]} \left[ 1 + \|u_h(r)\| _{H^2(B_{\mathcal{T}-r})}^4  \right]  \nonumber\\
		& \lesssim  \frac{\rT  \sqrt{M_{\mu}}}{2^{m/2}}   \sup_{r\in [0,\frac{\mathcal{T}}{2}]} \left[  1+ (l(r,z_h(r)))^2 \right]  \leq  \frac{\rT \sqrt{M_{\mu}}}{2^{m/2}}  ~ (1+\mathcal{B}^2), \nonumber
	\end{align}
	where  in the last  and the second last step we have used, respectively, Lemma \ref{lem-aprioriEnergyEst} for $\mathcal{T}$ instead of $T$ and
	\begin{align}
		& \left( \int_{0}^{t}  |r-r_m|  \, dr \right)^{\frac{1}{2}} \leq \left( \int_{0}^{\rT}  |r-r_m|  \, dr \right)^{\frac{1}{2}} = \left(  \sum_{k=1}^{2^m} \int_{t_{k-1}}^{t_k}  \biggl\vert r-\frac{k \rT}{2^m} \biggl\vert  \, dr \right)^{\frac{1}{2}} \leq \frac{\rT}{2^{m/2}}. \nonumber
	\end{align}
	Moreover, in the third last step we have also applied the following: since $R=T$ and  $\dot{h}_n \to \dot{h}$ weakly in $L^2(0,T;\rkhs)$,  the sequence  $\dot{h}_n - \dot{h} $ is bounded in $L^2(0,T;\rkhs)$ i.e. $ \exists M_{\mu} >0$ such that
	\begin{equation}\label{hBound}
		\int_{0}^{T}  \| \dot{h}_n(r) - \dot{h}(r)\|_{\rkhs}^2 \, dr  \leq M_{\mu}, \quad \mbox{ for all }n.
	\end{equation}
	Before moving to $G^{n,m}_3(t)$  note that, since $2R = \frac{\mathcal{T}}{2}$, due to  Remark  \ref{rem-aprioriEnergyEst}, for every  $s , t \in [0,\frac{\mathcal{T}}{2}]$,
	\begin{align}
		& \|u_h(t) - u_h(s)\|_{H^1(B_{2R})}  \leq \int_{s}^{t} \|v_h(r)\|_{H^1(B_{2R})}  \, dr   \lesssim \sqrt{\mathcal{B}} |t-s|. \nonumber
	\end{align}
	Consequently, by the H\"older inequality followed by Lemmata \ref{hsop}, \ref{lem-vnLips}, and  \ref{lem-Y-LDP} we obtain
	\begin{align}
		|G^{n,m}_3(t)|  &  \lesssim_{\varphi}   \left( \int_{0}^{t} \left[ \| v_n(r_m) \|_{H^1(B_{2R})}^2 + \| v_h(r_m)\|_{H^1(B_{2R})}^2 \right]   \, dr \right)^{\frac{1}{2}}   \nonumber\\
		& \qquad \qquad \times \left( \int_{0}^{t}  \| Y(u_h(r)) - Y(u_h(r_m)) \|_{H^1(B_{2R})}^2   \|\dot{h}_n(r) - \dot{h}(r)) \|_{\rkhs}^2 \, dr  \right)^{\frac{1}{2}}  \nonumber\\
		& \lesssim_{T, \mathcal{B}} \left( \int_{0}^{t} \| u_h(r) -   u_h(r_m) \|_{H^1(B_{2R})}^2 \left[ 1+ \| u_h(r)  \|_{H^1(B_{2R})}^2 + \|   u_h(r_m) \|_{H^1(B_{2R})}^2 \right] \right. \nonumber\\
		& \qquad \left. \times  \|\dot{h}_n(r) - \dot{h}(r) \|_{\rkhs}^2 \, dr  \right)^{\frac{1}{2}}   \nonumber\\
		& \lesssim_{T, \mathcal{B}}  \left( \int_{0}^{t}  | r-r_m | ~ \|\dot{h}_n(r) - \dot{h}(r) \|_{\rkhs}^2 \, dr  \right)^{\frac{1}{2}} \nonumber\\
		& \leq  \left( \sum_{k=1}^{2^m} \int_{t_{k-1}}^{t_k}  \biggl\vert r- \frac{k \rT}{2^m} \biggl\vert ~ \|\dot{h}_n(r) - \dot{h}(r) \|_{\rkhs}^2 \, dr  \right)^{\frac{1}{2}}      \nonumber\\
		& \leq \sqrt{\frac{\rT}{2^m}} \left( \int_{0}^{t}  \|\dot{h}_n(r) - \dot{h}(r) \|_{\rkhs}^2 \, dr  \right)^{\frac{1}{2}}  \leq \sqrt{R \frac{M_{\mu}}{2^m}}, \quad t \in [0,R].
\label{eqn-G_n^3}
	\end{align}
	Finally we start estimating  $G^{n,m}_4(t)$ by noting that for every $t \in [0,\rT]$,
	$$ \textrm{there exists} \quad  k_t \leq 2^m\quad \textrm{such that} \quad t \in \left[   \frac{(k_t-1) \cdot \rT}{2^m},  \frac{k_t \cdot \rT}{2^m} \right). $$
	Note that  on such interval $r_m  = \frac{k_t \cdot \rT}{2^m}$. Then by Lemma \ref{lem-aprioriEnergyEst}  we have
	\small \begin{align}\label{prop-strngConv-t9}
		& |G^{n,m}_4(t)|    \leq      \biggl\vert \sum_{k=1}^{k_t-1} \int_{t_{k-1}}^{t_k}  \left\langle \bar{\mathfrak{v}}_n \left( \frac{k \cdot \rT}{2^m}\right),  \varphi Y\left(u_h\left( \frac{k \cdot \rT}{2^m}\right) \right)  (\dot{h}_n(r) - \dot{h}(r)) \right\rangle_{H^1(B_{2R})} \, dr  \nonumber\\
		& \quad + \int_{t_{k_t-1}}^{t}  \left\langle \bar{\mathfrak{v}}_n \left( \frac{ (k_t-1) \cdot \rT}{2^m}\right),  \varphi Y\left(u_h\left( \frac{ (k_t-1) \cdot \rT}{2^m}\right) \right)  (\dot{h}_n(r) - \dot{h}(r)) \right\rangle_{H^1(B_{2R})} \, dr  \biggl\vert \nonumber\\
		& \leq \sum_{k=1}^{2^m}   \biggl\vert   \left\langle \bar{\mathfrak{v}}_n \left( \frac{k \cdot \rT}{2^m}\right),  \varphi Y\left(u_h\left( \frac{k \cdot \rT}{2^m}\right) \right)  \int_{t_{k-1}}^{t_k}  (\dot{h}_n(r) - \dot{h}(r)) \, dr  \right\rangle_{H^1(B_{2R})}   \biggl\vert \nonumber\\
		&  + \sup_{1 \leq k \leq 2^m} \sup_{t_k \leq t \leq t_{k-1} } \biggl\vert  \left\langle \bar{\mathfrak{v}}_n \left( \frac{ (k-1) \cdot \rT}{2^m}\right),  \varphi Y\left(u_h\left( \frac{ (k-1) \cdot \rT}{2^m}\right) \right) \int_{t_{k-1}}^{t}  (\dot{h}_n(r) - \dot{h}(r)) \, dr  \right\rangle_{H^1(B_{2R})}  \biggl\vert \nonumber\\
		& \leq \sum_{k=1}^{2^m}   \biggl\Vert  \bar{\mathfrak{v}}_n \left( \frac{k \cdot \rT}{2^m}\right) \biggl\Vert_{H^1(B_{2R})} \biggl\Vert  \varphi Y\left(u_h\left( \frac{k \cdot \rT}{2^m}\right) \right)  \int_{t_{k-1}}^{t_k}  (\dot{h}_n(r) - \dot{h}(r)) \, dr  \biggl\Vert_{H^1(B_{2R})}   \nonumber\\
		&  + \sup_{1 \leq k \leq 2^m} \sup_{t_k \leq t \leq t_{k-1} } \biggl\Vert \bar{\mathfrak{v}}_n \left( \frac{ (k-1) \cdot \rT}{2^m}\right)\biggl\Vert_{H^1(B_{2R})}  \biggl\Vert  \varphi Y\left(u_h\left( \frac{ (k-1) \cdot \rT}{2^m}\right) \right) \int_{t_{k-1}}^{t}  (\dot{h}_n(r) - \dot{h}(r)) \, dr \biggl\Vert_{H^1(B_{2R})} \nonumber\\
		& \lesssim_{\varphi, \mathcal{B}} \sum_{k=1}^{2^m}   \biggl\Vert   Y\left(u_h\left( \frac{k \cdot \rT}{2^m}\right) \right)  \int_{t_{k-1}}^{t_k}  (\dot{h}_n(r) - \dot{h}(r)) \, dr  \biggl\Vert_{H^1(B_{2R})} \nonumber\\
		&  + \sup_{1 \leq k \leq 2^m} \sup_{t_k \leq t \leq t_{k-1} }   \biggl\Vert   Y\left(u_h\left( \frac{ (k-1) \cdot \rT}{2^m}\right) \right) \int_{t_{k-1}}^{t}  (\dot{h}_n(r) - \dot{h}(r)) \, dr \biggl\Vert_{H^1(B_{2R})} \nonumber\\
		& =:  G_4^{n,m,1} + G_4^{n,m,2},
	\end{align}  \normalsize
	where the  right hand side does not depend on $t$.  By  invoking Lemmata \ref{hsop}, \ref{lem-Y-LDP},  the H\"older inequality,  Lemma \ref{lem-aprioriEnergyEst} and \eqref{hBound} we estimate $G_4^{n,m,2}$  as
	\begin{align}
	G_4^{n,m,2} &\lesssim_{R} \sup_{1 \leq k \leq 2^m} \sup_{t_k \leq t \leq t_{k-1} }   \biggl\Vert  Y\left(u_h\left( \frac{ (k-1) \cdot \rT}{2^m}\right) \right)  \biggl\Vert_{ H^1(B_{2R})}   \left( \int_{t_{k-1}}^{t}  \| \dot{h}_n(r) - \dot{h}(r)\|_{\rkhs}  \, dr \right)  \label{eqn-G_n^4,2}\\
	& \leq \sup_{1 \leq k \leq 2^m} \sup_{t_k \leq t \leq t_{k-1} } \left[ 1+       \biggl\Vert  u_h\left( \frac{ (k-1) \cdot \rT}{2^m}\right)   \biggl\Vert_{ H^1(B_{2R})}  \right]  \left( \int_{t_{k-1}}^{t}  \| \dot{h}_n(r) - \dot{h}(r)\|_{\rkhs}  \, dr \right) \nonumber\\
	& \lesssim_{\mathcal{B}} \sup_{1 \leq k \leq 2^m}  \left(\frac{R}{2^m} \right)^{\frac{1}{2}} \left( \int_{t_{k-1}}^{t_k}  \| \dot{h}_n(r) - \dot{h}(r)\|_{\rkhs}^2  \, dr \right)^{\frac{1}{2}}  \leq  \sqrt{R \frac{M_\mu}{2^m}}.  \nonumber
	\end{align}
	For $G_4^{n,m,1}$ recall that,  by Lemma \ref{hsop}, for every $\phi \in H^1(B(x,r))$ the multiplication operator
	$$ Y(\phi)  : K\ni k \mapsto  Y(\phi) \cdot k \in H^1(B(x,r)), $$
	is $\gamma$-radonifying and hence compact. Hence by   Lemma \ref{lem-compactness} we infer that for every $k$,
	\begin{align}\label{prop-strngConv-t11}
		& \biggl\Vert  Y\left(u_h\left( \frac{k \cdot \rT}{2^m}\right) \right)  \int_{t_{k-1}}^{t_k}  (\dot{h}_n(r) - \dot{h}(r)) \, dr  \biggl\Vert_{H^1(B_{2R})}  \to 0 \textrm{ as } n \to 0.
	\end{align}
	Hence each term of the sum in $G_4^{n,m,1}$ converges to $0$ as $n \to \infty$.
	Consequently, by substituting the computation between \eqref{prop-strngConv-t6} and \eqref{prop-strngConv-t9} into \eqref{prop-strngConv-t5} we obtain
	 \begin{align}
		\int_{0}^{t} &  \langle \bar{\mathfrak{v}}_n(r), \bar{g}_n(r) \rangle_{H^1(B_{2R})}  \, dr  \lesssim_{R, L_{A}, B_{A}, \varphi, \mathcal{B}}  \int_{0}^{t} \left( 1+  l(r,\mathfrak{z}_n(r)) \right) \left( 1+ \|\dot{h}_n(r) \|_{\rkhs}^2  \right)  \, dr \nonumber\\
		& \quad +  \sqrt{ R\frac{M_{\mu}}{2^m}} +  \sum_{k=1}^{2^m}  \biggl\Vert  Y\left(u_h\left( \frac{k \cdot \rT}{2^m}\right) \right)  \int_{t_{k-1}}^{t_k}  (\dot{h}_n(r) - \dot{h}(r)) \, dr  \biggl\Vert_{H^1(B_{2R})}.  \nonumber
	\end{align}
	Therefore, with \eqref{prop-strngConv-t4} and \eqref{prop-strngConv-normChngVarphi},  from \eqref{prop-strngConv-t2i}  we have
	\begin{align}
		 l(t,\mathfrak{z}_n(t)) & \lesssim  \int_{0}^{t}\left( 1+  l(r,\mathfrak{z}_n(r)) \right)  \left( 1+ \|\dot{h}_n(r) \|_{\rkhs}^2  \right)  \, dr  +    \sqrt{R \frac{M_{\mu}}{2^m}}   \nonumber\\
		& \quad +  \sum_{k=1}^{2^m}  \biggl\Vert  Y\left(u_h\left( \frac{k \cdot \rT}{2^m}\right) \right)  \int_{t_{k-1}}^{t_k}  (\dot{h}_n(r) - \dot{h}(r)) \, dr  \biggl\Vert_{H^1(B_{2R})}, \quad  t \in [0,R], \nonumber
	\end{align}
	and by the Gronwall Lemma, with the observation that all the terms in right hand side except the first are independent of $t$,  and $h_n \in S_{\mathcal{M}}$ further we get
	 \begin{align}\label{prop-strngConv-t10}
		 \sup_{t \in [0,R]} l(t,\mathfrak{z}_n(t)) & \lesssim   e^{T+\mathcal{M}}  \left\{ \sqrt{R \frac{M_{\mu}}{2^m}}   + \sum_{k=1}^{2^m}  \biggl\Vert  Y\left(u_h\left( \frac{k \cdot \rT}{2^m}\right) \right)  \int_{t_{k-1}}^{t_k}  (\dot{h}_n(r) - \dot{h}(r)) \, dr  \biggl\Vert_{H^1(B_{2R})}  \right\} .
	\end{align}
	Hence, for given any $\alpha >0$ we can choose $m$ such that
	\begin{equation}\nonumber
		\sqrt{R \frac{M_\mu}{2^m}}    < \alpha, \;\; \mbox{ for every } n \in \mathbb{N}.
	\end{equation}
	Thus, for such chosen $m$, due to \eqref{prop-strngConv-t11} by taking  $n \to \infty$ in \eqref{prop-strngConv-t10} we conclude that, for every $\alpha >0$,
	\begin{equation}\label{eqn-5.26}
		0 <  \limsup\limits_{n \to \infty} \sup_{t \in [0,R]} l(t,\mathfrak{z}_n(t)) < \alpha.
	\end{equation}
	Therefore, due to \eqref{prop-strngConv-normChngVarphi} we  conclude the proof of assertion \eqref{prop-strngConv-LimitBumpi}.
\end{proof}
Hence, the Proposition \ref{prop-strngConv} follows.
\end{proof}
Now we come back to the proof of \textbf{Statement 1}. Previous proposition shows that every sequence in $K_{\mathcal{M}}$ has a convergent subsequence. Hence $K_{\mathcal{M}}$ is sequentially relatively compact subset of $\mathcal{X}_{T}$. Let $\{ z_n\}_{n \in \mathbb{N}} \subset K_{\mathcal{M}}$ which converges to $z \in \mathcal{X}_{T}$. But Proposition \ref{prop-strngConv} shows that there exists a subsequence $\{ u_{n_k} \}_{k \in \mathbb{N}}$ which converges to some element $z_h$ of $K_{\mathcal{M}}$ in the strong topology of $\mathcal{X}_{T}$. Hence $z= z_h$ and $K_{\mathcal{M}}$ is a closed subset of $\mathcal{X}_{T}$. This completes the proof of \textbf{Statement 1}.
Below is a basic result that we have used in the proof of Proposition \ref{prop-strngConv}. A statement of this sort can be found in \cite{Chueshov+Millet_2010}, see the proof of Proposition 3.4.

\begin{Lemma}\label{lem-compactness}
Let $X,Y$ be separable Hilbert spaces and let $C:X\to Y$ be a compact operator. Then the operator $K:L^2(0,T;X)\to C([0,T];Y)$ defined as
\[Kg(t)=C\int_0^tg(s)\,ds\,,\]
where the integral $\int_0^tg(s)\,ds$ is meant in the Bochner sense,
is compact. In particular, if $g_n\to g$ weakly in $L^2(0,T;X)$ then $Kg_n$ converges to $Kg$ strongly in $C([0,T];Y)$.
\end{Lemma}
\begin{proof}[\textbf{Proof of Lemma \ref{lem-compactness}}]
Clearly the operator $K$ is bounded.
Let $B_{L^2_TX}$ stand for the centered unit ball in $L^2(0,T;X)$. In order to prove compactness of $K$, in view of the Arzel{\`a}-Ascoli Theorem, see  \cite[Lemma 1]{Simon_1987} (and, for a very general formulation, \cite[Theorem 8.2.10]{Engelking_1989}),   we only need to show that the following two conditions hold. \\
(1) for every fixed $t\in[0,T]$ the set
\[\left\{Kg(t): \,g\in B_{L^2_TX}\right\}\subset Y\quad\mathrm{is\,\,relatively\,\,compact\,\,in\,\,}Y;\]
(2) the set of function
\[\left\{Kg:\,g\in B_{L^2_TX}\right\}\subset C([0,T];Y)\]
is uniformly equi-continuous.\\
To prove (1) we note first that for $t\in[0,T]$ fixed
\[\left|\int_0^tg(s)\,ds\right|_X\le \sqrt{T}, \; g\in B_{L^2_TX}\,.\]
Since $C:X\to Y$ is compact, the set
\[\left\{C\int_0^tg(s)\,ds:\,g\in B_{L^2_TX}\right\},\]
being an image of a bounded set in $X$, is relatively compact in $Y$. \\
To prove (2) it is enough to note that for any $g\in B_{L^2_TX}$ and $s,t\in[0,T]$
\[|Kg(t)-Kg(s)|\le \|C\|\int_s^t|g(r)|\,dr\le \|C\|\sqrt{|t-s|}\,.\]
Thus the proof of Lemma \ref{lem-compactness} is complete.
\end{proof}
The following Lemma is about the Lipschitz property of the difference of solutions that we have used in proving Proposition \ref{prop-strngConv}.
\begin{Lemma}\label{lem-vnLips}
	Let $R>0$, $I=[-a,a]$ and  $h_n, h \in S_{\mathcal{M}}$. There exists a positive constant $C := C(R,\mathcal{B}, M,a)$ such that for   $t,s \in [0,R] $  the following holds
	\begin{equation}\label{lem-vnLips-res}
		\sup_{x \in I} \| \mathfrak{v}_n(t) - \mathfrak{v}_n(s) \|_{L^2(B(x,2R))} \lesssim C~ |t-s|^{\frac{1}{2}},
	\end{equation}
	where $\mathfrak{v}_n$ is defined just after \eqref{prop-strngConv-un}.
\end{Lemma}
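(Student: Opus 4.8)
The plan is to avoid the semigroup (mild) representation of $\mathfrak{z}_n$ and instead use the fact, obtained in the proof of Theorem~\ref{thm-skeleton} via the Chojnowska--Michalik Theorem, that on every bounded spatial interval the velocity component of a skeleton solution is an $L^2$-valued absolutely continuous function of time satisfying the pointwise identity~\eqref{vSolnIntegral} (with $Y$ now depending on $u$ only). Running that construction with a localisation radius larger than $a+R+T$ and using finite speed of propagation, \eqref{vSolnIntegral} is valid for $u_h$ and for $u_n$ on $(-a-R,a+R)$, which contains $B(x,R)$ for every $x\in I$. Subtracting the two identities I would write, in $L^2(B(x,R))$ and for $0\le s\le t\le T/2$,
\begin{align*}
\mathfrak v_n(t)-\mathfrak v_n(s) &= \int_s^t \partial_{xx}\mathfrak u_n(r)\,dr + \int_s^t\Big[A_{u_h(r)}(v_h(r),v_h(r))-A_{u_n(r)}(v_n(r),v_n(r))\Big]\,dr \\
&\quad - \int_s^t\Big[A_{u_h(r)}(\partial_x u_h(r),\partial_x u_h(r))-A_{u_n(r)}(\partial_x u_n(r),\partial_x u_n(r))\Big]\,dr \\
&\quad + \int_s^t\Big[Y(u_h(r))\dot h(r)-Y(u_n(r))\dot h_n(r)\Big]\,dr,
\end{align*}
and then estimate the $L^2(B(x,R))$-norm of the four integrals separately.

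Throughout, the key input is the a priori bound of Lemma~\ref{lem-aprioriEnergyEst} and Remark~\ref{rem-aprioriEnergyEst}, which give $\sup_{r\in[0,T/2]}\big(\|u_h(r)\|_{H^2(B(x,R))}^2+\|v_h(r)\|_{H^1(B(x,R))}^2\big)\le\mathcal B$ uniformly in $x\in[-a,a]$, and likewise for $u_n,v_n$ since $h_n\in S_{\mathcal M}$. For the first integral, $\|\partial_{xx}\mathfrak u_n(r)\|_{L^2(B(x,R))}\le\|u_h(r)\|_{H^2(B(x,R))}+\|u_n(r)\|_{H^2(B(x,R))}\lesssim\sqrt{\mathcal B}$, so it contributes $\lesssim_{\mathcal B}|t-s|$. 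For the two $A$-integrals, since $u_h(r,\cdot),u_n(r,\cdot)$ take values in the compact manifold $M$ on which $A$ is bounded, and the one-dimensional embedding $H^1\hookrightarrow L^\infty$ together with~\eqref{interpolation1} bounds $\|v_h(r)\|_{L^\infty(B(x,R))}$, $\|\partial_x u_h(r)\|_{L^\infty(B(x,R))}$ (and the same for $u_n$) by $\sqrt{\mathcal B}$, the integrands are bounded in $L^2(B(x,R))$ by a constant depending only on the bound of $A$ and on $\mathcal B,R$ --- exactly the kind of estimate carried out around~\eqref{lip-t2} and~\eqref{prop-strngConv-t3i}; hence these integrals are also $\lesssim_{\mathcal B,R}|t-s|$.

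The only term for which one gets the H\"older exponent $\tfrac12$ rather than $1$ is the one carrying the controls, and this is the delicate step. By Lemma~\ref{hsop} and then Lemma~\ref{lem-Y-LDP}(1) with Lemma~\ref{lem-aprioriEnergyEst},
\begin{align*}
\|Y(u_h(r))\dot h(r)\|_{L^2(B(x,R))} &\le c\,\|Y(u_h(r))\|_{L^2(B(x,R))}\,\|\dot h(r)\|_{\rkhs} \lesssim_{\mathcal B}\|\dot h(r)\|_{\rkhs},
\end{align*}
and similarly $\|Y(u_n(r))\dot h_n(r)\|_{L^2(B(x,R))}\lesssim_{\mathcal B}\|\dot h_n(r)\|_{\rkhs}$; the Cauchy--Schwarz inequality in time and $h,h_n\in S_{\mathcal M}$ then give
\begin{align*}
\int_s^t\|Y(u_h(r))\dot h(r)-Y(u_n(r))\dot h_n(r)\|_{L^2(B(x,R))}\,dr &\lesssim_{\mathcal B}|t-s|^{1/2}\Big(\int_0^{T/2}\big(\|\dot h(r)\|_{\rkhs}^2+\|\dot h_n(r)\|_{\rkhs}^2\big)\,dr\Big)^{1/2} \\
&\lesssim_{\mathcal B}\sqrt{\mathcal M}\,|t-s|^{1/2}.
\end{align*}
Adding the four contributions and using $|t-s|\le T/2$ to absorb $|t-s|$ into $|t-s|^{1/2}$ yields~\eqref{lem-vnLips-res} with a constant $C=C(R,\mathcal B,\mathcal M,a)$, the uniformity in $x\in I$ being inherited directly from that of $\mathcal B$ in Lemma~\ref{lem-aprioriEnergyEst}. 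I expect the only genuinely non-routine point to be the very first one: legitimising the pointwise-in-time $L^2$-valued integral identity for $v_h-v_n$ simultaneously on all the balls $B(x,R)$, $x\in I$ --- handled, as indicated, by taking the localisation radius in the construction of Theorem~\ref{thm-skeleton} large enough and invoking finite speed of propagation.
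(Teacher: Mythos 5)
Your proposal is correct and follows essentially the same route as the paper: both start from the integral identity \eqref{vSolnIntegral} obtained via the Chojnowska--Michalik argument in the existence proof, bound the $\partial_{xx}$ and the two second-fundamental-form integrands uniformly in time by $\mathcal B$ (using Lemma \ref{lem-aprioriEnergyEst}, Remark \ref{rem-aprioriEnergyEst} and the $H^1\hookrightarrow L^\infty$ embedding), and treat the diffusion-control term by Cauchy--Schwarz in time together with Lemmata \ref{hsop} and \ref{lem-Y-LDP}, which is the only source of the exponent $\tfrac12$. The one cosmetic difference is that the paper first uses the triangle inequality to reduce to a single skeleton solution $v_h$, while you subtract the two identities directly; both are equivalent given the a priori bound is uniform over $S_{\mathcal M}$.
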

\begin{proof}[\textbf{Proof of Lemma \ref{lem-vnLips}}]
	Due to triangle inequality it is sufficient to show
	\begin{equation}\nonumber
	\sup_{x \in I} \|v_h(t) - v_h(s) \|_{L^2(B(x,2R))} \lesssim C |t-s|^{\frac{1}{2}}, \quad t,s \in [0,R].
	\end{equation}
	From the proof of existence part in Theorem \ref{thm-skeleton} we have, for $t,s \in [0,R]$,
	\begin{align}\label{lem-vnLips-t2}
		\|v_h(t) - v_h(s)\|_{L^2(B(x,2R))} & \leq \int_{s}^t  \|\partial_{xx} u_h(r)\|_{L^2(B(x,2R))} \, dr \nonumber\\
		& \quad + \int_{s}^t \left[  \|f_h(r)\|_{L^2B(x,2R))} + \|g_h(r)\|_{L^2(B(x,2R))} \right] \, dr ,
	\end{align}
	where
	\begin{align}
		f_h(r) & :=  A_{u_h(r)}(v_h(r), v_h(r))   - A_{u_h(r)}(\partial_x u_h(r), \partial_x u_h(r)) , \textrm{ and } g_h(r) := Y(u_h(r)) \dot{h}(r) . \nonumber
	\end{align}
	But, since $h \in S_{\mathcal{M}}$, the H\"older inequality followed by Lemmata \ref{hsop} and  \ref{lem-Y-LDP} yield
	\begin{align}
		 \sup_{x \in I} \int_{s}^t  \|g_h(r)\|_{L^2(B(x,2R))}  \, dr  & \leq |t-s|^{\frac{1}{2}}  \left( \int_{s}^t  \sup_{x \in I} \| Y(u_h(r)) \|_{L^2(B(x,2R))}^2  \|\dot{h}(r)\|_{\rkhs}^2  \, dr  \right)^{\frac{1}{2}} \nonumber\\
		& \lesssim_{R,\mathcal{B}, M}  |t-s|^{\frac{1}{2}}, \qquad\textrm{for}\quad  t,s \in [0,R], \nonumber
	\end{align}
	where we also applied \ref{lem-aprioriEnergyEst}  with $2R$ instead of $T$ and, based on \eqref{prop-strngConv-t3i}, we also have
	\begin{align}
		& \sup_{x \in I} \int_{s}^t \|f_h(r)\|_{L^2(B(x,2R))}  \, dr  \leq |t-s|^{\frac{1}{2}}  \left(   \int_{s}^t \sup_{x \in I} \| A_{u_h(r)} (v_h(r), v_h(r) )  \|_{L^2(B(x,2R))}^2  \, dr  \right)^{\frac{1}{2}} \nonumber\\
		& \quad + |t-s|^{\frac{1}{2}} \left(   \int_{s}^t  \sup_{x \in I} \| A_{u_h(s)} (\partial_x u_h(r), \partial_x u_h(r) )  \|_{L^2(B(x,2R))}^2  \, dr \right)^{\frac{1}{2}} \nonumber\\
		& \lesssim |t-s|^{\frac{1}{2}} \left(   \int_{s}^t  \sup_{x \in I}  \|u_h(r)\|_{L^2(B(x,2R))}^2  \{ \|v_h(s)\|_{L^2(B(x,2R))}^4   +   \|\partial_x u_h(s)\|_{L^2(B(x,2R))}^4 \}\, ds   \right)^{\frac{1}{2}} \nonumber\\
		& \lesssim  |t-s|~ \mathcal{B}^{\frac{3}{2}} \qquad\textrm{for} \quad t,s \in [0,R].  \nonumber
	\end{align}
	Finally, by the H\"older inequality and Lemma \ref{lem-aprioriEnergyEst}, we obtain, for $t,s \in [0,R]$,
	\begin{align}
		\sup_{x \in I}  \int_{s}^t   \|\partial_{xx} u_h(s)\|_{L^2(B(x,2R))} \, dr  & \leq \left( \int_{s}^t   1 \, dr \right)^{\frac{1}{2}} \left( \int_{s}^t   \sup_{x \in I}  \|u_h(r)\|_{H^2(B(x,2R))}^2  \, dr \right)^{\frac{1}{2}}  \nonumber\\
		&  \lesssim   \sqrt{\mathcal{B}} |t-s|. \nonumber
	\end{align}
	Therefore, by collecting the estimates in \eqref{lem-vnLips-t2} we get the required inequality \eqref{lem-vnLips-res} and we are done with the proof of Lemma \ref{lem-vnLips}.
\end{proof}

\subsection{Proof of Statement 2}\label{subsec:Statement 2}
Recall that $\mathcal{M}>0$ is given and a sequence $\{h_n \}_{n \in \mathbb{N}} \subset \mathscr{S}_{\mathcal{M}}$ is also given which converges in law to $h \in\mathscr{S}_{\mathcal{M}}$ as $\varepsilon_n \to 0$.
It will be useful to introduce the following notation for the processes
\begin{equation}\nonumber
	Z_n := (U_n,V_n) = J^{\varepsilon_n}\left( W + \frac{1}{\sqrt{\varepsilon_n}} h_n  \right), \quad z_n := (u_n,v_n) =  J^0(h_n).
\end{equation}
Let us fix any $x \in \mathbb{R}$. Then set $N$ a natural number such that
$$N > \| (u_0,v_0)\|_{\mathcal{H}(B(x, T))}.$$
For each $n \in \mathbb{N}$ we define an $ \mathfrak{F}_t $-stopping time
\begin{equation}\label{defn-taun}
	\tau_n(\omega) := \inf\{ t >0: \| Z_n(t,\omega) \|_{\mathcal{H}(B(x,T-t))} \geq N \} \wedge T, \quad \omega \in \Omega.
\end{equation}
Recall that  for $z = (u,v) \in \mathcal{H}_{loc}$, we set
\begin{align*}
\mathbf{e}(t,T;x,z)  =\frac{1}{2}\left\{ \| u\|^2_{H^2(B(x,T-t))} + \| v \|^2_{H^1(B(x,T-t))} \right\} = \frac{1}{2} \| z\|_{\mathcal{H}(B(x,T-t))}^2, \quad t \in [0,T] .
\end{align*}

In this framework we prove the following key result.
\begin{Proposition}\label{prop-weakConv}
	Let us define $\mathcal{Z}_n := Z_n - z_n$. For $\tau_n$ defined in \eqref{defn-taun} we have
	\begin{equation}\nonumber
	\lim\limits_{n \to \infty}\sup_{x \in [-a,a]}   \mathbb{E}\left[ \sup_{t \in [0,\frac{T}{2}]} \mathbf{e}(t \wedge \tau_n ,T; x, \mathcal{Z}_n(t \wedge \tau_n)) \right] =0.
	\end{equation}
\end{Proposition}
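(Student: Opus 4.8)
The plan is to mimic closely the structure of the proof of Proposition \ref{prop-strngConv}, but now carrying the extra stochastic term $\sqrt{\varepsilon_n}\,Y(U_n)\dot W$ and therefore working with expectations of suprema rather than pointwise-in-$\omega$ estimates. Throughout we fix $x\in[-a,a]$ (suppressing it from the notation), introduce a bump function $\varphi$ equal to $1$ on $B(x,R)$ and vanishing outside $B(x,2R)$ with $T:=4\mathcal T$, $R:=\mathcal T=2R=T/4$ as in Proposition \ref{prop-strngConv}, multiply the equation for $\mathcal Z_n=Z_n-z_n$ by $\varphi$, and reduce \eqref{prop-strngConv-t1} to an estimate of $\mathbf e(t\wedge\tau_n,\bar{\mathcal Z}_n(t\wedge\tau_n))$ where $\bar{\mathcal Z}_n$ is the $\varphi$-localised difference (using the norm-equivalence \eqref{prop-strngConv-normChngVarphi}). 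The process $\bar{\mathcal Z}_n$ satisfies a linear wave equation with forcing built from (i) the drift differences $f_n$, handled exactly as in \eqref{prop-strngConv-t3i}–\eqref{prop-strngConv-t4} via the interpolation inequality \eqref{interpolation1}, the Lipschitz properties of $\mathcal A$, and the a priori bound (here the stochastic analogue of Lemma \ref{lem-aprioriEnergyEst}, which is an energy-inequality estimate on $Z_n$ valid up to $\tau_n$ coming from Appendix \ref{sec:EnergyIneqSWE}); (ii) the control term $Y(U_n)\dot h_n - Y(u_h)\dot h$, treated by the same four-term telescoping decomposition $G_1+G_2+G_3+G_4$ as in \eqref{prop-strngConv-t5}, using Lemmata \ref{hsop}, \ref{lem-Y-LDP}, \ref{lem-vnLips} and the compactness Lemma \ref{lem-hconv}; and (iii) the genuinely new martingale term $\sqrt{\varepsilon_n}\,\varphi\,Y(U_n)\dot W$.

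The key new step is the treatment of the stochastic integral. Applying the energy identity (Proposition \ref{prop_magic}, or the Itô formula underlying it) to $\mathbf e(t\wedge\tau_n,\bar{\mathcal Z}_n)$ produces, besides the deterministic terms above, a quadratic-variation term $\tfrac{\varepsilon_n}{2}\int_0^{t\wedge\tau_n}\|\varphi\,Y(U_n(s))\cdot\|_{\mathscr L_2(\rkhs,H^1(B_{2R}))}^2\,ds$ and a stochastic integral term $\sqrt{\varepsilon_n}\int_0^{t\wedge\tau_n}\langle \bar{\mathfrak v}_n(s),\varphi\,Y(U_n(s))\,dW(s)\rangle_{H^1(B_{2R})}$. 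The first is bounded by $\varepsilon_n\, C\int_0^{t\wedge\tau_n}(1+\|U_n(s)\|^2_{\mathcal H_{2R}})\,ds$ by Lemma \ref{hsop} and Lemma \ref{lem-Y-LDP}, hence by $\varepsilon_n\,C(N,T)$ after taking expectation and using the definition of $\tau_n$; this tends to $0$. For the martingale term one takes the supremum over $t\in[0,T/2]$ first and applies the Burkholder–Davis–Gundy inequality to bound its expectation by
\begin{equation}\nonumber
C\,\mathbb E\left[\left(\varepsilon_n\int_0^{T/2\wedge\tau_n}\|\bar{\mathfrak v}_n(s)\|_{H^1(B_{2R})}^2\,\|\varphi Y(U_n(s))\cdot\|_{\mathscr L_2(\rkhs,H^1(B_{2R}))}^2\,ds\right)^{1/2}\right],
\end{equation}
then by Young's inequality splits this as $\tfrac12\mathbb E[\sup_{t\le T/2}\mathbf e(t\wedge\tau_n,\bar{\mathcal Z}_n)]+C\varepsilon_n\,\mathbb E\int_0^{T/2\wedge\tau_n}\|\varphi Y(U_n)\cdot\|_{\mathscr L_2}^2\,ds$, the first term being absorbed into the left-hand side and the second being $O(\varepsilon_n)$ as above.

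After these reductions one arrives at an inequality of the form
\begin{equation}\nonumber
\mathbb E\Big[\sup_{r\le t}\mathbf e(r\wedge\tau_n,\bar{\mathcal Z}_n)\Big]\le C\int_0^t\mathbb E\Big[\sup_{r\le s}\mathbf e(r\wedge\tau_n,\bar{\mathcal Z}_n)\Big](1+\|\dot h_n(s)\|_{\rkhs}^2)\,ds+\rho_n(m)+\varepsilon_n C(N,T),
\end{equation}
where $\rho_n(m)$ collects the telescoping remainder terms from $G_2,G_3,G_4$: a piece $O(2^{-m/2})$ uniform in $n$, a piece $\sup_{1\le k\le 2^m}\big(\int_{t_{k-1}}^{t_k}\|\dot h_n-\dot h\|_{\rkhs}^2\big)^{1/2}$ which by absolute continuity of the integral is small for $m$ large uniformly in $n$ (here one uses the bound \eqref{hBound} on $\dot h_n-\dot h$ in $L^2(0,T;\rkhs)$, valid since $h_n\to h$ weakly — after invoking the Skorokhod/Jakubowski representation so that $h_n\to h$ a.s. in $S_{\mathcal M}$, cf. the Remark following Theorem \ref{thm-LDP}), and for each fixed $m$ a finite sum of terms of the form $\|Y(u_h(k\mathcal T/2^m))\int_{t_{k-1}}^{t_k}(\dot h_n-\dot h)\|_{H^1(B_{2R})}\to 0$ by Lemma \ref{lem-hconv}. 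Applying the Gronwall lemma (using $h_n\in S_{\mathcal M}$ so the exponent is $\le e^{T+\mathcal M}$), then letting $n\to\infty$ and finally $m\to\infty$ (and taking $\sup_{x\in[-a,a]}$, which only enters through the uniform-in-$x$ a priori bound on $Z_n$ and $z_h$, both available on the interval $[-a-T,a+T]$), gives the conclusion. The main obstacle I anticipate is twofold: first, establishing the stochastic a priori bound $\mathbb E\sup_{t\le T/2}\mathbf e(t\wedge\tau_n,x,Z_n(t\wedge\tau_n))\le C(N,T,a,\mathcal M)$ uniformly in $n$ and $x$ — this requires a careful localisation-plus-BDG argument on the energy inequality of Appendix \ref{sec:EnergyIneqSWE} for the controlled-noise problem \eqref{CP_noise+skeleton}, with the control term bounded by $\|\dot h_n\|_{\rkhs}^2\le\mathcal M$-integrable and the noise term handled by BDG+Gronwall — and second, the bookkeeping needed to ensure the Gronwall constant and all intermediate constants are genuinely uniform in $n$ and in $x\in[-a,a]$, which is why the uniform-over-$x$ versions of Lemmata \ref{lem-aprioriEnergyEst} and \ref{lem-vnLips} and their stochastic counterparts are essential.
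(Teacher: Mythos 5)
Your proposal is built on a misidentification of the control-term difference, which leads you down an unnecessarily complicated path. Since $\mathcal Z_n = Z_n - z_n$, where \emph{both} $Z_n$ and $z_n$ are driven by the \emph{same} control $\dot h_n$ (see \eqref{CP_noise+skeleton} and \eqref{prop-strngConv-un}), the drift difference coming from the control term is
\begin{equation}\nonumber
Y(U_n)\dot h_n - Y(u_n)\dot h_n = \big(Y(U_n) - Y(u_n)\big)\dot h_n,
\end{equation}
and not $Y(U_n)\dot h_n - Y(u_h)\dot h$ as you write. The latter would arise if you were comparing $Z_n$ directly with $z_h$, but the paper deliberately does \emph{not} do that here: the comparison $z_n \to z_h$, with its genuinely distinct controls $h_n$ and $h$, is precisely the content of Proposition \ref{prop-strngConv}, and \emph{that} is where the four-term telescoping $G_1+\dots+G_4$, the compactness Lemma \ref{lem-hconv}, and the absolute-continuity argument are required. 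In Proposition \ref{prop-weakConv} the control difference $\big(Y(U_n)-Y(u_n)\big)\dot h_n$ is simply Lipschitz in $\|U_n-u_n\|$ (Lemma \ref{lem-Y-LDP}), so it folds into the Gronwall integrand as $\|\mathcal Z_n\|^2_{\mathcal H}\,(1+\|\dot h_n\|^2_{\rkhs})$, exactly like the $A$-terms — no telescoping, no $G_4$-type compactness step, no Skorokhod representation inside this proposition.

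Consequently the paper's proof of Proposition \ref{prop-weakConv} is much shorter than what you propose: one applies Proposition \ref{prop_magic} directly on the balls $B(x,T-t)$ (no bump-function localisation is needed here), bounds $\|f_n\|_{H^1}$ via Lemmata \ref{lem-lip} and \ref{lem-Y-LDP} together with the stopping-time cutoff $\|Z_n\|_{\mathcal H_{T-t}}\le N$ on $[0,\tau_n)$ and the deterministic bound $\|z_n\|_{\mathcal H_{T-t}}\le\mathcal B$ from Lemma \ref{lem-aprioriEnergyEst}, handles the martingale by Davis plus Young (as you correctly anticipate), and absorbs the quadratic-variation term as $O(\varepsilon_n(1+N^2))$. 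Gronwall then yields $\mathbb E[\sup_t\mathbf e]\lesssim \varepsilon_n(1+N^2)\exp[C_{N,\mathcal B}(T+\mathcal M)]\to 0$. Your martingale and quadratic-variation handling is essentially correct, and your anticipation that a stochastic analogue of Lemma \ref{lem-aprioriEnergyEst} is needed is on target (it appears as Lemma \ref{lem-stocAprioriEnergyEst} — though the paper uses it in Lemma \ref{lem-ProbConv}, not in the proof of this proposition, where the $\tau_n$ cutoff suffices). The main gap is that by importing the entire $G_1,\dots,G_4$ machinery you would be proving the wrong, and much harder, comparison; recognising that $z_n$ and $Z_n$ share the control $h_n$ collapses that whole apparatus.
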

\begin{proof}[\textbf{Proof of Proposition \ref{prop-weakConv}}]
	Let us fix any $n \in \mathbb{N}$.
	To avoid complexity of notation we use an abuse of notation and write all the norms without reference of the centre of the ball $x$ and we will write $\mathbf{e}(t,z)$ in place of $\mathbf{e}(t,T;x,z)$ unless any conflict arises.
	First note that under our notation $Z_n = (U_n,V_n)$ and $z_n= (u_n,v_n)$, respectively, are the unique global strong solutions to the Cauchy problem
	\begin{equation}\nonumber
		\left\{
		\begin{aligned}
			&  \partial_{tt}U_n =  \partial_{xx}U_n +  A_{U_n}(\partial_t U_n, \partial_t U_n) - A_{U_n}(\partial_x U_n, \partial_x U_n) + Y(U_n) \dot{h}_n  ,   \\
			& \hspace{2cm} + \sqrt{\varepsilon_n} Y(U_n)\dot{W}, \\
			& \left( U_n(0), \partial_t U_n(0) \right) = \left( u_0 , v_0 \right), \quad \textrm{ where } V_n := \partial_t U_n,
		\end{aligned}\right.
	\end{equation}
	and
	\begin{equation}\nonumber
		\left\{
		\begin{aligned}
			&  \partial_{tt}u_n =  \partial_{xx}u_n +  A_{u_n}(\partial_t u_n, \partial_t u_n) - A_{u_n}(\partial_x u_n, \partial_x u_n)   +  Y(u_n) \dot{h}_n,   \\
			& \left( u_n(0), \partial_t u_n(0) \right) = \left( u_0 , v_0 \right),  \quad \textrm{ where } v_n := \partial_t u_n.
		\end{aligned}\right.
	\end{equation}
	Hence $\mathcal{Z}_n$ solves uniquely the Cauchy problem, with null initial data,
	\begin{align}
		\partial_{tt}\mathcal{U}_n & =  \partial_{xx}\mathcal{U}_n - A_{U_n}(\partial_x U_n, \partial_x U_n) +  A_{u_n}(\partial_x u_n, \partial_x u_n) + A_{U_n}(\partial_t U_n, \partial_t U_n)  \nonumber\\
		& \quad - A_{u_n}(\partial_t u_n, \partial_t u_n)  +  Y(U_n)\dot{h}_n  - Y(u_n)\dot{h}_n  + \sqrt{\varepsilon_n} Y(U_n) \dot{W}, \nonumber
	\end{align}
	where $\mathcal{V}_n := \partial_t \mathcal{U}_n$.  This is equivalent to say, for all $t \in [0,\frac{T}{2}]$,
	\begin{align}
		\mathcal{Z}_n(t) = \int_{0}^{t} S_{t-s}\left(\begin{array}{c}0\\ f_n(s)\end{array}\right)\, ds + \int_{0}^{t} S_{t-s} \left(\begin{array}{c}0\\g_n(s)\end{array}\right)\, dW(s).
	\end{align}
	Here
	\begin{align}
		f_n(s) & := - A_{U_n(s)}(\partial_x U_n(s), \partial_x U_n(s)) +  A_{u_n(s)}(\partial_x u_n(s), \partial_x u_n(s)) + A_{U_n(s)}(V_n(s), V_n(s))  \nonumber\\
		& \quad - A_{u_n(s)}(v_n(s), v_n(s))  +  Y(U_n(s)) \dot{h}_n(s)  - Y(u_n(s))\dot{h}_n(s), \nonumber
	\end{align}
	and
	\begin{equation}\nonumber
		g_n(s) := \sqrt{\varepsilon_n} Y(U_n(s)).
	\end{equation}
	Invoking Proposition \ref{prop_magic}, with that by taking $k=1,L=I,$ implies for every $t \in [0,\frac{T}{2}]$ and $x  \in [-a,a]$,
	\begin{align}\label{LDP-energyEstimate}
	\mathbf{e}(t,T;x,\mathcal{Z}_n(t)) & \leq  \int_{0}^{t} \mathbb{V}(r,\mathcal{Z}_n(r)) \, dr  + \int_{0}^{t} \langle \mathcal{V}_n(r), g_n(r)dW(r) \rangle_{L^2(B_{T-r})} \nonumber\\
	& \quad + \int_{0}^{t} \langle \partial_x \mathcal{V}_n(r), \partial_x [g_n(r)dW(r)] \rangle_{L^2(B_{T-r})},
	\end{align}
	with
	\begin{align}
	 \mathbb{V}(r,\mathcal{Z}_n(r))  & = \langle \mathcal{U}_n(r), \mathcal{V}_n(r) \rangle_{L^2(B_{T-r})} + \langle \mathcal{V}_n(r), f_n(r) \rangle_{L^2(B_{T-r})} + \langle \partial_x \mathcal{V}_n(r), \partial_x f_n(r) \rangle_{L^2(B_{T-r})}  \nonumber\\
	&  \quad  + \frac{1}{2} \sum_{j=1}^\infty \|g_n(r)e_j \|_{L^2(B_{T-r})}^2 +  \frac{1}{2} \sum_{j=1}^\infty \|\partial_x [g_n(r)e_j] \|_{L^2(B_{T-r})}^2, \nonumber
	\end{align}
	for a given sequence $\{e_j\}_{j \in \mathbb{N}}$ of orthonormal basis of $\rkhs$.

	Observe that, for any $\tau \in [0,T]$, by the Cauchy-Schwartz inequality
	\begin{align}\label{t1}
	 \sup_{0 \leq t \leq \tau} \int_{0}^{t \wedge \tau_n} \mathbb{V}(r,\mathcal{Z}_n(r)) \, dr &  \leq  2 \int_{0}^{\tau \wedge \tau_n} \mathbf{e}(r,\mathcal{Z}_n(r)) \, dr \\
	 &   + \frac{1}{2}\int_{0}^{\tau \wedge \tau_n} \left(  \| f_n(r) \|_{H^1(B_{T-r})}^2    +  \|g_n(r) \cdot\|_{\mathscr{L}_2(\rkhs,H^1(B_{T-r}))}^2  \right)  \, dr, \nonumber
	\end{align}
	where $g_n(r) \cdot$ denotes the multiplication operator in $\mathscr{L}_2(\rkhs,H^1(B_{T-r}))$, see Lemma~\ref{hsop}.

	Next, we define the process
	\begin{equation}\label{Yt}
		\mathcal{Y}(t) := \int_{0}^{t} \langle \mathcal{V}_n(r), g_n(r)dW(r) \rangle_{H^1(B_{T-r})}.
	\end{equation}
	By taking $\int_{0}^{t} \xi(r)\, dW(r)$ with
	$$ \xi(r): \rkhs \ni k \mapsto \langle \mathcal{V}_n(r), g_n(r)(k) \rangle_{H^1(B_{T-r})} \in \mathbb{R}, $$
	a Hilbert-Schmidt operator, note that
	\begin{equation}\nonumber
		\mathcal{Q}(t) := \int_{0}^{t} \xi(r) \circ \xi(r)^\star \, dr,
	\end{equation}
	is quadratic variation of $\mathbb{R}$-valued martingale $\mathcal{Y}$.  Thus
	\begin{align}\label{t2}
		\mathcal{Q}(t) & \leq \int_{0}^{t} \|\xi(r)\|_{\mathscr{L}_2(\rkhs,\mathbb{R})} \|\xi(r)^\star\|_{\mathscr{L}_2(\mathbb{R}, \rkhs)}\, dr =  \int_{0}^{t} \|\xi(r)\|_{\mathscr{L}_2(\rkhs,\mathbb{R})}^2\, dr \\
		& =  \int_{0}^{t} \sum_{j=1}^\infty |\xi(r)(e_j)|^2 \, dr =  \int_{0}^{t} \sum_{j=1}^\infty |\langle \mathcal{V}_n(r), g_n(r)(e_j) \rangle_{H^1(B_{T-r})} |^2 \, dr, \quad  t \in [0,\frac{T}{2}]. \nonumber
	\end{align}
	On the other hand by the Cauchy-Schwartz inequality
	\begin{align}
		& \sum_{j=1}^\infty |\langle \mathcal{V}_n(r), g_n(r)(e_j) \rangle_{H^1(B_{T-r})} |^2 \leq \|\mathcal{V}_n(r)\|_{H^1(B_{T-r})}^2 \| g_n(r) \cdot\|_{\mathscr{L}_2(\rkhs, H^1(B_{T-r})  )}^2. \nonumber
	\end{align}
	Therefore,
	\begin{equation}\label{QtEstimate}
		\mathcal{Q}(t) \leq \int_{0}^{t} \|\mathcal{V}_n(r)\|_{H^1(B_{T-r})}^2 \| g_n(r) \cdot \|_{\mathscr{L}_2(\rkhs, H^1(B_{T-r})  )}^2 \, dr, \quad  t \in [0,\frac{T}{2}].
	\end{equation}
	Invoking the Davis inequality with \eqref{QtEstimate} followed by the Young inequality gives
	\begin{align}\label{t3}
		\mathbb{E}& \left[ \sup_{0 \leq t \leq \tau} |\mathcal{Y}(t \wedge \tau_n)|\right]  \leq 3 \mathbb{E}\left[ \sqrt{\mathcal{Q}(\tau \wedge \tau_n)} \right] \nonumber\\
		& \qquad \leq   3 \mathbb{E} \left[ \sup_{0 \leq t \leq \tau \wedge \tau_n} \|\mathcal{V}_n(t \wedge \tau_n )\|_{H^1(B_{T-t})} \left\{  \int_{0}^{\tau \wedge \tau_n}  \| g_n(r) \cdot\|_{\mathscr{L}_2(\rkhs, H^1(B_{T-r})  )}^2 \, dr \right\}^{\frac{1}{2}} \right] \nonumber\\
		& \qquad \leq  3 \mathbb{E} \left[ \varepsilon \sup_{0 \leq t \leq \tau \wedge \tau_n} \|\mathcal{V}_n(t)\|_{H^1(B_{T-t})}^2 + \frac{1}{4 \varepsilon} \int_{0}^{\tau \wedge \tau_n}  \| g_n(r) \cdot\|_{\mathscr{L}_2(\rkhs, H^1(B_{T-r})  )}^2 \, dr  \right] \nonumber\\
		&\qquad \leq  6\varepsilon ~ \mathbb{E} \left[  \sup_{0 \leq t \leq \tau \wedge \tau_n} \mathbf{e}(t, \mathcal{Z}_n(t)) \right]  +  \frac{3}{4 \varepsilon} \mathbb{E}\left[ \int_{0}^{\tau \wedge \tau_n}  \| g_n(r) \cdot\|_{\mathscr{L}_2(\rkhs, H^1(B_{T-r})  )}^2 \, dr  \right].
	\end{align}
	By choosing $\varepsilon$ such that $6 \varepsilon = \frac{1}{2}$ and taking $\sup_{0 \leq s \leq t}$ followed by expectation $\mathbb{E}$ on the both sides of \eqref{LDP-energyEstimate} after evaluating it at $\tau \wedge \tau_n$ we obtain
	\begin{align}
		\mathbb{E}& \left[ \sup_{0 \leq s \leq t \wedge \tau_n} \mathbf{e}(s,\mathcal{Z}_n(s)) \right] \leq  \mathbb{E}\left[ \sup_{0 \leq s \leq t}\int_{0}^{s \wedge \tau_n} \mathbb{V}(r,\mathcal{Z}_n(r)) \, dr  \right]  + \mathbb{E}\left[ \sup_{0 \leq s \leq t} \mathcal{Y}(s \wedge \tau_n) \right]. \nonumber
	\end{align}
	Consequently, using \eqref{t1} and \eqref{t3} we infer that
	\begin{align}\label{t5}
		\mathbb{E} \left[ \sup_{0 \leq s \leq t \wedge \tau_n} \mathbf{e}(s,\mathcal{Z}_n(s)) \right]  &  \leq 4 \mathbb{E} \left[ \int_{0}^{t \wedge \tau_n}   \mathbf{e}(r,\mathcal{Z}_n(r)) \, dr \right]  + \mathbb{E}\left[\int_{0}^{t \wedge \tau_n} \| f_n(r) \|_{H^1(B_{T-r})}^2 \, dr \right] \nonumber\\
		& \quad + 19 \mathbb{E} \left[\int_{0}^{t \wedge \tau_n} \| g_n(r) \cdot\|_{\mathscr{L}_2(\rkhs, H^1(B_{T-r})  )}^2 \, dr \right].
	\end{align}
	Now since the Hilbert-Schmidt operator $g_n(r) \cdot$ is defined as
	\begin{equation}\nonumber
		\rkhs \ni k \mapsto g_n(r)\cdot k \in H^1(B_{T-r}),
	\end{equation}
	Lemmata \ref{hsop} and \ref{lem-Y-LDP} give,
	\begin{align}\label{t6}
	& \sup_{x\in[-a,a]} \mathbb{E} \left[\int_{0}^{t \wedge \tau_n} \| g_n(r) \cdot \|_{\mathscr{L}_2(\rkhs, H^1(B_{T-r})  )}^2 \, dr \right]   \lesssim_T   ~ \mathbb{E} \left[\int_{0}^{t \wedge \tau_n} \| \sqrt{\varepsilon_n} Y(U_n(r)) \|_{H^1(B_{T-r})}^2 \, dr \right] \nonumber\\
	&  \leq C_{Y,T}^2 \varepsilon_n ~  \mathbb{E} \left[\int_{0}^{t \wedge \tau_n} \left( 1+ \|Z_n(r)\|_{\mathcal{H}_{T-r}}^2  \right)    \, dr \right]   \lesssim_T  \varepsilon_n   ~ (1+N^2).
	\end{align}
	Here we observe that the constant in inequality \eqref{t6} does not depend on $a$ due to Lemma \ref{hsop}.
	To estimate the terms involving $f_n$ we have
	\begin{align}\label{t7}
		\|f_n(r)\|_{H^1(B_{T-r})}^2  & \lesssim \|A_{U_n(r)}(\partial_x U_n(r), \partial_x U_n(r)) -  A_{u_n(r)}(\partial_x u_n(r), \partial_x u_n(r)) \|_{H^1(B_{T-r})}^2 \nonumber\\
		& \quad  + \| A_{U_n(r)}(V_n(r), V_n(r)) - A_{u_n(r)}(v_n(r), v_n(r)) \|_{H^1(B_{T-r})}^2  \nonumber\\
		& \quad   +  \| Y(U_n(r))\dot{h}_n(r)   - Y(u_n(r))\dot{h}_n(r) \|_{H^1(B_{T-r})}^2 \nonumber\\
		& =: f_n^1 + f_n^2 + f_n^3.
	\end{align}
	By doing the computation based on Lemmata \ref{lem-lip}  and \ref{lem-Y-LDP} we obtain
	\begin{align}\label{t7-1}
		 f_n^1 & \lesssim \|A_{U_n(r)}(\partial_x U_n(r), \partial_x U_n(r)) -  A_{u_n(r)}(\partial_x U_n(r), \partial_x U_n(r)) \|_{H^1(B_{T-r})}^2 \nonumber\\
		& \quad + \|A_{u_n(r)}(\partial_x U_n(r), \partial_x U_n(r)) -  A_{u_n(r)}(\partial_x u_n(r), \partial_x U_n(r)) \|_{H^1(B_{T-r})}^2 \nonumber\\
		& \quad + \|A_{u_n(r)}(\partial_x u_n(r), \partial_x U_n(r)) -  A_{u_n(r)}(\partial_x u_n(r), \partial_x u_n(r)) \|_{H^1(B_{T-r})}^2 \nonumber\\
		& \lesssim_{T,x} \|U_n(r) - u_n(r)\|_{H^2(B_{T-r})}^2 \left(1 +\|\partial_x U_n(r)\|_{H^1(B_{T-r})}^2 +\|\partial_x U_n(r)\|_{H^1(B_{T-r})}^2  \right) \times \nonumber\\
		& \quad \times \left( 1+ \|u_n(r)\|_{H^2(B_{T-r})}^2 \right)  \nonumber\\
		& \quad + \|u_n(r)\|_{H^2(B_{T-r})}^2   \|\partial_x [U_n(r) - u_n(r)]\|_{H^1(B_{T-r})}^2\|\partial_x[u_n(r)]\|_{H^1(B_{T-r})}^2 \nonumber\\	
		& \lesssim  ~ \|\mathcal{Z}_n(r)\|_{\mathcal{H}_{T-r}}^2 ~ \left[  \left( 1 + \|Z_n(r)\|_{\mathcal{H}_{T-r}}^2 \right)  \left( 1 +  \|z_n(r)\|_{\mathcal{H}_{T-r}}^2 \right) + \|z_n(r)\|_{\mathcal{H}_{T-r}}^4 \right],
	\end{align}
	and, by similar calculations,
	\begin{align}\label{t7-2}
		& f_n^2  \lesssim_{T,x}  ~ \|\mathcal{Z}_n(r)\|_{\mathcal{H}_{T-r}}^2 ~ \left[  \left( 1 + \|Z_n(r)\|_{\mathcal{H}_{T-r}}^2 \right)  \left( 1 +  \|z_n(r)\|_{\mathcal{H}_{T-r}}^2 \right) + \|z_n(r)\|_{\mathcal{H}_{T-r}}^4 \right].
	\end{align}
	Furthermore, Lemmata \ref{lem-Y-LDP} and \ref{hsop} implies
	\begin{align}\label{t7-3}
		f_n^3 & \lesssim_{T,x} \| U_n(r)  - u_n(r)  \|_{H^1(B_{T-r})}^2 \left[ 1+ \| U_n(r)   |_{H^1(B_{T-r})}^2 + \| u_n(r)  |_{H^1(B_{T-r})}^2  \right] \|\dot{h}_n(r)\|_{\rkhs}^2 \nonumber\\
		& \lesssim  \|\mathcal{Z}_n(r)\|_{\mathcal{H}_{T-r}}^2 ~  \left( 1 + \|Z_n(r)\|_{\mathcal{H}_{T-r}}^2 + \|z_n(r)\|_{\mathcal{H}_{T-r}}^2  \right)   \|\dot{h}_n(r)\|_{\rkhs}^2.
	\end{align}
	Hence by substituting \eqref{t7-1}-\eqref{t7-3} in \eqref{t7} we get
	\begin{align}
		\|f_n(r)\|_{H^1(B_{T-r})}^2 & \lesssim_{T,x}   \|\mathcal{Z}_n(r)\|_{\mathcal{H}_{T-r}}^2 ~ \left[  \left( 1 +  \|Z_n(r)\|_{\mathcal{H}_{T-r}}^2 \right)  \left( 1 +  \|z_n(r)\|_{\mathcal{H}_{T-r}}^2 \right) + \|z_n(r)\|_{\mathcal{H}_{T-r}}^4 \right] \nonumber\\
		& \quad + \|\mathcal{Z}_n(r)\|_{\mathcal{H}_{T-r}}^2 ~  \left( 1 + \|Z_n(r)\|_{\mathcal{H}_{T-r}}^2 + \|z_n(r)\|_{\mathcal{H}_{T-r}}^2  \right)   \|\dot{h}_n(r)\|_{\rkhs}^2, \nonumber
	\end{align}
	consequently, the definition of $\tau_n$ and Lemma \ref{lem-aprioriEnergyEst} suggest
	\begin{align}\label{t8}
		 \mathbb{E}\left[\int_{0}^{t \wedge \tau_n} \| f_n(r) \|_{H^1(B_{T-r})}^2 \, dr \right] & \lesssim \mathbb{E}\left[\int_{0}^{t \wedge \tau_n} \left\{  \|\mathcal{Z}_n(r)\|_{\mathcal{H}_{T-r}}^2 ~ \left[  \left( 1 +  N^2 \right)  \left( 1 +  \mathcal{B}^2 \right) + \mathcal{B}^4 \right] \right. \right.  \nonumber\\
		& \quad \left. \left. +  \|\mathcal{Z}_n(r)\|_{\mathcal{H}_{T-r}}^2 ~  \left( 1 + N^2 + \mathcal{B}^2  \right)  ~ \left( 1 + \mathcal{B}^2 \right)  \|\dot{h}_n(r)\|_{\rkhs}^2   \right\} \, dr \right] \nonumber\\
		& \lesssim \mathbb{E}\left[\int_{0}^{t \wedge \tau_n}  ~ \mathbf{e}(r,T;x, \mathcal{Z}_n(r)) ~ C_{N,\mathcal{B}} \left(1  +  \|\dot{h}_n(r)\|_{\rkhs}^2  \right)  \, dr \right],
	\end{align}
	for some constant $C_{N,\mathcal{B}}>0$ depends on $N,\mathcal{B}$, where $\mathcal{B}$ is a function of $x$ which is bounded on compact sets.
	Then substitution of \eqref{t6} and \eqref{t8} in \eqref{t5} implies, here we write dependency of $\textbf{e}$ on $x$ and $T$ explicitly,
	\begin{align}
		\mathbb{E} & \left[ \sup_{0 \leq s \leq t \wedge \tau_n} \mathbf{e}(s,T;x,\mathcal{Z}_n(s)) \right]   \lesssim_{T,x}  \varepsilon_n ~ (1+N^2) \nonumber\\
		& \qquad + C_{N,\mathcal{B}}  \mathbb{E}  \left[ \int_{0}^{t \wedge \tau_n} [\sup_{0 \leq s \leq r \wedge \tau_n} \mathbf{e}(s, T;x,\mathcal{Z}_n(s))]  \left( 1+  \|\dot{h}_n(r)\|_{\rkhs}^2 \right)  \, dr  \right]. \nonumber
	\end{align}
	Therefore, invoking the stochastic Gronwall Lemma, see \cite[Lemma 3.9]{Duan+Millet_2009}, gives,
	\begin{align}\label{t9}
		\sup_{x \in [-a,a]} \mathbb{E}& \left[ \sup_{0 \leq s \leq t \wedge \tau_n} \mathbf{e}(s,T;x,\mathcal{Z}_n(s)) \right] \lesssim_{T,a}  \varepsilon_n  ~ (1+N^2)  \exp\left[ C_{N,\mathcal{B}} (T + \mathcal{M} ) \right].
	\end{align}
	Since $\varepsilon_n \to 0$ as $n \to \infty$ and
	$$ \mathbb{E} \left[ \sup_{0 \leq s \leq t \wedge \tau_n} \mathbf{e}(s,T;x,\mathcal{Z}_n(s)) \right] = \mathbb{E} \left[ \sup_{0 \leq s \leq t} \mathbf{e}(s\wedge \tau_n,T;x,\mathcal{Z}_n(s\wedge \tau_n)) \right], $$
	inequality \eqref{t9} gives $\lim\limits_{n \to \infty} \sup_{x \in [-a,a]} \mathbb{E} \left[ \sup_{0 \leq t \leq T} \mathbf{e}(t\wedge \tau_n,T;x,\mathcal{Z}_n(t\wedge \tau_n)) \right] =0$.   Hence we are done with the proof of Proposition \ref{prop-weakConv}.
\end{proof}

To proceed further we also need the following stochastic analogue of Lemma \ref{lem-aprioriEnergyEst}.
\begin{Lemma}\label{lem-stocAprioriEnergyEst}
	There exists a constant $\mathscr{B}: = \mathscr{B}(N,T,\mathcal{M}) > 0$ such that
	\begin{equation}\nonumber
	\limsup_{n \to\infty} ~ \sup_{x \in [-a,a]} \mathbb{E}\left[ \sup_{t \in [0,\frac{T}{2}]} \mathbf{e}(t \wedge \tau_n ,T;x, Z_n(t \wedge \tau_n)) \right]  \leq \mathscr{B}.
	\end{equation}
\end{Lemma}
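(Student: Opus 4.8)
The plan is to obtain Lemma~\ref{lem-stocAprioriEnergyEst} as a short consequence of Proposition~\ref{prop-weakConv} together with the deterministic a priori bound of Lemma~\ref{lem-aprioriEnergyEst}, via the decomposition $Z_n=\mathcal{Z}_n+z_n$ already used above. Since $\mathbf{e}(t,x,z)=\tfrac12\|z\|_{\mathcal{H}(B(x,T-t))}^2$ is half the square of a norm, the elementary inequality $\|a+b\|^2\le 2\|a\|^2+2\|b\|^2$ gives, for every $s\in[0,T/2]$, every $x$ and every $\omega$,
\[
\mathbf{e}(s,x,Z_n(s))\le 2\,\mathbf{e}(s,x,\mathcal{Z}_n(s))+2\,\mathbf{e}(s,x,z_n(s)),
\]
with $\mathcal{Z}_n=Z_n-z_n$ and $z_n=J^0(h_n)=z_{h_n}$. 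Taking $s=t\wedge\tau_n$, then the supremum over $t\in[0,T/2]$, then $\mathbb{E}$, then $\sup_{x\in[-a,a]}$, and using that $z_n$ is deterministic while $t\wedge\tau_n\in[0,T/2]$, one arrives at
\[
\sup_{x\in[-a,a]}\mathbb{E}\Big[\sup_{t\in[0,T/2]}\mathbf{e}(t\wedge\tau_n,x,Z_n(t\wedge\tau_n))\Big]\le 2\sup_{x\in[-a,a]}\mathbb{E}\Big[\sup_{t\in[0,T/2]}\mathbf{e}(t\wedge\tau_n,x,\mathcal{Z}_n(t\wedge\tau_n))\Big]+2\sup_{x\in[-a,a]}\sup_{t\in[0,T/2]}\mathbf{e}(t,x,z_n(t)).
\]

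Next I would bound the two terms on the right separately. For the second term, since $h_n\in S_{\mathcal{M}}$ for all $n$ and $z_n$ solves the skeleton problem~\eqref{LDP-skeleton} with control $h_n$, Lemma~\ref{lem-aprioriEnergyEst} in its form for $x$ ranging over $[-a,a]$ yields $\sup_{x\in[-a,a]}\sup_{t\in[0,T/2]}\mathbf{e}(t,x,z_n(t))\le\mathcal{B}(\mathcal{M},T,a)$, a bound uniform in $n$. For the first term, Proposition~\ref{prop-weakConv} states exactly that $\sup_{x\in[-a,a]}\mathbb{E}[\sup_{t\in[0,T/2]}\mathbf{e}(t\wedge\tau_n,x,\mathcal{Z}_n(t\wedge\tau_n))]\to 0$ as $n\to\infty$. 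Passing to $\limsup_{n\to\infty}$ in the displayed inequality therefore gives the claim with $\mathscr{B}:=2\,\mathcal{B}(\mathcal{M},T,a)$; since $\mathcal{B}$ is monotone in the initial-data norm $\|(u_0,v_0)\|_{\mathcal{H}(-a-T,a+T)}$ and $N$ is chosen above this norm, $\mathscr{B}$ can be recorded as a function of $N$, $T$, $\mathcal{M}$ (and $a$), as asserted.

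Because the analytic work is already packaged in Proposition~\ref{prop-weakConv}, \textbf{there is essentially no obstacle here}: the lemma is bookkeeping. The only points needing care are that the bound on the $z_n$-part is genuinely $n$-independent (guaranteed because the whole sequence $\{h_n\}$ lies in the single ball $S_{\mathcal{M}}$) and that constants are tracked so that $\mathscr{B}$ has the advertised dependence. If a self-contained proof were preferred instead, one could mimic the proof of Proposition~\ref{prop-weakConv} directly on $Z_n$: apply Proposition~\ref{prop_magic} with $k=1$, $L=I$; discard the $A$-inner products by perpendicularity of the second fundamental form to the tangent space (using $U_n(r,\cdot)\in M$, $V_n(r,\cdot)\in T_{U_n}M$); control the $Y(U_n)\dot h_n$ contributions by Lemmata~\ref{hsop} and~\ref{lem-Y-LDP} as $\lesssim(1+\mathbf{e})(1+\|\dot h_n(r)\|_{\rkhs}^2)$; bound the It\^o correction and the martingale part through $\|Y(U_n(r))\|_{H^1}^2\lesssim 1+\|Z_n(r)\|_{\mathcal{H}_{T-r}}^2\le 1+N^2$ on $[0,\tau_n]$ together with the Davis and Young inequalities; and close with the stochastic Gronwall lemma \cite[Lemma~3.9]{Duan+Millet_2009}, after which letting $\varepsilon_n\to0$ produces $\mathscr{B}\simeq \mathbf{e}(0,(u_0,v_0))\,e^{C(T+\mathcal{M})}$.
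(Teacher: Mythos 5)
Your proof is correct, and it takes a genuinely different (and shorter) route than the paper's. You reduce the claim to two already-established facts via the elementary bound $\|a+b\|^2\le 2\|a\|^2+2\|b\|^2$ applied to $Z_n=\mathcal{Z}_n+z_n$: the $\mathcal{Z}_n$ part vanishes in the $n\to\infty$ limit by Proposition \ref{prop-weakConv}, while the deterministic $z_n$ part is bounded uniformly in $n$ by Lemma \ref{lem-aprioriEnergyEst} because all $h_n$ lie in $S_{\mathcal{M}}$. The two small points you need — $\sup_{t\in[0,T/2]}\mathbf{e}(t\wedge\tau_n,x,z_n(t\wedge\tau_n))\le\sup_{t\in[0,T/2]}\mathbf{e}(t,x,z_n(t))$ since $t\wedge\tau_n\in[0,T/2]$, and no circularity since Proposition \ref{prop-weakConv} is proved without reference to this lemma — both hold, so the bookkeeping is sound. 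The paper instead re-derives the estimate from scratch: it applies the energy inequality of Proposition \ref{prop_magic} directly to $Z_n$ and closes with the truncation built into $\tau_n$, producing a constant of the form $N^2T+(1+N^2)T+\mathcal{M}$ in the limit. What the paper's route buys is a self-contained argument whose constant is explicit in $N$, $T$, $\mathcal{M}$ as advertised; what your route buys is brevity, at the modest cost that your $\mathscr{B}$ arrives as $2\mathcal{B}(\mathcal{M},T,a)$ and must be repackaged (as you do, via $N>\|(u_0,v_0)\|_{\mathcal{H}(B(x,T))}$ and monotonicity of $\mathcal{B}$ in the initial-data norm) to match the stated $N$-dependence; note that both proofs in fact carry an implicit $a$-dependence not reflected in the lemma's statement. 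Your parenthetical sketch of the direct estimate is essentially the paper's proof, so you were aware of both options.
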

\begin{proof}[\textbf{Proof of Lemma \ref{lem-stocAprioriEnergyEst}}]
	Let us fix sequence $\{e_j\}_{j \in \mathbb{N}}$ of orthonormal basis of $\rkhs$. Let us also fix any $n \in \mathbb{N}$. With the notation of this subsection, Proposition \ref{prop_magic}, with  $k=1,L=I$, implies for every $t \in [0,\frac{T}{2}]$ and $x \in [-a,a]$,
	\begin{align}
	\mathbf{e}(t,T;x, Z_n(t)) & \leq  \int_{0}^{t} \mathbb{V}(r,x,Z_n(r)) \, dr  + \int_{0}^{t} \langle V_n(r), g_n(r)dW(r) \rangle_{H^1(B(x,T-r))}, \nonumber
	\end{align}
with
\begin{align}
 \mathbb{V}(r,x,Z_n(r))  & := \langle U_n(r), V_n(r) \rangle_{L^2(B(x,T-r))} + \langle V_n(r), f_n(r) \rangle_{H^1(B(x,T-r))}  \nonumber\\
& \qquad  + \frac{1}{2} \sum_{j=1}^\infty \|g_n(r)e_j \|_{H^1(B(x,T-r))}^2 ,\nonumber
\end{align}
where for simplification we avoid writing the dependency of l.h.s on $T$ explicitly, and
\begin{align}
& f_n(r) := A_{U_n(r)}(V_n(r), V_n(r)) - A_{U_n(r)}(\partial_x U_n(r), \partial_x U_n(r))    +  Y(U_n(r)) \dot{h}_n(r), \nonumber\\
& g_n(r) := \sqrt{\varepsilon_n} Y(U_n(r)). \nonumber
\end{align}
Next, we set
\begin{equation}\nonumber
\psi_n (t,x) := \mathbb{E} \left[ \sup_{0 \leq s \leq t} \mathbf{e}(s \wedge \tau_n,T;x,Z_n(s \wedge \tau_n))\right], \quad t \in [0,T].
\end{equation}
Now, we intent to follow the procedure of Proposition \ref{prop-weakConv}.
By the Cauchy-Schwartz inequality, for $\tau \in [0,\frac{T}{2}]$ and $x \in [-a,a]$, we have
\begin{align}
\sup_{0 \leq t \leq \tau} & \int_{0}^{t \wedge \tau_n} \mathbb{V}(r,x,Z_n(r)) \, dr   \leq  2 \int_{0}^{\tau \wedge \tau_n} \mathbf{e}(r,T;x,Z_n(r)) \, dr   \nonumber\\
& \qquad \qquad + \frac{1}{2}\int_{0}^{\tau \wedge \tau_n} \left(  \| f_n(r) \|_{H^1(B(x,T-r))}^2    +  \|g_n(r) \cdot\|_{\mathscr{L}_2(\rkhs,H^1(B(x,T-r)))}^2  \right)  \, dr. \nonumber
\end{align}
Since the $g_n$ here is same as in Proposition \ref{prop-weakConv}, the computation of \eqref{Yt}-\eqref{t6} fits here too and we have
\begin{align}\label{lem-stocAprioriEnergyEst-t2}
\mathbb{E} \left[ \sup_{0 \leq s \leq t \wedge \tau_n} \mathbf{e}(s,T;x,Z_n(s)) \right]  &  \lesssim_T  \mathbb{E} \left[ \int_{0}^{t \wedge \tau_n}   \mathbf{e}(r,T;x,Z_n(r)) \, dr \right]  \\
& \quad  + \mathbb{E}\left[\int_{0}^{t \wedge \tau_n} \| f_n(r) \|_{H^1(B(x,T-r))}^2 \, dr \right] + \varepsilon_n (1+ N^2). \nonumber
\end{align}
Invoking Lemmata \ref{hsop} and \ref{lem-Y-LDP} implies, to save space we write $B_{T-r}$ instead of $B(x,T-r)$,
\begin{align}
& \|f_n(r)\|_{H^1(B_{T-r})}^2   \lesssim \|A_{U_n(r)}(\partial_x U_n(r), \partial_x U_n(r)) \|_{H^1(B_{T-r})}^2 + \| A_{U_n(r)}(V_n(r), V_n(r))  \|_{H^1(B_{T-r})}^2  \nonumber\\
& \qquad  +  \| Y(U_n(r))\dot{h}_n(r)  \|_{H^1(B_{T-r})}^2 \nonumber\\
& \quad \lesssim_{T,x} \left( 1 + \| U_n(r)\|_{H^1(B_{T-r})}^2 \right) \left[ 1+  \| \partial_x U_n(r) \|_{H^1(B_{T-r})}^2 + \| V_n(r) \|_{H^1(B_{T-r})}^2  +  \| \dot{h}_n(r)\|_{\rkhs}^2  \right] \nonumber\\
& \quad \lesssim  \left( 1 + \| Z_n(r)\|_{\mathcal{H}_{T-r}}^2 \right) \left[ 1+  \| Z_n(r) \|_{\mathcal{H}_{T-r}}^2 +  \| \dot{h}_n(r)\|_{\rkhs}^2  \right]. \nonumber
\end{align}
So from \eqref{lem-stocAprioriEnergyEst-t2} and the definition \eqref{defn-taun} we get
\begin{align}
\sup_{x \in [-a,a]}\mathbb{E} \left[ \sup_{0 \leq s \leq t \wedge \tau_n} \mathbf{e}(s,T;x,Z_n(s)) \right]  &  \lesssim_{T,a} N^2 \mathbb{E} \left[ t \wedge \tau_n \right]    + \varepsilon_n (1+ N^2) \nonumber\\
& \quad + (1+ N^2) \mathbb{E}\left[\int_{0}^{t \wedge \tau_n}   \left( 1 + N^2+ \dot{h}_n(r)\|_{\rkhs}^2  \right)  \, dr \right] \nonumber\\
& \lesssim_T  N^2 T + (1+N^2)T + \mathcal{M} +  \varepsilon_n (1+ N^2). \nonumber
\end{align}
Since $\lim\limits_{n \to \infty}\varepsilon_n  =0$, taking $\limsup_{n \to\infty}$ on both the sides we get the required bound, and hence, the Lemma \ref{lem-stocAprioriEnergyEst}.
\end{proof}

\begin{Lemma}\label{lem-ProbConv}
	The sequence of $\mathcal{X}_{T}$-valued process $\{ \mathcal{Z}_n \}_{n \in \mathbb{N}}$ converges in probability to $0$.
\end{Lemma}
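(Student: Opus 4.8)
The plan is to deduce Lemma~\ref{lem-ProbConv} from Proposition~\ref{prop-weakConv} by a stopping-time argument, exactly paralleling the way one passes from a localized energy estimate to a global statement in the proof of Theorem~\ref{thm-skeleton}. First I would fix $\mathcal{T}>0$ and, since the metric on $\mathcal{X}_{\mathcal{T}}$ is generated by the seminorms $\|\cdot\|_{H^2(B_a)}\times\|\cdot\|_{H^1(B_a)}$ over balls $B_a=B_a(0)$, $a\in\mathbb{N}$, reduce the claim to showing that for each fixed $a>0$ and each $\delta>0$,
\begin{equation}\nonumber
\lim_{n\to\infty}\mathbb{P}\Big[\sup_{t\in[0,\mathcal{T}]}\big(\|\mathcal{U}_n(t)\|_{H^2(B_a)}^2+\|\mathcal{V}_n(t)\|_{H^1(B_a)}^2\big)>\delta\Big]=0 .
\end{equation}
Applying Proposition~\ref{prop-weakConv} with $T:=4\mathcal{T}$ (so that $T/2=2\mathcal{T}\geq\mathcal{T}$ and $B_a\subseteq B_{T-t}$ once $a$ is fixed and $T$ is chosen large, as in the reduction from \eqref{prop-strngConv-LimitBump} to \eqref{prop-strngConv-LimitBumpi}) yields
\begin{equation}\nonumber
\lim_{n\to\infty}\mathbb{E}\Big[\sup_{t\in[0,T/2]}\mathbf{e}(t\wedge\tau_n,x,\mathcal{Z}_n(t\wedge\tau_n))\Big]=0,
\end{equation}
and hence, by Markov's inequality, $\mathbf{e}(\cdot\wedge\tau_n,x,\mathcal{Z}_n(\cdot\wedge\tau_n))\to0$ in probability, uniformly in $x$ over compacts.

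The remaining point is to remove the stopping time $\tau_n$, i.e. to show $\mathbb{P}(\tau_n<T)\to0$ as $n\to\infty$ (with $T=4\mathcal{T}$). This is where Lemma~\ref{lem-stocAprioriEnergyEst} enters: by definition \eqref{defn-taun}, on the event $\{\tau_n<T\}$ we have $\|Z_n(\tau_n)\|_{\mathcal{H}(B(x,T-\tau_n))}\geq N$, so $\mathbf{e}(\tau_n,x,Z_n(\tau_n))\geq N^2/2$; choosing $N$ larger than $\|(u_0,v_0)\|_{\mathcal{H}(B(x,T))}$, Chebyshev together with Lemma~\ref{lem-stocAprioriEnergyEst} gives
\begin{equation}\nonumber
\mathbb{P}(\tau_n<T)\leq\mathbb{P}\Big[\sup_{t\in[0,T/2]}\mathbf{e}(t\wedge\tau_n,x,Z_n(t\wedge\tau_n))\geq\tfrac{N^2}{2}\Big]\leq\frac{2\mathscr{B}(N,T,\mathcal{M})}{N^2}+o(1).
\end{equation}
This does not go to zero for fixed $N$; instead one observes that $\mathscr{B}(N,T,\mathcal{M})$ is, by its proof, of the form $C_T\big(N^2T+(1+N^2)T+\mathcal{M}\big)$, which is only linear in $N^2$, so $2\mathscr{B}/N^2$ is bounded but not small. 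The correct route, therefore, is the standard one from \cite{ZB+Goldys+Jegaraj_2017,Chueshov+Millet_2010}: split according to the initial data. Writing $\Omega_N:=\{\|(u_0,v_0)\|_{\mathcal{H}(B(x,T))}\leq N\}$, on $\Omega_N$ one has $\tau_n=T$ forced to be ``large'' only if the solution itself grows, and since $\bigcup_N\Omega_N=\Omega$ up to null sets, $\mathbb{P}(\Omega\setminus\Omega_N)\to0$ as $N\to\infty$; on $\Omega_N$ one uses that $Z_n\to z_n$ on $[0,\tau_n]$ together with the deterministic bound of Lemma~\ref{lem-aprioriEnergyEst} to upgrade $\tau_n$ to a time not depending on $n$. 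Concretely, for $\eta>0$ pick $N$ with $\mathbb{P}(\Omega\setminus\Omega_N)<\eta$; on $\Omega_N$, Proposition~\ref{prop-weakConv} gives $\sup_{t\le T/2}\|\mathcal{Z}_n(t\wedge\tau_n)\|_{\mathcal{H}(B(x,T-t))}\to0$ in probability, and combined with $\sup_{t\le T/2}e(t,z_n(t))\le\mathcal{B}(\mathcal{M},T,a)<N^2/4$ for $N$ chosen large enough relative to $\mathcal{B}$, one gets $\mathbb{P}(\{\tau_n<T\}\cap\Omega_N)\to0$; hence $\limsup_n\mathbb{P}(\tau_n<T)\leq\eta$, and $\eta$ is arbitrary.

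Once $\mathbb{P}(\tau_n<T)\to0$ is established, the conclusion is immediate: for any $a>0$ and $\delta>0$,
\begin{align}\nonumber
\mathbb{P}\Big[\sup_{t\in[0,\mathcal{T}]}\|\mathcal{Z}_n(t)\|_{\mathcal{H}(B_a)}^2>\delta\Big]
&\leq\mathbb{P}(\tau_n<T)+\mathbb{P}\Big[\sup_{t\in[0,\mathcal{T}]}\|\mathcal{Z}_n(t\wedge\tau_n)\|_{\mathcal{H}(B_a)}^2>\delta\Big]\\\nonumber
&\leq\mathbb{P}(\tau_n<T)+\frac{2}{\delta}\,\mathbb{E}\Big[\sup_{t\in[0,T/2]}\mathbf{e}(t\wedge\tau_n,0,\mathcal{Z}_n(t\wedge\tau_n))\Big]\longrightarrow0,
\end{align}
using $B_a\subseteq B(0,T-t)$ for $t\in[0,\mathcal{T}]$ with $T=4\mathcal{T}$, then taking $a\to\infty$ through the seminorms defining $\mathcal{X}_{\mathcal{T}}$ and a diagonal/union-bound over $a\in\mathbb{N}$ weighted by $2^{-a}$. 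I expect the main obstacle to be precisely the decoupling of $\tau_n$ from $n$: one must be careful that the ``radius'' $N$ in the definition of $\tau_n$ can be taken uniformly large over the compact $x$-range $[-a,a]$ (which is exactly why Lemmata~\ref{lem-aprioriEnergyEst} and \ref{lem-stocAprioriEnergyEst} are stated with a supremum over $x\in[-a,a]$ and constants bounded on compacts), and that the a priori bound $\mathcal{B}(\mathcal{M},T,a)$ on the skeleton solution is strictly smaller than the threshold $N^2$, so that $z_n$ never triggers the stopping time and only the genuinely small quantity $\mathcal{Z}_n$ is responsible for any exit. The rest is routine Chebyshev/Borel--Cantelli bookkeeping.
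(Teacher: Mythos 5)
Your proposal correctly identifies a genuine obstruction in the paper's own argument, and your alternative route is essentially the right one; the comparison is therefore worth spelling out. The paper bounds the contribution of $\{\tau_n<T\}$ in \eqref{lem-ProbConv-t3} by Markov's inequality applied to $\mathbb{E}[\sup_{t}\mathbf{e}(t,x,Z_n(t))]$, and then in \eqref{lem-ProbConv-t1} chooses $N$ so large that $\frac{1}{N}\sup_{n,x}\mathbb{E}[\sup_t \mathbf{e}(t\wedge\tau_n,x,Z_n(t\wedge\tau_n))]<\alpha/2$. This is exactly where your objection bites: the bound $\mathscr{B}(N,T,\mathcal{M})$ delivered by Lemma~\ref{lem-stocAprioriEnergyEst} is at least quadratic in $N$ (and, carrying out the multiplication in the penultimate display of that lemma's proof, the term $(1+N^2)\,\mathbb{E}\bigl[\int_0^{t\wedge\tau_n}(1+N^2+\|\dot{h}_n(r)\|_{\rkhs}^2)\,dr\bigr]$ actually contributes a term of order $N^4T$), so $\mathscr{B}/N\to\infty$ and the required $N$ does not exist. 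The root cause is that Lemma~\ref{lem-stocAprioriEnergyEst} estimates $\|f_n\|_{H^1}$ crudely, without exploiting that $A_{U_n}(\cdot,\cdot)\perp T_{U_n}M$ (as Proposition~\ref{nonexp} and Lemma~\ref{lem-aprioriEnergyEst} do), and so its bound inherits the very threshold $N$ it was supposed to remove. Your replacement step — control the skeleton $z_n$ by the deterministic, $N$-independent constant $\mathcal{B}$ of Lemma~\ref{lem-aprioriEnergyEst}, observe that on $\{\tau_n<T\}$ the excess over the threshold must then be carried by $\mathcal{Z}_n$ alone, and kill that with Proposition~\ref{prop-weakConv} — avoids Lemma~\ref{lem-stocAprioriEnergyEst} entirely and is the argument that actually closes the step.

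That said, your concrete formulation has one gap of its own. You write $\sup_t e(t,z_n(t))\le\mathcal{B}(\mathcal{M},T,a)<N^2/4$ ``for $N$ chosen large enough relative to $\mathcal{B}$,'' but on $\Omega_N=\{\|(u_0,v_0)\|_{\mathcal{H}(B(x,T))}\le N\}$ the constant $\mathcal{B}$ of Lemma~\ref{lem-aprioriEnergyEst} depends on the initial-data bound through an exponential, $\mathcal{B}\lesssim\exp\bigl(\|(u_0,v_0)\|_{\mathcal{H}(B(x,T))}^2(T+\mathcal{M})^2\bigr)$, so on $\Omega_N$ one has $\mathcal{B}\sim\exp(CN^2)$, and $\mathcal{B}<N^2/4$ is never achievable with the same $N$. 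You must decouple the two roles of $N$: first fix $N_0$ with $\mathbb{P}(\Omega\setminus\Omega_{N_0})<\eta$, which pins down $\mathcal{B}_0:=\mathcal{B}(\mathcal{M},T,a,N_0)$; then take the stopping-time threshold in \eqref{defn-taun} to be some $N_\tau>2\sqrt{2\mathcal{B}_0}$; then send $n\to\infty$ via Proposition~\ref{prop-weakConv} (whose constant is allowed to depend on $N_\tau$); finally send $\eta\to 0$. The paper in fact sidesteps the $\Omega_N$ issue by tacitly treating $\|(u_0,v_0)\|_{\mathcal{H}(B(x,T))}$ as a fixed number when choosing $N$ (see the line before \eqref{defn-taun}), in which case a single $N>2\sqrt{2\mathcal{B}}$ suffices and the decomposition is unnecessary. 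Finally, your last display only works for $a\le T-\mathcal{T}$; for general $a$ one should, as in the paper's closing paragraph, cover $B_a$ by finitely many translates $B(x_i,T/2)$ rather than gesture at a diagonal over $a\in\mathbb{N}$, since $B(x_i,T/2)\subseteq B(x_i,T-t)$ holds uniformly for $t\in[0,T/2]$.
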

\begin{proof}[\textbf{Proof of Lemma \ref{lem-ProbConv}}]
	We aim to show that for every  $x \in \mathbb{R}$ and $ R,\delta, \alpha >0$ there exists a natural number $n_0$ such that
	\begin{equation}\label{lem-ProbConv-t0}
	\mathbb{P} \left[ \sup_{t \in [0,T]} \| \mathcal{Z}_n(t)\|_{\mathcal{H}_{B(x,R)}} > \delta   \right]  <\alpha \quad \textrm{ for all } \quad n \geq n_0.
	\end{equation}
	Let us choose and fix $x \in \mathbb{R}, \delta>0,\alpha>0$. In first step, we prove  \eqref{lem-ProbConv-t0} for the case when $R$ is set to be $T$. Let us also set $\mathcal{T} = 2T$.
	%
	%
	Then,  since $\| \cdot\|_{\mathcal{H}_{B(x,r)}}$ is increasing in $r$ and for $t \in [0,T]$ we have $\mathcal{T}- t \geq T=R$, and
	%
	%
	\begin{equation}\label{lem-ProbConv-t0c}
	\mathbb{P}\left[   \sup_{t \in [0, T]} \| \mathcal{Z}_n(t)\|_{\mathcal{H}_{B(x,R)}} > \delta   \right] \leq \mathbb{P}\left[  \sup_{t \in [0,T]} \| \mathcal{Z}_n(t)\|_{\mathcal{H}_{B(x,\mathcal{T}-t)}} > \delta   \right].
	\end{equation}
	Further note that, since $0 \leq \mathbf{e}(t,\mathcal{T};x,\mathcal{Z}_n(t,\omega)) = \frac{1}{2} \| \mathcal{Z}_n(t,\omega)\|_{\mathcal{H}_{B(x,\mathcal{T}-t)}}^2$, due to \eqref{lem-ProbConv-t0c} instead of showing \eqref{lem-ProbConv-t0}, in the setting $R=T$,  it is enough to show that there exists $n_0 \in \mathbb{N}$ such that
	\begin{equation}\label{lem-ProbConv-t0d}
	\mathbb{P} \left[ \sup_{t \in [0,T]} \mathbf{e}(t,\mathcal{T};x,\mathcal{Z}_n(t,\omega)) >  \delta^2/2   \right]  <\alpha \quad \textrm{ for all } \quad n \geq n_0.
	\end{equation}
	But, since $x$ is fix in the argument now, there exists $a >0$ such that $x \in [-a,a]$ and the following holds
	$$\mathbb{P} \left[  \sup_{t \in [0,T]} \mathbf{e}(t,\mathcal{T};x,\mathcal{Z}_n(t)) >  \delta^2/2   \right] \leq \sup_{x \in [-a,a]}  \mathbb{P} \left[  \sup_{t \in [0,T]} \mathbf{e}(t,\mathcal{T};x,\mathcal{Z}_n(t)) >  \delta^2/2   \right].$$
	Consequently instead of \eqref{lem-ProbConv-t0d} it is sufficient to show that the existence of $n_0 \in \mathbb{N}$ such that
	\begin{equation}\label{lem-ProbConv-t0e}
	\sup_{x \in [-a,a]}  \mathbb{P} \left[ \sup_{t \in [0,T]} \mathbf{e}(t,\mathcal{T};x,\mathcal{Z}_n(t,\omega)) >  \delta^2/2   \right]  <\alpha \quad \textrm{ for all } \quad n \geq n_0.
	\end{equation}
	To prove \eqref{lem-ProbConv-t0e}, let us define a sequence $\{\kappa_n\}_{n \in \mathbb{N}}$ of stopping time via replacing $T$ by $\mathcal{T}$ in \eqref{defn-taun}.
	Now choose  $N > \|(u_0,v_0)\|_{\mathcal{H}_{a+T}}$  and $n_0 \in \mathbb{N}$ such that, based on Lemma \ref{lem-stocAprioriEnergyEst} for $\mathcal{T}$ instead of $T$,
	\begin{equation}\label{lem-ProbConv-t1}
	\frac{2}{N^2} \sup_{n \in \mathbb{N}} \sup_{x \in [-a,a]}  \mathbb{E} \left[ \sup_{t \in [0,T]} \mathbf{e}(t\wedge \kappa_n,\mathcal{T};x, Z_n(t\wedge \kappa_n)) \right] < \frac{\alpha}{2} \textrm{ for all } n \geq n_0,
	\end{equation}
	and, due to Proposition \ref{prop-weakConv} for $\mathcal{T}$ instead of $T$,
	\begin{equation}\label{lem-ProbConv-t2}
	\sup_{x \in [-a,a]}  \mathbb{E} \left[ \sup_{t \in [0,T]} \mathbf{e}(t\wedge \kappa_n,\mathcal{T};x,\mathcal{Z}_n(t\wedge \kappa_n)) \right] < \frac{\delta^2 \alpha}{4} \textrm{ for all } n \geq n_0.
	\end{equation}
	Thus the Markov inequality followed by using of \eqref{lem-ProbConv-t1} and \eqref{lem-ProbConv-t2}, for $n \geq n_0$, gives
	\begin{align}\label{lem-ProbConv-t3}
	\sup_{x \in [-a,a]}  \mathbb{P} & \left[ \sup_{t \in [0,T]} \mathbf{e}(t,\mathcal{T};x,\mathcal{Z}_n(t)) > \delta^2/2 \right] \nonumber\\
	& = \sup_{x \in [-a,a]}  \mathbb{P} \left[  \sup_{t \in [0,T]} \mathbf{e}(t,\mathcal{T};x,\mathcal{Z}_n(t)) > \delta^2/2  \textrm{ and }  \kappa_n = \mathcal{T}  \right]  \nonumber\\
	& \quad  + \sup_{x \in [-a,a]}  \mathbb{P}\left[ \sup_{t \in [0,T]} \mathbf{e}(t,\mathcal{T};x,\mathcal{Z}_n(t)) > \delta^2/2  \textrm{ and }  \mathbf{e}(t,\mathcal{T};x, Z_n(t)) \geq \frac{N^2}{2}  \right] \nonumber\\
	& \leq \frac{2}{\delta^2} \sup_{x \in [-a,a]}  \mathbb{E} \left[  \sup_{t \in [0,T]} \mathbf{e}(t,\mathcal{T};x,\mathcal{Z}_n(t)) \right] \nonumber\\
	& \quad + \frac{2}{N^2} \sup_{x \in [-a,a]}  \mathbb{E} \left[ \sup_{t \in [0,T]} \mathbf{e}(t,\mathcal{T};x, Z_n(t)) \right]  < \alpha.
	\end{align}
	Now we move to prove \eqref{lem-ProbConv-t0} when $R$ is not set to $T$. Since the closure of $B(x,R)$ is compact and $B(x,R) \subset \cup_{y \in B(x,R)} B(y,T) $, we can find finitely many centre $\{x_i\}_{i=1}^m$ such that $B(x,R) \subset \cup_{i=1}^m B(x_i,T).$ Moreover, since $B(x,R)$ is bounded, there exists $a >0$ such that $B(x,R) \in [-a,a]$. In particular, $x_i \in [-a,a]$ for all $i=1,\ldots,m$. Then since $\| \mathcal{Z}_n(t,\omega)\|_{\mathcal{H}_{B(x,R)}} \leq \sum_{i=1}^{m}\| \mathcal{Z}_n(t,\omega)\|_{\mathcal{H}_{B(x_i,T)}}$, we have
	\begin{align}
	& \sup_{x \in [-a,a]}  \mathbb{P}  \left[ \sup_{t \in [0,T]} \| \mathcal{Z}_n(t)\|_{\mathcal{H}_{B(x,R)}} > \delta \right] \leq  \sup_{x \in [-a,a]}  \mathbb{P} \left[  \sup_{t \in [0,T]} \sum_{i=1}^{m}\| \mathcal{Z}_n(t)\|_{\mathcal{H}_{B(x_i,T)}} > \delta \right] \nonumber\\
	& \leq \sum_{i=1}^{m} \sup_{x \in [-a,a]}  \mathbb{P} \left[ \sup_{t \in [0,T]} \| \mathcal{Z}_n(t)\|_{\mathcal{H}_{B(x,T)}} > \delta \right] \leq m \sup_{x \in [-a,a]}  \mathbb{P} \left[   \sup_{t \in [0,T]} \mathbf{e}(t,\mathcal{T};x,\mathcal{Z}_n(t))  > \delta^2/2 \right]. \nonumber
	\end{align}
	Now by taking $\alpha$ as $\alpha/m$ in \eqref{lem-ProbConv-t3}, of course with new $a$, we get that there exists an $n_0 \in\mathbb{N}$ such that, for all $n \geq n_0$,
	\begin{align}
	\sup_{x \in [-a,a]}  \mathbb{P} & \left[ \omega \in \Omega: \sup_{t \in [0,T]} \| \mathcal{Z}_n(t,\omega)\|_{\mathcal{H}_{B(x,R)}} > \delta \right]  < \alpha. \nonumber
	\end{align}
	Hence the Lemma \ref{lem-ProbConv}.
\end{proof}

Now we come back to the proof of \textbf{Statement 2}. Recall that $S_{\mathcal{M}}$ is a separable metric space. Since, by the assumptions, the sequence $\{ \mathscr{L}(h_n) \}_{n \in\mathbb{N}}$ of laws on $S_{\mathcal{M}}$ converges weakly to the law $\mathscr{L}(h)$, the Skorokhod representation theorem, see for example  \cite[Theorem 3.30]{Kallenberg_1997B}, there exists a probability space $(\tilde{\Omega}, \tilde{\mathscr{F}}, \tilde{\mathbb{P}})$, and on this probability space, one can  construct processes $(\tilde{h}_n, \tilde{h}, \tilde{W})$ such that the joint distribution of $(\tilde{h}_n, \tilde{W})$ is same as that of $(h_n,W)$, the distribution of $\tilde{h}$ coincide with that of $h$, and $\tilde{h}_n \xrightarrow[n \to \infty]{} \tilde{h}$, $\tilde{\mathbb{P}}$-a.s. pointwise on $\tilde{\Omega}$, in the weak topology of $S_M$. By Proposition \ref{prop-strngConv} this implies that
\begin{equation}\nonumber
J^0 \circ \tilde{h}_n \to J^0\circ \tilde{h} \quad \textrm{ in } \quad \mathcal{X}_T \;\; \tilde{\mathbb{P}}\textrm{-a.s. pointwise on } \tilde{\Omega}.
\end{equation}
Next, we claim that
\begin{equation}\nonumber
	\mathscr{L}(z_n) = \mathscr{L}(\tilde{z}_n), \quad \textrm{ for all } n
\end{equation}
where
\begin{equation}\nonumber
z_n := J^0 \circ h : \Omega \to \mathcal{X}_T \quad \textrm{ and } \quad \tilde{z}_n := J^0\circ \tilde{h}_n : \tilde{\Omega} \to \mathcal{X}_T.
\end{equation}
To avoid complexity, we will write $J^0(h)$ for $J^0 \circ h$. Let $B$ be an arbitrary Borel subset of $\mathcal{X}_T$. Thus, since from Proposition \ref{prop-strngConv} $J^0: S_{\mathcal{M}} \to  \mathcal{X}_T$ is Borel, $(J^0)^{-1}(B)$ is Borel in $S_{\mathcal{M}}$. So we have
 \begin{align}
 	& \mathscr{L}(z_n)(B) = \mathbb{P} \left[  J^0(h_n)(\omega) \in B\right]  = \mathbb{P} \left[ h_n^{-1} \left( (J^0)^{-1} (B)\right) \right]  = \mathscr{L}(h_n) \left( (J^0)^{-1} (B) \right). \nonumber
 \end{align}
But, since $\mathscr{L}(h_n) = \mathscr{L}(\tilde{h}_n)$ on $\mathcal{X}_T$, this implies $\mathscr{L}(z_n)(B) = \mathscr{L}(\tilde{z}_n)(B)$.
Hence the claim and by a similar argument we also have $\mathscr{L}(z_h) = \mathscr{L}(z_{\tilde{h}})$.

Before moving forward, note that from Lemma \ref{lem-ProbConv}, the sequence of $\mathcal{X}_T$-valued random variables, defined from $\Omega$, $J^{\varepsilon_n}(h_n) - J^0(h_n)$ converges in measure $\mathbb{P}$ to $0$.  Consequently, because $\mathscr{L}(h_n) = \mathscr{L}(\tilde{h}_n)$ and $J^{\varepsilon_n} - J^0$ is measurable, we infer that  $J^{\varepsilon_n}(\tilde{h}_n) - J^0(\tilde{h}_n) \xrightarrow[]{\tilde{\mathbb{P}}} 0$ as $n \to \infty$.
Hence, we can choose a subsequence $\{ J^{\varepsilon_n}(\tilde{h}_n) - J^0(\tilde{h}_n) \}_{n \in \mathbb{N}}$, indexed again by $n$,  of $\mathcal{X}_T$-valued random variables converges to $0$, $\tilde{\mathbb{P}}$-almost surely.

Now we claim to have the proof of \textbf{Statement 2}. Indeed, for any globally Lipschitz continuous and bounded function $\psi : \mathcal{X}_T \to \mathbb{R}$, see \cite[Theorem 11.3.3]{Dudley_1989B}, we have
\begin{align}
	& \left\vert  \int_{\mathcal{X}_T} \psi(x) \, d\mathscr{L}(J^{\varepsilon_n}(h_n)) -  \int_{\mathcal{X}_T} \psi(x) \, d\mathscr{L}(J^0(h)) \right\vert \nonumber\\
	&  = \left\vert  \int_{\mathcal{X}_T} \psi(x) \, d\mathscr{L}(J^{\varepsilon_n}(\tilde{h}_n)) -  \int_{\mathcal{X}_T} \psi(x) \, d\mathscr{L}(J^0(\tilde{h})) \right\vert\nonumber\\
	& \leq  \left\vert  \int_{\tilde{\Omega}} \left\{  \psi \left(  J^{\varepsilon_n}(\tilde{h}_n) \right) -   \psi \left(  J^0(\tilde{h}_n) \right)\right\} \, d\tilde{\mathbb{P}}  \right\vert \nonumber\\
	& \quad + \left\vert  \int_{\tilde{\Omega}} \psi \left(  J^0(\tilde{h}_n) \right)\, d\tilde{\mathbb{P}} -  \int_{\tilde{\Omega}} \psi \left(  J^0(\tilde{h}) \right)\, d\tilde{\mathbb{P}}  \right\vert. \nonumber
\end{align}
Since $J^0(\tilde{h}_n) \xrightarrow[n \to \infty]{} J^0(\tilde{h})$, $\mathbb{P}$-a.s. and $\psi$ is bounded and continuous, we deduce that the 2nd term in right hand side above converges to $0$ as $n \to \infty$. Moreover we claim that the 1st term also goes to $0$. Indeed, it follows from the dominated convergence theorem because the term is bounded by
\begin{equation}\nonumber
	L_{\psi} \int_{\tilde{\Omega}}  |J^{\varepsilon_n}(\tilde{h}_n) -   J^0(\tilde{h}_n)| \, d \tilde{\mathbb{P}},
\end{equation}
where $L_{\psi}$ is Lipschitz constant of $\psi$,  and the sequence $\{ J^{\varepsilon_n}(\tilde{h}_n) -   J^0(\tilde{h}_n) \}_{n \in \mathbb{N}}$ converges to $0$, $\tilde{\mathbb{P}}$-a.s.

Therefore, \textbf{Statement 2} holds true and we complete the proof of Theorem \ref{thm-LDP}.

\appendix
\section{Intrinsic and Extrinsic Formulation}\label{sec:IntAndExtSoln}
Here we recall the intrinsic and extrinsic formulation of SGWE  from  \cite{Brz+Ondr_2007} and state, without proof, the equivalence result between them.
Consider the following SGWE Cauchy problem
\begin{equation}\label{eqn_initial}
\left\{\begin{aligned}
& \mathbf D_t\partial_tu =\mathbf D_x\partial_xu+Y_u(\partial_tu,\partial_xu)\,\dot W,
\\
& u(0,\cdot)=u_0,\\
& \partial_tu(t,\cdot)_{|t=0} =v_0
\end{aligned}
\right.\end{equation}
Assume that  $u_0$, $v_0$ are $\mathfrak F_0$-measurable random variables with values in $H^2_{\textrm{loc}}(\mathbb R,M)$ and $H^1_{\textrm{loc}}(\mathbb R,TM)$ respectively such that $u_0(x,\omega)\in M$ and
$v_0(x,\omega)\in T_{u_0(x,\omega)}M$ hold for every $\omega\in\Omega$ and $x\in\mathbb R$.
\begin{Definition}\cite[Definition 2.3]{Brz+Ondr_2007}\label{def_intrinsic_soln}
	A  process $u:\mathbb  R_+\times\mathbb  R\times\Omega\to M$ is called an {\rm intrinsic solution} of  problem \eqref{eqn_initial} provided the following six conditions are satisfied:
	\begin{trivlist}
		\item[(i)] $u(t,x,\cdot)$ is $\mathfrak F_t$-measurable for every $x\in\mathbb R$ and every $t\ge 0$,
		\item[(ii)] $u(\cdot,\cdot,\omega)$ belongs to $\mathcal{C}^1(\mathbb  R_+\times\mathbb  R,M)$ for every $\omega\in\Omega$,
		\item[(iii)] $\mathbb{R}^+\ni t \mapsto u(t,\cdot,\omega)\in H^2_{\textrm{loc}}(\mathbb R,M)$ is continuous for every $\omega\in\Omega$,
		\item[(iv)] $\mathbb{R}^+\ni t\mapsto\partial_tu(t,\cdot,\omega)\in H^1_{\textrm{loc}}(\mathbb R;TM)$ is continuous for every $\omega\in\Omega$,
		\item[(v)] $u(0,x,\omega)=u_0(x,\omega)$ and $\partial_tu(0,x,\omega)=v_0(x,\omega)$ holds for every $x\in\mathbb R$ almost surely,
		\item[(vi)]   and for every vector field $X$ on $M$, and every $t\ge 0$ and $R>0$
	\end{trivlist}
	\begin{align}
	\langle\partial_tu(t),X(u(t))\rangle_{T_{u(t)}M}&= \langle v_0,X(u_0)\rangle_{T_{u(t)}M}+\int_0^t\langle\mathbf D_x\partial_xu(s),X(u(s))\rangle_{T_{u(s)}M}\,ds
	\nonumber\\
	&\quad + \int_0^t\langle\partial_tu(s),\nabla_{\partial_tu(s)}X\rangle_{T_{u(s)}M}\,ds \nonumber
	\\
	&\quad + \int_0^t\langle X(u(s)),Y_{u(s)}(\partial_tu(s),\partial_xu(s))\,dW(s)\rangle_{T_{u(s)}M},
	\nonumber
	\end{align}
	holds in $L^2(-R,R)$ almost surely.
\end{Definition}

\begin{Definition}\cite[Definition 2.6]{Brz+Ondr_2007}\label{def_extrinsic_soln}
	A  process $u:\mathbb  R_+\times\mathbb  R\times\Omega\to M$ is called an {\rm extrinsic solution} of problem \eqref{eqn_initial} provided the following six conditions are satisfied.
	\begin{trivlist}
		\item[(a)] $u(t,x,\cdot)$ is $\mathfrak F_t$-measurable for every $t\ge 0$ and $x\in\mathbb  R$,
		\item[(b)] $\mathbb{R}^+\ni t \mapsto u(t,\cdot,\omega)\in H^2_{\textrm{loc}}(\mathbb R;\mathbb R^n)$ is continuous for every $\omega\in\Omega$,
		\item[(c)] $\mathbb{R}^+\ni t\mapsto u(t,\cdot,\omega)\in H^1_{\textrm{loc}}(\mathbb R;\mathbb R^n)$ is continuously
		differentiable for every $\omega\in\Omega$,
		\item[(d)] $u(t,x,\omega)\in M$ for every $x\in\mathbb R$ and every $\omega\in\Omega$,
		\item[(e)] $u(0,x,\omega)=u_0(x,\omega)$ and $\partial_tu(0,x,\omega)=v_0(x,\omega)$ holds for every $x\in\mathbb R$ almost surely,
		\item[(f)]   and  for every $t\ge 0$
		and $R>0$
		\begin{align}
		\partial_tu(t)& = v_0 +\int_0^t\left[\partial_{xx}u(s)-A_{u(s)}(\partial_xu(s),\partial_xu(s))+A_{u(s)}(\partial_tu(s),\partial_tu(s))\right]\,ds
		\nonumber\\
		&\qquad + \int_0^rY_{u(s)}(\partial_tu(s),\partial_xu(s))\,dW(s), \nonumber
		\end{align}
		holds in $L^2((-R,R); \mathbb R^n)$ almost surely.
	\end{trivlist}
\end{Definition}

The next result state the equivalence between the intrinsic solution and extrinsic solution to the problem  \eqref{eqn_initial}.
\begin{Theorem}\cite[Theorem 12.1]{Brz+Ondr_2007} \label{thm-equiv}
	Assume that  $u_0$, $v_0$ are $\mathfrak F_0$-measurable random variables with values in $H^2_{\textrm{loc}}(\mathbb R,M)$ and $H^1_{\textrm{loc}}(\mathbb
	R,TM)$ respectively such that $u_0(x,\omega)\in M$ and
	$v_0(x,\omega)\in T_{u_0(x,\omega)}M$ hold for every
	$\omega\in\Omega$ and $x\in\mathbb R$. Suppose also that $M$ is a compact submanifold of $\mathbb{R}^n$ as in Definition \ref{def_extrinsic_soln}. Then a
	process $u:\mathbb  R_+\times\mathbb  R\times\Omega\to M$ is  an {intrinsic solution} of problem \eqref{eqn_initial} if and only if it is an extrinsic solution of the same problem.
\end{Theorem}

\section{Existence and uniqueness result}\label{sec:existUniqResult}
In this part we recall a result about the existence of a uniqueness global solution, in strong sense, to problem  \eqref{eqn_initial}. We ask the reader to refer  \cite[Theorem 11.1]{Brz+Ondr_2007} for the proof.
%
%

%
\begin{Theorem}\label{thm-exist}
	Fix $T>0$ and $R > T$. For every $\mathfrak F_0$-measurable random variable $u_0,v_0$ with values in $H^2_{\textrm{loc}}(\mathbb R,M)$ and $H^1_{\textrm{loc}}(\mathbb R,TM)$, there exists a process $u: [0,T) \times \mathbb{R} \times \Omega \to M$, which we denote by $u = \{u(t), t < T \}$, such that the following hold:
	\begin{enumerate}
		\item  $u(t,x,\cdot): \Omega \to M$ is $\mathfrak{F}_t$-measurable for every $t <T$ and $x \in \mathbb{R}$,
		\item  $[0,T) \ni t \mapsto u(t,\cdot,\omega) \in H^2((-R,R);\mathbb{R}^n)$ is continuous for almost every $\omega \in \Omega$,
		\item  $[0,T) \ni t \mapsto u(t,\cdot,\omega) \in H^1((-R,R);\mathbb{R}^n)$ is continuously differentiable for almost every $\omega \in \Omega$,
		\item  $u(t,x,\omega) \in M$,    for every $t <T, x \in \mathbb{R}$, $\mathbb{P}$-almost surely,
		\item  $u(0,x,\omega)  = u_0(x,\omega)$ and $\partial_t u(0,x,\omega) = v_0(x,\omega)$ holds,   for every $x \in \mathbb{R}$, $\mathbb{P}$-almost surely,
		\item  for every $t \geq 0$ and $R>0$,
		\begin{align}
			\partial_t u(t) & = v_0 + \int_{0}^{t} \left[ \partial_{xx}u(s) - A_{u(s)}(\partial_x u(s), \partial_x u(s)) + A_{u(s)}(\partial_t u(s), \partial_t u(s))   \right] \, ds \nonumber\\
			& \quad + \int_{0}^{t}  Y_{u(s)}(\partial_t u(s), \partial_x u(s)) \, dW(s), \nonumber
		\end{align}
		holds in $L^2((-R,R);\mathbb{R}^n)$,   $\mathbb{P}$-almost surely.
	\end{enumerate}
	Moreover, if there exists another process $U = \{ U(t); t \geq 0\}$ satisfy the above properties,  then $U(t,x,\omega)=u(t,x,\omega)$ for every $|x|<R-t$ and $t \in [0,T)$, $\mathbb{P}$-almost surely.
\end{Theorem}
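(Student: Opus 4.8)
Theorem \ref{thm-exist} is the stochastic counterpart of Theorem \ref{thm-skeleton}, and the plan is to reproduce, \emph{mutatis mutandis}, the localization scheme carried out in the proof of that theorem, with the control integral $\int_0^t S_{t-s}(\mathbf G_{r,k}(s,z(s))\dot h(s))\,ds$ replaced by the genuine It\^o integral $\int_0^t S_{t-s}\mathbf G_{r,k}(s,z(s))\,dW(s)$; this is the (mildly reformulated) content of \cite[Theorem 11.1]{ZB+Ondrejat_2007}, recalled here in the form convenient for Section \ref{sec:LDP}. First, fix $r>R+T$, $k\in\mathbb N$, the bump function $\varphi$, and form exactly the same cutoff maps $\mathbf F_r,\mathbf F_{r,k},\mathbf G_r,\mathbf G_{r,k}$ and $\mathbf Q_r$ as there. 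By Lemma \ref{lem-lip} these are globally Lipschitz after the $k$-truncation, and $\mathbf G_{r,k}(t,\cdot)$ takes values in $\mathscr L_2(\rkhs,\mathcal H)$ by Lemma \ref{hsop}. The standard theory of semilinear stochastic evolution equations driven by a cylindrical Wiener process on $\rkhs$ (Banach fixed point in $\mathcal C([0,T];L^2(\Omega;\mathcal H))$, using that $S_\cdot$ is a $C_0$-group and the It\^o isometry for the stochastic convolution) then yields a unique $\mathbb F$-adapted mild solution $z_{r,k}\in\mathcal C([0,T];\mathcal H)$, $\mathbb P$-a.s., of the truncated equation with datum $\xi=(E^2_ru_0,E^1_rv_0)$.

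Next one transplants Corollary \ref{cor-ZTildaSoln} and Proposition \ref{prop-coincidence}. Writing $\widetilde z_{r,k}=\mathbf Q_r(z_{r,k})$ and introducing the auxiliary maps $\widetilde F_{r,k},\widetilde G_{r,k}$, the It\^o formula (in the form of \cite[Lemma 6.4]{ZB+Ondrejat_2007}) shows that $\widetilde z_{r,k}$ solves the analogous equation with data $(\mathbf Q_r(\xi),\widetilde F_{r,k},\widetilde G_{r,k})$. Defining the stopping times $\tau_k^1,\tau_k^2,\tau_k^3,\tau_k$ as in \eqref{stoppingTimes}, one applies the energy inequality of Appendix \ref{sec:EnergyIneqSWE} (Proposition \ref{prop_magic}) to $a_k-\widetilde a_k$ and closes with the ordinary Gronwall Lemma to obtain $a_k=\widetilde a_k$ on $[0,\tau_k)$; hence $u_{r,k}(t,x)=\h(u_{r,k}(t,x))$ for $|x|\le r-t$, $t\le\tau_k$, and Lemma \ref{cor_inv} forces $u_{r,k}(t,x)\in M$ there, together with $\tau_k=\tau_k^1=\tau_k^2\le\tau_k^3$. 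The consistency statement of Lemma \ref{ndcr}, namely that the $z_{r,k}$ extend one another with $\tau_k\uparrow\tau$, follows from the same energy--Gronwall computation.

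The non-explosion statement $\tau=T$ a.s.\ reproduces Proposition \ref{nonexp}: via the Chojnowska--Michalik Theorem one upgrades $z_{r,k}$ to a genuine $H^1(\mathbb R)\times L^2(\mathbb R)$-valued solution of \eqref{vcfa} (with $dW(s)$ in place of $\dot h(s)\,ds$), uses $v_{r,k}=\partial_t u_{r,k}$ and the perpendicularity of the second fundamental form to $T_{u_{r,k}}M$ (Proposition \ref{sft}) to discard the curvature terms from the $L^2$-pairings, and controls $\log(1+\|z_{r,k}\|^2_{\mathcal H_{r-t}})$ using the Gagliardo--Nirenberg inequality \eqref{GNineq}, \cite[Lemma 10.1]{ZB+Ondrejat_2007}, and the growth bound \eqref{y00}. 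This is where the argument genuinely departs from Theorem \ref{thm-skeleton}: the martingale terms must be estimated by the Burkholder--Davis--Gundy (Davis) inequality to pass to $\mathbb E\sup_t$-bounds, and the final step is a \emph{stochastic} Gronwall lemma, giving a bound on $1+\mathbb E\sup_{t\le\tau_k}\log(1+\|z_{r,k}(t)\|^2_{\mathcal H_{r-t}})$ uniform in $k$ on the sets $S_j$, incompatible with $\tau<T$. One then sets $w_{r,k}(t)=(E^2_{r-t}u_{r,k}(t),E^1_{r-t}v_{r,k}(t))$, $z_r(t)=\lim_k w_{r,k}(t)$, restricts the integral identity to $(-R,R)$ and passes $k\to\infty$ by dominated convergence to obtain property (6); $\mathscr F_t$-measurability (property (1)) is inherited from the adaptedness of the $z_{r,k}$. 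Uniqueness follows exactly as in the uniqueness part of Theorem \ref{thm-skeleton}: given another solution $U$, form $Z=(E^2_RU,E^1_R\partial_tU)$, a stopping time $\sigma_k$ and the auxiliary process $\beta$, show via Chojnowska--Michalik that the restriction of $\beta$ to $\mathcal H_R$ minus $(U,\partial_tU)$ solves the null linear wave equation and hence vanishes on the backward light cone, and close with the energy inequality plus a stochastic Gronwall argument applied to $\|\beta-a_k\|^2_{\mathcal H_{R-t}}$.

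The main obstacle, as indicated, is that inserting the It\^o integral forces every a priori estimate of Theorem \ref{thm-skeleton} to be re-run in $\mathbb E\sup_t$ form: this requires BDG for the stochastic convolution, a stochastic Gronwall lemma in both the non-explosion and the uniqueness arguments, and some care with adaptedness and with the choice of stopping times so that the truncated diffusion $\mathbf G_{r,k}$ stays Hilbert--Schmidt via Lemma \ref{hsop}. Since the deterministic scaffolding is already in place and these probabilistic ingredients are standard, the proof is essentially a transcription of \cite[\S\S 7--11]{ZB+Ondrejat_2007}.
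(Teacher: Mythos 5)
The paper does not give its own proof of Theorem~\ref{thm-exist}: Appendix~\ref{sec:existUniqResult} simply recalls the result, attributing it to \cite[Theorem 11.1]{ZB+Ondrejat_2007}. Your outline faithfully reproduces the localization--cutoff--$\mathbf Q_r$--stopping-time scheme of that reference, and matches the structure of the deterministic analogue proved as Theorem~\ref{thm-skeleton} in Section~\ref{sec:skeleton}, with the correct probabilistic replacements (mild solution via It\^o isometry, BDG for the stochastic convolution, stochastic Gronwall in the non-explosion and uniqueness arguments); so the approach is the same as the one the paper relies on.
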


\section{Energy inequality for stochastic wave equation}\label{sec:EnergyIneqSWE}
Recall the following slightly modified version of \cite[Proposition 6.1]{Brz+Ondr_2007} for a one (spatial) dimensional linear inhomogeneous stochastic wave equation. For $l\in\mathbb  N$, we use the symbol $D^lh$ to denote the $\mathbb R^{n\times 1}$-vector $ \left(\frac{d^lh^1}{d x^l}, \frac{d^lh^2}{d x^l}, \cdots
,\frac{d^lh^n}{d x^l} \right). $
\begin{Proposition}\label{prop_magic}
	Assume that  $T>0$ and  $k\in \mathbb{N}$.  Let $W$ be a cylindrical Wiener process on a Hilbert space $K$. Let $f$ and $g$ be progressively measurable processes with values, respectively,  in $H_{\trm{loc}}^k(\mathbb R;\mathbb R^n)$ and $\mathscr L_2(K,H_{\trm{loc}}^k(\mathbb R;\mathbb R^n))$ such that, for every $R>0$,
	\begin{equation}\nonumber
	\int_0^T\left\{\Vert f(s)\Vert_{H^k((-R,R);\mathbb R^n)}+\Vert g(s)\Vert^2_{\mathscr L_2(K,H^k((-R,R);\mathbb	R^n))}\right\}\,ds < \infty,
	\end{equation}
	$\mathbb{P}$-almost surely. Let  $z_0$ be  an $\mathscr F_0$-measurable random variable with values in
	$$ \mathcal H_{\trm{loc}}^k : =H_{\trm{loc}}^{k+1}(\mathbb R;\mathbb R^n)\times H_{\trm{loc}}^k(\mathbb R;\mathbb R^n). $$
	Assume that  an $\mathcal{H}_{\trm{loc}}^k$-valued  process $z=z(t)$, $t\in [0,T]$, satisfies
	$$
	z(t)=S_tz_0+\int_0^tS_{t-s}\left(\begin{array}{c}0\\f(s)\end{array}\right)\,ds+\int_0^tS_{t-s}\left(\begin{array}{c}0\\g(s)\end{array}\right)\,dW(s),\qquad 0\le t\le T.
	$$
	Given $x\in\mathbb R$,  we define  the energy function $e:[0,T]\times \mathcal H_{\trm{loc}}^k \to
	\mathbb{R}^+$ by, for $z=(u,v)\in\mathcal H_{\trm{loc}}^k$,
	\begin{equation}\nonumber
	\mathbf e(t,T;x,z) = \frac{1}{2} \left\{\Vert u\Vert^2_{L^2(B(x,T-t))}+\sum_{l=0}^k\left[\Vert D^{l+1} u\Vert^2_{L^2(B(x,T-t))} + \Vert D^lv \Vert^2_{L^2(B(x,T-t))}\right] \right\}.
	\end{equation}
	Assume that $L:[0,\infty)\to\mathbb R$ is a non-decreasing $\mathcal{C}^2$-smooth function and define the second energy function $E: [0,T]\times\mathcal H_{\trm{loc}}^k \to \mathbb{R}$, by
	$$
	\mathbf E(t,z)=L(\mathbf e(t,T;x,z)), \; z=(u,v)\in\mathcal H_{\trm{loc}}^k.
	$$
	Let  $\{e_j\}$ be  an orthonormal basis of  $K$. We define a function $V: [0,T]\times\mathcal H_{\trm{loc}}^k \to \mathbb{R}$, by
	\begin{eqnarray*}
		V(t,z)&=&L^\prime(\mathbf e(t,T;x,z))\left[\langle
		u,v\rangle_{L^2(B(x,T-t))}+\sum_{l=0}^k\langle
		D^lv,D^lf(t)\rangle_{L^2(B(x,T-t))} \right] \\
		&\quad +&\frac{1}{2}L^\prime(\mathbf
		e(t,T;x,z))\sum_{j}\sum_{l=0}^k\vert D^l[g(t)e_j]\vert^2_{L^2(B(x,T-t))}+
		\\
		&\quad +&\frac{1}{2}L^{\prime\prime}(\mathbf
		e(t,T;x,z))\sum_{j}\left[\sum_{l=0}^k\langle
		D^lv,D^l[g(t)e_j]\rangle_{L^2(B(x,T-t))}\right]^2, \; (t,z)\in
		[0,T]\times \mathcal H_{\trm{loc}}^k,
	\end{eqnarray*}
	where we suppress the dependency of the left hand side on $T$ and $x$. Then $\mathbf E$ is continuous on $[0,T]\times\mathcal H_{\trm{loc}}^k$, and for every $0\le t\le T$,
	\begin{eqnarray}\nonumber
	\mathbf E(t,z(t))&\le &\mathbf
	E(0,z_0)+\int_0^tV(r,z(r)\,dr\\
	&\quad +&\sum_{l=0}^k\int_0^tL^\prime(\mathbf
	e(r,z(r)))\langle D^lv(r),D^l[g(r)\,dW(r)]\rangle_{L^2(B(x,T-r))}, \quad \mathbb{P}\trm{-a.s.}.
	\nonumber\end{eqnarray}
\end{Proposition}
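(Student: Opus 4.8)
The argument follows the scheme of \cite[Proposition 6.1]{ZB+Ondrejat_2007}, adapted to an arbitrary order $k$, to a light cone centred at an arbitrary $x\in\mathbb{R}$, and to the composition with a general non-decreasing $\mathcal{C}^2$ function $L$. The first step is a reduction to regular data. By the finite propagation speed of the wave group $S_t$ from Proposition \ref{gr}, the values of $z$ on the backward light cone $\{(t,y):|y-x|\le T-t\}$ depend only on $z_0$, $f$, $g$ restricted to $B(x,T)$; after multiplying the data by a cut-off supported in a slightly larger ball we may therefore assume that $z_0$, $f$ and $g$ are one level smoother, the general case being recovered at the end by approximation. Under this extra regularity, the particular case of the Chojnowska--Michalik theorem \cite{Chojn-M_1979} already invoked in the proof of Proposition \ref{nonexp} shows that $z=(u,v)$ is a strong solution: $\partial_t u=v$ and
\[
v(t)=v_0+\int_0^t\big(\partial_{xx}u(s)+f(s)\big)\,ds+\int_0^t g(s)\,dW(s)
\]
holds in $H^k(\mathbb{R};\mathbb{R}^n)$; in particular $D^{l+1}u(t)$ and $D^{l}v(t)$ are continuous functions of $y$ for $0\le l\le k$, so their traces at the endpoints of $B(x,T-t)$ are well defined.

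\textbf{It\^o's formula on the shrinking domain.} Next I would apply the infinite-dimensional It\^o formula to $t\mapsto \mathbf e(t,z(t))$, being careful that the time dependence enters both through the semimartingale $z(t)$ and through the shrinking interval $I(t):=B(x,T-t)$. Writing $\mathbf e(t,z(t))=\tfrac12\int_{I(t)}\big\{|u|^2+\sum_{l=0}^k(|D^{l+1}u|^2+|D^l v|^2)\big\}\,dy$ and differentiating, the moving boundary contributes, at each endpoint $y_\pm(t):=x\pm(T-t)$, the value of $-\tfrac12\big\{|u|^2+\sum_{l=0}^k(|D^{l+1}u|^2+|D^l v|^2)\big\}$ there (the sign coming from $\tfrac{d}{dt}(T-t)=-1$), while the interior differentiation, combined with $\partial_t u=v$ and the equation for $v$, produces $\langle u,v\rangle_{L^2(I(t))}$, the terms $\sum_l\langle D^{l+1}u,D^{l+1}v\rangle_{L^2(I(t))}$ and $\sum_l\langle D^l v,D^l(\partial_{xx}u+f)\rangle_{L^2(I(t))}$, the It\^o correction $\tfrac12\sum_j\sum_l\|D^l[g(t)e_j]\|^2_{L^2(I(t))}$, and the stochastic differential $\sum_l\langle D^l v,D^l[g(t)\,dW(t)]\rangle_{L^2(I(t))}$.

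\textbf{The sign of the boundary flux.} This is the step where an inequality, rather than an equality, arises, and it is the technical heart of the proof. Since $\partial_{xx}=\Delta$ in one space dimension, $D^l\partial_{xx}u=D^{l+2}u$, and integration by parts on $I(t)$ gives $\langle D^{l+1}u,D^{l+1}v\rangle_{L^2(I(t))}+\langle D^l v,D^{l+2}u\rangle_{L^2(I(t))}=\big[D^l v\cdot D^{l+1}u\big]_{y_-(t)}^{y_+(t)}$. Adding these boundary terms to the moving-boundary contribution, one obtains at $y_+(t)$ the quantity $-\tfrac12|D^{l+1}u|^2-\tfrac12|D^l v|^2+D^l v\cdot D^{l+1}u=-\tfrac12|D^{l+1}u-D^l v|^2\le 0$, at $y_-(t)$ the quantity $-\tfrac12|D^{l+1}u+D^l v|^2\le 0$, and the remaining $-\tfrac12|u|^2$ at each endpoint is trivially $\le 0$; hence the total boundary flux is non-positive. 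Discarding it yields the claimed inequality when $L=\mathrm{id}$, since what is left in the drift is precisely $\langle u,v\rangle+\sum_l\langle D^l v,D^l f\rangle+\tfrac12\sum_{j,l}\|D^l[g e_j]\|^2$ on $I(t)$. For a general non-decreasing $\mathcal{C}^2$ function $L$ I would then apply the one-dimensional It\^o formula to $L$ and the continuous semimartingale $\mathbf e(t,z(t))$: the $L'$-weighted drift reproduces the first two lines of $V$, the quadratic-variation term $\tfrac12 L''(\mathbf e)\,d\langle\mathbf e\rangle$ with $d\langle\mathbf e\rangle=\sum_j\big(\sum_l\langle D^l v,D^l[g e_j]\rangle_{L^2(I(t))}\big)^2\,dt$ reproduces the last line of $V$, and---crucially---since $L'\ge 0$ the $L'$-weighted non-positive boundary flux may again be discarded without reversing the inequality, and the martingale term picks up the factor $L'(\mathbf e(r,z(r)))$.

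\textbf{Passage to the limit and main obstacle.} Finally I would remove the extra regularity: approximating $(z_0,f,g)$ by smooth data (for instance via mollification together with a cut-off respecting the light cone), the corresponding solutions converge in the natural $\mathcal{C}([0,T];\cdot)$-topologies built from the seminorms of $\mathcal{H}^k_{\mathrm{loc}}$ by the hypothesis on $f,g$ and the energy estimates, while $\mathbf E$, $V$ and the stochastic integral are continuous in these topologies, so the inequality passes to the limit; the continuity of $\mathbf E$ on $[0,T]\times\mathcal{H}^k_{\mathrm{loc}}$ follows from the continuity of $t\mapsto\|z\|^2_{\mathcal{H}(B(x,T-t))}$ and of $L$. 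I expect the main difficulty to be the rigorous justification of the It\^o expansion on the time-dependent domain $I(t)$, which cannot be read off a black-box It\^o formula: the cleanest route, following \cite{ZB+Ondrejat_2007}, is to first establish the identity for the truncated energies on fixed balls $B(x,\rho)$ with $\rho<T$ and then let $\rho$ track the light cone, controlling the boundary flux by a trace/Fubini argument, and to carry out the bookkeeping of the boundary terms for all $0\le l\le k$ simultaneously together with the verification that their sum is non-positive.
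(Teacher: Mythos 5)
Your proposal is correct and follows essentially the same route as the proof of the cited Proposition~6.1 of \cite{ZB+Ondrejat_2007}, of which the present proposition is a modification (general centre $x$, general order $k$, and composition with a general non-decreasing $\mathcal{C}^2$ function $L$); the paper itself gives no proof, only the reference. You have correctly identified all the essential ingredients: the finite-propagation-speed reduction plus regularisation of the data so that the strong (Chojnowska--Michalik) form of the equation is available; the It\^o expansion on the shrinking light-cone interval $I(t)=B(x,T-t)$; the integration by parts that pairs $D^l\partial_{xx}u=D^{l+2}u$ against $D^lv$ to produce the boundary trace $[D^lv\cdot D^{l+1}u]$; the completion of the square giving $-\tfrac12|D^{l+1}u\mp D^lv|^2\le 0$ at $y_\pm(t)$ (together with the trivially negative $-\tfrac12|u|^2$), which is the only source of the inequality; the scalar It\^o formula for $L$ exploiting $L'\ge0$ to keep the discarded boundary flux non-positive while the $L''$ term is reproduced exactly; and the final approximation argument, with the honest acknowledgement that the main technical care is the rigorous treatment of the time-dependent domain.
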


\textbf{Acknowledgements:} The last author wishes to thank the York Graduate Research School, to award the Overseas scholarship (ORS), and the Department of Mathematics, University of York, to provide financial support and excellent research facilities during the period of this work. The results of this paper are part of his  Ph.D. thesis. He also presented a lecture on the topic of this paper at the Workshop on Stochastic Partial Differential Equations, held at the University of Sydney, Australia, in August 2019.

\end{document}